\documentclass{article}
\usepackage[a4paper,marginratio=1:1,width=150mm,top=25mm,bottom=25mm]{geometry}
\usepackage[T1]{fontenc}
\usepackage[utf8]{inputenc}

\usepackage{amsmath}
\usepackage{amssymb}
\usepackage{amsthm}
\usepackage{amsfonts}

\usepackage{bbold}

\usepackage{tikz}
\usetikzlibrary{calc}
\usepackage{tikz-cd}
\usepackage{todonotes}

\usepackage{quiver}

\usepackage[citestyle=alphabetic,bibstyle=alphabetic,maxbibnames=9]{biblatex}
\usepackage{graphicx}
\usepackage{xcolor}
\usepackage{thmtools}
\usepackage{hyperref}
\usepackage{cleveref}

\usepackage{parskip}
\usepackage{tcolorbox}
\tcbuselibrary{most}

\definecolor{RUBBlue}{RGB}{128,0,50}
\definecolor{RUBGrey}{RGB}{231, 231, 231}
\definecolor{RUBLightGrey}{RGB}{252, 252, 252}
\definecolor{RUBGreen}{RGB}{0, 49, 83}

\theoremstyle{plain}
\newtheorem{theorem}{Theorem}[section]
\crefname{theorem}{theorem}{theorems}
\newtheorem{proposition}[theorem]{Proposition}
\crefname{proposition}{proposition}{propositions}
\newtheorem{lemma}[theorem]{Lemma}
\crefname{lemma}{lemma}{lemmas}
\newtheorem{corollary}[theorem]{Corollary}
\crefname{corollary}{corollary}{corollaries}

\theoremstyle{definition}
\newtheorem{definition}[theorem]{Definition}
\crefname{definition}{definition}{definitions}
\newtheorem{construction}[theorem]{Construction}
\crefname{construction}{construction}{constructions}
\newtheorem{notation}[theorem]{Notation}
\crefname{notation}{notation}{notations}

\crefname{convention}{convention}{conventions}

\theoremstyle{remark}
\newtheorem{example}[theorem]{Example}
\crefname{example}{example}{examples}
\newtheorem{remark}[theorem]{Remark}
\crefname{remark}{remark}{remarks}

\newtheorem{introthm}{Theorem}

\crefname{introthm}{theorem}{theorems}
\tcolorboxenvironment{introthm}{colback=white, colframe=RUBBlue}

\tcolorboxenvironment{theorem}{colback=white, colframe = RUBBlue}
\tcolorboxenvironment{proposition}{colback=white, colframe = RUBBlue}
\tcolorboxenvironment{lemma}{colback=white, colframe = RUBBlue}
\tcolorboxenvironment{corollary}{colback=white, colframe = RUBBlue}
\tcolorboxenvironment{definition}{colframe= RUBGreen, colback=white}
\tcolorboxenvironment{construction}{colframe= RUBGreen, colback=white,breakable}
\tcolorboxenvironment{notation}{colframe= RUBGreen, colback=white}
\tcolorboxenvironment{convention}{colframe= RUBGreen, colback=white}
\tcolorboxenvironment{proof}{blanker,breakable,left=1mm,before skip=10pt,after skip=10pt
}
\tcolorboxenvironment{remark}{blanker,breakable,
before skip=10pt,after skip=10pt}

\DeclareMathOperator{\im}{im}

\DeclareMathOperator*{\colim}{colim}

\DeclareMathOperator{\id}{id}

\newcommand{\st}{\mathrm{st}}
\newcommand{\op}{\mathrm{op}}

\newcommand{\nc}{\mathrm{nc}}
\newcommand{\fpqc}{\mathrm{fpqc}}
\newcommand{\et}{\mathrm{\acute{e}t}}

\newcommand{\fin}{\mathrm{fin}}
\newcommand{\fab}{\mathrm{fab}}
\newcommand{\gl}{\mathrm{gl}}
\newcommand{\ab}{\mathrm{ab}}

\DeclareMathOperator{\Spec}{Spec}
\DeclareMathOperator{\Spet}{Sp\acute{e}t}
\DeclareMathOperator{\Fun}{Fun}
\DeclareMathOperator{\QCoh}{QCoh}
\DeclareMathOperator{\CAlg}{CAlg}
\DeclareMathOperator{\Shv}{Shv}
\DeclareMathOperator{\Orb}{Orb}

\DeclareMathOperator{\map}{map}
\DeclareMathOperator{\Mod}{Mod}
\DeclareMathOperator{\Map}{Map}

\DeclareMathOperator{\Sp}{Sp}
\DeclareMathOperator{\Spc}{Spc}
\DeclareMathOperator{\Cat}{Cat}

\DeclareMathOperator{\Ar}{Ar}
\DeclareMathOperator{\Fin}{Fin}
\DeclareMathOperator{\SpDM}{SpDM}
\DeclareMathOperator{\SpSch}{SpSch}
\newcommand{\PrLst}{{\mathrm{Pr}^L_\st}}
\DeclareMathOperator{\Stk}{Stk}
\DeclareMathOperator{\Top}{Top}
\DeclareMathOperator{\Glo}{Glo}

\renewcommand{\O}{\mathcal{O}}
\newcommand{\X}{\mathrm{X}}
\newcommand{\Y}{\mathrm{Y}}

\newcommand{\ko}{\mathrm{ko}}

\newcommand{\ku}{\mathrm{ku}}
\newcommand{\KU}{\mathrm{KU}}

\newcommand{\nb}[1]{\left(#1 \right)}

\newcommand{\F}{\mathbb{F}}

\newcommand{\oo}{\mathcal{O}}

\newcommand{\zz}{\mathbb{Z}}

\newcommand{\qq}{\mathbb{Q}}

\newcommand{\mm}{\mathfrak{m}}
\newcommand{\nn}{\mathfrak{n}}
\renewcommand{\gg}{\mathbb{G}}
\renewcommand{\ss}{\mathbb{S}}
\newcommand{\bb}{\mathbb{B}}
\newcommand{\einfty}{\mathbb{E}_\infty}

\newcommand{\spec}{\mathrm{Spec}}

\begin{document}
\title{A family completion theorem for tempered cohomology}
\author{Leonard Tokic}
\date{\today}
\maketitle
\begin{abstract}
  Let \(\gg\) be an oriented \(\mathbb{P}\)-divisible group over a noetherian \(\einfty\)-ring \(R\), let \(G\) be a finite group, and let \(\mathcal{F}\) be a family of subgroups of \(G\).
  We show that completion of \(R(\gg)_G\)-modules at \(\mathcal{F}\) agrees with algebraic completion at the ideal
  \[I_\gg(\mathcal{F})=\bigcap_{H\in\mathcal{F}}\ker\nb{\pi_0R(\gg)^{ G}\to \pi_0R(\gg)^{ H}}.\]
  For \(\gg=\mu_{\mathbb{P}^\infty}\) over \(\KU\) this recovers the family completion theorem of Adams, Haeberly, Jackowski, and May, and for the trivial family the classical Atiyah-Segal completion theorem.
  The main input is a theory of support for points of the tempered character stack \(\gg\{\bb G\}\), in the spirit of Segal's analysis of the prime spectrum of the complex representation ring: we show that the support of a point is a single conjugacy class of abelian subgroups of \(G\), and that the points supported inside \(\mathcal{F}\) are exactly the preimage of \(V(I_\gg(\mathcal{F}))\) under the affinization map.
  We also prove a version over locally noetherian geometric base stacks, in which the ideal is replaced by an open substack of \(\gg(\bb G)\), the analogue over such a base of \(\spec\,R_\gg^{\bb G}\), and which applies for instance to genuine equivariant topological modular forms.
\end{abstract}

\tableofcontents

\section{Introduction}
 
Let \(G\) be a compact Lie group and let \(X\) be a finite \(G\)-CW complex.
The Atiyah-Segal completion theorem \cite{AtiyahSegal_1969_Equivariant$K$theoryCompletion} states that the projection \(EG\times X\to X\) induces an isomorphism
\[\KU_G^*(X)^\wedge_I\xrightarrow{\ \sim\ }\KU^*_G(EG\times X)=\KU^*(X_{hG}),\]
where \(I\subset R(G)\simeq \KU_G^0(*)\) is the augmentation ideal.
One can read this as saying that two rather different ways of forgetting equivariance agree: completing the coefficients at an ideal, and passing to the Borel construction.
 
Adams, Haeberly, Jackowski, and May \cite{AdamsHaeberlyJackowskiMay_1988_GeneralizationAtiyahsegalCompletion} showed that this is the first case of a much more flexible statement, in which the trivial subgroup is replaced by an arbitrary family \(\mathcal{F}\) of closed subgroups of \(G\).
They work with pro-group valued equivariant \(K\)-theory, \(\mathcal{KU}^*_G(X)=\{\KU^*_G(X_\alpha)\}\) with \(X_\alpha\) running over the finite subcomplexes of \(X\), define the \(\mathcal{F}\)-adic completion to be the pro-group
\[\{\KU^*_G(X_\alpha)/J\KU^*_G(X_\alpha)\},\]
where \(J\) runs over the finite products of the ideals \(I^G_H=\ker \KU^0_G(*)\to \KU_H^0(*)\) with \(H\in\mathcal{F}\), and prove that this is isomoprhic to \(\mathcal{KU}_G^*(X\times E\mathcal F)\).

The family \(\mathcal{F}=\{e\}\) recovers the Atiyah-Segal completion theorem, and the family of all subgroups gives nothing at all, so the interest lies in between.
Statements of this form are also known, for example, for equivariant stable cohomotopy \cite{AdamsHaeberlyJackowskiMay_1988_GeneralizationSegalConjecture}.
 
Tempered cohomology, introduced by Lurie in \cite{Lurie_2019_EllipticCohomologyIII}, is rich source of equivariant cohomology theories.
To a pre-oriented \(\mathbb{P}\)-divisible group \(\gg\) over an \(\einfty\)-ring \(R\) it associates a limit preserving functor
\[R_\gg^\bullet:(\Spc^\gl_\fin)^\op\to \CAlg(\Sp)_{R/},\]
which for \(\gg=\mu_{\mathbb{P}^\infty}\) over \(\KU\) recovers equivariant \(K\)-theory, for finite groups of equivariance.
Another key example of interest is given by taking \(\gg\) to be the torsion in an oriented elliptic curve, giving a version of equivariant elliptic cohomology.

Our goal in this paper is prove a family completion theorem for tempered cohomology theories.
 
\subsection{Main results}
 
Fix a noetherian \(\einfty\)-ring \(R\) and an oriented \(\mathbb{P}\)-divisible group \(\gg\) over \(\Spec R\).
Under these hypotheses, Gepner, Linskens, and Pol showed that the functor \(R_\gg^\bullet\) is representable by a commutative algebra \(R(\gg)\) in global spectra.

Given a finite group \(G\) and a family \(\mathcal{F}\) of subgroups of \(G\), there are two recollements of \(\Mod_{R(\gg)_G}(\Sp_G)\) at hand.
The first is geometric: the idempotent \(\mathbb{E}_0\)-algebra \(\widetilde{E\mathcal{F}}\in\Sp_G\) gives a recollement \(\mathcal{R}(\widetilde{E\mathcal{F}})\) of \(\Sp_G\), which may be tensored up.
The second is algebraic: the ideal \(I_\gg(\mathcal{F})\subset \pi_0R(\gg)^G\) gives, by the theory of \cite[§~7]{Lurie_SAG}, a recollement \(\mathcal{R}(I_\gg(\mathcal{F}))\) of \(\Mod_{R(\gg)^G}(\Sp)\), which may likewise be tensored up.
Our first main result is that these agree.
 
\begin{introthm}[\cref{thm:betterTemperedFamilyCompletion}]
  Let \(G\) be a finite group, let \(\mathcal{F}\) be a family of subgroups of \(G\), and let
  \[I_\gg(\mathcal{F})=\bigcap_{H\in\mathcal{F}}\ker\nb{\pi_0R(\gg)^G\to \pi_0R(\gg)^H}.\]
  Then we have an equivalence of recollements
  \[\mathcal{R}(\widetilde{E\mathcal{F}})\otimes_{\Sp_G}\Mod_{R(\gg)_G}(\Sp_G)\simeq \mathcal{R}(I_\gg(\mathcal{F}))\otimes_{\Mod_{R(\gg)^G}(\Sp)}\Mod_{R(\gg)_G}(\Sp_G).\]
  In particular, an \(R(\gg)_G\)-module is \(\mathcal{F}\)-complete if and only if it is \(I_\gg(\mathcal{F})\)-complete.
\end{introthm}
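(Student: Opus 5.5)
Both recollements are determined by their idempotent algebras in the stable module category \(\mathcal{M}=\Mod_{R(\gg)_G}(\Sp_G)\). So I would compare those algebras directly. On the geometric side the idempotent is \(R(\gg)_G\otimes\widetilde{E\mathcal{F}}\). On the algebraic side it is \(R(\gg)_G\otimes_{R(\gg)^G}\check{C}\), where \(\check{C}=\colim\) of the localizations away from \(V(I_\gg(\mathcal{F}))\), i.e. the cofibre of the local cohomology object \(\Gamma_{I}R(\gg)^G\to R(\gg)^G\). Two idempotent algebras under the unit generate the same recollement exactly when each one tensored with the other is equivalent to itself. Equivalently, each kills the other's complementary (torsion) idempotent. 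Concretely, I will show:

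\textbf{(a)} \(E\mathcal{F}_+\otimes R(\gg)_G\) is \(I_\gg(\mathcal{F})\)-torsion.

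\textbf{(b)} \(\widetilde{E\mathcal{F}}\otimes R(\gg)_G\) is \(I_\gg(\mathcal{F})\)-local, meaning its image in \(\Mod_{R(\gg)^G}\) is annihilated by \(\Gamma_I\).

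Together these identify the two cofibre sequences \(E\mathcal{F}_+\to S^0\to\widetilde{E\mathcal{F}}\) and \(\Gamma_I\to\id\to L_I\) after tensoring, and hence the recollements. The final claim about completeness then follows, since complete objects form the right orthogonal of the torsion part in each recollement.

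For (a), I write \(E\mathcal{F}_+\) as a colimit of cells \(G/H_+\) with \(H\in\mathcal{F}\). Torsion objects are closed under colimits, so it suffices to treat \(G/H_+\otimes R(\gg)_G\), whose fixed points form a module over \(R(\gg)^H\). The action of \(\pi_0R(\gg)^G\) on these fixed points factors through restriction to \(\pi_0R(\gg)^H\). Hence \(I_\gg(\mathcal{F})\), which lies in \(\ker(\pi_0R(\gg)^G\to\pi_0R(\gg)^H)\), acts by zero on homotopy, so the module is torsion. I need some care with genuine versus underlying modules here. The cleanest route is to check torsionness on all \(K\)-fixed points for \(K\le G\), since these detect equivalences in \(\Sp_G\).

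Part (b) is the main obstacle. Being \(I\)-local amounts to: for every prime \(\mathfrak p\supseteq I_\gg(\mathcal{F})\) (using noetherianity, which follows from the earlier finiteness results for \(\pi_0R(\gg)^G\)), the localization \((\widetilde{E\mathcal{F}}\otimes R(\gg)_G)^K_{\mathfrak p}\) vanishes for every \(K\). Equivalently, \(\widetilde{E\mathcal{F}}\otimes R(\gg)_G\) vanishes after base change to the points of \(V(I)\). Here I would invoke the support theory stated in the abstract. Every point of the tempered character stack \(\gg\{\bb G\}\) has support equal to a single conjugacy class of abelian subgroups \(A\), and the points over \(V(I_\gg(\mathcal{F}))\) are exactly those with support in \(\mathcal{F}\). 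Geometric fixed points \(\Phi^A\) of \(\widetilde{E\mathcal{F}}\) vanish precisely when \(A\in\mathcal{F}\). Tempered cohomology is detected on abelian subgroups via the character isomorphism (Lurie's tempered character theory). So after base change to a point \(x\) with support \(A\in\mathcal{F}\), the \(G\)-spectrum should reduce, up to a faithful base change, to \(\Phi^A\) data, and that is zero.

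To make this precise, I would use faithfully flat descent along the map from \(\gg\{\bb G\}\) to \(\spec\pi_0R(\gg)^G\), which the support results provide, to reduce vanishing to the level of points. This is where the real work lies: relating localization at a prime to geometric fixed points at its support subgroup. That step is an analogue of Segal's localization theorem for \(R(G)\). Once it is established, (a) and (b) give the equivalence of recollements.
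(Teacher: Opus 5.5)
Your reduction to (a) and (b) is formally correct, and (a) is fine. It is the elementary inclusion of \cref{prop:FCompleteImpliesIFComplete}. You check it on cells \(G/H_+\), where \(x\in I_\gg(\mathcal F)\) acts on \(\mathrm{Ind}_H^G\mathrm{Res}^G_H R(\gg)_G\) through \(\mathrm{res}^G_H(x)=0\); the paper checks it on geometric fixed points.

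But (b) is the whole content of the theorem, and the route you sketch for it fails at its first step. Vanishing of the localizations at all primes of \(V(I)\) is strictly stronger than \(I\)-locality, and it fails for the very object in question. Take \(\KU\), \(G=C_p\), \(\mathcal F=\{e\}\). The \(C_p\)-fixed points of \(\widetilde{E\mathcal F}\otimes\KU_{C_p}\) are \(\Phi^{C_p}\KU\), with \(\pi_0=R(C_p)[(1-t)^{-1}]=\zz[\zeta_p,1/p]\). This is \(I\)-local, since \(t-1\) acts invertibly. Yet its \(\pi_0\) localized at \(\mathfrak q=(t-1,p)\in V(I)\) is \(\qq(\zeta_p)\neq 0\).

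This matches Segal. The prime \(\mathfrak q\) has support \(\{e\}\), and his localization theorem replaces \(X\) by \(X^{(e)}=X\). In general it discards only isotropy not containing a conjugate of the support, never isotropy \(K\notin\mathcal F\) that contains it, and the latter is exactly what (b) has to kill. So no Segal-type localization statement reduces (b) to \(\Phi^A\)-data.

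If you replace localizations by residue fields, the pointwise vanishing becomes essentially a restatement of (b), and you give no mechanism for it. Nor is there faithfully flat descent along \(\gg\{\bb G\}\to\Spec R(\gg)^G\). That map is an equivalence for abelian \(G\), and its charts \(\Spec R(\gg)^H\to\Spec R(\gg)^G\) are finite but not flat (e.g.\ the augmentation \(R(G)\to\zz\)). The support results only tell you which points lie over \(V(I_\gg(\mathcal F))\).

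What is missing is an input that actually kills the isotropy outside \(\mathcal F\). The paper gets it by induction on \(G\), using isotropy separation for the family \(\mathcal P\) of proper subgroups.
\begin{itemize}
\item The \(\widetilde{E\mathcal P}\)-part is handled by the result of Gepner--Linskens--Pol when \(G\) is abelian (\cref{lem:temperedAbelianProperCompletion}), and by abelian nilpotence of \(R(\gg)_G\) when it is not (\cref{lem:temperedIsMNNAbNilpotent}).
\item The \(E\mathcal P_+\)-part reduces to showing that, for an \(I_\gg(\mathcal F)\)-complete \(M\) and proper \(H\), \(\mathrm{Res}^G_HM\) is \(I_\gg(\mathcal F\vert_H)\)-complete, so that the inductive hypothesis applies.
\end{itemize}
Support theory enters only at this second step, and only to compare closed sets. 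The extended ideal \(\mathrm{res}^G_H(I_\gg(\mathcal F))\pi_0R(\gg)^H\) and \(I_\gg(\mathcal F\vert_H)\) have the same preimage \(Z_\gg(\mathcal F\vert_H)\) in \(|\gg\{\bb H\}|\) (\cref{prop:affinizationVIFandZF}, \cref{lem:ClosedSubsetOfFamily}). This suffices because \(\Mod_{R(\gg)^H}(\Sp)\to\Mod_{R(\gg)_H}(\Sp_H)\) factors through \(\QCoh(\gg\{\bb H\})\) (\cref{lem:factors_over_character_stack}), so the two tensored-up recollements agree.
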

 
Unwinding the last sentence for a module \(M\), the theorem says that the natural map
\[M^\wedge_{I_\gg(\mathcal{F})}\to \underline{\map}_G\nb{E\mathcal{F}_+,M}\]
is an equivalence.
For \(\gg=\mu_{\mathbb{P}^\infty}\) over \(\KU\) this recovers a variant of the theorem of Adams, Haeberly, Jackowski, and May.
 
Strictly speaking, their theorem is a little stronger than this.
Their isomorphism of pro-groups says not merely that the two completions agree, but that the natural filtrations on either side are compatible, which a statement about the completed objects alone does not see.
On the other hand, our version directly gives an equivalence of \(G\)-spectra.

Our method of proof relies heavily on the basechange properties of recollements we establish in \cref{sec:recollements}, and it is unclear how one would incorporate information about the filtrations.

One direction of \cref{thm:betterTemperedFamilyCompletion} is elementary and needs none of the machinery we develop: an \(\mathcal{F}\)-complete module is \(I_\gg(\mathcal{F})\)-complete, essentially because inverting an element of \(I_\gg(\mathcal{F})\) already kills the relevant geometric fixed points (\cref{prop:FCompleteImpliesIFComplete}).
Everything else in this paper is in service of the converse.
 
As an application we obtain a finiteness statement in bounded height, to the effect that in height at most \(n\) one only needs to know tempered cohomology on groups with at most \(n\) generators.
 
\begin{introthm}[\cref{prop:rightKanExtension}]
  Let \(n\geq 0\) and suppose that, for every point \(\mathfrak{p}\in\spec\,R\) and every \(\ell\in\mathbb{P}\), the \(\ell\)-divisible group \((\gg_{\widehat{R}_\mathfrak{p}})^\et_{(\ell)}\) has height at most \(n\).
  Then the functor
  \[\Glo_\fab^\op\to \CAlg(\Sp),\,\bb V\mapsto R_\gg^{\bb V}\]
  is right Kan extended from the full subcategory on those \(\bb H\) with \(H\) generated by at most \(n\) elements.
\end{introthm}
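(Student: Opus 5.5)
The plan is to deduce the statement from the first main theorem (\cref{thm:betterTemperedFamilyCompletion}), applied for each finite abelian group, together with a structural fact about tempered cohomology in bounded height. We must show that for every finite abelian \(V\), the map
\[R_\gg^{\bb V}\to \lim_{(\bb H\to\bb V),\ H\text{ $n$-generated}} R_\gg^{\bb H}\]
is an equivalence, where the limit runs over the comma category \((\Glo_\fab^{\le n})_{/\bb V}\). The maps \(\bb H\to \bb V\) in \(\Glo_\fab\) are (up to conjugation, which is trivial here) homomorphisms \(H\to V\). Each factors as a surjection onto its image followed by an inclusion of an \(n\)-generated subgroup of \(V\). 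So we expect the comma category to have a cofinal subcategory given by the poset \(\mathcal{F}_n\) of subgroups of \(V\) generated by at most \(n\) elements. This \(\mathcal{F}_n\) is a family, since it is closed under subgroups and conjugation. Checking this cofinality, in the \(\infty\)-categorical sense for the \((2,1)\)-category \(\Glo\), is the first routine step. It identifies the right-hand side with \(\lim_{H\in\mathcal{F}_n} R_\gg^{\bb H}\), which is the genuine fixed points \(\map_V(E\mathcal{F}_{n,+},R(\gg)_V)^V\). This uses the representability of \(R_\gg^\bullet\) by \(R(\gg)\) and the fact that \(E\mathcal{F}_n\) is a colimit of orbits \(V/H\) with \(H\in\mathcal{F}_n\).

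By \cref{thm:betterTemperedFamilyCompletion}, this limit is \((R_\gg^{\bb V})^\wedge_{I_\gg(\mathcal{F}_n)}\). It therefore suffices to show that \(R_\gg^{\bb V}\) is already \(I_\gg(\mathcal{F}_n)\)-complete. I would in fact aim for the stronger statement that the ideal \(I_\gg(\mathcal{F}_n)\) is nilpotent in \(\pi_0 R_\gg^{\bb V}\), or at least that \(V(I_\gg(\mathcal{F}_n))\) is all of \(\spec\,\pi_0R_\gg^{\bb V}\). Since \(\pi_0R_\gg^{\bb V}\) is noetherian, being finite over \(\pi_0R\), radical containment gives nilpotence. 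Completion at a nilpotent ideal is the identity, and this finishes the argument.

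The main step is the geometric one: every point of \(\spec\,\pi_0R_\gg^{\bb V}\) lies in \(V(I_\gg(\mathcal{F}_n))\). Using the support theory from the abstract, a point of the character stack \(\gg\{\bb V\}\) over a field has support a single subgroup \(A\le V\). Points supported in \(\mathcal{F}_n\) are exactly the preimage of \(V(I_\gg(\mathcal{F}_n))\). Affinization is surjective on points, since \(R_\gg^{\bb V}\) is \(\oo\) of \(\gg(\bb V)\)-type objects finite over \(R\). So we must show every support \(A\) is \(n\)-generated.

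To do this, localize and complete at \(\mathfrak p\in\spec R\) and pass to the \(\ell\)-primary parts. Via Lurie's tempered character theory, a point with support \(A\) corresponds to an injective homomorphism \(A^\vee\hookrightarrow \gg^\et(\kappa)\), or more precisely an embedding of the Pontryagin dual into the étale part of \(\gg\) over a separably closed field. The \(\ell\)-part of \(\gg^\et_{(\ell)}\) has height at most \(n\), so its torsion is a subgroup of \((\qq_\ell/\zz_\ell)^n\). Every finite subgroup of that is generated by \(n\) elements. Assembling over primes \(\ell\), \(A_{(\ell)}\) is \(n\)-generated for every \(\ell\), and hence \(A\) is \(n\)-generated, being a product of its Sylow parts. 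The obstacle I expect is pinning down this identification of the support with the étale-part embedding in the paper's formalism. That includes reducing from \(\gg\) over \(R\) to \(\gg_{\widehat R_\mathfrak p}\), where the height hypothesis is stated. I would handle it by base-changing the support computation along \(R\to\widehat R_\mathfrak p\), using that points over \(\mathfrak p\) factor through the completion.
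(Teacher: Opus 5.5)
Your route is the paper's: the pointwise formula for the right Kan extension, identification of the limit with \(\map_V((E\mathcal{F}_n\vert_V)_+,R(\gg)_V)^V\), then \cref{thm:betterTemperedFamilyCompletion}, then the bounded-height support computation of \cref{rmk:boundedHeightBoundedGeneration} to put \(I_\gg(\mathcal{F}_n\vert_V)\) in the nilradical. Your sketch of that last step matches what the paper extracts from the proof of \cref{prop:temperedSupportsAreConjugate}: after splitting \(\gg_S\simeq\gg^\circ_S\oplus\underline{\Lambda}_S\), the connected part has no points over reduced residue fields, so every support is a quotient of \(\prod_\ell\zz_\ell^{h_\ell}\). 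Minor point: for abelian \(V\) the map \(\gg\{\bb V\}\to\Spec R_\gg^{\bb V}\) is an equivalence by construction, so you need no surjectivity argument for affinization.

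Your first step, however, fails as stated: the poset of \(n\)-generated subgroups of \(V\) is not final in \((\Glo_{\mathcal{F}_n})_{/\bb V}\), and checking this is not routine. Mapping spaces in \(\Glo\) carry \(2\)-cells, and for abelian \(V\) the centralizer of every image is all of \(V\). Consequently the endomorphisms of \((\bb H\hookrightarrow\bb V)\) in the slice form the group \(V/H\), which the poset discards. A limit over the subgroup poset therefore computes the wrong object. Already for the family \(\{e\}\) it returns \(R_\gg^{\bb e}=R\), whereas the slice is \(BV\) and the correct limit is \(R^{BV}=\map_V(EV_+,R(\gg)_V)^V\). The right indexing category is the orbit category \(\Orb_{\mathcal{F}_n}(V)\), in which \(V/H\) has automorphism group \(V/H\). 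The paper uses exactly this, via the Gepner--Meier finality result. Its limit is precisely the genuine fixed points \(\map_V((E\mathcal{F}_n\vert_V)_+,R(\gg)_V)^V\) that you write down next. So if you replace the poset by \(\Orb_{\mathcal{F}_n}(V)\), the rest of your argument goes through unchanged.
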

 
Finally we would like a version over a general base stack, and here one immediately runs into the difficulty that there is no ring in which to take an ideal.
What survives is the closed subset that the ideal cuts out.
We call a stack \(M\) a {\it locally noetherian geometric} stack, or LNG-stack, if it admits a small colimit presentation by noetherian affines with flat transition maps (\cref{def:LNG_stack}); the stacks \(\mathrm{M}^\mathrm{or}_\mathrm{Ell}\) and \(\mathrm{M}^\mathrm{or}_\mathrm{Tori}\simeq(\Spec \KU)//C_2\) are of this form.
Over such a base the role of \(\spec\,R_\gg^{\bb G}\) is played by the relatively affine stack \(\gg(\bb G)\) over \(M\) associated to \(\O_M(\gg)^G\).
The union \(V_\gg(\mathcal{F})\) of the images of the \(|\gg(\bb H)|\) for \(H\in\mathcal{F}\) is then a closed subset of \(|\gg(\bb G)|\), and the open immersion associated to its complement is locally quasi-compact, so that it gives rise to a recollement \(\mathcal{R}(I_\gg(\mathcal{F}))\).
 
\begin{introthm}[\cref{thm:family_completion_LNG_version}]
  Let \(M\) be an LNG-stack and let \(\gg\) be an oriented \(\mathbb{P}\)-divisible group over \(M\).
  For any finite group \(G\) and family \(\mathcal{F}\) of subgroups of \(G\), the basechanges of \(\mathcal{R}(I_\gg(\mathcal{F}))\) and of \(\mathcal{R}(\widetilde{E\mathcal{F}})\) to \(\Mod_{\O_M(\gg)_G}(\Sp_G\otimes\QCoh(M))\) are equivalent.
\end{introthm}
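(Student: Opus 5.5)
The strategy is to reduce to the affine case of \cref{thm:betterTemperedFamilyCompletion} by descent along a flat noetherian atlas of \(M\). By \cref{def:LNG_stack}, write \(M\simeq\colim_i \Spec R_i\), a small colimit of noetherian affines with flat transition maps. Pulling back gives oriented \(\mathbb{P}\)-divisible groups \(\gg_i\) over the \(R_i\). Then \(\QCoh(M)\simeq\lim_i\Mod_{R_i}\), and correspondingly
\[\Mod_{\O_M(\gg)_G}(\Sp_G\otimes\QCoh(M))\simeq\lim_i\Mod_{R_i(\gg_i)_G}(\Sp_G).\]
This uses that \(\Sp_G\otimes-\) preserves this limit, since \(\Sp_G\) is dualizable in \(\PrLst\). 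It also uses that the formation of \(\O_M(\gg)\) is compatible with flat base change, which I take from the representability construction of Gepner--Linskens--Pol.

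The key steps are then as follows.

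First, both recollements are compatible with pullback along each \(\Spec R_i\to M\). For \(\mathcal{R}(\widetilde{E\mathcal{F}})\) this is automatic: it is tensored up from \(\Sp_G\), so its basechange to \(\Mod_{\O_M(\gg)_G}\) restricts on each chart to the corresponding basechange over \(R_i\). For \(\mathcal{R}(I_\gg(\mathcal{F}))\) one must check two things:
\begin{itemize}
\item \(\gg(\bb G)\times_M\Spec R_i\simeq\Spec \pi_0\)-level data of \(R_i(\gg_i)^G\), and this identification carries the image of \(|\gg(\bb H)|\) to \(V(\ker(\pi_0R_i(\gg_i)^G\to\pi_0R_i(\gg_i)^H))\);
\item the open complement of \(V_\gg(\mathcal{F})\) therefore pulls back to the complement of \(V(I_{\gg_i}(\mathcal{F}))\).
\end{itemize}
Flatness makes images of closed sets behave well under base change, via the flat base change of the finite (hence closed) maps \(\gg(\bb H)\to\gg(\bb G)\). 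By the basechange results of \cref{sec:recollements} for recollements from locally quasi-compact open immersions, the recollement attached to the open substack then restricts on each chart to \(\mathcal{R}(I_{\gg_i}(\mathcal{F}))\) tensored up.

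Second, on each chart \(\Spec R_i\), \cref{thm:betterTemperedFamilyCompletion} identifies the two basechanged recollements. These identifications are natural in \(R_i\), since both sides are defined by universal properties, namely the idempotent algebra, or equivalently the localization, and its kernel. They are therefore compatible with the flat transition maps.

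Third, pass to the limit. A recollement of a limit of categories, given by compatible recollements on each term with transition functors preserving the pieces, is the limit recollement. Two recollements of the same category agree exactly when their open parts, or equivalently their idempotent algebras, agree. So it suffices to show that the two idempotent algebras in the limit category agree after restriction to each chart, which the second step gives.

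I expect the main obstacle to be the first step: showing that the geometrically defined closed subset \(V_\gg(\mathcal{F})\) pulls back exactly to \(V(I_{\gg_i}(\mathcal{F}))\). This is where the support theory from the main body is needed. Concretely, one uses that the points supported in \(\mathcal{F}\) are precisely the preimage of \(V(I_\gg(\mathcal{F}))\), so that the union of the images of the \(\gg(\bb H)\) is \(V(I)\) and not merely contained in it. One also needs that this description is stable under flat base change and that the open immersion is locally quasi-compact. I would try to prove this by checking it on points: points of \(|\gg(\bb G)\times_M\Spec R_i|\) map to points of \(\Spec R_i\), and the affine statement applies over each chart.
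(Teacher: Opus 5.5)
Your overall architecture is the paper's. You choose an LNG-presentation, identify the chartwise basechange of \(\mathcal{R}(I_\gg(\mathcal{F}))\) with the affine ideal recollement (\cref{rmk:LNG_to_local_affines_family_recollement_basechange}), apply \cref{thm:betterTemperedFamilyCompletion} on each chart, and conclude by joint conservativity of the restrictions to \(\Mod_{R_i(\iota_i^*\gg)_G}(\Sp_G)\); your limit description is a fine justification of that conservativity. You do not need naturality of the chartwise identifications, since being equivalent is a property of two recollements.

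\textbf{The gap.} The step you single out as the main obstacle is misdiagnosed, and the tool you propose does not deliver it. \cref{prop:affinizationVIFandZF} lives on the character stack. It only says \(|a|^{-1}(V(I_\gg(\mathcal{F})))=Z_\gg(\mathcal{F})\) for the affinization \(|a|\colon|\gg\{\bb G\}|\to|\spec\,R_\gg^{\bb G}|\). To extract from it that every point of \(V(I_\gg(\mathcal{F}))\) lies in the image of some \(|\spec\,R_\gg^{\bb H}|\) with \(H\in\mathcal{F}\), you would also need \(|a|\) to hit every point of \(V(I_\gg(\mathcal{F}))\). That is a separate surjectivity statement which you do not address and which the paper never proves.

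\textbf{The fix.} No support theory is needed at this point. Each \(\pi_0R_\gg^{\bb G}\to\pi_0R_\gg^{\bb H}\) is finite \cite[cor.~4.7.13(b)]{Lurie_2019_EllipticCohomologyIII}, hence integral. By lying over, the image of \(|\spec\,R_\gg^{\bb H}|\) is therefore exactly \(V(I^G_H)\). Since \(\mathcal{F}\) is finite, \(V(I_\gg(\mathcal{F}))=\bigcup_{H\in\mathcal{F}}V(I^G_H)\) on the nose.

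What actually carries the step is the other ingredient you mention, flat basechange of the finite maps, in the form of \cref{lem:VIF_flat_basechange}.
\begin{itemize}
\item For a flat map \(R\to S\) of noetherian rings, the square relating the \(H\)- and \(G\)-fixed points over \(R\) and over \(S\) is a pullback by \cite[thm.~4.7.1]{Lurie_2019_EllipticCohomologyIII}.
\item By flatness it stays a pullback on \(\pi_0\), and Tag 01JT of \cite{stacks-project} then shows that the preimage of the image is the image.
\item Applied to the flat transition maps of the presentation, this shows that \(\coprod_iV(I_{\iota_i^*\gg}(\mathcal{F}))\) is saturated for the equivalence relation exhibiting \(|\gg(\bb G)|\) as a quotient of \(\coprod_i|\Spec R_i(\iota_i^*\gg)^G|\).
\end{itemize}
Saturation gives at once that \(V_\gg(\mathcal{F})\) is closed and that its preimage on each chart is \(V(I_{\iota_i^*\gg}(\mathcal{F}))\). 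Since \(R_i(\iota_i^*\gg)^G\) is noetherian, it also gives that the complementary open immersion is locally quasi-compact; this is \cref{prop:VIF_closed_over_geometric_stacks}.

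With this replacement your argument goes through. The support theory enters only inside \cref{thm:betterTemperedFamilyCompletion} itself.
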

 
Applied to the universal oriented elliptic curve over \(\mathrm{M}^\mathrm{or}_\mathrm{Ell}\), which is \(0\)-affine, this is a family completion theorem for genuine \(G\)-equivariant topological modular forms; combining it with the height bound of \cref{rmk:boundedHeightBoundedGeneration} recovers, for finite abelian groups of equivariance, the fact that \(\bb G\mapsto \mathrm{TMF}^G\) is right Kan extended from abelian groups generated by at most two elements.

\subsection{Proof ideas}
 
The basic structure of our proof of theorem A is heavily inspired by that of Adams, Haeberly, Jackowski, and May, and it may help the reader to keep their argument in mind.
Theirs proceeds by induction over the subgroups of \(G\).
Having reduced to a vanishing statement, they smash with a cofibre sequence \(S^0\to Y\to Y/S^0\) in which \(Y\) is a colimit of representation spheres with \(Y^G=S^0\) and \(Y^H\) contractible for proper \(H\), that is, with the isotropy separation sequence for the family \(\mathcal{P}\) of proper subgroups.
The \(Y\)-part is then handled outright, by an argument with Bott classes and Euler classes, while the \(Y/S^0\)-part is built out of cells \(G/H_+\) with \(H\) proper and is handled by the inductive hypothesis together with a change of groups isomorphism.
That change of groups step is the crux of the matter, and it rests on the purely algebraic assertion that the \(\mathcal{F}\)-adic and \(\mathcal{F}\vert_H\)-adic topologies on \(R(H)\) coincide, which they in turn deduce from Segal's theory of supports.
 
Our argument has the same shape.
We may assume \(G\notin\mathcal{F}\), and let \(M\) be an \(I_\gg(\mathcal{F})\)-complete module.
Mapping the isotropy separation sequence for the family \(\mathcal{P}\) of proper subgroups into \(M\) splits the problem in two.
The \(\widetilde{E\mathcal{P}}\)-part needs no induction, and here we can be more efficient than in the classical case.
If \(G\) is abelian it follows from a comparison with the work of Gepner, Linskens, and Pol, whose ideal from \cite[def.~15.10]{GepnerLinskensPol_2024_Global2ringsGenuine} coincides with ours and whose \cite[prop.~15.14]{GepnerLinskensPol_2024_Global2ringsGenuine} places any nilpotent module in the localizing tensor ideal generated by modules induced from proper subgroups.
If \(G\) is not abelian it is immediate from \(R(\gg)_G\) being nilpotent with respect to the family of abelian subgroups in the sense of \cite{MathewNaumannNoel_2017_NilpotenceDescentEquivariant}, which we deduce in \cref{lem:temperedIsMNNAbNilpotent} from the vanishing of geometric fixed points at nonabelian subgroups.
 
The \(E\mathcal{P}_+\)-part reduces, along the orbits \(G/H_+\) with \(H\) proper, to showing that \(\mathrm{Res}^G_HM\) is \(\mathcal{F}\vert_H\)-complete, which is where we want to apply the inductive hypothesis; for that we need to know that \(\mathrm{Res}^G_HM\) is \(I_\gg(\mathcal{F}\vert_H)\)-complete.
Restriction commutes with completions tensored up from \(\Mod_{R(\gg)^H}(\Sp)\), so we know it is complete for the extended ideal \(J=\mathrm{res}^G_H(I_\gg(\mathcal{F}))\pi_0R(\gg)^H\), and comparing \(J\) with \(I_\gg(\mathcal{F}\vert_H)\) is where our prove differs significantly:
We do not compare the two ideals directly.

Instead, we introduce the {\it character stack} \(\gg\{\bb G\}\), whose global sections agree with \(R(\gg)^G\).
Since \cref{lem:factors_over_character_stack} tells us that the functor \(\Mod_{R(\gg)^H}(\Sp)\to \Mod_{R(\gg)_H}(\Sp_H)\) factors over \(\QCoh(\gg\{\bb H\})\), it suffices to compare the two recollements after base change along the affinization map to the character stack.
As we show in \cref{ssec:support}, the points in \(\gg\{\bb G\}\) behave nicely under change of groups.
In particular, both \(V(J)\) and \(V(I_\gg(\mathcal{F}\vert_H))\) have the same preimage in \(|\gg\{\bb H\}|\), so the two completions of \(\mathrm{Res}^G_HM\) agree.

Another consequence of our study of the character stacks is a close connection between the heights of \(\gg\) and the subgroups of \(G\) on which a point of \(\gg\{\bb G\}\) is supported.
As a corollary, we obtain theorem B.
 
Theorem C is then deduced from theorem A by descent along an LNG-presentation.
The point is that the formation of \(V(I_\gg(\mathcal{F}))\) commutes with flat basechange between noetherian affines (\cref{lem:VIF_flat_basechange}), so that the closed subsets glue to a closed subset of \(|\gg(\bb G)|\) whose complementary open immersion is locally quasi-compact, and the resulting recollement restricts on each chart to the affine one.
 
\subsection{Outline}
 
In \cref{sec:recollements} we collect what we need about recollements.
After recalling the basic notions we introduce, in \cref{ssec:linear_recollements}, the notion of a \(C\)-linear recollement of a \(C\)-module \(X\): one whose local objects are closed under both tensors and cotensors by objects of \(C\).
\Cref{prop:ConsequencesOfLinearRecollement} identifies these with cofibre sequences in \(\Mod_C(\PrLst)\) whose functors are internal left adjoints, from which stability under basechange and under tensoring follows without much trouble.
The remaining subsections treat the four sources of recollements we use: idempotent \(\mathbb{E}_0\)-algebras in \cref{ssec:idempotent}, finitely generated ideals in \cref{ssec:ideal_recollement}, families of subgroups in \cref{ssec:family_recollement}, and open immersions of stacks in \cref{ssec:open_immersions}.
A reader who is willing to grant that these all give \(C\)-linear recollements may safely skip ahead.
 
\Cref{sec:tempered} contains the main results.
In \cref{ssec:support} we develop the theory of support for points in the character stacks, most notably that the support consists of a single conjugacy class of an abelian subgroup (\cref{prop:temperedSupportsAreConjugate}).
In \cref{ssec:family_completion_affine} we prove theorem A (\cref{thm:betterTemperedFamilyCompletion}) and deduce theorem B (\cref{prop:rightKanExtension}) from it.
In \cref{ssec:LNG} we turn to general base stacks: we introduce LNG-stacks, assemble the genuine \(G\)-equivariant algebra \(\O_M(\gg)_G\) out of the structure sheaf of \cite{BalderramaDaviesLinskens_2026_AmbidextrousGlobalSpectra}, show that \(V_\gg(\mathcal{F})\) is closed, and prove theorem C (\cref{thm:family_completion_LNG_version}).
 
Two appendices collect material which would otherwise interrupt the exposition.
\Cref{sec:stacks} gathers what we need about the stacks of \cite{BalderramaDaviesLinskens_2025_AffinenessReconstructionComplexperiodic}: underlying locales and topological spaces, local and finitary properties of morphisms, and open immersions, including the existence of the open substacks used to define \(U_\gg(\mathcal{F})\).
\Cref{sec:lifting_monos} is independent of the rest of the paper and gives a criterion for lifting a diagram across a monomorphism in an operadic setting: for a fibration of \(\infty\)-operads and a monomorphism in a fibre, the space of lifts is empty or contractible, and it is nonempty precisely when the evident factorization condition holds.
We use this in \cref{prop:ConsequencesOfLinearRecollement} to produce \(C\)-linear structures on adjoints appearing in \(C\)-linear recollements.
 
\subsection{Conventions}
 
We work throughout in the language of \(\infty\)-categories, the standard references being \cite{Lurie_HTT} and \cite{Lurie_HA}, whose content we use freely; all algebras, modules, and monoidal structures are understood in this sense, and we drop the prefix `\(\infty\)-' from `category' where no confusion can arise.
For stacks and quasi-coherent sheaves we follow the conventions of \cite{BalderramaDaviesLinskens_2025_AffinenessReconstructionComplexperiodic}; in particular \(|X|\) denotes the underlying topological space of a stack \(X\).
For tempered cohomology we follow \cite{Lurie_2019_EllipticCohomologyIII} and \cite{BalderramaDaviesLinskens_2026_AmbidextrousGlobalSpectra}, and for global spectra \cite{GepnerLinskensPol_2024_Global2ringsGenuine}.
In particular, \(\fin\) and \(\fab\) denote the global families of finite and finite abelian groups.
Families of subgroups are always assumed to be closed under subgroups and conjugation, and for a subgroup \(H\subset G\) we write \(\mathcal{F}\vert_H\) for the family of those subgroups of \(H\) which lie in \(\mathcal{F}\).
 
\subsection*{Acknowledgements}
Over the course of this project, I enjoyed various clarifying and enlightening conversations.
Thus, I want to thank Jack Davies, Daniel Garrido, Alexander Ivanov, Gerd Laures, Georg Lehner, Sil Linskens, Maxime Ramzi, Björn Schuster, and Stefan Schwede, to name a few.

As always, I'm also thankful for the algebraic topology community in general.

\section{Prerequisites about recollements}\label{sec:recollements}
The formulation we chose for the family completion theorems is as an equivalence of recollements.
In this section we will introduce some machinery to handle (linear) recollements, and discuss various ways to obtain recollements for the kind of categories we are interested in.

\begin{notation}
Let \(X\) be a stable category.
Following the conventions in \cite{Rezk__WHATARERECOLLEMENTS}, we say a recollement \(\mathcal{R}\) of \(X\) is diagram of categories
\[\begin{tikzcd}
	U & X & Z
	\arrow["{i_*}"{description}, hook, from=1-1, to=1-2]
	\arrow["{j^*}"{description}, two heads, from=1-2, to=1-3]
\end{tikzcd}\]
such that
\begin{enumerate}
  \item \(i_*\) is fully faithful, and both left and right adjoint,
  \item \(j^*\) is a localization, and both left and right adjoint, and
  \item the essential image of \(i_*\) agrees with the kernel of \(j^*\), that is the objects \(x\in X\) such that \(j^*x\to j^*0\) is an equivalence.
\end{enumerate}

  Consider the resulting diagram
\[\begin{tikzcd}
	U & X & Z
	\arrow[""{name=0, anchor=center, inner sep=0}, "{i_*}"{description}, hook, from=1-1, to=1-2]
	\arrow[""{name=1, anchor=center, inner sep=0}, "{i^!}"{description}, shift left, curve={height=-12pt}, two heads, from=1-2, to=1-1]
	\arrow[""{name=2, anchor=center, inner sep=0}, "{i^\#}"{description}, shift right, curve={height=12pt}, two heads, from=1-2, to=1-1]
	\arrow[""{name=3, anchor=center, inner sep=0}, "{j^*}"{description}, two heads, from=1-2, to=1-3]
	\arrow[""{name=4, anchor=center, inner sep=0}, "{j_!}"{description}, shift left, curve={height=-12pt}, hook, from=1-3, to=1-2]
	\arrow[""{name=5, anchor=center, inner sep=0}, "{j_\#}"{description}, shift right, curve={height=12pt}, hook, from=1-3, to=1-2]
	\arrow["\dashv"{anchor=center, rotate=-90}, draw=none, from=0, to=1]
	\arrow["\dashv"{anchor=center, rotate=-90}, draw=none, from=3, to=4]
	\arrow["\dashv"{anchor=center, rotate=-90}, draw=none, from=2, to=0]
	\arrow["\dashv"{anchor=center, rotate=-90}, draw=none, from=5, to=3]
\end{tikzcd}\]
  We will denote the essential images of \(j_\#\), \(i_*\), and \(j_!\) by \(Nil(\mathcal{R})\), \(Loc(\mathcal{R})\), and \(Cpl(\mathcal{R})\) and call their objects \(\mathcal{R}\)-nilpotent, \(\mathcal{R}\)-local, and \(\mathcal{R}\)-complete.
\end{notation}
Let us recall some basic properties of these objects:
\begin{proposition}[{\cite[prop.~1.2]{Rezk__WHATARERECOLLEMENTS}}]\label{prop:basic_recollement_omnibus}\,
  \begin{enumerate}
    \item \(U\) and \(Z\) are stable categories.
    \item The essential image of \(j_\#\) agrees with the kernel of \(i^\#\).
    \item The essential image of \(j_!\) agrees with the kernel of \(i^!\).
    \item All three horizontal sequences are Verdier sequences.
    \item We have equivalences
    \[Loc(\mathcal{R})=Nil(\mathcal{R})^\perp,\,Loc(\mathcal{R})=\,\!^\perp Cpl(\mathcal{R}),\,Nil(\mathcal{R})=\,\!^\perp Loc(\mathcal{R}),\,Cpl(\mathcal{R})=Loc(\mathcal{R})^\perp\]
    of subcategories of \(X\).
  \end{enumerate}
\end{proposition}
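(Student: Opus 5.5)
Throughout, write \(i^\#\dashv i_*\dashv i^!\) and \(j_\#\dashv j^*\dashv j_!\), with \(i_*\) fully faithful and \(j^*\) a localization. The plan is to derive everything from the two fully faithful embeddings \(i_*\) and \(j_!\) (or \(j_\#\)), together with the fact that adjoints of exact functors between stable categories are exact. The central intermediate claim is that every \(x\in X\) sits in natural cofibre sequences
\[j_\#j^*x\to x\to i_*i^\# x,\qquad i_*i^!x\to x\to j_!j^*x.\]
Every other item will be read off from these two sequences.

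\emph{Stability of \(U\) and \(Z\), and the kernels of \(i^\#\) and \(i^!\).} \(U\) is equivalent to \(\ker j^*\), which is a full subcategory of \(X\) closed under finite limits, finite colimits and shifts, since \(j^*\) is exact (it is both a left and a right adjoint). Hence \(U\) is stable. Similarly \(Z\) is equivalent to the essential image of \(j_!\), which is closed under finite limits because \(j_!\) preserves them. It is also closed under shifts, because \(j^*\) commutes with \(\Sigma\) and \(j^*j_!\simeq\id\). A full subcategory closed under finite limits and both shifts is stable, so \(Z\) is stable.

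For item 2: since \(i_*\) is fully faithful, \(i^\#i_*\simeq\id\). If \(j^*x\simeq 0\), write \(x\simeq i_*u\). Then
\[\Map(j_\#z,x)\simeq\Map(z,j^*x)=0,\]
so \(x\) lies in \((\im j_\#)^\perp\), and in particular \(j^*i_*=0\). Consequently \(\Map(i^\#j_\#z,u)\simeq\Map(j_\#z,i_*u)=0\) for all \(u\), so \(i^\#j_\#=0\).

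To get the sequence \(j_\#j^*x\to x\to i_*i^\#x\), take \(c\) to be the cofibre of the counit \(j_\#j^*x\to x\). Applying \(j^*\) and using \(j^*j_\#\simeq\id\) gives \(j^*c=0\), so \(c\simeq i_*u\) for some \(u\). Applying \(i^\#\) and using \(i^\#j_\#=0\) gives \(i^\#x\simeq i^\#c\simeq u\), which identifies the cofibre with \(i_*i^\#x\). It follows that \(i^\#x=0\) if and only if \(x\simeq j_\#j^*x\), which is item 2. The dual argument with \(j_!\) and \(i^!\) produces the second sequence and proves item 3.

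\emph{Verdier sequences.} Each horizontal sequence consists of a fully faithful functor followed by a functor exhibiting its target as the quotient by the image of the first. For \(U\xrightarrow{i_*}X\xrightarrow{j^*}Z\) this holds because \(j^*\) is a localization with kernel exactly \(\im i_*\). For the sequences built from \(j_\#\) and \(i^\#\), respectively \(j_!\) and \(i^!\), one uses items 2 and 3 together with the fact that \(i^\#\) and \(i^!\) are localizations, since \(i_*\) is fully faithful. Both sequences are split, by the other adjoint, so in each case the quotient is computed as the Bousfield localization.

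\emph{Orthogonality.} Item 5 follows from the cofibre sequences. For example, \(x\in Nil^\perp\) means \(\Map(j_\#z,x)\simeq\Map(z,j^*x)=0\) for all \(z\), which is equivalent to \(j^*x=0\), i.e.\ \(x\in Loc\). In the other direction, \(x\in{}^\perp Loc\) forces \(\Map(x,i_*i^\#x)=0\). The identity of \(i^\#x\) corresponds to the unit \(x\to i_*i^\#x\), so this map is zero, hence \(i^\#x=0\), and by item 2 \(x\in Nil\). The remaining two equalities are handled symmetrically.

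The main obstacle is the bookkeeping needed to identify the cofibres canonically with \(i_*i^\#x\) and \(i_*i^!x\), i.e.\ checking that the maps are the units and counits rather than merely abstractly equivalent objects. Everything else is formal.
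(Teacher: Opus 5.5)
\textbf{Comparison with the paper.} The paper gives no argument for this statement; it only cites Rezk. So the question is whether your self-contained proof works.

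\textbf{What works.} Your skeleton is the standard one and is fine: establish the natural cofibre sequences \(j_\#j^*x\to x\to i_*i^\#x\) and \(i_*i^!x\to x\to j_!j^*x\), then read off items 2--5. Deriving the two sequences uses only that \(X\) and \(U\) are stable and that \(j^*\), being a two-sided adjoint, preserves finite limits and colimits.

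\textbf{The gap: stability of \(Z\) in item 1.} Closure of \(\im j_!\) under \(\Omega\) is fine. Closure under \(\Sigma\), however, does not follow from ``\(j^*\) commutes with \(\Sigma\) and \(j^*j_!\simeq\id\)''.
\begin{itemize}
\item Those two facts only give \(j_!j^*\Sigma j_!z\simeq j_!\Sigma_Z z\).
\item To conclude \(\Sigma j_!z\in\im j_!\) you need the unit \(\Sigma j_!z\to j_!\Sigma_Z z\) to be an equivalence. That is, you need the right adjoint \(j_!\) to commute with the colimit \(\Sigma\), which is exactly what is at stake.
\item Switching to \(j_\#\), whose image is closed under \(\Sigma\), proves closure of a different subcategory. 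Transporting it along \(j_!j^*\) meets the same problem.
\end{itemize}
This is not cosmetic. Item 4 presupposes that \(Z\) is stable. Your identification of the maps inverted by \(j^*\) with those whose cofibre lies in \(\ker j^*\) also uses stability of \(Z\).

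\textbf{The repair is to reorder.} Prove items 2 and 3 first; they do not use stability of \(Z\). Then either of the following finishes item 1.
\begin{itemize}
\item \(i^!\) is a right adjoint between the stable categories \(X\) and \(U\), hence exact. So \(i^!\Sigma j_!z\simeq\Sigma i^!j_!z\simeq 0\), and item 3 gives \(\Sigma j_!z\in\im j_!\).
\item Alternatively, use the second cofibre sequence to show \(\im j_!=(\im i_*)^\perp\). Then observe that \(\Map(i_*u,\Sigma j_!z)\simeq\Map(\Omega i_*u,j_!z)\simeq\Map(j^*\Omega i_*u,z)\simeq *\), because \(\ker j^*\) is closed under \(\Omega\) and \(j^*0\) is initial.
\end{itemize}
Once \(Z\) is known to be stable, the rest of your argument, including the Verdier-sequence and orthogonality claims, goes through as written.
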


\begin{definition}
  We say that two recollements \(\mathcal{R}\) and \(\mathcal{R}'\) of \(X\) are equivalent, and write \(\mathcal{R}\simeq \mathcal{R}'\), if any (and thus all) of the classes of nilpotent, local, or complete objects agree.
\end{definition}

\subsection{Presentable linear recollements}\label{ssec:linear_recollements}
Let us fix a presentably symmetric monoidal stable category \(C\in\CAlg(\PrLst)\).
Given a \(C\)-module \(X\), we are interested in an appropriate notion of \(C\)-linear recollements of \(X\).
On the one hand, it should be a property of a recollement of \(X\) to be \(C\)-linear, on the other hand we would like these to have a good interaction with tensor products over \(C\).

\begin{definition}\label[definition]{def:CLinearRecollement}
  Let \(C\) be a presentably stable symmetric monoidal category, and let \(X\) be a \(C\)-module in \(\PrLst\).
  We call a recollement
\[\begin{tikzcd}
	U & X & Z
	\arrow[""{name=0, anchor=center, inner sep=0}, "{i_*}"{description}, hook, from=1-1, to=1-2]
	\arrow[""{name=1, anchor=center, inner sep=0}, "{i^!}"{description}, shift left, curve={height=-12pt}, two heads, from=1-2, to=1-1]
	\arrow[""{name=2, anchor=center, inner sep=0}, "{i^\#}"{description}, shift right, curve={height=12pt}, two heads, from=1-2, to=1-1]
	\arrow[""{name=3, anchor=center, inner sep=0}, "{j^*}"{description}, two heads, from=1-2, to=1-3]
	\arrow[""{name=4, anchor=center, inner sep=0}, "{j_!}"{description}, shift left, curve={height=-12pt}, hook, from=1-3, to=1-2]
	\arrow[""{name=5, anchor=center, inner sep=0}, "{j_\#}"{description}, shift right, curve={height=12pt}, hook, from=1-3, to=1-2]
	\arrow["\dashv"{anchor=center, rotate=-90}, draw=none, from=0, to=1]
	\arrow["\dashv"{anchor=center, rotate=-90}, draw=none, from=3, to=4]
	\arrow["\dashv"{anchor=center, rotate=-90}, draw=none, from=2, to=0]
	\arrow["\dashv"{anchor=center, rotate=-90}, draw=none, from=5, to=3]
\end{tikzcd}\]
  of the underlying stable category of \(X\) {\it \(C\)-linear} if the essential image of \(i_*\) is closed both under tensors and cotensors by objects in \(C\).
\end{definition}
\begin{remark}
  While the requirement that the essential image of \(i_*\) be closed under tensors seems to us like a rather natural condition, being closed under powers is of a different nature.
  
  One point of justification is that it follows from closure under tensors in the case that \(C\) is generated under colimits by dualizables.
  This also shows that any presentable stable recollement is automatically \(\Sp\)-linear.

  Another point is that, under the assumption that the essential image of \(i_*\) is closed under tensors, being closed under powers is equivalent to the left adjoint \(i^\#\) admitting a \(C\)-linear structure.
\end{remark}
\begin{proposition}\label[proposition]{prop:ConsequencesOfLinearRecollement}
  In the situation of \cref{def:CLinearRecollement}, there is a unique lift of
  \[Z\xrightarrow{j_\#}X\xrightarrow{i^\#}U\]
  to a co-fiber sequence in \(\Mod_C(\PrLst)\), and the lifts of \(j_\#\) and \(i^\#\) are internal left adjoints.
  
  On the other hand, any such co-fiber sequence has an underlying \(C\)-linear recollement.
\end{proposition}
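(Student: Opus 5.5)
We first make \(Z\), via \(j_\#\), and \(U\), via \(i^\#\), into \(C\)-modules in \(\PrLst\), and check that \(j_\#\) and \(i^\#\) are \(C\)-linear.

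Write \(Nil\) for the essential image of \(j_\#\) and \(Loc\) for that of \(i_*\). Since \(Loc\) is closed under tensors by \(C\), so is \(Nil={}^\perp Loc\) (\cref{prop:basic_recollement_omnibus}). Indeed, for \(n\in Nil\), \(c\in C\) and \(\ell\in Loc\),
\[\Map(c\otimes n,\ell)\simeq\Map(n,\underline{\hom}(c,\ell)),\]
where \(\underline{\hom}(c,-)\) denotes the cotensor. This vanishes because \(Loc\) is closed under cotensors. This is the one place where closure under cotensors is used for the left part.

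Hence \(Nil\subset X\) is a full subcategory closed under colimits and under the \(C\)-action. It therefore inherits a \(C\)-module structure, and the inclusion is a \(C\)-linear colimit-preserving functor. One can package this by applying \cref{sec:lifting_monos} to the fibration \(\Mod(\PrLst)\to\CAlg(\PrLst)\) (operadically, \(\mathrm{LM}^\otimes\)) and the monomorphism \(Nil\hookrightarrow X\) in \(\PrLst\). The factorization condition is exactly closure under the action, and the lemma shows the space of lifts is contractible, which gives uniqueness.

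For \(U\) we use that \(i^\#\) is a localization whose kernel \(Nil\) is a \(C\)-submodule. The localization is therefore compatible with the action (a \(C\)-linear Bousfield localization), so \(U\simeq X/Nil\) computed in \(\Mod_C(\PrLst)\) acquires a unique \(C\)-structure making \(i^\#\) linear. Since the forgetful functor \(\Mod_C(\PrLst)\to\PrLst\) preserves colimits, this sequence is a cofiber sequence in \(\Mod_C(\PrLst)\). Uniqueness of the lift follows because the \(C\)-structure on a full subcategory, and on a localization with fixed kernel, is determined by the lifting criterion.

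Next, the right adjoints \(j^*\) and \(i_*\) must be \(C\)-linear, i.e.\ the lax structures on the right adjoints must be strict. For \(i_*\) this is closure of \(Loc\) under tensors. For \(j^*\), the map \(c\otimes j^*x\to j^*(c\otimes x)\) is an equivalence because \(j^*\) kills exactly \(Loc\), and \(Loc\) is tensor-closed, so \(j_\#j^*\) is a \(C\)-linear colocalization. Both right adjoints preserve colimits since they themselves have right adjoints. Internal left adjoints in \(\Mod_C(\PrLst)\) are precisely the linear functors whose right adjoint is colimit preserving and strictly linear, so \(j_\#\) and \(i^\#\) are internal left adjoints.

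For the converse, start from a cofiber sequence \(Z\to X\to U\) in \(\Mod_C(\PrLst)\) with internal left adjoint functors. The underlying sequence in \(\PrLst\) is a cofiber sequence with a fully faithful first map, and the extra adjoints exist, so it is a recollement in the usual stable sense (Verdier sequence with both adjoints). Its local objects form the image of the right adjoint \(i_*\). This image is closed under tensors because \(i_*\) is strictly \(C\)-linear. It is closed under cotensors because \(i_*\) is linear with linear left adjoint \(i^\#\), so \(\underline{\hom}(c,i_*u)\simeq i_*\underline{\hom}(c,u)\) follows by adjunction.

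The main obstacle is the coherence in the first part: producing the \(C\)-module structures and linear structures as genuine \(\infty\)-categorical data rather than objectwise equivalences. We address this with the lifting criterion of \cref{sec:lifting_monos}, whose factorization check reduces to the closure properties verified above.
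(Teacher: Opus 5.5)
Your proposal is correct and follows the paper's proof essentially step for step. The nilpotents are tensor-closed because the locals are cotensor-closed, the lifting lemma of \cref{sec:lifting_monos} gives the unique \(C\)-linear lift of \(j_\#\), \(U\) is obtained as the cofiber in \(\Mod_C(\PrLst)\), and tensor-closure of the locals makes the structures on \(i_*\) and \(j^*\) strictly linear, so \(j_\#\) and \(i^\#\) are internal left adjoints. The one step you skip, which the paper does first, is checking that \(U\) and \(Z\) are presentable (the endofunctors \(i_*i^\#\) and \(j_\#j^*\) are accessible, so \cite[prop.~5.5.4.2]{Lurie_HTT} applies) and that the underlying sequence is already a (co)fiber sequence in \(\PrLst\); ``closed under colimits'' alone does not put \(Nil\) in \(\PrLst\).
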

\begin{proof}
  The underlying category of \(X\) is, by assumption, presentable.
  Moreover, the localization endofunctors \(i_*\circ i^\#\) and \(j_\#\circ j^*\) are left adjoints, so accessible, so by  \cite[prop.~5.5.4.2(3)]{Lurie_HTT} both \(U\) and \(Z\) are accessible.
  Clearly they also admit small colimits, so \(U\) and \(Z\) are presentable.

  This gives us a unique lift to \(\PrLst\), and there it is a (co)fiber-sequence by \cite[prop.~A.20]{Ramzi_2024_DualizablePresentable$infty$categories}.

  Since the local objects are closed under cotensors, the nilpotent objects are closed under tensors.
  Since \(j_\#\) is fully faithful, \cref{lem:lifting_monos_into_modules} gives us a unique lift of \(j_\#:Z\to X\) to \(\Mod_C(\PrLst)\) compatible with the \(C\)-module structure on \(X\).
  Then, as the cofiber in \(\PrLst\) of the \(C\)-linear map \(j_\#\), also \(i^\#\) obtains a unique lift to \(\Mod_C(\PrLst)\) compatible with the \(C\)-module structure on \(X\).
  
  Since the forgetful functor \(\Mod_C(\PrLst)\to \PrLst\) is conservative and both left and right adjoint, we see that the resulting sequence is a co-fiber sequence in \(\Mod_C(\PrLst)\).

  Finally, since the local objects are closed under tensors, we find that the projection maps
  \[c\otimes i_*u\to i_* c\otimes u\text{ and }c\otimes j^*x\to j^*c\otimes x\]
  are equivalences, so \(i^\#\) and \(j_\#\) are internal left adjoints in \(\Mod_C(\PrLst)\).

  In the other direction, given a co-fiber sequence in \(\Mod_C(\PrLst)\) with internal left adjoint functors, one easily deduces that the underlying diagram of categories is (the left-most adjoint part of) a \(C\)-linear recollement.
\end{proof}

This allows us to define notions of basechanges and relative tensors of \(C\)-linear recollements:

\begin{lemma}\label[lemma]{lem:CLinearRecollementBasechange}
  Let \(f:C\to D\) be a map in \(\CAlg(\PrLst)\), and let
  \[\mathcal{R}:U\xrightarrow{i_*} X\xrightarrow{j^*}Z\]
  be a sequence in \(\Mod_C(\PrLst)\) that defines a \(C\)-linear recollement.
  Then
  \[f_*\mathcal{R}:D\otimes_C U\xrightarrow{D\otimes_C i_*}D\otimes_C X\xrightarrow{D\otimes_Cj^*}D\otimes_C Z\]
  defines a \(D\)-linear recollement.
\end{lemma}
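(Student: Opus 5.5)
The plan is to use the characterization in \cref{prop:ConsequencesOfLinearRecollement}. By that proposition, the \(C\)-linear recollement \(\mathcal{R}\) is the same as a cofibre sequence
\[Z\xrightarrow{j_\#}X\xrightarrow{i^\#}U\]
in \(\Mod_C(\PrLst)\) whose functors are internal left adjoints. Note that \(i_*\) and \(j^*\) are the \(C\)-linear right adjoints of \(i^\#\) and \(j_\#\). Conversely, a cofibre sequence in \(\Mod_D(\PrLst)\) whose functors are internal left adjoints underlies a \(D\)-linear recollement. So it suffices to show two things:
\begin{enumerate}
\item applying \(D\otimes_C(-)\) yields a cofibre sequence in \(\Mod_D(\PrLst)\) with internal left adjoint functors;
\item the resulting right adjoints are \(D\otimes_C i_*\) and \(D\otimes_C j^*\).
\end{enumerate}

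First, \(D\otimes_C(-):\Mod_C(\PrLst)\to\Mod_D(\PrLst)\) is a left adjoint, with right adjoint the restriction \(f^*\). Hence it preserves colimits, and in particular cofibre sequences. Here \(\Mod_C(\PrLst)\) is stable-like enough: it is pointed, with zero object the zero category, and cofibres are pushouts along the map to \(0\). So \(D\otimes_C Z\to D\otimes_C X\to D\otimes_C U\) is again a cofibre sequence.

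Second, \(D\otimes_C(-)\) is a functor of \((\infty,2)\)-categories: it is the base change functor \(\Mod_C(\PrLst)\to\Mod_D(\PrLst)\), which is compatible with the enrichment over \(\PrLst\). It therefore carries adjunctions internal to \(\Mod_C(\PrLst)\) to adjunctions internal to \(\Mod_D(\PrLst)\). Applied to the internal adjunctions \(i^\#\dashv i_*\) and \(j_\#\dashv j^*\), this shows that \(D\otimes_C i^\#\) and \(D\otimes_C j_\#\) are internal left adjoints, with right adjoints \(D\otimes_C i_*\) and \(D\otimes_C j^*\). The same argument applied to the adjunctions \(i_*\dashv i^!\) and \(j^*\dashv j_!\) shows these extend as well, but this is not needed. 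The converse half of \cref{prop:ConsequencesOfLinearRecollement} then gives the desired \(D\)-linear recollement. Its right-adjoint part is \(D\otimes_C U\xrightarrow{D\otimes_C i_*}D\otimes_C X\xrightarrow{D\otimes_C j^*}D\otimes_C Z\), which is \(f_*\mathcal{R}\).

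The main point needing care is this 2-functoriality of base change, i.e.\ that \(D\otimes_C(-)\) preserves internal adjunctions. I would argue it as follows. An internal left adjoint \(L:A\to B\) in \(\Mod_C(\PrLst)\) is one whose right adjoint \(R\) is colimit-preserving and \(C\)-linear, with the projection map an equivalence. The unit and counit are then \(C\)-linear natural transformations, i.e.\ 2-morphisms in \(\Mod_C(\PrLst)\), and the triangle identities are equalities of 2-morphisms. Base change acts functorially on mapping categories \(\Fun^L_C(A,B)\to\Fun^L_D(D\otimes_C A,D\otimes_C B)\), via the relative tensor product as a \(\PrLst\)-enriched functor. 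Hence it preserves units, counits and the triangle identities. If one prefers to avoid \((\infty,2)\)-language, one can instead check directly on the bar construction \(D\otimes_C X=|D\otimes C^{\otimes\bullet}\otimes X|\), computed in \(\PrLst\). Each level receives a levelwise internal left adjoint, and these are compatible with the simplicial structure maps because all the projection formulas hold. Geometric realizations in \(\PrLst\) are limits along right adjoints, so the right adjoints assemble levelwise.
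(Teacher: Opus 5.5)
Your first two steps match the paper: base change carries the internal adjunctions \(i^\#\dashv i_*\) and \(j_\#\dashv j^*\) to adjunctions \(D\otimes_C i^\#\dashv D\otimes_C i_*\) and \(D\otimes_C j_\#\dashv D\otimes_C j^*\), and it preserves cofibre sequences. The gap is the final appeal to the converse half of \cref{prop:ConsequencesOfLinearRecollement}. You define a cofibre sequence as just a pushout along the map to \(0\), and with that reading the converse is false. Take any nonzero \(D\)-module \(Z\). The sequence \(Z\to 0\to 0\) is a cofibre sequence in \(\Mod_D(\PrLst)\), because the pushout of \(0\leftarrow Z\to 0\) is computed as the pullback \(0\times_Z 0\simeq 0\) of right adjoints. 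Both maps are internal left adjoints. Yet the would-be \(j^*\colon 0\to Z\) is not a localization. In general, a cofibre sequence \(Z'\xrightarrow{a}X'\xrightarrow{b}U'\) in \(\PrLst\) only tells you that \(b^R\) identifies \(U'\) with \(\ker(a^R)\). It says nothing about \(a\) being fully faithful, and that is exactly the missing input. The paper's ``co-fiber sequences'' are sequences that are both fibre and cofibre sequences, which is why its proof of this lemma checks both. Since \(D\otimes_C(-)\) is only a left adjoint, preservation of colimits cannot give you full faithfulness of \(D\otimes_C j_\#\).

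The paper supplies this by citing that tensoring preserves localizations, so \(D\otimes_C j^*\) and \(D\otimes_C i^\#\) are localizations. It then checks the fibre-sequence property by passing to right adjoints via \((\PrLst)^{\op}\simeq\mathrm{Pr}^R_{\mathrm{st}}\). In your framework the repair is one line, using the 2-functoriality you already invoke. The unit \(\id_Z\Rightarrow j^*j_\#\) is an invertible 2-morphism in \(\Mod_C(\PrLst)\), and \(D\otimes_C(-)\) carries it to the unit of \(D\otimes_C j_\#\dashv D\otimes_C j^*\), which is therefore invertible. Hence \(D\otimes_C j_\#\) is fully faithful and \(D\otimes_C j^*\) is a localization. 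Combined with \(D\otimes_C U\simeq\ker(D\otimes_C j^*)\) from the cofibre property, this gives the recollement. Separately, drop your aside about \(i_*\dashv i^!\) and \(j^*\dashv j_!\). These are not internal adjunctions in general, because \(i^!\) and \(j_!\) need not preserve colimits: for \(\mathcal{R}(I)\), \(j_!j^*\) is \(I\)-adic completion.
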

\begin{proof}
  By \cref{prop:ConsequencesOfLinearRecollement} we know that \(i^\#\) and \(j_\#\) are internal left adjoints in \(Mod_C(Pr^L)\).
  This equips us with adjunctions
  \[D\otimes_C i^\#\dashv D\otimes_C i_*\text{ and }D\otimes_C j_\#\dashv D\otimes_C j^*.\]
  Since tensoring preserves localizations by \cite[cor.~1.46]{Ramzi_2024_DualizablePresentable$infty$categories}, we have that \(D\otimes_C i^\#\) and \(D\otimes_C j^*\) are localizations, so their adjoints are fully faithful.
  
  It remains to show that
  \[D\otimes_C U\xrightarrow{D\otimes_C i_*}D\otimes_C X\xrightarrow{D\otimes_Cj^*}D\otimes_C Z\]
  is a co-fiber sequence in \(\PrLst\).

  Since, by \cref{prop:ConsequencesOfLinearRecollement}, \(U\to X\to Z\) is a cofiber sequence in \(\Mod_C(\PrLst)\) , and both \(D\otimes_C\) and forgetting the \(D\)-module structure preserve colimits, we find it is a cofiber sequence.

  The same argument also shows that
  \[D\otimes_C Z\xrightarrow{D\otimes j_\#}D\otimes_C X\xrightarrow{D\otimes_C i^\#}D\otimes_CU\]
  is a cofiber sequence in \(\PrLst\).
  By the equivalence \((\PrLst)^{op}\simeq \mathrm{Pr}^R_{\mathrm{st}}\) this shows that the right adjoint sequence is a fiber sequence, and we are done.
\end{proof}

\begin{lemma}\label[lemma]{lem:CLinearRecollementTensor}
  Let \(C\in \CAlg(\PrLst)\), let \(Y\) be a \(C\)-module, and let
  and let
  \[\mathcal{R}:U\xrightarrow{i_*} X\xrightarrow{j^*}Z\]
  be a sequence in \(\Mod_C(\PrLst)\) that defines an \(C\)-linear recollement.
  Then
  \[Y\otimes_C\mathcal{R}:Y\otimes_C U\xrightarrow{Y\otimes_C i_*}Y\otimes_C X\xrightarrow{Y\otimes_Cj^*}Y\otimes_C Z\]
  defines a \(C\)-linear recollement.
\end{lemma}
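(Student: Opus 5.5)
The plan is to copy the proof of \cref{lem:CLinearRecollementBasechange}, replacing the functor \(D\otimes_C-\) by \(Y\otimes_C-\colon \Mod_C(\PrLst)\to\Mod_C(\PrLst)\). The key fact is that \(Y\otimes_C-\) is a functor of \((\infty,2)\)-categories: it is the \(C\)-linear tensor product with a fixed object, so it preserves adjunctions between \(C\)-linear functors (internal adjunctions) and commutes with colimits in each variable.

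First, by \cref{prop:ConsequencesOfLinearRecollement}, the sequences \(Z\xrightarrow{j_\#}X\xrightarrow{i^\#}U\) and \(U\xrightarrow{i_*}X\xrightarrow{j^*}Z\) are sequences in \(\Mod_C(\PrLst)\) in which \(i^\#\) and \(j_\#\) are internal left adjoints, with \(C\)-linear right adjoints \(i_*\) and \(j^*\). Applying \(Y\otimes_C-\) gives adjunctions
\[Y\otimes_C i^\#\dashv Y\otimes_C i_*\quad\text{and}\quad Y\otimes_C j_\#\dashv Y\otimes_C j^*\]
in \(\Mod_C(\PrLst)\), and these are again internal.

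Next I will show that \(Y\otimes_C i^\#\) and \(Y\otimes_C j^*\) are localizations, so that their right adjoints are fully faithful. A localization is exactly a functor whose right adjoint has invertible counit. This property is expressed purely $2$-categorically, so \(Y\otimes_C-\) preserves it. Alternatively one can cite \cite[cor.~1.46]{Ramzi_2024_DualizablePresentable$infty$categories}, since \(Y\otimes_C-\) is the relative tensor product \(Y\otimes_C(-)\) over \(C\), computed as a bar construction of tensor products in \(\PrLst\), each of which preserves localizations.

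Then I will handle exactness. Both \(U\to X\to Z\) and \(Z\to X\to U\) are cofibre sequences in \(\Mod_C(\PrLst)\). The functor \(Y\otimes_C-\) preserves colimits, and the forgetful functor to \(\PrLst\) does as well, so both tensored sequences are cofibre sequences in \(\PrLst\). Passing to right adjoints via \((\PrLst)^\op\simeq\mathrm{Pr}^R_\st\) turns the second one into a fibre sequence. Together with full faithfulness, this yields a recollement of \(Y\otimes_C X\) in which all functors are \(C\)-linear and the left adjoints are internal. The converse half of \cref{prop:ConsequencesOfLinearRecollement} then gives \(C\)-linearity.

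The main obstacle is the localization step. It needs either the $2$-functoriality of \(Y\otimes_C-\) on \(\Mod_C(\PrLst)\), or the reduction of the relative tensor product to ordinary tensors in \(\PrLst\) as a geometric realization. For the latter one must also check that a colimit of localizations is again a localization, which holds because localizations are the epimorphisms in \(\PrLst\). I would try the $2$-functoriality argument first, since unit and counit equivalences are preserved by any $2$-functor.
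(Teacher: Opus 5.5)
Your proposal is correct and is the paper's argument: the paper proves this lemma only by saying it is the same as \cref{lem:CLinearRecollementBasechange}, and you run that proof with \(Y\otimes_C-\) in place of \(D\otimes_C-\), using the internal adjoints from \cref{prop:ConsequencesOfLinearRecollement}, the preservation of localizations, and the cofibre sequences in \(\PrLst\) passed to right adjoints. One small precision for your preferred $2$-functorial route: for \(j^*\), apply it to the internal adjunction \(j_\#\dashv j^*\) with its invertible unit, since the right adjoint \(j_!\) need not be a morphism of \(\Mod_C(\PrLst)\); your fallback via epimorphisms, or the paper's citation that tensoring preserves localizations, covers \(j^*\) directly.
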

\begin{proof}
  The proof is essentially the same as the one of \cref{lem:CLinearRecollementBasechange}.
\end{proof}

\subsection{Symmetric monoidal recollements and idempotent algebras}\label{ssec:idempotent}
Let \(C\in \CAlg(\PrLst)\).
Then \(C\) is itself a \(C\)-module, and we will refert to \(C\)-linear recollements of \(C\) itself as {\it multiplicative recollements} of \(C\).
We shall now see that these correspond exactly to idempotent \(\mathbb{E}_0\)-algebras in \(C\).

\begin{definition}
  Let \(C\) be a presentable symmetric monoidal stable category, and let \(\mathbb{1}_C\to e\) be an idempotent \(\mathbb{E}_0\)-algebra in \(C\).
  We call an object \(c\in C\)
  \begin{itemize}
      \item \(e\)-nilpotent, if \(e\otimes c\simeq 0\),
      \item \(e\)-local, if \(\mathbb{1}_C\otimes c\to e\otimes c\) is an equivalence, and
      \item \(e\)-complete, if \(\underline{\map}_C(e,c)\simeq 0\).
  \end{itemize}
  If the idempotent algebra is clear from the context, we will often simply call these nilpotent, local, and complete objects.
\end{definition}
\begin{proposition}\label{prop:idempotent_recollement_omnibus}
  Let \(C\) be a presentable symmetric monoidal stable category, and let \(i:\mathbb{1}_C\to e\) be an idempotent \(\mathbb{E}_0\)-algebra in \(C\).
  Then \(e\) admits a unique refinement to an \(\mathbb{E}_\infty\)-algebra in \(C\).
  Moreover, the forgetful functor
  \[i_*:\Mod_e(C)\to C\]
  is fully faithful and extends to a \(C\)-linear recollement
\[\begin{tikzcd}
	{\mathcal{R}(e):\Mod_e(C)} && C && {C/\Mod_e(C)}
	\arrow[""{name=0, anchor=center, inner sep=0}, "{i_*}"{description}, hook, from=1-1, to=1-3]
	\arrow[""{name=1, anchor=center, inner sep=0}, "{i^!}"{description}, curve={height=-18pt}, two heads, from=1-3, to=1-1]
	\arrow[""{name=2, anchor=center, inner sep=0}, "{i^\#}"{description}, curve={height=18pt}, two heads, from=1-3, to=1-1]
	\arrow[""{name=3, anchor=center, inner sep=0}, "{j^*}"{description}, two heads, from=1-3, to=1-5]
	\arrow[""{name=4, anchor=center, inner sep=0}, "{j_!}"{description}, curve={height=-18pt}, hook, from=1-5, to=1-3]
	\arrow[""{name=5, anchor=center, inner sep=0}, "{j_\#}"{description}, curve={height=18pt}, hook, from=1-5, to=1-3]
	\arrow["\dashv"{anchor=center, rotate=-117}, draw=none, from=0, to=1]
	\arrow["\dashv"{anchor=center, rotate=-63}, draw=none, from=2, to=0]
	\arrow["\dashv"{anchor=center, rotate=-69}, draw=none, from=3, to=4]
	\arrow["\dashv"{anchor=center, rotate=-111}, draw=none, from=5, to=3]
\end{tikzcd}\]
having the following properties/admitting the following structures:
\begin{enumerate}
  \item  \(j_\#\), \(i_*\), and \(j_!\) are fully faithful, and their essential images are the \(e\)-nilpotent, \(e\)-local, and \(e\)-complete objects.
  \item \(i^\#\), \(j^*\), and \(i^!\) are Verdier quotients, and their kernels are the \(e\)-nilpotent, \(e\)-local, and \(e\)-complete objects.
  \item There are unique symmetric monoidal structures on \(\Mod_e(C)\) and \(C/\Mod_e(C)\) such that \(i^\#\) and \(j^*\) are symmetric monoidal, and for the induced \(C\)-module structures \(i_*\) and \(j_\#\) are \(C\)-linear
\end{enumerate}
  If \(F:C\to D\) is a map in \(\CAlg(\PrLst)\), then also \(F(e)\) is an idempotent algebra, and we get a unique commutative diagram in \(\PrLst\)
\[\begin{tikzcd}
{\Mod_e(C)} & C & {C/\Mod_e(C)} \\
{\Mod_{F(e)}(D)} & D & {D/\Mod_{F(e)}(D)}
\arrow[hook, from=1-1, to=1-2]
\arrow["F_{loc}"{description},from=1-1, to=2-1]
\arrow[two heads, from=1-2, to=1-3]
\arrow["F"{description}, from=1-2, to=2-2]
\arrow["F_{nil}"{description},from=1-3, to=2-3]
\arrow[hook, from=2-1, to=2-2]
\arrow[two heads, from=2-2, to=2-3]
\end{tikzcd}\]
where both squares are horizontally left-adjointable.

Finally, if \(i_*:U\to C\) is fully faithful, admits both a left adjoint \(i^\#\) and a right adjoint \(i^!\), and its essential image is closed under tensors and cotensors, then the adjunction unit \(\mathbb{1}_C\to i_*i^\#\mathbb{1}_C\) is an idempotent algebra, and the essential image of \(i_*\) agrees with the \((i_*i^\#\mathbb{1}_C)\)-local objects.
\end{proposition}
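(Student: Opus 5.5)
I would prove this omnibus by reducing everything to the standard theory of idempotent algebras (smashing localizations), and then invoking \cref{prop:ConsequencesOfLinearRecollement} to get the \(C\)-linear structure. First, the uniqueness of the \(\mathbb{E}_\infty\)-refinement of an idempotent \(\mathbb{E}_0\)-algebra is \cite[prop.~4.8.2.9]{Lurie_HA}. The same reference shows that \(\Mod_e(C)\to C\) is fully faithful, with essential image the \(e\)-local objects, i.e.\ those \(c\) with \(c\to e\otimes c\) an equivalence. The left adjoint is \(i^\#=e\otimes-\). Since \(\Mod_e(C)\) is closed under limits and colimits in \(C\), there is also a right adjoint, namely \(i^!=\underline{\map}_C(e,-)\), with counit induced by \(\mathbb{1}\to e\).

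Next I would build the recollement. Put \(Z=C/\Mod_e(C)\). The kernel of \(i^\#\) consists of the \(e\)-nilpotent objects, and I would identify \(j_\#j^*\) with \(\mathrm{fib}(\mathbb{1}\to e)\otimes-\), so that \(Z\) is equivalent to the \(e\)-nilpotent objects. Likewise the kernel of \(i^!\) consists of the \(e\)-complete objects, and \(j_!j^*=\underline{\map}_C(\mathrm{fib}(\mathbb{1}\to e),-)\). Idempotence of \(e\) gives \(e\otimes\mathrm{fib}(\mathbb{1}\to e)\simeq0\), and this yields the orthogonality relations required by \cref{prop:basic_recollement_omnibus}; this settles (1) and (2). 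Linearity is then immediate. Local objects are closed under tensors, since \(c\otimes e\otimes x\simeq e\otimes c\otimes x\). They are closed under cotensors, since \(\underline{\map}(c,x)\) for \(e\)-local \(x\) is an \(e\)-module via the internal-hom module structure. So \cref{prop:ConsequencesOfLinearRecollement} gives the \(C\)-linear cofiber sequence.

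For (3), \(i^\#=e\otimes-\) is a smashing localization whose kernel is a \(\otimes\)-ideal. Hence both \(i^\#\) and \(j^*\) are compatible localizations in the sense of \cite[prop.~2.2.1.9]{Lurie_HA}. This gives unique symmetric monoidal structures making them symmetric monoidal, and \(C\)-linearity of the right adjoints follows from the projection formulas. For the base change statement, \(F(e)\) is idempotent because \(F\) is symmetric monoidal. The functor \(F_{loc}=D\otimes_C-\) is induced on modules, and \(F_{nil}\) is induced on the cofibers in \(\PrLst\). Left adjointability amounts to the identity \(F(e\otimes c)\simeq F(e)\otimes F(c)\), together with the analogous statement for \(\mathrm{fib}(\mathbb{1}\to e)\).

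The main obstacle is the converse. Suppose \(i_*\) is fully faithful with essential image closed under tensors and cotensors, and set \(e=i_*i^\#\mathbb{1}\). Closure under tensors gives that \(c\otimes e\) is local. I would then show that \(c\to c\otimes e\) is an \(i^\#\)-equivalence, via \(i^\#(c\otimes-)\simeq i^\#c\otimes i^\#(-)\), which uses the \(C\)-linearity of \(i^\#\) from \cref{prop:ConsequencesOfLinearRecollement}. It follows that \(i_*i^\#c\simeq e\otimes c\). In particular \(e\otimes e\simeq i_*i^\#e\simeq e\) via the unit, so \(e\) is idempotent, and the local objects are exactly the \(e\)-local ones. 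The subtle point is securing the monoidality of \(i^\#\) needed for that projection formula, which is exactly where closure under cotensors enters.
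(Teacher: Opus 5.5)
Your proposal is correct and follows essentially the paper's route. You use Lurie's results on idempotent algebras for the first claims and tensor-compatibility of the two localizations for (3). For base change you reduce a Beck--Chevalley check to monoidality of \(F\). For the converse you use that closure under cotensors makes \(i^\#\) compatible with tensors, so that \(i_*i^\#\simeq e\otimes(-)\).

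The differences are minor:
\begin{itemize}
\item You build the recollement by hand via \(\mathrm{fib}(\mathbb{1}_C\to e)\), where the paper cites the split Verdier sequence results.
\item In the converse you get the needed compatibility of \(i^\#\) from \cref{prop:ConsequencesOfLinearRecollement}. The paper instead uses that \(\ker i^\#\) is a \(\otimes\)-ideal together with Nikolaus--Scholze's Prop.~A.5. Note that \(C\)-linearity of \(i^\#\) already suffices here; you do not need full monoidality.
\item \(j_\#\) is a left adjoint of \(j^*\), not a right adjoint. So its \(C\)-linearity comes from the nilpotent objects being closed under tensors, not from a projection formula.
\end{itemize}
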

\begin{proof}
  That \(e\) uniquely lifts to an \(\mathbb{E}_\infty\)-algebra is precisely \cite[prop.~4.8.2.9]{Lurie_HA}.

  That \(i_*\) is fully faithful, and that its essential image consists of the local objects, is precisely \cite[prop.~4.8.2.10]{Lurie_HA}.

  Now \(i_*\) admits both adjoints, since it is a restriction-of-scalars functor, so by \cite[cor.~A.2.11]{CalmesHarpazLandNardinSteimleDottoHebestreitMoiNikolaus_2025_HermitianKtheoryStable} it is a split Verdier inclusion.
  It follows that the Verdier quotient \(j^*\) also admits both adjoints, and that both \((j_\#,i^\#) \) and \((j_!,i^!)\) are Verdier sequences by \cite[cor.~A.2.8]{CalmesHarpazLandNardinSteimleDottoHebestreitMoiNikolaus_2025_HermitianKtheoryStable}.

  Since the adjunctions \(i^\#\dashv i_*\dashv i^!\) arise from the map of algebras \(i:\mathbb{1}_C\to e\), and since we already know that \(i_*\) is fully faithful, the kernels of \(i^\#\) and \(i^!\) agree with the kernels of
  \[i_*\circ i^\#(-)\simeq e\otimes(-)\text{ and }i_*\circ i^!(-)\simeq \underline{\map}_C(e,-),\]
  so they are exactly the nilpotent and complete objects.
  This finishes (1) and (2).

  Since \(i^\#\) is the extension-of-scalars along the unit of an \(\mathbb{E}_\infty\)-algebra and a localization, we find that the class of morphisms inverted by \(i^\#\) fulfills the assumptions of \cite[Pr.A.5]{NikolausScholze_2018_TopologicalCyclicHomology}.
  Thus, this particular symmetric monoidal structure on \(i^\#\) and \(\Mod_e(C)\) is also the unique one.

  Similarly, \cite[Pr.A.5]{NikolausScholze_2018_TopologicalCyclicHomology} also equips \(j^*\) and \(C/\Mod_e(C)\) with a unique symmetric monoidal structure.

  As adjoints of \(C\)-linear functors, \(i_*\) and \(j_\#\) come with canonical (op)lax \(C\)-linear structures.
  Let us verify that these are, in fact, \(C\)-linear:
  For \(i_*\) we need to show that
  \[c\otimes i_*(m)\xrightarrow{\eta}i_*i^\#(c\otimes i_*(m))\xrightarrow{\simeq}i_*(i^\#(c)\otimes i^\#i_*(m))\xrightarrow{\epsilon}i_*(i^\#(c)\otimes m)\]
  is an equivalence,
  where the middle and right morphisms are equivalences by \(i^\#\) being monoidal and \(i^\#\) being a localization.
  The left map, the adjunction unit of \(i^\#\dashv i_*\), is an equivalence exactly on the essential image of \(i_*\), which are the local objects.
  But by definition the local objects are clearly closed under tensors.

  Similarly, for \(j_\#\), we need to show that
  \[j_\#(j^*(c)\otimes m)\xrightarrow{\eta}j_\#(j^*(c)\otimes j^*j_\#(m))\xrightarrow{\simeq}j_\#j^*(c\otimes j_\#(m))\xrightarrow{\epsilon}c\otimes j_\#(m)\]
  is an equivalence.
  The left and middle morphisms are equivalences by \(j^*\) being a localization and \(j^*\) being monoidal.
  The right map, the adjunction counit of \(j_\#\dashv j^*\), is an equivalence exactly on the essential image of \(j_\#\), which are the nilpotent objects.
  But by definition the nilpotent objects are clearly closed under tensors.
  This finishes (3).

  Now, if \(F:C\to D\) is symmetric monoidal, it is clear that \(F(e)\) is again an idempotent algebra, and that \(F\) maps \(e\)-local to \(F(e)\)-local objects.
  This gives the (unique) left and right vertical functors yielding the commutative diagram.
  
  Regarding the adjointability of the squares, denote the recollement functors for \(F(e)\) in \(D\) with \(I\) and \(J\) with the same decorations.
  For the left square, we need to show that the Beck-Chevalley transformation
  \[I^\#\circ F\xrightarrow{I^\#\circ F\circ \eta}I^\#\circ F\circ i_*\circ i^\#\xrightarrow{\simeq}I^\#\circ I_*\circ F_{loc}\circ i^\#\xrightarrow{\epsilon\circ F_{loc}\circ i^\#}F_{loc}\circ i^\#\]

is an equivalence.
The right morphism is an equivalence since \(I^\#\) is a localization, and the middle is the commutativity of the square.
For the left map, we may check that it is an equivalence after applying \(I_*\).
On an object \(c\in C\) this becomes
\[F(e)\otimes F(c)\xrightarrow{F(e)\otimes F(i\otimes\id_c)}F(e)\otimes F(e\otimes c),\]
which is an equivalence since \(F\) is monoidal and \(i:\mathbb{1}_C\to e\) is an idempotent algebra.

For the right square, we need to show that the Beck-Chevalley transformation
\[J_\#\circ F_{nil}\xrightarrow{J_\#\circ F_{nil}\circ \eta}J_\#\circ F_{nil}\circ j^*\circ j_\#\xrightarrow{\simeq}J_\#\circ J^*\circ F\circ j_\#\xrightarrow{\epsilon\circ F\circ j_\#}F\circ j_\#\]
is an equivalence.
The left morphism in an equivalence since \(j^*\) is a localization, and the middle is the commutativity of the square.
For the right map, it suffices to check on objects of the form \(j^*(c)\).
Denote the fiber of \(i:\mathbb{1}_C\to e\) by \(q:x\to \mathbb{1}_C\).
The functor \(j_\#\circ j^*\) is equivalent to \(x\otimes(-)\), so we need to show that
\[F(x)\otimes F(x\otimes c)\xrightarrow{F(q)\otimes F(x\otimes c)}F(x\otimes c)\]
is an equivalence.
This, again, follows from \(F\) being monoidal and \(q:x\to \mathbb{1}_C\) being an idempotent coalgebra.

Finally, assume \(i_*:U\to C\) is fully faithful, admits adjoints \(i^\#\dashv i_*\dashv i^!\), and its essential image is closed under tensors and cotensors.
Then \cite[cor.~A.2.11]{CalmesHarpazLandNardinSteimleDottoHebestreitMoiNikolaus_2025_HermitianKtheoryStable} tells us that \(i_*\) is a split Verdier inclusion, so \(i^\#\) is a Verdier quotient with kernel the left orthogonal complement of \(\mathrm{im}(i_*)\).
Since \(\mathrm{im}(i_*)\) is closed under cotensors, its left orthogonal complement is closed under tensors.
Thus, the localization \(i^\#\) fulfills the assumptions of \cite[prop.~A.5]{NikolausScholze_2018_TopologicalCyclicHomology}, so \(i^\#\) and \(U\) admit a unique symmetric monoidal structure.
Arguing as for (3) we find that \(i_*\) is then \(C\)-linear, so the localization endofunctor \(i_*\circ i^\#\) is \(C\)-linear.
It follows that \(\eta:\id_C\implies i_*\circ i^\#\) is equivalent to tensoring with \(\eta_{\mathbb{1}_C}\), so we conclude.
\end{proof}

We can give rather explicit characterizations of the nilpotents, locals, and completes associated to basechanges and tensors of \(\mathcal{R}(e)\):
\begin{lemma}\label[lemma]{lem:characterizeNLCOfIdempotentRecollement}
  Let \(C\in \CAlg(\PrLst)\) and let \(e\) be an idempotent \(\mathbb{E}_0\)-algebra in \(C\).
  For any \(C\)-module \(M\), we have the following characterizations \(M\otimes_C \mathcal{R}(e)\):
  \begin{itemize}
    \item \(Nil(M\otimes_C \mathcal{R}(e))=\{m\in M\mid e\otimes m\simeq 0\}\),
    \item \(Loc(M\otimes_C \mathcal{R}(e))=\{m\in M\mid \text{the unit }\mathbb{1}_C\otimes m\to e\otimes m\text{ is an equivalence}\}\), and
    \item \(Cpl(M\otimes_C \mathcal{R}(e))=\{m\in M\mid m^e\simeq0\}\).
  \end{itemize}
\end{lemma}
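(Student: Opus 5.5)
The plan is to identify the three functors \(M\otimes_C i^\#\), \(M\otimes_C i_*\circ M\otimes_C i^\#\), and \(M\otimes_C j_!\circ M\otimes_C j^*\)-type composites with concrete endofunctors of \(M\) given by the action of \(C\) on \(M\). The relevant objects in \(C\) are \(e\), \(\mathbb{1}_C\to e\), and the internal cotensor \((-)^e\). Once this is done, the three classes can be read off as kernels and fixed points of these endofunctors, using \cref{prop:basic_recollement_omnibus}.

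The first step is to understand \(M\otimes_C \Mod_e(C)\). Since \(e\) is an \(\mathbb{E}_\infty\)-algebra by \cref{prop:idempotent_recollement_omnibus}, we have \(M\otimes_C\Mod_e(C)\simeq \Mod_e(M)\). Under this identification \(M\otimes_C i_*\) is the forgetful functor and \(M\otimes_C i^\#\) is \(e\otimes(-)\). Both are \(C\)-linear, so tensoring the adjunction \(i^\#\dashv i_*\) (internal in \(\Mod_C(\PrLst)\) by \cref{prop:ConsequencesOfLinearRecollement}) gives that the localization endofunctor \((M\otimes_C i_*)(M\otimes_C i^\#)\) on \(M\) is \(m\mapsto e\otimes m\), with unit \(\mathbb{1}_C\otimes m\to e\otimes m\). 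By \cref{lem:CLinearRecollementTensor} this is a recollement. The local objects are those fixed by this localization, which gives the second bullet. The nilpotent objects are the kernel of \(M\otimes_C i^\#\), and since \(M\otimes_C i_*\) is conservative this is \(\{m\mid e\otimes m\simeq 0\}\), which gives the first bullet.

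For the complete objects, \cref{prop:basic_recollement_omnibus}(5) gives \(Cpl=Loc^\perp\). Every local object has the form \(e\otimes m'\), and conversely each \(e\otimes m'\) is local by idempotence, so the local objects are exactly the \(e\otimes m'\). Hence \(m\) is complete if and only if \(\Map_M(e\otimes m',m)\simeq 0\) for all \(m'\in M\). By the defining adjunction of the cotensor \(m^e\) (the right adjoint of \(e\otimes-\), which exists by presentability), this is \(\Map_M(m',m^e)\simeq0\) for all \(m'\), that is, \(m^e\simeq 0\).

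The main obstacle is the first identification, \(M\otimes_C\Mod_e(C)\simeq\Mod_e(M)\) compatibly with the functors \(i_*\) and \(i^\#\). I would cite Lurie's result \(\Mod_A(C)\otimes_C M\simeq \Mod_A(M)\) (HA, theorem 4.8.4.6). Alternatively, the problem can be avoided: \(M\otimes_C -\) preserves the cofiber sequence from \cref{prop:ConsequencesOfLinearRecollement}, and the composite endofunctor is computed by tensoring the \(C\)-linear endofunctor \(e\otimes-\) of \(C\) with \(M\), which acts as \(e\otimes-\) on \(M\).
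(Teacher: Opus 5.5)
Your proposal is correct and follows essentially the same route as the paper. Both identify the localization endofunctor \((M\otimes_C i_*)\circ(M\otimes_C i^\#)\) with the action \(e\otimes(-)\) on \(M\), whose unit is \(\mathbb{1}_C\otimes m\to e\otimes m\), read off the local objects, and then obtain the nilpotent and complete objects from \cref{prop:basic_recollement_omnibus}, via the kernel of \(i^\#\) and \(Loc^\perp\) respectively.

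The detour through \(M\otimes_C\Mod_e(C)\simeq\Mod_e(M)\) is unnecessary. Your alternative of tensoring the \(C\)-linear endofunctor \(e\otimes-\) with \(M\) is exactly what the paper does.
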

\begin{proof}
  We will start by identifiying the class of locals.
  By construction, it is the essential image of the localization endofunctor
  \[(\id_M\otimes i_*)\circ (\id_M\otimes i^\#)\]
  which sends \(m\) to \(e\otimes m\), and the natural transformation from \(\id_M\simeq \id_M\otimes \id_C\) is given by
  \[\mathbb{1}_C\otimes m\to e\otimes C.\]
  Thus, the class of locals is exactly as claimed.

  The identifications of the nilpotent and complete objects then follow from point (5) of \cref{prop:basic_recollement_omnibus}.
\end{proof}

\begin{corollary}\label[corollary]{cor:idempotent_recollement_basechange}
  Let \(f:C\to D\) be a map in \(\CAlg(\PrLst)\), and let \(e\in C\) be an idempotent \(\mathbb{E}_0\)-algebra.
  Then we have an equivalence of recollements
  \[f_*\mathcal{R}(e)\simeq \mathcal{R}(f(e)).\]
\end{corollary}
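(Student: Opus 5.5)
We compare the local objects of the two recollements of \(D\), which suffices by the definition of equivalence of recollements. First we make \(f_*\mathcal{R}(e)\) explicit. By \cref{prop:idempotent_recollement_omnibus}, \(\mathcal{R}(e)\) is a \(C\)-linear recollement of \(C\), and by \cref{lem:CLinearRecollementBasechange} its basechange is a \(D\)-linear recollement
\[D\otimes_C\Mod_e(C)\to D\otimes_C C\simeq D\to D\otimes_C (C/\Mod_e(C)).\]
By definition the underlying recollement of \(D\) is the tensor \(D\otimes_C\mathcal{R}(e)\), where \(D\) is regarded as a \(C\)-module via \(f\). Hence \cref{lem:characterizeNLCOfIdempotentRecollement}, applied with \(M=D\), describes its local objects: they are the \(d\in D\) for which \(\mathbb{1}_C\otimes d\to e\otimes d\) is an equivalence, where \(\otimes\) is the \(C\)-action on \(D\).

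Next we unwind the \(C\)-action. The \(C\)-module structure on \(D\) induced by \(f\) is \(c\otimes d=f(c)\otimes_D d\), and this identification is natural in \(c\). In particular the map \(\mathbb{1}_C\otimes d\to e\otimes d\) is identified with
\[\mathbb{1}_D\otimes_D d\simeq f(\mathbb{1}_C)\otimes_D d\xrightarrow{f(i)\otimes d} f(e)\otimes_D d.\]
By \cref{prop:idempotent_recollement_omnibus}, \(f(e)\) is an idempotent algebra in \(D\), and the condition that this map be an equivalence is exactly the definition of \(d\) being \(f(e)\)-local. So \(Loc(f_*\mathcal{R}(e))=Loc(\mathcal{R}(f(e)))\) as subcategories of \(D\), and the recollements are equivalent. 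The nilpotent and complete classes then agree as well, by \cref{prop:basic_recollement_omnibus}(5).

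The only step needing care is the identification \(D\otimes_C C\simeq D\), together with the matching of the localization endofunctor \(\id_D\otimes i_*i^\#\) with \(f(e)\otimes_D(-)\). Since \(i_*i^\#\simeq e\otimes(-)\) as a \(C\)-linear endofunctor of \(C\), tensoring it with \(D\) over \(C\) gives the \(D\)-linear endofunctor \(f(e)\otimes_D(-)\) of \(D\). This endofunctor is determined by its value on \(\mathbb{1}_D\), which is \(f(e)\). The unit transformation is determined in the same way by \(f(i)\). This is routine.
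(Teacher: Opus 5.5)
Your proposal is correct and matches the paper's intended argument: the paper states this corollary without proof directly after \cref{lem:characterizeNLCOfIdempotentRecollement}, and applying that lemma with \(M=D\), then rewriting the \(C\)-action as \(c\otimes d=f(c)\otimes_D d\), is exactly what it amounts to. Your final paragraph re-derives part of that lemma's proof and can be dropped.
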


\subsection{Recollements associated to ideals}\label{ssec:ideal_recollement}
Let \(A\) be a commutative algebra in spectra, and let \(I\subset \pi_0A\) be a finitely generated ideal.
For any \(\Mod_A(\Sp)\)-module \(M\), Lurie defines subcategories of \(I\)-nilpotent, \(I\)-local, and \(I\)-complete objects of \(M\).
Let us recall these now:
\begin{construction}\label[construction]{cons:ICompletionRecollement}
  Let \(A\in \CAlg(\Sp)\), let \(I\subset\pi_0A\) be a finitely generated ideal, and let \(M\) be a \(\Mod_A(\Sp)\)-module.
  Recall the definition of the following full subcategories of \(M\):
  \begin{itemize}
    \item \(M^{Nil(I)}=\{m\in M\mid A[x^{-1}]\otimes m\simeq 0\text{ for all }x\in I\}\),
    \item \(M^{Loc(I)}=\nb{M^{Nil(I)}}^\perp\), and
    \item \(M^{Cpl(I)}=\nb{M^{Loc(I)}}^\perp\).
  \end{itemize}
  From \cite[prop.~7.2.4.4, prop.~7.3.1.4]{Lurie_SAG} it then follows that this assembles to a presentable stable recollement
  \[\mathcal{R}(I,M):M^{Loc(I)}\xrightarrow{}M\xrightarrow{\Gamma_I^M}M^{Nil(I)}.\]
  By \cite[prop.~7.2.4.9(3)]{Lurie_SAG} the \(I\)-local objects of \(M\) are closed under tensors.
  By \cite[prop.~7.1.1.12(b)]{Lurie_SAG} the \(I\)-nilpotent objects in \(M\) are closed under tensors, so the \(I\)-local objects in \(M\) are closed under cotensors.

  Thus, \(\mathcal{R}(I,M)\) is a \(\Mod_A(\Sp)\)-linear recollement of \(M\).
  In the case that \(M=\Mod_A(\Sp)\), we will denote this recollement simply by \(\mathcal{R}(I)\), and the idempotent algebra from \cref{prop:idempotent_recollement_omnibus} by \(L_IA\).

  Note that, by (the proof of) \cite[prop.~7.1.1.5]{Lurie_SAG}, these notions only depend on the radical of \(I\).
  In particular, these constructions only need that the radical of \(I\) is the radical of a finitely generated ideal.
\end{construction}
\begin{lemma}\label[lemma]{lem:ICompletionIsTensoredUp}
  Let \(A\in \CAlg(Sp)\), let \(I\subset\pi_0A\) be a finitely generated ideal, and let \(M\) be a \(\Mod_A(\Sp)\)-module.
  Then we have an equivalence of linear recollements
  \[\mathcal{R}(I,M)\simeq \mathcal{R}(I)\otimes_{\Mod_A(\Sp)}M.\]
\end{lemma}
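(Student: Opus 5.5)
Since both sides are recollements of \(M\), it suffices to show that one of the three classes agree; we compare the local objects. By \cref{prop:idempotent_recollement_omnibus} applied to the \(\Mod_A(\Sp)\)-linear recollement \(\mathcal{R}(I)\), the \(I\)-local objects of \(\Mod_A(\Sp)\) are the \(L_IA\)-local ones for the idempotent algebra \(A\to L_IA\). Then \cref{lem:characterizeNLCOfIdempotentRecollement} describes the tensored-up recollement \(\mathcal{R}(I)\otimes_{\Mod_A(\Sp)}M\): its local objects are those \(m\) with \(m\to L_IA\otimes m\) an equivalence, and its nilpotent objects are those \(m\) with \(L_IA\otimes m\simeq 0\). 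We are therefore reduced to comparing these with \(M^{Loc(I)}\) and \(M^{Nil(I)}\) from \cref{cons:ICompletionRecollement}. The cleanest route is to compare nilpotent objects, since those are defined concretely on both sides.

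Choose generators \(x_1,\dots,x_n\) of \(I\). First I will identify \(L_IA\) explicitly. The fiber of \(A\to L_IA\) is the \(I\)-nilpotent part \(\Gamma_IA\simeq \bigotimes_i \mathrm{fib}(A\to A[x_i^{-1}])\). Hence \(L_IA\) is the cofiber of \(\Gamma_IA\to A\), which is the "punctured Čech complex" built out of the localizations \(A[(x_{i_1}\cdots x_{i_k})^{-1}]\) for nonempty index sets. This is \cite[§7.2]{Lurie_SAG}.

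Next, for \(m\in M\), I claim that \(L_IA\otimes m\simeq 0\) if and only if \(A[x^{-1}]\otimes m\simeq 0\) for all \(x\in I\).

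For the "if" direction, take \(m\) with \(A[x^{-1}]\otimes m\simeq0\) for all \(x\in I\). Every term of the finite punctured Čech diagram is an \(A[x_i^{-1}]\)-module for some \(i\). Tensoring that term with \(m\) therefore gives \(A[x_i^{-1}]\otimes_A(\text{term})\otimes m\), which vanishes. So \(L_IA\otimes m\simeq0\), using that the diagram is finite and the action \(\otimes\) is exact.

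For the "only if" direction, suppose \(L_IA\otimes m\simeq 0\). Then \(m\simeq \Gamma_IA\otimes m\). For \(x\in I\) we have \(A[x^{-1}]\otimes\Gamma_IA\simeq0\), because \(\Gamma_IA\) is \(I\)-nilpotent in \(\Mod_A(\Sp)\) and \(x\) lies in \(I\); this uses only the radical-dependence recalled in \cref{cons:ICompletionRecollement}. Consequently \(A[x^{-1}]\otimes m\simeq A[x^{-1}]\otimes\Gamma_IA\otimes m\simeq0\).

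The claim shows that the nilpotent classes agree. By \cref{prop:basic_recollement_omnibus}(5), \(Loc=Nil^\perp\) and \(Cpl=Loc^\perp\) on both sides, so the local and complete classes agree as well, and the two recollements are equivalent.

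The step I expect to be the main obstacle is justifying the identification of \(\Gamma_IA\) and \(L_IA\) with the finite Koszul/Čech-type constructions. This requires pinning down the results in \cite[§7.1–7.2]{Lurie_SAG}, and in particular making sure the tensor-product formula for \(\Gamma_I\) holds in \(\Mod_A(\Sp)\). Once that is in place, the transfer to an arbitrary module \(M\) is formal, because \(M\) is acted on exactly and all the constructions involved are finite colimits and tensors with \(A\)-modules.
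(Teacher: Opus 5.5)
Your argument is correct, but it is organized differently from the paper's. The paper never identifies $L_IA$. It inducts on the number of generators. For $I=(x)$ it uses $\mathcal{R}((x))\simeq\mathcal{R}(A[x^{-1}])$ and \cref{lem:characterizeNLCOfIdempotentRecollement}. To pass from $I$ to $I+(x)$ it uses \cite[rem.~7.1.2.4]{Lurie_SAG}, i.e.\ $M^{Nil(I+(x))}=(M^{Nil(I)})^{Nil((x))}$, together with associativity of relative tensor products of categories.

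You instead treat the whole ideal at once. You apply \cref{lem:characterizeNLCOfIdempotentRecollement} to the single idempotent algebra $L_IA$, and then check object-wise that $L_IA\otimes m\simeq 0$ if and only if $m\in M^{Nil(I)}$. This gives a purely object-level comparison and avoids identifying nilpotent subcategories with relative tensor products such as $M\otimes_{\Mod_A(\Sp)}Nil(\mathcal{R}(I))$. The cost is that you need the formula $\Gamma_IA\simeq\bigotimes_i\mathrm{fib}(A\to A[x_i^{-1}])$ from \cite[§7]{Lurie_SAG}. That is essentially the same input as the paper's iterated-nilpotence remark, repackaged.

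Two small remarks.

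First, given that tensor formula, you do not need the punctured Čech complex in the ``if'' direction. Set $m_k=\Gamma_{(x_k)}A\otimes_A\cdots\otimes_A\Gamma_{(x_n)}A\otimes m$ and $m_{n+1}=m$. Then $\Gamma_IA\otimes m\to m$ is the composite of the maps $m_k\to m_{k+1}$. Each of these has cofibre $A[x_k^{-1}]\otimes m_{k+1}$, which vanishes by associativity of the action and $A[x_k^{-1}]\otimes m\simeq 0$.

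Second, in the ``only if'' direction no radical argument is needed. The object $\Gamma_IA=j_\#j^*A$ lies in $\Mod_A(\Sp)^{Nil(I)}$ by construction, so $A[x^{-1}]\otimes_A\Gamma_IA\simeq 0$ for every $x\in I$ by definition. Radical-dependence enters instead in the proof of the tensor formula: there one must see that a module killed by inverting each generator $x_i$ is already killed by inverting every $x\in I$.
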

\begin{proof}
  Since both are, by construction, linear recollements of \(M\), it will suffice to show that their nilpotent objects agree.
  We shall do this by an induction on the number of generators of \(I\).

  First, consider the case that \(I=(x)\).
  Then, by \cref{lem:characterizeNLCOfIdempotentRecollement}, \(\mathcal{R}(I)\) agrees with the recollement \(\mathcal{R}(A[x^{-1}])\) of \cref{prop:idempotent_recollement_omnibus}.
  Again by \cref{lem:characterizeNLCOfIdempotentRecollement} we thus find that the \(I\)-nilpotents in \(M\) are exactly the nilpotents of \(\mathcal{R}(I)\otimes_{\Mod_A(\Sp)}M\).

  Now, assume that we have already proven the statement for \(I\), and let us extend it to \(I+(x)\).
  By our inductive hypothesis and \cite[rem.~7.1.2.4]{Lurie_SAG} we have
  \[M^{Nil(I+(x))}=\nb{M^{Nil(I)}}^{Nil((x))}=\nb{M\otimes_{\Mod_A(\Sp)} Nil(\mathcal{R}(I))}\otimes_{\Mod_A(\Sp)} Nil(\mathcal{R}((x))).\]
  Using associativity and the same trick backwards, we obtain
  \[M^{Nil(I+(x))}=M\otimes_{\Mod_A(\Sp)}Nil(\mathcal{R}(I+(x)))\]
  and we are done.
\end{proof}
\begin{lemma}\label[lemma]{lem:ICompletionAndRestrictingScalars}
  Let \(A\to B\) be a map of commutative algebras in spectra, \(I\subset \pi_0 A\) a finitely generated ideal, \(M\) an \(\Mod_B(\Sp)\)-module, and denote by \(f^*M\) the \(\Mod_A(\Sp)\)-module obtained by restriction along \(f\).
  Then we have an equivalence of recollements
  \[\mathcal{R}(I,f^*M)\simeq \mathcal{R}(f(I)\pi_0B,M).\]
\end{lemma}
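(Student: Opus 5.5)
Both recollements are recollements of the same underlying stable category \(M\), and both are linear: the first is \(\Mod_A(\Sp)\)-linear by \cref{cons:ICompletionRecollement} applied to \(f^*M\), the second is \(\Mod_B(\Sp)\)-linear. By definition, two recollements of \(M\) are equivalent as soon as their nilpotent objects agree. So I would prove
\[(f^*M)^{Nil(I)}=M^{Nil(f(I)\pi_0B)}\]
as full subcategories of \(M\), working directly with the definition via inverted elements. Note first that \(f(I)\pi_0B\) is finitely generated, by the images \(f(x_1),\dots,f(x_n)\) of generators \(x_1,\dots,x_n\) of \(I\).

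The key observation is that, for \(x\in\pi_0A\) and \(m\in M\), the tensor \(A[x^{-1}]\otimes m\) formed in the \(\Mod_A(\Sp)\)-module \(f^*M\) agrees with \(B[f(x)^{-1}]\otimes m\) formed in \(M\). Indeed, the \(\Mod_A(\Sp)\)-action on \(f^*M\) is by definition \(N\otimes m=(B\otimes_A N)\otimes m\), and \(B\otimes_A A[x^{-1}]\simeq B[f(x)^{-1}]\), since localization at an element is a base change of \(A\to A[x^{-1}]\) (the telescope \(\colim(A\xrightarrow{x}A\xrightarrow{x}\cdots)\) is carried to the corresponding telescope for \(f(x)\)). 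Hence \(m\) is killed by all \(A[x^{-1}]\), \(x\in I\), if and only if it is killed by all \(B[y^{-1}]\) with \(y\in f(I)\).

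It remains to pass from elements of \(f(I)\) to all elements of \(f(I)\pi_0B\), i.e. to show that \(m\) is \(f(I)\pi_0B\)-nilpotent as soon as \(B[f(x_k)^{-1}]\otimes m\simeq0\) for the generators. I would do this by the inductive argument already used in \cref{lem:ICompletionIsTensoredUp}: via \cite[rem.~7.1.2.4]{Lurie_SAG}, nilpotence for \(J+(y)\) is nilpotence for \(J\) followed by nilpotence for \((y)\). This reduces the claim to the generators, i.e. to \(M^{Nil(f(I)\pi_0B)}=\bigcap_k M^{Nil((f(x_k)))}\), which is \cite[prop.~7.1.1.5]{Lurie_SAG} (nilpotence only depends on the radical, and may be tested on generators). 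The same fact applied to \(I\) gives \((f^*M)^{Nil(I)}=\bigcap_k (f^*M)^{Nil((x_k))}\), and the two intersections match by the key observation.

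I expect the main obstacle to be bookkeeping rather than mathematics: making precise that the \(\Mod_A(\Sp)\)-action on \(f^*M\) is induced through \(B\otimes_A-\), and that this identification is compatible with the localization \(A[x^{-1}]\). Once these identifications are in place, the equality of nilpotent classes is immediate, and the equality of local and complete objects follows by taking orthogonals as in \cref{prop:basic_recollement_omnibus}(5).
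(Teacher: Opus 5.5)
Your proposal is correct and takes the paper's route: the paper also reduces to comparing the classes of nilpotent objects and then just cites \cite[rem.~7.1.1.10]{Lurie_SAG}, which your identification \(B\otimes_A A[x^{-1}]\simeq B[f(x)^{-1}]\), combined with testing nilpotence on generators of the ideal, proves by hand. The induction via \cite[rem.~7.1.2.4]{Lurie_SAG} is superfluous: the generator test already takes you from \(f(I)\) to \(f(I)\pi_0B\), since the elements \(y\) with \(B[y^{-1}]\otimes m\simeq 0\) form a radical ideal.
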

\begin{proof}
  It suffices to show that the classes nilpotents agree, which is the case by \cite[rem.~7.1.1.10]{Lurie_SAG}.
\end{proof}

Let us record a source of \(\Mod_A(\Sp)\)-modules which we will frequently use:

\begin{proposition}\label[proposition]{prop:linear_over_endomorphisms}
  Let \(C\) be a presentably stable symmetric monoidal catgeory.
  Then \(C\) admits a canonical \(\Mod_{\mathrm{end}_C(\mathbb{1}_C)}(\Sp)\)-algebra structure, and the functor \(\map_C(\mathbb{1}_C,\bullet)\) comes with a lax \(\Mod_{\mathrm{end}_C(\mathbb{1}_C)}(\Sp)\)-linear factorization through \(\Mod_{\mathrm{end}_C(\mathbb{1}_C)}(\Sp)\).
  If the unit in \(C\) is compact, then the functor is even \(\Mod_{\mathrm{end}_C(\mathbb{1}_C)}(\Sp)\)-linear.
\end{proposition}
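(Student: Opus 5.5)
The plan is to obtain the \(\Mod_{\mathrm{end}_C(\mathbb{1}_C)}(\Sp)\)-algebra structure from the universal property of \(\Sp\) and \(\Mod\)-categories, and to derive the factorization of \(\map_C(\mathbb{1}_C,\bullet)\) by passing to right adjoints. Write \(A=\mathrm{end}_C(\mathbb{1}_C)\). Since \(C\in\CAlg(\PrLst)\) and \(\Sp\) is the initial object there, we have a unique symmetric monoidal left adjoint \(u:\Sp\to C\). The right adjoint \(\map_C(\mathbb{1}_C,\bullet)\) is lax symmetric monoidal, and it sends the unit to \(A\). We therefore regard \(A\) as an \(\einfty\)-ring, namely the image of the commutative algebra \(\mathbb{1}_C\) under the lax monoidal right adjoint.

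First I will construct the symmetric monoidal functor \(\Mod_A(\Sp)\to C\). The functor \(u\) sends \(A\) to \(u(A)\), and the counit gives a map of commutative algebras \(u(A)\to\mathbb{1}_C\) in \(C\). Base change along this map gives a symmetric monoidal left adjoint
\[\Mod_A(\Sp)\xrightarrow{u}\Mod_{u(A)}(C)\xrightarrow{\mathbb{1}_C\otimes_{u(A)}-}C,\]
which is a map in \(\CAlg(\PrLst)\) out of \(\Mod_A(\Sp)\). Alternatively, I can invoke the adjunction \(\CAlg(\Sp)\rightleftarrows\CAlg(\PrLst)\) given by \(R\mapsto\Mod_R(\Sp)\) and \(C\mapsto\mathrm{end}_C(\mathbb{1}_C)\), using the counit \(\Mod_A(\Sp)\to C\) as in \cite{Lurie_HA}. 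Either way this is the canonical algebra structure; call the functor \(F\). Its right adjoint is \(F^R=\map_C(\mathbb{1}_C,\bullet)\) with the induced \(A\)-module structure, because \(F(A)=\mathbb{1}_C\) and \(\map_C(F(A),c)\) recovers the underlying spectrum.

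Next, \(F\) is a map in \(\CAlg(\PrLst)\) and hence \(\Mod_A(\Sp)\)-linear. Its right adjoint \(F^R\) therefore acquires a canonical lax \(\Mod_A(\Sp)\)-linear structure, given by the mate
\[N\otimes_A F^R(c)\to F^R(F(N)\otimes c).\]
This gives the claimed factorization through \(\Mod_A(\Sp)\).

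The main step is the compact case: I must show that this lax structure map is an equivalence for all \(N\in\Mod_A(\Sp)\) and \(c\in C\). For fixed \(c\), both sides are exact in \(N\). If \(\mathbb{1}_C\) is compact, then \(F^R=\map_C(\mathbb{1}_C,\bullet)\) preserves filtered colimits, and being exact it preserves all colimits. Since \(F\) and \(\otimes\) also preserve colimits, both sides preserve colimits in \(N\). As \(\Mod_A(\Sp)\) is generated under colimits by \(A\), it suffices to treat \(N=A\), where the map is the identity \(F^R(c)\to F^R(\mathbb{1}_C\otimes c)\). The only real care needed is to check that the mate really is this identity on the unit. Equivalently, \(F^R\) is an internal left adjoint, and the projection formula holds because \(\Mod_A(\Sp)\) is generated by dualizable objects.
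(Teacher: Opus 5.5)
Your proposal is correct and takes the same route as the paper. The algebra structure is the map \(\Mod_{\mathrm{end}_C(\mathbb{1}_C)}(\Sp)\to C\) in \(\CAlg(\PrLst)\) coming from Lurie's adjunction. The lax linear structure on \(\map_C(\mathbb{1}_C,\bullet)\) is the mate of the tautological linear structure on that left adjoint. In the compact case you prove directly, via colimit preservation and reduction to the generator \(A\), what the paper cites from Ramzi's observation.

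One small slip: in your closing remark it is \(F\), not \(F^R\), that is an internal left adjoint.
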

\begin{proof}
  By \cite[thm.~4.8.5.11, thm.~4.8.5.16(4)]{Lurie_HA}, there is a factorization
\[\begin{tikzcd}
	\Sp & C \\
	& {\Mod_{\mathrm{end}_{C}(\mathbb{1}_C)}(\Sp)}
	\arrow[from=1-1, to=1-2]
	\arrow[from=1-1, to=2-2]
	\arrow[dashed, from=2-2, to=1-2]
\end{tikzcd}\]
  in \(\CAlg(\PrLst)\), providing the algebra structure on \(C\).
  Looking at right adjoints provides the factorization of \(\map_C(\mathbb{1}_C,\bullet)\).
  This makes the dashed arrow above tautologically \(\Mod_{\mathrm{end}_{C}(\mathbb{1}_C)}(\Sp)\)-linear, which equips its right adjoint with a lax \(\Mod_{\mathrm{end}_C(\mathbb{1}_C)}(\Sp)\)-linear structure.

  Finally, if the unit \(\mathbb{1}_C\) is compact, \cite[obs.~4.51]{Ramzi_2026_LocallyRigid$infty$categories} tells us that the lax linear structure of the right adjoint is actually linear.
\end{proof}
\subsection{Recollements associated to families of subgroups}\label{ssec:family_recollement}

A key example of recollements in equivariant stable homotopy theory comes from families of subgroups:
Let \(G\) be a compact Lie group and let \(\mathcal{F}\) be a family of closed subgroups of \(G\).
Define an \(\mathbb{E}_0\)-algebra \(\widetilde{E\mathcal{F}}\) in \((\Spc_G)_*\) by
\[\widetilde{E\mathcal{F}}^H=\begin{cases}S^0 & H\notin\mathcal{F}\\ * & H\in \mathcal{F}\end{cases}\]
and unit map \(\eta:S^0\to \widetilde{E\mathcal{F}}\) pointwise either the identity on \(S^0\) or the unique map \(S^0\to *\).
Denote the suspension spectrum by \(\widetilde{E\mathcal{F}}\in\Sp_G\), as well, and let \(E\mathcal{F}_+\) be the fiber of \(\eta:\ss_G\to \widetilde{E\mathcal{F}}\) in \(\Sp_G\).

\(\widetilde{E\mathcal{F}}\) is an idempotent \(\mathbb{E}_0\)-algebra in \(\mathrm{Sp}_G\), so we obtain the stable recollement \(\mathcal{R}(\widetilde{E\mathcal{F}})\):
\[\begin{tikzcd}
	{\mathrm{Mod}_{\widetilde{E\mathcal{F}}}=\mathrm{Sp}^{\Phi\mathcal{F}}_G} && {\mathrm{Sp}_G} && {\mathrm{Sp}^{h\mathcal{F}}_G} \\
	\\
	\\
	&& {}
	\arrow[""{name=0, anchor=center, inner sep=0}, "{i_*}"{description}, hook, from=1-1, to=1-3]
	\arrow[""{name=1, anchor=center, inner sep=0}, "{i^*}"{description}, curve={height=18pt}, from=1-3, to=1-1]
	\arrow[""{name=2, anchor=center, inner sep=0}, "{i^!}"{description}, curve={height=-18pt}, from=1-3, to=1-1]
	\arrow[""{name=3, anchor=center, inner sep=0}, "{j^*}"{description}, from=1-3, to=1-5]
	\arrow[""{name=4, anchor=center, inner sep=0}, "{j_!}"{description}, curve={height=18pt}, hook', from=1-5, to=1-3]
	\arrow[""{name=5, anchor=center, inner sep=0}, "{j_*}"{description}, curve={height=-18pt}, hook', from=1-5, to=1-3]
	\arrow["\dashv"{anchor=center, rotate=-116}, draw=none, from=0, to=2]
	\arrow["\dashv"{anchor=center, rotate=-88}, draw=none, from=3, to=5]
	\arrow["\dashv"{anchor=center, rotate=-64}, draw=none, from=1, to=0]
	\arrow["\dashv"{anchor=center, rotate=-92}, draw=none, from=4, to=3]
\end{tikzcd}\]

For \(A\in \CAlg(\Sp_G)\) we will say that an \(A\)-module is \(\mathcal{F}\)-nilpotent, \(\mathcal{F}\)-complete, or \(\mathcal{F}\)-local if its underlying \(G\)-spectrum is.
Since the underlying functor \(\Mod_A(\Sp_G)\to \Sp_G\) is conservative and commutes with both tensors and powers by objects in \(\Sp_G\), \cref{lem:characterizeNLCOfIdempotentRecollement} tells us that these are the nilpotents, locals, and completes of the recollement \(\Mod_A(\Sp_G)\otimes_{\Sp_G}\mathcal{R}(\widetilde{E\mathcal{F}})\).

\begin{remark}
  Note that our use of '\(\mathcal{F}\)-nilpotent' does {\it not} agree with the notion discussed in \cite[§~6.6]{MathewNaumannNoel_2017_NilpotenceDescentEquivariant}.
  Later we will want to also talk about their notion \(\mathcal{F}\)-nilpotence, and we will always explicitly state if we mean their notion.
\end{remark}

\begin{lemma}\label[lemma]{lem:generatorsForFNilpotents}
  The fullsubcategory of \(\mathcal{F}\)-nilpotent \(G\)-spectra is, as a localizing stable subcategory, generated by \(G/H_+\) for \(H\in\mathcal{F}\).
\end{lemma}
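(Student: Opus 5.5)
Write \(\mathcal{L}\subset \Sp_G\) for the localizing subcategory generated by the orbits \(G/H_+\) with \(H\in\mathcal{F}\). Recall that by \cref{prop:idempotent_recollement_omnibus} and \cref{lem:characterizeNLCOfIdempotentRecollement}, the \(\mathcal{F}\)-nilpotent \(G\)-spectra are exactly those \(X\) with \(\widetilde{E\mathcal{F}}\otimes X\simeq 0\); equivalently, \(X\simeq E\mathcal{F}_+\otimes X\). The plan is to prove the two inclusions separately, using geometric fixed points for one direction and the cell structure of \(E\mathcal{F}\) for the other.

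\textbf{Generators are nilpotent.} First I check \(\mathcal{L}\subset Nil\). For \(H\in\mathcal{F}\) we have
\[\widetilde{E\mathcal{F}}\otimes G/H_+\simeq G_+\otimes_H \mathrm{Res}^G_H\widetilde{E\mathcal{F}}.\]
The restriction \(\mathrm{Res}^G_H\widetilde{E\mathcal{F}}\) is \(\widetilde{E(\mathcal{F}\vert_H)}\). Since \(H\in\mathcal{F}\) and families are closed under subgroups, \(\mathcal{F}\vert_H\) is the family of all subgroups of \(H\), so all fixed points of \(\widetilde{E(\mathcal{F}\vert_H)}\) are contractible and it is zero. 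Nilpotent objects are closed under colimits, because \(\widetilde{E\mathcal{F}}\otimes-\) preserves colimits. Hence all of \(\mathcal{L}\) is nilpotent.

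\textbf{Nilpotents lie in \(\mathcal{L}\).} Conversely, let \(X\) be nilpotent, so \(X\simeq E\mathcal{F}_+\otimes X\). The space \(E\mathcal{F}\) admits a \(G\)-CW structure whose cells are all of the form \(G/H\times D^n\) with \(H\in\mathcal{F}\). The suspension spectrum \(E\mathcal{F}_+\) is the colimit of its skeleta, and the successive cofibers are wedges of \(\Sigma^n G/H_+\). So \(E\mathcal{F}_+\otimes X\) is a colimit built from the objects \(\Sigma^n G/H_+\otimes X\). It then suffices to show \(G/H_+\otimes X\in\mathcal{L}\) for every \(X\in\Sp_G\) and \(H\in\mathcal{F}\).

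\textbf{Reduction to compact generators.} The \(G\)-spectra are generated under colimits by the \(\Sigma^n G/K_+\) for arbitrary subgroups \(K\le G\), and \(G/H_+\otimes-\) preserves colimits. So it suffices to treat \(G/H_+\otimes G/K_+\). The double coset formula writes \(G/H\times G/K\) as a finite disjoint union of orbits \(G/(H\cap gKg^{-1})\). Each \(H\cap gKg^{-1}\) is a subgroup of \(H\), hence lies in \(\mathcal{F}\), so every summand is a generator of \(\mathcal{L}\).

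The main point needing care is the existence of the \(G\)-CW model of \(E\mathcal{F}\) with isotropy in \(\mathcal{F}\), together with the identification of its suspension spectrum with the fiber \(E\mathcal{F}_+\) of \(\ss_G\to\widetilde{E\mathcal{F}}\). For the model I would cite the standard construction, for instance an infinite join of the orbits \(G/H\) with \(H\in\mathcal{F}\). Its fixed points are contractible for \(H\in\mathcal{F}\) and empty otherwise, and checking these fixed points identifies its cofiber sequence with the defining one.
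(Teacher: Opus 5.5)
Your proof is correct. The first half matches the paper's: orbits with isotropy in \(\mathcal{F}\) are nilpotent by the projection formula, and nilpotents are closed under colimits. For the converse, however, you take a genuinely different route.

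The paper argues by detection rather than construction. Since the nilpotents are closed under colimits in \(\Sp_G\), the orbits \(G/H_+\) are compact there. So it suffices to show that a nilpotent \(X\) with \(X^H\simeq 0\) for all \(H\in\mathcal{F}\) is zero:
\begin{itemize}
\item for \(H\in\mathcal{F}\), one gets \(\mathrm{Res}^G_HX\simeq 0\), hence \(X^{\Phi H}\simeq 0\);
\item for \(H\notin\mathcal{F}\), one has \(X^{\Phi H}\simeq X^{\Phi H}\otimes\widetilde{E\mathcal{F}}^{\Phi H}\simeq (X\otimes\widetilde{E\mathcal{F}})^{\Phi H}\simeq 0\);
\item geometric fixed points are jointly conservative.
\end{itemize}
That argument needs no model of \(E\mathcal{F}\) and works only from the fixed-point description of \(\widetilde{E\mathcal{F}}\). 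The cost is invoking the compact-generation criterion and the conservativity of the \((-)^{\Phi H}\).

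Your argument avoids geometric fixed points and compactness altogether. In return it shows two further things: via the double coset formula, that \(\mathcal{L}\) is a localizing tensor ideal; and that every nilpotent \(X\) is explicitly built from \(\mathcal{F}\)-cells as \(E\mathcal{F}_+\otimes X\).

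The one external input you flag can be made painless. Instead of a \(G\)-CW model, you may use \(E\mathcal{F}\simeq\colim_{G/H\in\Orb^{\mathcal{F}}(G)}G/H\) in \(G\)-spaces; the paper itself uses this description in the proof of \cref{prop:rightKanExtension}. Then \(E\mathcal{F}_+\in\mathcal{L}\) is immediate because \(\Sigma^\infty_+\) preserves colimits. The identification with the fiber of \(\ss_G\to\widetilde{E\mathcal{F}}\) is then just the fixed-point check you describe.
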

\begin{proof}
  As the above is a stable recollement, we can identify the \(\mathcal{F}\)-nilpotent objects with the kernel of \(i^*\).
  Since \(i_*\) is fully faithful we can further identify this as the kernel of \(i_*\circ i^*\).
  This endofunctor is given by tensoring with \(\widetilde{E\mathcal{F}}\).

  Let us first show that the spectra \(G/H_+\) are \(\mathcal{F}\)-nilpotent for \(H\in\mathcal{F}\).
  Applying the projection formula we have
  \[G/H_+\otimes \widetilde{E\mathcal{F}}=\mathrm{Ind}_H^G\nb{\mathrm{Res}_H^G\nb{\widetilde{E\mathcal{F}}}}=0\]
  since all fixed points of \(\widetilde{E\mathcal{F}}\) for subgroups of \(H\) are contractible by definition.

  Now assume that \(X\) is \(\mathcal{F}\)-nilpotent and that
  \(\mathrm{map}(G/H_+, X)=X^H=0\text{ for all }H\in\mathcal{F}\).
  To show that the orbits \(G/H_+\) for \(H\in \mathcal{F}\) generate the \(\mathcal{F}\)-nilpotent objects, it suffices to show that these assumptions already imply \(X=0\).

  Since \(\mathcal{F}\) is closed under subgroups we can conclude that \(\mathrm{Res}^G_H(X)=0\) for all \(H\in\mathcal{F}\), and then that \(X^{\Phi H}=0\) for all \(H\in\mathcal{F}\).
  For \(H\notin\mathcal{F}\) we can then use that \(X\) is \(\mathcal{F}\)-nilpotent:
  \[X^{\Phi H}=X^{\Phi H}\otimes\ss=X^{\Phi H}\otimes \widetilde{E\mathcal{F}}^{\Phi H}=\nb{X\otimes \widetilde{E\mathcal{F}}}^{\Phi H}=0.\]

  As the geometric fixed point functors are jointly conservative we deduce that \(X=0\).
\end{proof}

We now want to connect this with the recollements considered in \cref{cons:ICompletionRecollement}, at least for \(G\) a finite group.
\begin{definition}
  Let \(G\) be a finite group and let \(\mathcal{F}\) be a family of subgroups of \(G\).
  For \(A\in \CAlg(\Sp_G)\) we define an ideal in \(\pi_0A^G\) by
\[I_A(\mathcal{F})=\bigcap_{H\in \mathcal F}I^G_H=\ker(r^G_H:\pi_0A^G\to \pi_0A^H).\]
\end{definition}

The endomorphism spectrum of \(A\) in \(\Mod_A(\Sp_G)\) is equivalent to \(A^G\in\CAlg(\Sp)\), so \cref{prop:linear_over_endomorphisms} gives us that \(\Mod_A(\Sp_G)\) is canonically \(\Mod_{A^G}(\Sp)\)-linear.
Assuming \(I_A(\mathcal{F})\) is finitely generated, this allows us to talk about \(I_A(\mathcal{F})\)-complete \(A\)-modules:

\begin{proposition}\label[proposition]{prop:FCompleteImpliesIFComplete}
  Let \(G\) be a finite group, \(\mathcal{F}\) a family of subgroups of \(G\), \(A\) a commutative algebra in \(G\)-spectra, and assume that \(I_A(\mathcal{F})\subset \pi_0 A^G\) is finitely generated (up to radicals).
  Let \(M\) be an \(\mathcal{F}\)-complete \(A\)-module.
  Then \(M\) is \(I_A(\mathcal{F})\)-complete.
\end{proposition}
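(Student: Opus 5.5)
Since \(M\) is \(\mathcal{F}\)-complete and complete objects are right orthogonal to local objects (\cref{prop:basic_recollement_omnibus}(5)), it suffices to show that every \(I_A(\mathcal{F})\)-local \(A\)-module \(L\) is \(\mathcal{F}\)-local, i.e.\ satisfies \(L\otimes E\mathcal{F}_+\simeq 0\). Then \(\map(L,M)\simeq 0\) for every \(I_A(\mathcal{F})\)-local \(L\), so \(M\) lies in \(Loc(I_A(\mathcal{F}))^\perp=Cpl(I_A(\mathcal{F}))\). By \cref{lem:ICompletionIsTensoredUp} we may work with \(\mathcal{R}(I_A(\mathcal{F}))\otimes_{\Mod_{A^G}(\Sp)}\Mod_A(\Sp_G)\). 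Its local objects are those \(N\) with \(N\simeq L_IA^G\otimes_{A^G}N\), and the local subcategory is generated under colimits and tensors by the modules \(A^G[x^{-1}]\otimes_{A^G}N\) for \(x\in I_A(\mathcal{F})\). Indeed, the nilpotent objects are cut out by the vanishing of these, and \(L_IA\) can be built as a finite (Čech-type) limit of such localizations over a finite generating set. Since \(\mathcal{F}\)-local objects form a localizing tensor ideal (\cref{prop:idempotent_recollement_omnibus}), this reduces the problem to showing that \(A[x^{-1}]:=A^G[x^{-1}]\otimes_{A^G}A\) is \(\mathcal{F}\)-local for each \(x\in I_A(\mathcal{F})\).

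For this, by \cref{lem:generatorsForFNilpotents} and the orthogonality \(Loc=Nil^\perp\), it suffices to show that \(\map_G(G/H_+,A[x^{-1}])=A[x^{-1}]^H\) vanishes for each \(H\in\mathcal{F}\). Alternatively, we can check that \(E\mathcal{F}_+\otimes A[x^{-1}]\simeq 0\), which again reduces to the vanishing of restrictions to \(H\in\mathcal{F}\). Restriction to \(H\) is symmetric monoidal and commutes with the colimit \(A\xrightarrow{x}A\xrightarrow{x}\cdots\) defining the localization. Hence \(\mathrm{Res}^G_H A[x^{-1}]\) is the telescope of multiplication by \(r^G_H(x)\in\pi_0A^H\) on \(\mathrm{Res}^G_HA\). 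Because \(x\in I_A(\mathcal{F})\subset\ker r^G_H\), the element \(r^G_H(x)\) is zero, so the telescope vanishes. Therefore \(\mathrm{Res}^G_HA[x^{-1}]\simeq0\) for all \(H\in\mathcal{F}\).

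The main obstacle is making precise that the \(\Mod_{A^G}(\Sp)\)-linear action on \(\Mod_A(\Sp_G)\) from \cref{prop:linear_over_endomorphisms} sends \(A^G[x^{-1}]\) to the telescope \(A[x^{-1}]\), with multiplication by \(x\) acting through the unit map \(A^G\to\underline{\mathrm{end}}(A)\). The second delicate point is that the \(H\)-restriction of this self-map is multiplication by \(r^G_H(x)\). Both follow from the naturality of the \(\mathrm{end}(\mathbb{1})\)-action and the fact that restriction is a symmetric monoidal left adjoint compatible with the ring maps \(A^G\to A^H\).
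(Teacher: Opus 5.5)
Your proof is correct, and its decisive computation is the same as the paper's. Both arguments come down to showing \(E\mathcal{F}_+\otimes A[x^{-1}]\simeq 0\) for \(x\in I_A(\mathcal{F})\), because \(x\) restricts to \(0\) in \(\pi_0A^H\) for every \(H\in\mathcal{F}\).

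The packaging around that computation differs in two places.

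\textbf{Reduction step.} The paper uses Lurie's elementwise criterion for completeness (SAG cor.~7.3.2.2): \(M\) is \(I\)-complete iff \(\lim(M\xleftarrow{x}M\xleftarrow{x}\cdots)\simeq\underline{\map}_A(A[x^{-1}],M)\) vanishes for each \(x\in I\). For \(\mathcal{F}\)-complete \(M\) this becomes \(\underline{\map}_A((A\otimes E\mathcal{F}_+)[x^{-1}],M)\), so it suffices to kill \((A\otimes E\mathcal{F}_+)[x^{-1}]\). You instead prove \(Loc(I_A(\mathcal{F}))\subseteq Loc(\mathcal{F})\). You do this by writing \(L_IA^G\) as a finite \v{C}ech-type limit of the localizations \(A^G[x_S^{-1}]\) and using that \(\mathcal{F}\)-local objects form a thick tensor ideal.

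\textbf{Vanishing check.} The paper kills \((A\otimes E\mathcal{F}_+)[x^{-1}]\) by checking all geometric fixed points. For \(K\notin\mathcal{F}\) this comes from \(E\mathcal{F}_+\); for \(K\in\mathcal{F}\) it comes from restriction. You detect \(\mathcal{F}\)-locality through \cref{lem:generatorsForFNilpotents}, checking only \(\mathrm{Res}^G_H\) for \(H\in\mathcal{F}\). That lemma itself rests on geometric fixed points, so the underlying input is the same.

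\textbf{What each route buys.} The paper's route is shorter, because Lurie's criterion already packages the fact that the \(A[x^{-1}]\) control \(I\)-locality. Yours makes explicit the equivalent statement \(Nil(\mathcal{F})\subseteq Nil(I_A(\mathcal{F}))\). That is exactly the form in which the proposition is later invoked in \cref{lem:temperedAbelianProperCompletion}.

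\textbf{Minor points.} Of your two justifications for generation of the local objects, only the \v{C}ech description actually gives it. Knowing that nilpotents are cut out by the \(A[x^{-1}]\) does not by itself show that the local objects are built from them. Also, if your finite generators only lie in \(\sqrt{I_A(\mathcal{F})}\), replace them by powers lying in \(I_A(\mathcal{F})\); this does not change the localizations.
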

\begin{proof}
  By \cite[cor.~7.3.2.2]{Lurie_SAG} we need to show that, for every \(x\in I_A(\mathcal{F})\), the limit \(T\) of
  \[M\xleftarrow{x}M\xleftarrow{x}M\xleftarrow{x}\cdots\]
  vanishes.
  By the mapping description of \(\mathcal{F}\)-completion we have
  \[T=\underline{\map}_A\nb{(A\otimes E\mathcal{F}_+)[x^{-1}],M}.\]
  We claim that already \(R=(A\otimes E\mathcal{F}_+)[x^{-1}]=0\).
  By construction we have that \(R^{\Phi K}=0\) for \(K\notin \mathcal{F}\).
  For \(K\in \mathcal{F}\) we have \(x\in I^G_K\).
  This means that, after restriction to \(K\), \(x\) equals \(0\), so \(\mathrm{Res}^G_K R =0\), thus \(R^{\Phi K}=0\).
  Since all geometric fixed points of \(R\) vanish, \(R\) itself must be zero, and we are done. 
\end{proof}

\subsection{Recollements associated to open immersions of stacks}\label{ssec:open_immersions}
Even in the affine case, our proof of the family completion theorem for tempered cohomology will crucially employ the language of stacks as introduced in \cite{BalderramaDaviesLinskens_2025_AffinenessReconstructionComplexperiodic}.
See their paper for basic definitions, and appendix~\ref{sec:stacks} for some further results.
In particular, recall (\cref{def:open_immersion_stk}) that a map \(f:X\to Y\) of stacks is a (quasi-compact) open immersion if \(f\) is a \(|-|\)-Cartesian map, where
\[|-|:\Stk\to \Top\]
is the underlying topological space functor, and \(|f|:|X|\to |Y|\) is a (quasi-compact) open embedding of topological spaces.

\begin{definition}\label[definition]{def:locally_quasicompact_open_immersion}
  We call a map of stacks \(i:U\to X\)  a {\it locally quasi-compact open immersion} if there exists an effective epimorphism
  \[Y\to X\]
  such that the basechange \(U_Y\to Y\) is a quasi-compact open immersion (see \cref{def:open_immersion_stk}).
\end{definition}

\begin{lemma}\label[lemma]{lem:lqc_open_immersion_recollement}
  Let \(i:U\to X\) be an locally quasi-compact open immersion.
  Then \(i\) is universally \(0\)-affine, the pushforward \(i_*\) is fully faithful, and the essential image of \(i_*\) is closed under both tensors and cotensors with objects in \(\QCoh(X)\).
\end{lemma}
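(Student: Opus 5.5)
The plan is to reduce everything to the quasi-compact case by descent along an effective epimorphism, and then to handle the quasi-compact case by the known properties of quasi-compact open immersions in the framework of \cite{BalderramaDaviesLinskens_2025_AffinenessReconstructionComplexperiodic}, collected in \cref{sec:stacks}.

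\textbf{Step 1: reduce to a quasi-compact open immersion.} Choose an effective epimorphism \(p:Y\to X\) as in \cref{def:locally_quasicompact_open_immersion}, so that \(U_Y\to Y\) is a quasi-compact open immersion. The first claim is that \(i\) is universally \(0\)-affine. Being \(|-|\)-Cartesian is stable under basechange, and so are the open-embedding conditions. The plan is to argue that universal \(0\)-affineness is local on the target for effective epimorphisms, using the locality results in \cref{sec:stacks}, and hence reduce to showing that a quasi-compact open immersion is \(0\)-affine. For that, I would base change further to affines \(\Spec A\to Y\). There the open immersion corresponds to a quasi-compact open \(V(I)^c\) with \(I\) finitely generated, and \(\QCoh\) of it is \(\Mod_A^{Loc(I)}\). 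So the pushforward identifies \(\QCoh(U_{\Spec A})\) with the \(I\)-local objects, and \(0\)-affineness follows by \cite[§~7]{Lurie_SAG}.

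\textbf{Step 2: full faithfulness.} Because \(i\) is universally \(0\)-affine, pushforward satisfies base change along \(\Spec A\to X\). Full faithfulness of \(i_*\) is equivalent to the counit \(i^*i_*\to\id\) being an equivalence, and this can be checked after pulling back along the jointly conservative family of charts \(\Spec A\to X\), using the cover \(\QCoh(X)\simeq\lim\QCoh(\Spec A)\). On each chart it reduces to the fact that the inclusion of \(I\)-local modules is fully faithful. This is where \(|-|\)-Cartesianness is needed: it ensures \(U\times_X U\simeq U\), so \(i^*i_*\simeq\id\) by base change.

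\textbf{Step 3: closure under tensors and cotensors.} Closure under tensors follows from the projection formula \(F\otimes i_*G\simeq i_*(i^*F\otimes G)\). For universally \(0\)-affine maps, \(i_*\) is \(\QCoh(X)\)-linear, and this can again be checked on affine charts via \cite[prop.~7.2.4.9]{Lurie_SAG}. For cotensors I would use \(\underline{\map}(F,i_*G)\simeq i_*\underline{\map}(i^*F,G)\), which follows formally by adjunction from \(i^*\) being symmetric monoidal. In detail, \(\map(H,\underline{\map}(F,i_*G))=\map(i^*(H\otimes F),G)=\map(H,i_*\underline{\map}(i^*F,G))\), which needs no hypothesis at all.

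\textbf{Main obstacle.} The hardest part will be the descent in Step 1. One has to verify that the limit description of \(\QCoh\) is compatible with pushforward, that is, that the Beck--Chevalley maps for \(i_*\) along the charts are equivalences, without already knowing that \(X\) is nice. My first attempt would be to deduce base change for \(i_*\) from \(0\)-affineness locally, and then invoke the general statement that relatively affine (or \(0\)-affine) maps satisfy base change for arbitrary pullbacks, as in \cite{BalderramaDaviesLinskens_2025_AffinenessReconstructionComplexperiodic}.
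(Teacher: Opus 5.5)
Your proposal is correct and follows essentially the paper's route. The paper also gets universal \(0\)-affineness from the locality of that class (\cref{lem:universal_zero_affine_local_finitary}) plus the quasi-compact case, though it simply cites that case as quasi-affine via \cite[thm.~3.3.2.1]{BalderramaDaviesLinskens_2025_AffinenessReconstructionComplexperiodic} rather than rebuilding it from \(I\)-local modules. It then proves full faithfulness by checking the counit after pullback to affine charts, using adjointable base-change squares, and handles tensors and cotensors exactly by the projection formula and your adjunction argument.

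Two small precisions. The counit lives in \(\QCoh(U)\), so the jointly conservative family should be the base-changed charts \(U_A\to U\). Your alternative argument via \(U\times_X U\simeq U\) needs \(i\) to be a monomorphism, which follows from locality of open immersions and monomorphisms under effective epimorphisms, not from their stability under base change.
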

\begin{proof}
  The claim that \(i\) is universally \(0\)-affine follows from \cref{lem:universal_zero_affine_local_finitary} and the fact that quasi-compact open immersions are quasi-affine, thus universally \(0\)-affine by \cite[thm.~3.3.2.1]{BalderramaDaviesLinskens_2025_AffinenessReconstructionComplexperiodic}.

  Let \(Y\to X\) be an effective epimorphism such that the basechange \(V\to Y\) is a quasi-compact open immersion.
  Choosing a small colimit presentation \(Y\simeq \colim_{j\in J}\Spec R_j\) we see that the further basechange
  \[\coprod_{j\in J} V_j \to \coprod_{j\in J}\Spec R_j\]
  is a quasi-compact open immersion, too.
  In particular, each \(V_i\to \Spec R_i\) is a quasi-compact open immersion.
  This gives us pullback squares
\[\begin{tikzcd}
	{V_j} & {\Spec R_j} \\
	U & X
	\arrow["{i_j}"{description}, from=1-1, to=1-2]
	\arrow["{\iota_j^U}"{description}, from=1-1, to=2-1]
	\arrow["\lrcorner"{anchor=center, pos=0.125}, draw=none, from=1-1, to=2-2]
	\arrow["{\iota_j}"{description}, from=1-2, to=2-2]
	\arrow["i"{description}, from=2-1, to=2-2]
\end{tikzcd}\]
  which are adjointable by \cite[prop.~2.2.2.5(1)]{BalderramaDaviesLinskens_2025_AffinenessReconstructionComplexperiodic}.
  That \(i_*\) is fully faithful is equivalent to the adjunction counit \(\epsilon_M^i:i^*i_*M\to M\) being an equivalence for each \(M\in\QCoh(U)\).
  Since the map \(\coprod_{j\in J}\Spec R_j\to X\) is an effective epimorphism, the functors \((\iota_j)^*\) are jointly conservative.
  Thus, it is enough to show that each
  \[(\iota_j)^*\epsilon_M^i:(\iota_j)^*i^*i_*M\to (\iota_j)^* M\]
  is an equivalence.
  The adjointability of the square above identifies this with the adjunction counit
  \[\epsilon^{i_j}_{(\iota^U_j)^*M}:(i_j)^*(i_j)_*(\iota^U_j)^*M\to (\iota^U_j)^*M.\]
  But \((i_j)_*\) is fully faithful by \cite[cor.~2.4.1.6]{Lurie_SAG}, so \(i_*\) is fully faithful.

  It remains to show that the essential image of \(i_*\) is closed under tensors and cotensors by objects in \(\QCoh(X)\).
  The projection formula for \(i_*\) immediately shows that the essential image of \(i_*\) is closed under tensors with objects in \(\QCoh(X)\).
  For cotensors, consider the chain of natural equivalences of mapping spaces
  \begin{align*}
    \QCoh(X)(T,F_X(N,i_*M))&\simeq \QCoh(X)(T\otimes N,i_* M)\simeq \QCoh(U)(i^* (T\otimes N),M)\\
    &\simeq \QCoh(U)(i^*T\otimes i^*N,M)\simeq \QCoh(U)(i^*T,F_U(i^*N,M))\\
    &\simeq \QCoh(X)(T,i_* F_U(i^*N,M)),
  \end{align*}
  where $F_{Y}(-,-)$ denotes the internal homomorphism object in $\QCoh(Y)$ and where we have used that $i^\ast$ is strong symmetric monoidal. The Yoneda lemma then identifies \(F_X(N,i_*M)\simeq i_* F_U(i^*N,M)\), which shows that $F_X(N,i_\ast M)$ lies in the essential image of \(i_*\).
\end{proof}

\begin{corollary}\label{cor:qc_open_immersion_idempotent}
  Let \(i:U\to X\) be a locally quasi-compact open immersion of stacks.
  Then \(\O_X\to i_*\O_U\) is an idempotent \(\mathbb{E}_0\)-algebra, and the essential image of \(i_*\) agrees with the local objects of the associated recollement \(\mathcal{R}(i_*\O_U)\).
\end{corollary}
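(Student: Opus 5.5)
The corollary is the final assertion of \cref{prop:idempotent_recollement_omnibus} applied with \(C=\QCoh(X)\) and \(U\) replaced by \(\QCoh(U)\); the work is checking its hypotheses via \cref{lem:lqc_open_immersion_recollement}.

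First, that lemma says \(i\) is universally \(0\)-affine, \(i_*\colon\QCoh(U)\to\QCoh(X)\) is fully faithful, and its essential image is closed under tensors and cotensors with objects of \(\QCoh(X)\).

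Next, \(i_*\) needs both adjoints. The left adjoint is \(i^*\). For the right adjoint \(i^!\), I would argue that \(i_*\) preserves small colimits, so the adjoint functor theorem applies, since both categories are presentable. Colimit preservation should come from universal \(0\)-affineness: pushforward along a \(0\)-affine morphism commutes with colimits and satisfies base change. Alternatively, it can be checked after pulling back along the jointly conservative \(\iota_j^*\) using the adjointable squares from the proof of \cref{lem:lqc_open_immersion_recollement}. This reduces to pushforward along a quasi-compact open immersion into \(\Spec R_j\), which preserves colimits by \cite[cor.~2.4.1.6]{Lurie_SAG} or by quasi-affineness. I expect this colimit preservation to be the only real obstacle, and the descent reduction to the affine charts should handle it.

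With these hypotheses, the final part of \cref{prop:idempotent_recollement_omnibus} gives that the unit \(\O_X\to i_*i^*\O_X\) is an idempotent \(\mathbb{E}_0\)-algebra. It also gives that the essential image of \(i_*\) is exactly the \((i_*i^*\O_X)\)-local objects, i.e.\ the local objects of \(\mathcal{R}(i_*i^*\O_X)\). Since \(i^*\) is symmetric monoidal, \(i^*\O_X\simeq\O_U\), which identifies the algebra with \(\O_X\to i_*\O_U\) and finishes the proof.
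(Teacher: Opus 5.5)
Your proposal is correct and is the same argument as the paper's: the paper simply invokes the final clause of \cref{prop:idempotent_recollement_omnibus}, with its hypotheses supplied by \cref{lem:lqc_open_immersion_recollement}. Your explicit checks that \(i_*\) preserves colimits (so that a right adjoint \(i^!\) exists) and that \(i^*\O_X\simeq\O_U\) fill in details the paper leaves implicit.
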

\begin{proof}
  This holds for the local side of a multiplicative recollement  by \cref{prop:idempotent_recollement_omnibus}, of which the inclusion
  \[i_*:\QCoh(U)\to\QCoh(X)\]
  is an example by \cref{lem:lqc_open_immersion_recollement}.
\end{proof}

\begin{remark}
  If \(X=\Spec R\) is an affine stack, then a locally quasi-compact immersion \(i:U\to X\) is determined by a quasi-compact open subset \(|U|\subset |\Spec R|=|\Spec \pi_0 R|\).
  Then \(|U|\) is the complement of \(V(I)\) for some finitely generated ideal \(I\subset \pi_0 R\), and the recollement \(\mathcal{R}(i_*\O_U)\) of \(\QCoh(X)\simeq \Mod_R(\Sp)\) agrees with the recollement \(\mathcal{R}(I)\) considered in \cref{cons:ICompletionRecollement}.
\end{remark}

\section{Tempered cohomology}\label{sec:tempered}
Tempered cohomology, introduced by Lurie in \cite{Lurie_2019_EllipticCohomologyIII}, associates to a pre-oriented \(\mathbb{P}\)-divisible group \(\gg\) over an \(\einfty\)-ring \(R\) a limit preserving functor
\[R_\gg^\bullet:(\Spc^\gl_{\fin})^\op\to \CAlg(\Sp)_{R/}.\]
In the case of \(\mu_{\mathbb{P}^\infty}\) over \(\KU\) this recovers equivariant topological \(K\)-theory, for finite groups of equivariance.
Over all, the construction of \(R_\gg^\bullet\) mimics the kind of structures one observes in the cohomology theory represented by \(\KU\in\CAlg(\Sp_\fin^\gl)\).

Especially well-behaved is the case where \(\gg\) is oriented and \(R\) is noetherian.
Under these conditions, Gepner, Linskens, and Pol (\cite{GepnerLinskensPol_2024_Global2ringsGenuine}) showed that the cohomology funnctor \(R_\gg^\bullet\) is represented by a commutative algebra \(R(\gg)\in\CAlg(\Sp_\fin^\gl)\), crucially using Lurie's theory of tempered local systems (\cite[§~7]{Lurie_2019_EllipticCohomologyIII}).

Let us now recall some of the main constrcutions involved in this story.
Throughout this section we will fix a noetherian \(\einfty\)-ring \(R\) and an oriented \(\mathbb{P}\)-divisible group \(\gg\) over \(\Spec R\).
We will denote the associated global cohomology theory of \cite[cons.~4.0.3]{Lurie_2019_EllipticCohomologyIII} by
\[R_\gg^\bullet:(\Spc_\fin^\gl)^\op\to \CAlg(\Sp)_{R/},\]
and the associated commutative algebra in \(\fab\)-global spectra, as in \cite[thm.~E]{GepnerLinskensPol_2024_Global2ringsGenuine}, as well as its coinduction to a commutative algebra in \(\fin\)-global spectra, by \(R(\gg)\).

By construction \(R^\bullet_\gg\) is right Kan extended from its restriction to
\[\Glo_\fab^\op\subset\Glo_\fin^\op\subset (\Spc_\fin^\gl)^\op,\]
and on there it is naturally equivalent to the functor
\[\Glo_\fab^\op\to \CAlg_R,\,\bb A\mapsto R(\gg)^A.\]
\begin{construction}
  Denote the left Kan extension of
  \[\Glo_\fab\xrightarrow{R_\gg^\bullet}(\CAlg(\Sp)_{R/})^\op\xrightarrow{\Spec}\Stk_{/\Spec R}\]
  along \(\Glo_\fab\subset \Glo_\fin\) by
  \[\gg\{\bullet\}:\Glo_\fin\to \Stk_{\Spec R}.\]
  For a finite group \(G\) we call \(\gg\{\bb G\}\) the {\it tempered character stack} (of \(G\) with respect to \(\gg\)).

  Since the global sections functor \(\Gamma:\Stk^\op\to \CAlg(\Sp)\) preserves limits, and since its left adjoint \(\Spec\) is fully faithful, we obtain an equivalence from \(\Gamma\circ \gg\{\bullet\}\) to the restriction of \(R_\gg^\bullet\) to \(\Glo_\fin^\op\).

  Since we Kan extended from an inclusion, we get that, for abelian groups \(A\), the affinization map
  \[\gg\{\bb A\}\to \Spec \Gamma\gg\{\bb A\}\simeq \Spec R_\gg^{\bb A}\]
  is naturally equivalent to the canonical natural transformation defining the Kan extension, and is an equivalence.

  Finally, we will repeatedly use the pointwise formula for the left Kan extension together with the fact that
  \[\Orb_\ab(G)\to (\Glo_\fab)_{/\bb G},\,G/H\mapsto (\bb H\to \bb G)\]
  is colimit final (\cite[lem.~4.2]{GepnerMeier_EquivariantTMF1}) to obtain
  \[\gg\{\bb G\}\simeq \colim_{G/H\in \Orb_\ab(G)}\Spec R_\gg^{\bb H}.\]
\end{construction}

\subsection{Theory of support}\label{ssec:support}
A key input to the proof of the completion theorem is a comparison of the \(I_{R(\gg)_G}(\mathcal{F})\)-adic and \(I_{R(\gg)_H}(\mathcal{F}\vert_H)\)-adic topologies on \(\pi_0R(\gg)^H\).
Doing this directly turns out to be quite hard: only for abelian groups \(H\) do we have a good understanding of the structure of \(R(\gg)^H\).
For general finite groups \(G\) the limit defining \(R(\gg)^G\) from the fixed points with respect to the abelian subgroups of \(G\) might create 'new' prime ideals.

Instead, we will develope a theory of support for points in \(|\gg\{\bb G\}|\) (which, by constrcution, all come from \(|\gg\{\bb H\}|\) with \(H\subset G\) abelian) to study their behaviour under restriction, inspired by Segal's work on the prime spectra of complex representation rings \cite{Segal_1968_RepresentationringCompactLie}. 
Then we identify the preimage of \(V(I_{R(\gg)_G}(\mathcal{F}))\) under the affinization map
\[|a|:|\gg\{\bb G\}|\to |\Spec R(\gg)^G|\]
with the points whose support is contained in \(\mathcal{F}\).
We shall later see that, for the completion theorem, controlling this preimage suffices.

\begin{definition}
  Let \(G\) be a finite group, and let \(x\) be a point in \(|\gg\{\bb G\}|\).
  We will say that \(x\) {\it comes from} a subgroup \(H\subset G\) if
  \[x\in \im |\gg\{\bb H\}|\to |\gg\{\bb G\}|.\]
  We define the {\it support} of \(x\) to be the set
  \[supp(x)=\{H\subset G\mid x\text{ comes from }H\text{, but not from a proper subgroup of }H\}.\]
\end{definition}

\begin{lemma}\label[lemma]{lem:reduceSupportToCompleteLocalRings}
  Let G be a finite group, let \(x\) be a point in \(|\gg\{\bb G\}|\), and let \(\mm\) be the image of \(x\) in \(|\Spec R|\).
  Denote the completed local ring of \(R\) at \(\mm\) by \(\widehat{R}\).
  Then \(x\) unqiely lifts to a point \(x'\) of \(|\gg_{\widehat{R}}\{\bb G\}|\), and \(supp(x)=supp(x')\).
\end{lemma}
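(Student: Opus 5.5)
The plan is to show that the character stack is compatible with base change along \(\Spec\widehat{R}\to\Spec R\), and that the relevant fibre of underlying spaces is a single point over \(x\). Write \(f:\Spec\widehat{R}\to\Spec R\).

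\emph{Step 1: base change of the character stack.} For every finite abelian \(H\) I would use the tempered-cohomology base change \(R_{\gg_{\widehat R}}^{\bb H}\simeq \widehat R\otimes_R R_\gg^{\bb H}\). This holds because \(R_\gg^{\bb H}\) is the ring of functions on \(\gg[\widehat H]\), which is finite flat over \(R\), so its formation commutes with arbitrary base change. Since pullback along \(f\) of stacks over \(\Spec R\) preserves colimits, the colimit formula \(\gg\{\bb G\}\simeq\colim_{G/H\in\Orb_\ab(G)}\Spec R_\gg^{\bb H}\) then gives
\[\gg_{\widehat R}\{\bb G\}\simeq \gg\{\bb G\}\times_{\Spec R}\Spec\widehat R,\]
and this identification is natural in \(\bb G\in\Glo_\fin\). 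In particular it is compatible with the maps \(\gg\{\bb H\}\to\gg\{\bb G\}\).

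\emph{Step 2: uniqueness of the lift.} I would use the fact from \cref{sec:stacks} that the underlying space of a colimit of affines is the corresponding colimit (quotient) of the spaces \(|\Spec R_\gg^{\bb H}|\). For each abelian \(H\), the fibre of \(|\Spec(\widehat R\otimes_R R^{\bb H}_\gg)|\to|\Spec R^{\bb H}_\gg|\) over a point lying over \(\mm\) is a single point. The reason is that \(R^{\bb H}_\gg\) is finite over the noetherian ring \(R\), so primes of \(\pi_0\) lying over \(\mm\) are maximal and their residue fields are finite over \(\kappa(\mm)\). Moreover \(\widehat R\otimes_R\kappa(\mm)=\kappa(\mm)\), so \(\widehat R\otimes_R R^{\bb H}_\gg\otimes_{R^{\bb H}_\gg}\kappa(y)=\kappa(y)\).

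Hence \(|\Spec \widehat R\otimes R^{\bb H}_\gg|\) restricted over \(\mm\) maps bijectively onto the fibre of \(|\Spec R_\gg^{\bb H}|\) over \(\mm\). Passing to the colimit, taking fibres over \(\mm\) commutes with the quotient, since both constructions are compatible with the projections to \(|\Spec R|\). So \(|\gg_{\widehat R}\{\bb G\}|_{\mm}\to|\gg\{\bb G\}|_{\mm}\) is a bijection, which produces a unique \(x'\) over \(\mm\).

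The main obstacle is justifying that the colimit on underlying spaces interacts well with fibres, together with the precise meaning of "unique lift". I would read the latter as a unique point of \(|\gg_{\widehat R}\{\bb G\}|\) mapping to \(x\) and lying over the closed point. If the claim is meant literally without that condition, it should still hold: any lift of \(x\) must lie over a prime of \(\widehat R\) contracting to \(\mm\), and only the maximal ideal does.

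\emph{Step 3: equality of supports.} Fix a subgroup \(K\subset G\). By naturality in Step 1, the square relating \(|\gg_{\widehat R}\{\bb K\}|\to|\gg_{\widehat R}\{\bb G\}|\) and \(|\gg\{\bb K\}|\to|\gg\{\bb G\}|\) commutes, and both vertical maps are bijections on fibres over \(\mm\) by Step 2 applied to \(K\) and to \(G\). All points involved lie over \(\mm\). Therefore \(x\) comes from \(K\) if and only if \(x'\) comes from \(K\). Since the support is defined entirely in terms of the relation "comes from", it follows that \(supp(x)=supp(x')\).
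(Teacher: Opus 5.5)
Your proposal is correct and follows essentially the same route as the paper. You use flat base change for the abelian pieces \(\gg\{\bb H\}\), then a residue-field computation (the paper cites Stacks Tag 01JT) to get a bijection on points over \(\mm\), then pass to the fibrewise quotient over \(\Orb_\ab(G)\), and finally transport witnesses of ``comes from'' using uniqueness of lifts. One small slip: primes of \(\pi_0R_\gg^{\bb H}\) lying over \(\mm\) are maximal only when \(\mm\) is maximal in \(\pi_0R\). You never use this, though, since \(\kappa(y)\otimes_{\pi_0R}\pi_0\widehat{R}\cong\kappa(y)\) holds for any \(y\) over \(\mm\).
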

\begin{remark}
  By construction, we have
  \[\gg\{\bb G\}=\colim_{G/H\in \Orb_{\ab}(G)}\gg\{\bb H\}.\]
  Since the underlying topological space functor, by construction, commutes with colimits, every point comes from an {\it abelian} subgroup of \(G\).
  In particular, the support always consists of abelian subgroups.
\end{remark}

\begin{proof}
  Let \(H\subset G\) be an abelian subgroup.
  Since \(R\to \widehat{R}\) is flat, we have by \cite*[cor.~4.7.3]{Lurie_2019_EllipticCohomologyIII} that the diagram of classical schemes
\[\begin{tikzcd}
	{\gg\{\bb H\}^\heartsuit} & {\gg_{\widehat{R}}\{\bb H\}^\heartsuit} \\
	{\spec\,\pi_0R} & {\spec\,\pi_0\widehat{R}}
	\arrow[from=1-1, to=2-1]
	\arrow[from=1-2, to=1-1]
	\arrow[from=1-2, to=2-2]
	\arrow[from=2-2, to=2-1]
\end{tikzcd}\]
  is a pullback.
  By \cite[\href{https://stacks.math.columbia.edu/tag/01JT}{Tag 01JT}]{stacks-project} we then have that the top horizontal map induces a bijection of points over \(\mm\) and \(\mm\pi_0\widehat{R}\).
  The set \(|\gg\{\bb G\}|\) is a quotient of the set
  \[\coprod_{H\subset G,\,H\text{ abelian}}|\gg\{\bb H\}|\]
  by a fiberwise equivalence relation over  \(|\spec\,R|\), and similarly for \(|\gg_{\widehat{R}}\{\bb G\}|\) over \(|\spec\,\widehat{R}|\).
  Furthermore, the comparison map reflects the equivalence relation, so we also have that
  \[\gg_{\widehat{R}}\{\bb G\}\to\gg\{\bb G\}\]
  induces a bijection between points over \(\mm\) and \(\mm\pi_0\widehat{R}\).
  Thus, there is a unique lift \(x'\) of \(x\).

  Now, assume that \(x\) comes from \(H\subset G\).
  Then it will necessarily be the image of a point \(y\) that lies over \(\mm\), too, so by the same argument there is a unique lift \(y'\).
  The image of \(y'\) will then be a lift of \(x\), and thus \(x'\) by uniqueness, so also \(x'\) comes from \(H\).

  In the other direction, lets say \(x'\) is the image of some \(y'\).
  Then simply applying the horizontal maps shows that also \(x\) comes from \(H\).

  Since \(x\) and \(x'\) come from the same subgroups, also the sets of minimal such subgroups must agree, and we are done.
\end{proof}

\begin{lemma}\label[lemma]{lem:supportInAbelianGroupsIsInjStack}
  Let \(H\) be a finite abelian group and let \(x\) be a point in
  \[|\gg\{\bb H\}|=|\gg[\hat{H}]|.\]
  Then \(S\subset H\) is in the support of \(x\) if and only if
  \[x\in \im |Inj(\hat{S},\gg)|\to |\gg[\hat{H}]|,\]
  where \(Inj\) denotes the stack of injections of \cite[def.~5.5.10]{BalderramaDaviesLinskens_2026_AmbidextrousGlobalSpectra}.
\end{lemma}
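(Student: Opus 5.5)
For an abelian subgroup \(S\subset H\), the map \(\gg\{\bb S\}\to\gg\{\bb H\}\) is identified with \(\gg[\hat S]\to\gg[\hat H]\). This map is induced by the surjection \(\hat H\to\hat S\), i.e.\ the restriction of characters. Points of \(\gg[\hat H]\) are homomorphisms \(\hat H\to\gg\) with values in some field. In these terms, "\(x\) comes from \(S\)" means that the corresponding homomorphism factors through \(\hat H\to\hat S\). By the Cartier-duality description of \cite{BalderramaDaviesLinskens_2026_AmbidextrousGlobalSpectra}, \(\gg[\hat H]\) is stratified by the closed substacks \(\gg[\hat S]\). The open part of \(\gg[\hat S]\), obtained by removing all \(\gg[\hat S']\) for proper \(S'\subsetneq S\), should be exactly \(Inj(\hat S,\gg)\). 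So the plan is as follows.

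First, I reduce to \(H=S\). Since \(\gg[\hat S]\to\gg[\hat H]\) is a closed immersion, it is injective on points. Hence \(x\) comes from \(S\) if and only if it has a unique preimage \(x_S\in|\gg[\hat S]|\). Moreover, \(x\) comes from a subgroup \(S'\subset S\) if and only if \(x_S\) does, because the maps compose. So \(S\in supp(x)\) if and only if \(x_S\) does not come from any proper subgroup of \(S\). The claim then reduces to the following statement. Inside \(|\gg[\hat S]|\), the image of \(|Inj(\hat S,\gg)|\) is the complement of the union of the images of the \(|\gg[\hat S']|\) for \(S'\subsetneq S\). Here the image of \(Inj\) in \(\gg[\hat H]\) is the image of \(Inj(\hat S,\gg)\to\gg[\hat S]\to\gg[\hat H]\).

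Second, I prove this complement statement. By the definition in \cite[def.~5.5.10]{BalderramaDaviesLinskens_2026_AmbidextrousGlobalSpectra}, \(Inj(\hat S,\gg)\) is an open substack of \(\gg[\hat S]=\underline{\mathrm{Hom}}(\hat S,\gg)\). It is defined as the complement of the vanishing loci of \(\phi(\chi)\) for \(\chi\in\hat S\) nonzero; equivalently, one may use cyclic subgroups of order \(p\). A point \(y\) with residue field \(\kappa\) is then a map \(\phi:\hat S\to\gg(\kappa)\), and \(y\in Inj\) if and only if \(\ker\phi=0\). On the other hand, \(y\) comes from \(\gg[\hat S']\) for \(S'\subsetneq S\) if and only if \(\phi\) factors through \(\hat S\to\hat S'\). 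This happens if and only if \(\ker\phi\) contains the nonzero subgroup \(S'^\perp\). Conversely, a nonzero kernel \(K\) gives a factorization through \(\hat S/K=\widehat{K^\perp}\) with \(K^\perp\) proper. I need points with values in an algebraically closed field to represent every point of \(|\cdot|\). This is why I would first use \cref{lem:reduceSupportToCompleteLocalRings}, or simply a field-valued base change, to work fiberwise over a point of \(\spec R\).

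The main obstacle is the pointwise identification just described. Over a field of characteristic \(p\), the group \(\gg\) has a connected part, and \(\phi(\chi)\) can be a nonzero infinitesimal point while \(\phi(\chi)=0\) on \(\kappa\)-points. So the "kernel" must be taken scheme-theoretically, and one must check that \(Inj\) is defined by an open condition that is detected on points. My first attempt would be to use the fact that \(|\gg[\hat S]|\to|\gg^{\et}[\hat S]|\) is a homeomorphism on each fiber, since the connected part is infinitesimal. This should also be the content of the injection stack's behaviour in loc.\ cit. With it, one reduces to the étale quotient, where kernels are honest subgroups and the argument above applies verbatim.
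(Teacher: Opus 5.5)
Your proposal is correct and matches the paper's proof. Injectivity on points of the closed immersions \(\gg[\hat A]\to\gg[\hat B]\) reduces the claim to \(H=S\). After that, \(Inj(\hat S,\gg)\) is by definition the open complement of the images of the \(\gg[\hat{S'}]\) with \(S'\subsetneq S\); in your formulation this is the same thing, since \(\{\phi(\chi)=0\}\) is exactly the closed substack \(\gg[\widehat{\langle\chi\rangle^\perp}]\).

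The ``main obstacle'' you raise does not actually arise. Whether a point lies in such a closed locus is decided by its residue-field-valued point of the underlying classical scheme, and there \(\phi(\chi)\) is an honest element of \(\gg(\kappa)\). So your pointwise kernel argument is already complete, and the detour through \(\gg^{\et}\) is unnecessary.
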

\begin{proof}
  This follows from the definition of the stack of injections, and from the fact that for any inclusion of finite abelian groups \(A\subset B\), the morphism
  \(\gg[\hat{A}]\to\gg[\hat{B}]\)
  is a closed immersion between affines, so injective on points.
\end{proof}

\begin{lemma}\label[lemma]{lem:temperedAbelianSupportStaysSupport}
  Let \(V\subset W\) be finite abelian groups, and let \(x\) be a point in \(|\gg\{\bb V\}|\).
  If \(S\subset V\) is in the support of \(x\), then it is also in the support of \((|\gg\{\bb V\}|\to|\gg\{\bb W\}|)(x)\).
\end{lemma}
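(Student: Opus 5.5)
The plan is to reduce everything to \cref{lem:supportInAbelianGroupsIsInjStack}, which describes supports in terms of the stacks of injections for both \(V\) and \(W\). Write \(\iota:\gg[\hat{V}]\to\gg[\hat{W}]\) for the map induced by \(V\subset W\); under the identifications \(\gg\{\bb V\}\simeq \gg[\hat V]\) and \(\gg\{\bb W\}\simeq\gg[\hat W]\) it is the closed immersion coming from the surjection \(\hat W\to\hat V\). Let \(y=|\iota|(x)\).

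We need two things: that \(y\) comes from \(S\subset W\), and that \(y\) comes from no proper subgroup \(S'\subsetneq S\).

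The first is immediate. Since \(x\) comes from \(S\), the composite \(\gg\{\bb S\}\to\gg\{\bb V\}\to\gg\{\bb W\}\) shows \(y\) comes from \(S\). Equivalently, the map \(Inj(\hat S,\gg)\to\gg[\hat V]\) composed with \(\iota\) is the map \(Inj(\hat S,\gg)\to\gg[\hat W]\) attached to \(S\subset W\). This uses that the maps \(\gg[\hat A]\to\gg[\hat B]\) for \(A\subset B\) are functorial in the inclusion.

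For minimality, suppose \(y\) comes from some \(S'\subsetneq S\subset W\). Then \(y\) lies in the image of \(|\gg[\hat{S'}]|\to|\gg[\hat W]|\), and this map factors through \(|\gg[\hat S]|\) and hence through \(|\gg[\hat V]|\), since \(S'\subset S\subset V\subset W\). So \(y=|\iota|(z)\) for some \(z\) in the image of \(|\gg[\hat{S'}]|\to|\gg[\hat V]|\). As noted in the proof of \cref{lem:supportInAbelianGroupsIsInjStack}, \(\iota\) is a closed immersion of affines and hence injective on points. Therefore \(z=x\), so \(x\) comes from \(S'\), contradicting \(S\in supp(x)\).

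Hence \(S\in supp(y)\). The only delicate point is this minimality step: one must make sure that the subgroup \(S'\) witnessing \(y\) lands inside \(V\). This holds because \(S'\subset S\subset V\), so it suffices to quantify over proper subgroups of \(S\) itself.
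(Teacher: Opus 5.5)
Your proof is correct and is essentially the paper's argument. The paper pushes forward the fact that \(x\) lies in the image of \(|Inj(\hat{S},\gg)|\) and applies \cref{lem:supportInAbelianGroupsIsInjStack} at both \(V\) and \(W\), whereas you check the definition of support directly, with minimality coming from injectivity on points of the closed immersion \(\gg[\hat{V}]\to\gg[\hat{W}]\); since that injectivity is exactly what \cref{lem:supportInAbelianGroupsIsInjStack} rests on, the two routes differ only in packaging.
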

\begin{proof}
  By \cref{lem:supportInAbelianGroupsIsInjStack} we know that \(x\) is in the image of
  \[|Inj(\widehat{S},\gg)|\to|\gg\{\bb V\}|.\]
  Then \((|\gg\{\bb V\}|\to|\gg\{\bb W\}|)(x)\) is in the image of
  \[|Inj(\widehat{S},\gg)|\to|\gg\{\bb V\}|\to|\gg\{\bb W\}|,\]
  so \(S\) is in the support of \((|\gg\{\bb V\}|\to|\gg\{\bb W\}|)(x)\) by \cref{lem:supportInAbelianGroupsIsInjStack}
\end{proof}
\begin{proposition}\label[proposition]{prop:temperedSupportsAreConjugate}
  Let \(G\) be a finite group, and let \(x\) be a point in \(|\gg\{\bb G\}|\).
  If \(V,W\) are abelian subgroups of \(G\) both in the support of \(x\), then they are conjugate in \(G\).
\end{proposition}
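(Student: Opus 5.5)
The plan is to reduce the statement to the abelian case, where support is a single subgroup, and then track that subgroup through the colimit presentation
\[\gg\{\bb G\}\simeq \colim_{G/H\in \Orb_\ab(G)}\gg\{\bb H\}.\]
Since \(|-|\) commutes with colimits, \(|\gg\{\bb G\}|\) is the quotient of \(\coprod_{H\text{ abelian}}|\gg\{\bb H\}|\) by the equivalence relation generated by the maps in \(\Orb_\ab(G)\). Every morphism \(G/H\to G/K\) there is a composite of a conjugation \(c_g:H\to gHg^{-1}\) and an inclusion \(H\subset K\). So the relation is generated by two moves: \(y\sim c_g(y)\), and \(y\sim \iota(y)\) for an inclusion \(\iota\) of abelian subgroups.

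\textbf{Step 1: uniqueness of support in the abelian case.} For \(V\) abelian and \(y\in|\gg\{\bb V\}|\), I claim \(supp(y)\) is a single subgroup \(S(y)\subset V\). By \cref{lem:reduceSupportToCompleteLocalRings} we may work over a complete local ring, and enlarge to a geometric point with algebraically closed residue field \(\kappa\). There \(y\) corresponds to a homomorphism \(\phi:\hat V\to \gg(\kappa)\), thinking of \(\gg[\hat V]\) as the stack of maps \(\hat V\to\gg\). By \cref{lem:supportInAbelianGroupsIsInjStack}, \(S\in supp(y)\) exactly when \(\phi\) factors through \(\hat V\twoheadrightarrow\hat S\) by an injective map \(\hat S\to\gg(\kappa)\). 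The only such \(S\) is the annihilator of \(\ker\phi\). So \(S(y)=(\ker\phi)^\perp\) is unique.

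This is the step I expect to be the main obstacle. One has to check carefully that points of \(Inj(\hat S,\gg)\) really are the geometric points whose associated homomorphism is injective on \(\hat S\). Because of the connected (formal) part of \(\gg\), a "point" should be read via the étale quotient over the residue field. I would unwind \cite[def.~5.5.10]{BalderramaDaviesLinskens_2026_AmbidextrousGlobalSpectra} at a geometric point to make this precise.

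\textbf{Step 2: invariance under the generating moves.} For an inclusion \(V\subset W\), \cref{lem:temperedAbelianSupportStaysSupport} combined with Step 1 gives \(S(\iota(y))=S(y)\). For a conjugation, naturality of the \(Inj\)-stacks in the group gives \(S(c_g(y))=gS(y)g^{-1}\). Hence the \(G\)-conjugacy class of \(S(y)\) is constant on equivalence classes, and defines an invariant \([S](x)\) of \(x\in|\gg\{\bb G\}|\).

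\textbf{Step 3: conclusion.} Suppose \(V\in supp(x)\). Then \(x\) is the image of some \(y\in|\gg\{\bb V\}|\). Since \(S(y)\in supp(y)\), \(y\) is the image of a point of \(|\gg\{\bb{S(y)}\}|\), so \(x\) comes from \(S(y)\subset V\). Minimality of \(V\) forces \(S(y)=V\). The same argument applied to \(W\in supp(x)\) gives a point \(z\) over \(W\) with \(S(z)=W\). By Step 2, \(V\) and \(W\) are both representatives of \([S](x)\), so they are conjugate in \(G\).
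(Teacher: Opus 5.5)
Your argument is correct, and it takes a genuinely different and more elementary route than the paper.

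\textbf{The paper's route.} The paper first reduces to a complete local base with $x$ over the closed point. It then splits the connected-\'etale sequence of $\gg$ after a faithfully flat extension $R\to S$, and uses Lurie's character isomorphism to decompose $|\gg_S\{\bb G\}|$ into pieces indexed by conjugacy classes of homomorphisms $\gamma\colon\hat\Lambda\to G$. Using $\gg^\circ(L)=0$ for reduced $L$, it shows that lifts of $x_V$ and $x_W$ lie in pieces labelled by surjections onto $V$ and $W$. After transporting a prime along a zig-zag, both lifts land in the same piece, which forces conjugacy.

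\textbf{Your route.} You prove directly that points on abelian groups have singleton supports. You then show that the $G$-conjugacy class of that singleton is invariant under the generating relations of the colimit over $\Orb_\ab(G)$. No splitting or character theory is needed. As a bonus, you get uniqueness of abelian supports independently of this proposition, whereas the paper deduces it from the proposition (as in \cref{lem:temperedHalfAbelianSupportStaysSupport}).

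\textbf{What each buys.} The paper's route gives the explicit identification of the support with $(\im\alpha)_G$, where $\alpha$ labels the component containing a lift of $x$. This is reused in \cref{rmk:boundedHeightBoundedGeneration} and in the proof of \cref{prop:affinizationVIFandZF}. To extract that from your picture you would still need to identify $\gg(\kappa)$ with the points of the \'etale part, which is exactly where the input $\gg^\circ(L)=0$ enters.

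\textbf{The obstacle you flagged.} Step 1 does not actually need $Inj$, and the connected part of $\gg$ plays no role there. Since $\gg[\hat S]\to\gg[\hat V]$ is a closed immersion, $y$ comes from $S$ if and only if $\phi$ vanishes on $S^\perp=\ker(\hat V\to\hat S)$, i.e.\ if and only if $S\supseteq(\ker\phi)^\perp$. So the subgroups $y$ comes from form a principal up-set, and its minimum is the unique support. Your identification of field-valued points of $\gg[\hat V]$ with homomorphisms $\hat V\to\gg(\kappa)$ is the same one the paper uses at the end of its own proof, via the connective cover, so it is at the paper's level of rigor.
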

\begin{remark}
  The following proof is inspired by Lurie's proof of \cite[lem.~4.9.6]{Lurie_2019_EllipticCohomologyIII}.
  In fact, the first few reduction steps are basically identical to it.
\end{remark}
\begin{proof}
  As a first reduction, we may use \cref{lem:reduceSupportToCompleteLocalRings} so that we may assume that \(R\) is noetherian complete local, and that \(x\) lies over the closed point \(\mm\) of \(\Spec R\).

  Pick preimages \(x_V,x_W\) of \(x\).
  Then the support of \(x_V\) is \(V\), so by \cref{lem:supportInAbelianGroupsIsInjStack} \(x_V\) lies in the open substack \(Inj(\hat{V},\gg)\), and similarly for \(x_W\).

  There is short exact sequence of \(\mathbb{P}\)-divisable groups
  \[0\to \gg^\circ\xrightarrow{f}\gg\xrightarrow{q}\gg^{\et}\to 0\]
  over \(R\) such that \(\gg^\circ\) is connected over the closed point, and \(\gg^{\et}\) is étale: If the characteristic of \(k(\mm)\) is zero, simply set \(\gg^\circ=0\), if it is \(p>0\) use the connected-étale sequence guaranteed by \cite[cor.~2.5.22]{Lurie_Ell2}.
  Since \(\Spec\pi_0R\) is connected the \(\ell\)-divisable groups \(\gg^{\et}_{(\ell)}\) have constant height \(h_\ell\).
  Setting
  \[\Lambda=\bigoplus_{\ell\in\mathbb{P}}(\qq_\ell/\zz_\ell)^{h_\ell}\]
  we get, by \cite[prop.~2.7.15]{Lurie_2019_EllipticCohomologyIII}, a faithfully flat \(R\)-algebra \(S=Split_\Lambda(f)\) such that
  \[\gg_S\simeq\gg^\circ_S\oplus \underline{\Lambda}_S.\]

  Now fix a point \(\nn\in|\spec\, S|\) over \(\mm\).
  We claim that 
  \[|Inj(\hat{V},\gg)_S|_\nn=\coprod_{\alpha:\hat{\Lambda}\twoheadrightarrow V}|\Spec S|_\nn,\]
  where the subscipt \(\nn\) denotes the subset of points over \(\nn\) and the coproduct runs over the {\it continiuous} surjective homomorphisms.

  Since \(R\to S\) is faithfully flat, basechange along it commutes with passing to underlying classical stacks, so by \cite[thm.~4.7.1,\,thm.~4.3.2]{Lurie_2019_EllipticCohomologyIII} we get a commutative diagram
\[\begin{tikzcd}
	{|\gg\{\bb W\}|} && {|\coprod_{\alpha:\widehat{\Lambda}\to W}\gg_S^\circ\{\bb W\}|} \\
	{|\gg\{\bb G\}|} && {|\coprod_{(\gamma:\widehat{\Lambda}\to G)_G}\gg_S^\circ\{\bb Z_G(\im \gamma)\}|} \\
	{|\gg\{\bb V\}|} && {|\coprod_{\beta:\widehat{\Lambda}\to V}\gg_S^\circ\{\bb V\}|}
	\arrow[from=1-1, to=2-1]
	\arrow[from=1-3, to=1-1]
	\arrow[from=1-3, to=2-3]
	\arrow[from=2-3, to=2-1]
	\arrow[from=3-1, to=2-1]
	\arrow[from=3-3, to=2-3]
	\arrow[from=3-3, to=3-1]
\end{tikzcd}\]
  Note that, in the corresponding diagram of classical stacks, both squares are pullback squares.
  We can now use \cite[\href{https://stacks.math.columbia.edu/tag/01JT}{Tag 01JT}]{stacks-project} to construct lifts of \(x\), \(x_V\), and \(x_W\) along the horizontal maps.
  Fix a prime ideal
  \[\mathfrak{p}_V\subset k(x_V)\otimes_{k(\mm)}k(\nn)\]
  and denote the associated lift of \(x_V\) by \(y_V\).
  Since \(x_V\) and \(x_W\) both map to \(x\), there is a zig-zag of subconjugations of abelian subgroups of \(G\) and points in the corresponding stacks connecting \(x_V\) and \(x_W\):
\[\begin{tikzcd}
	{p_0\in|\gg\{\bb A_0\}|} & {p_{n}\in|\gg\{\bb A_{n}\}|} & \\
	{x_V\in|\gg\{\bb V\}|} & \ldots & {x_W\in|\gg\{\bb W\}|}
	\arrow[hook, from=2-1, to=1-1]
	\arrow[hook', from=2-2, to=1-1]
	\arrow[hook, from=2-2, to=1-2]
	\arrow[hook', from=2-3, to=1-2]
\end{tikzcd}\]

  We then choose lifts and restrict to transport \(\mathfrak{p}_V\) to a prime ideal \(\mathfrak{p}_W\subset k(x_W)\otimes_{k(\mm)}k(\nn)\) along the zig-zag
\[\begin{tikzcd}
	{\mathfrak{q}_0\subset k(p_0)\otimes_{k(\mm)}k(\nn)} & {\mathfrak{q}_n\subset k(p_n)\otimes_{k(\mm)}k(\nn)} & \\
	\\
	{\mathfrak{p}_V\subset k(x_V)\otimes_{k(\mm)}k(\nn)} & \ldots & {\mathfrak{p}_W\subset k(x_W)\otimes_{k(\mm)}k(\nn)}
	\arrow[hook, from=1-1, to=3-1]
	\arrow[hook', from=1-1, to=3-2]
	\arrow[hook, from=1-2, to=3-2]
	\arrow[hook', from=1-2, to=3-3]
\end{tikzcd}\]
  and denote the associated lift of \(x_W\) by \(y_W\).
  This construction ensures that \(y_V\) and \(y_W\) map to the same point in \(|\gg_S\{\bb G\}|\).

  By our claim \(y_V\) and \(y_W\) lie in components associated to surejctions \(\beta:\hat{\Lambda}\twoheadrightarrow V\) and \(\alpha:\hat{\Lambda}\twoheadrightarrow W\).
  Since they map to the same point, their images must be in the same component, so that, in particular, the images of the composites
  \(\hat{\Lambda}\xrightarrow{\beta}V\subset G\text{ and }\hat{\Lambda}\xrightarrow{\alpha}W\subset G,\)
  that is \(V\) and \(W\), must be conjugate in \(G\).

  Let us now prove the claim.
  Combining \cite[lem.~5.5.5]{BalderramaDaviesLinskens_2026_AmbidextrousGlobalSpectra} and \cite[thm.~4.7.1]{Lurie_2019_EllipticCohomologyIII} we see that
  \[Inj(\widehat{V},\gg)_S\simeq Inj(\widehat{V},\gg_S).\]
  By \cite[lem.~5.5.5]{BalderramaDaviesLinskens_2026_AmbidextrousGlobalSpectra} and \cite[thm.~4.3.2]{Lurie_2019_EllipticCohomologyIII} we can write
  \[|Inj(\widehat{V},\gg_S)|\simeq \coprod_{\alpha:\hat{\Lambda}\to V}|\spec\,S(\gg^\circ_S)^V|\setminus \bigcup_{\im \alpha\subset V'\nsubseteq V}|\spec\,S(\gg^\circ_S)^{V'}|,\]
  where the coproduct runs over the {\it continuous} homomorphisms.
  The claim now follows if we can show that, on points over \(\nn\), the restriction induces an equivalence
  \[|\spec\,S(\gg^\circ_S)^V|_\nn \simeq |\spec\,S(\gg^\circ_S)^e|_\nn=\{\nn\}.\]
  By the construction of tempered cohomology and since \(R\to S\) is flat we have a natural equivalence
  \[|\spec\,S(\gg^\circ_S)^V|=|\gg^\circ_S[\widehat{V}]|=|\gg^\circ[\widehat{V}]_S|=|\gg^\circ[\widehat{V}]^\heartsuit_{\pi_0S}|.\]
  Let \(a\) and \(b\) be points of \(\gg^\circ[\widehat{V}]^\heartsuit_{\pi_0S}\) over \(\nn\), which are then the images of the unique points of \(\spec\,k(a)\) and \(\spec\,k(b)\).
  Let \(\mathfrak{l}\subset k(a)\otimes_{k(\nn)}k(b)\) be a maximal ideal and denote the quotient field by \(L\).
  Then composing with the maps from \(\spec\,k(a)\) and \(\spec\, k(b)\) we get two maps \(\spec\, L\to \gg^\circ[\widehat{V}]^\heartsuit_{\pi_0S}\) over \(\spec\,\pi_0 S\) so that the images of the unique point of \(\spec\, L\) are  \(a\) and \(b\).

  By definition we have an equivalence of mapping spaces
  \[\Stk_{/\spec\,\pi_0 S}(\spec\, L, \gg^\circ[\widehat{V}]^\heartsuit_{\pi_0S})\simeq \Stk_{/\spec\, R_{\geq 0}}(\spec\, L, \gg^\circ[\widehat{V}])\simeq Mod_{\zz}^{cn}(\widehat{V},\gg^\circ(L)).\]
  The \(\mathbb{P}\)-divisable group \(\gg^\circ\) is connected over the closed point of \(R\) and \(L\) is a reduced \(\pi_0R/\mm\)-algebra, so by \cite[prop.~2.3.9]{Lurie_Ell2} we have \(\gg^\circ(L)=0\).
  Thus, the mapping space is contractible, so \(a=b\) and we are done.
\end{proof}
\begin{remark}\label[remark]{rmk:boundedHeightBoundedGeneration}
  Let \(G\) be a finite group, \(x\) a point in \(|\gg\{\bb G\}|\), and let \(S\) be an abelian subgroup in the support of \(x\).
  The proof of \cref{prop:temperedSupportsAreConjugate} also gives us the following:
  Let \(x\) be over \(\mathfrak{p}\in \spec\,R\) and denote the vector of heights of the \(\mathbb{P}\)-divisable group \((\gg_{\widehat{R}_\mathfrak{p}})^{et}\) by \(h(\mathfrak{p})\).
  The group \(S\) then must admit a continuous surjection from
  \[\prod_{\ell\in\mathbb{P}}\nb{\zz_\ell^\wedge}^{h(\mathfrak{p})_\ell}.\]
  In particular, if \(h(\mathfrak{p})\) is bounded above by some \(n\), then \(S\) can be generated by \(\leq n\) elements.
\end{remark}

\begin{lemma}\label[lemma]{lem:temperedHalfAbelianSupportStaysSupport}
  Let \(G\) be a finite group, let \(H\) be an abelian subgroup of \(G\), and let \(x\) be a point in \(|\gg\{\bb H\}|\).
  If \(S\subset G\) is in the support of the image of \(x\) in \(|\gg\{\bb G\}|\), then \(S\) is conjugate to a subgroup \(T\subset H\subset G\) such that \(T\) is in the support of \(x\). 
\end{lemma}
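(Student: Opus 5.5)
The plan is to reduce to the abelian case by choosing a minimal subgroup of \(H\) from which \(x\) itself comes, and then to compare supports using \cref{prop:temperedSupportsAreConjugate}.

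Write \(\bar{x}\) for the image of \(x\) in \(|\gg\{\bb G\}|\). The set of subgroups of \(H\) from which \(x\) comes is nonempty, since it contains \(H\), and it is finite. Choose a minimal element \(T\subset H\) of this set. By definition, \(T\in supp(x)\), and \(T\) is abelian because \(H\) is.

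Next I show that \(T\in supp(\bar{x})\). Clearly \(\bar{x}\) comes from \(T\), via the composite \(|\gg\{\bb T\}|\to|\gg\{\bb H\}|\to|\gg\{\bb G\}|\). It remains to rule out that \(\bar{x}\) comes from some proper subgroup \(T'\subsetneq T\).

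\begin{itemize}
  \item Minimality of \(T\) only concerns subgroups of \(H\), and does not obviously exclude \(T'\). So I instead use \cref{lem:temperedAbelianSupportStaysSupport} with \(V=T\) and \(W=H\).
  \item Let \(x_T\in|\gg\{\bb T\}|\) be a preimage of \(x\). If \(x_T\) came from a proper subgroup of \(T\), then so would \(x\), contradicting minimality. Hence \(T\in supp(x_T)\).
  \item In fact, supports in the abelian case are detected by \(Inj\) (\cref{lem:supportInAbelianGroupsIsInjStack}). So \(x_T\) lies in the image of \(|Inj(\hat{T},\gg)|\), and hence so does \(\bar{x}\), via \(\bb T\to\bb G\).
\end{itemize}

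The main obstacle is upgrading this statement about \(Inj(\hat{T},\gg)\) into the assertion that \(\bar{x}\) does not come from a proper subgroup of \(T\) inside \(G\). My first attempt would rerun the argument of \cref{prop:temperedSupportsAreConjugate}:

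\begin{enumerate}
  \item Reduce to \(R\) complete local via \cref{lem:reduceSupportToCompleteLocalRings}, and split \(\gg_S\simeq\gg^\circ_S\oplus\underline{\Lambda}_S\).
  \item By the claim proved there, the lifts of \(x_T\) over \(\nn\) lie in components indexed by continuous surjections \(\hat{\Lambda}\twoheadrightarrow T\).
  \item Any point of \(|\gg\{\bb G\}|\) over \(\nn\) lies in a component indexed by a \(G\)-conjugacy class of maps \(\gamma:\hat{\Lambda}\to G\).
  \item The image of \(\gamma\) is an invariant of \(\bar{x}\) up to conjugacy, and it equals \(T\). If \(\bar{x}\) came from some \(T'\), then \(\im\gamma\) would be subconjugate to \(T'\).
  \item Since \(\im\gamma=T\) and \(T'\subsetneq T\), a proper subgroup cannot contain a conjugate of \(T\), because its order is too small.
\end{enumerate}

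This gives \(T\in supp(\bar{x})\).

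Finally, \(S\) and \(T\) both lie in \(supp(\bar{x})\), and both are abelian, by the remark following the definition of support. So \cref{prop:temperedSupportsAreConjugate} shows that \(S\) is conjugate to \(T\subset H\), and \(T\in supp(x)\), as required.
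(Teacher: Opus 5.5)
Your route is correct in outline but differs from the paper's at the key step. The overall shape is shared: take a support of \(x\) inside \(H\), show it is a support of \(\bar x\), and finish with \cref{prop:temperedSupportsAreConjugate}.

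\textbf{How the paper does the key step.} It never returns to the geometry. From \(S'\in supp(x)\) it gets some \(T\subseteq S'\) in \(supp(\bar x)\), and proves \(T=S'\) combinatorially. It connects \(x\) to the image in \(|\gg\{\bb H\}|\) of a point of \(|\gg\{\bb T\}|\) over \(\bar x\) by a zig-zag of abelian subgroups, and transports \(S'\) along it:
\begin{itemize}
\item pushing forward with \cref{lem:temperedAbelianSupportStaysSupport};
\item using that supports of points on abelian groups are singletons;
\item pulling back along the closed immersions \(\gg\{\bb f_i\}\).
\end{itemize}
This produces a subgroup of order \(|S'|\) in the singleton support \(\{T\}\), so \(T=S'\).

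\textbf{How you do it.} You prove \(T\in supp(\bar x)\) directly by rerunning the splitting argument. In effect this is a strengthened form of \cref{prop:temperedSupportsAreConjugate}: if \(x_T\in|Inj(\hat T,\gg)|\) and \(\bar x\) comes from \(T'\), then \(T\) is subconjugate to \(T'\). This costs a second pass through the reduction to complete local rings, the connected-\'etale splitting and Lurie's component decomposition. In return it avoids the subgroup bookkeeping along the zig-zag, and it gives \cref{lem:temperedSupportStaysSupport} for arbitrary \(H\) at once. The paper's route uses \cref{prop:temperedSupportsAreConjugate} only as a black box.

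\textbf{Step 4 needs correcting.} You assert that \(\im\gamma\) is, up to conjugacy, an invariant of \(\bar x\), independent of the chosen lift to \(|\gg_S\{\bb G\}|\) over \(\nn\). Different lifts of \(\bar x\) over \(\nn\) can lie in different components. This already happens for cyclic \(G\) when the Galois action on \(\gg^{\et}\) over \(k(\mm)\) is nontrivial. Showing that the indexing images are nevertheless conjugate is itself a support comparison of exactly the kind you are proving, and nothing in your sketch establishes it.

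What the proof of \cref{prop:temperedSupportsAreConjugate} actually supplies is matched lifts:
\begin{enumerate}
\item Take \(z\in|\gg\{\bb T'\}|\) over \(\bar x\), and connect \(x_T\) and \(z\) by a zig-zag through abelian subgroups.
\item Fix a lift \(y_T\) of \(x_T\) over \(\nn\). Transport its prime along the zig-zag, using Tag 01JT of the Stacks project on the pullback squares of affines, to get a lift \(y_z\) of \(z\) over \(\nn\) with the same image in \(|\gg_S\{\bb G\}|\).
\item Then \(y_T\) lies in a component indexed by a surjection \(\beta\colon\hat\Lambda\twoheadrightarrow T\), and \(y_z\) in one indexed by some \(\delta\colon\hat\Lambda\to T'\). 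The two composites into \(G\) must be \(G\)-conjugate.
\item Hence \(T\) is conjugate to \(\im\delta\subseteq T'\), and your order argument applies.
\end{enumerate}
With this replacement the proof goes through. A minor point: in step 3 you mean points of \(|\gg_S\{\bb G\}|\), not \(|\gg\{\bb G\}|\).
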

\begin{proof}
  Denote the image of \(x\) in \(|\gg\{\bb G\}|\) by \(y\), and let \(S'\) be in the support of \(x\).
  Then \(y\) also comes from \(S'\subset H\subset G\), so that the support of \(y\) contains a subgroup \(T\subset S'\subset H\subset G\).
  \cref{prop:temperedSupportsAreConjugate} tells us that \(T\) and \(S\) are conjugate in \(G\).

  Let us now show that we actually have \(T=S'\).
  By assumption, there is a point \(p\in|\gg\{\bb T\}|\) that maps to \(y\).
  Denote the image of \(p\) in \(|\gg\{\bb H\}|\) by \(x'\).
  Then both \(x\) and \(x'\) map to \(y\).
  In particular, \(T\) must be in the support of \(x'\).
  Moreover, there is a zig-zag of subconjugations of abelian subgroups of \(G\)
\[\begin{tikzcd}
	{A_1} & {A_3} & {A_n} & \\
	H & {A_2} & \ldots & H
	\arrow["{f_0}"{description}, hook, from=2-1, to=1-1]
	\arrow["{f_1}"{description}, hook, from=2-2, to=1-1]
	\arrow["{f_2}"{description}, from=2-2, to=1-2]
	\arrow[hook, from=2-3, to=1-2]
	\arrow[hook, from=2-3, to=1-3]
	\arrow["{f_n}"{description}, hook, from=2-4, to=1-3]
\end{tikzcd}\]
  and points \(p_i\) in the associated stacks \(\gg\{\bb A_i\}\) that connects \(x\) to \(x'\).

  By \cref{lem:temperedAbelianSupportStaysSupport} we then have that \(f_0(S')\) is in the support of \(p_1\).
  By assumption \(p_1\) comes from \(f_1(A_2)\), so \(f_1(A_2)\) contains a support of \(p_1\), too.
  Since \(A_1\) is abelian, \cref{prop:temperedSupportsAreConjugate} tells us that the support of \(p_1\) is a singelton, so \(f_0(S')\subset f_1(A_2)\).

  Now \(\gg\{\bb f_1\}\) is a closed immersion of affine stacks, so the preimage \(f_1^{-1}(f_0(S'))\) must be in the support of \(p_2\).
  Continuing like this we find that
  \[f_n^{-1}(\cdots(f_1^{-1}(f_0(S')))\cdots)=T.\]
  In particular, we have that the order of \(S'\) and \(T\) are equal.
  Thus, the inclusion \(T\subset S'\) must be an equality.
\end{proof}

\begin{lemma}\label[lemma]{lem:temperedSupportStaysSupport}
  Let \(H\subset G\) be finite groups, let \(x\) be a point in \(|\gg\{\bb H\}|\), and denote its image in \(|\gg\{\bb G\}|\) by \(y\).
  If \(S\subset H\) is in the support of \(x\), then \(S\) is also in the support of \(y\).
\end{lemma}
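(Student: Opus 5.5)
The plan is to reduce to the abelian case handled by \cref{lem:temperedHalfAbelianSupportStaysSupport}, using that $x$ comes from $S$ and that supports in $G$ form a single conjugacy class by \cref{prop:temperedSupportsAreConjugate}.

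First, since $S$ is in the support of $x$, the point $x$ comes from $S$: there is a point $p\in|\gg\{\bb S\}|$ mapping to $x$. By the remark following the definition of support, $S$ is abelian. Moreover $p$ has $S$ in its support, since if $p$ came from a proper subgroup $S'\subsetneq S$, then so would $x$ (composing the maps $|\gg\{\bb S'\}|\to|\gg\{\bb S\}|\to|\gg\{\bb H\}|$), contradicting minimality. Composing further to $G$, the point $y$ is the image of $p$ under $|\gg\{\bb S\}|\to|\gg\{\bb G\}|$, so $y$ comes from $S$.

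It remains to show that $y$ does not come from a proper subgroup of $S$. Suppose $y$ comes from some $S'\subsetneq S$. Then, shrinking $S'$ to a minimal subgroup from which $y$ comes, some $T\subseteq S'$ lies in $supp(y)$, so $|T|<|S|$. Now apply \cref{lem:temperedHalfAbelianSupportStaysSupport} to the abelian subgroup $S\subset G$ and the point $p\in|\gg\{\bb S\}|$. It tells us that $T$ is conjugate in $G$ to a subgroup $T'\subset S$ which lies in the support of $p$.

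The key step is identifying the support of $p$. Since $S$ is abelian, \cref{prop:temperedSupportsAreConjugate} applied to the group $S$ says that $supp(p)$ consists of mutually conjugate, hence equal, subgroups of $S$. Since $S\in supp(p)$, we get $supp(p)=\{S\}$, so $T'=S$. Then $|T|=|T'|=|S|$, contradicting $|T|<|S|$. Hence $S$ is minimal among the subgroups from which $y$ comes, i.e. $S\in supp(y)$.

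The main care point is this last step: \cref{lem:temperedHalfAbelianSupportStaysSupport} must be applied with the abelian group $S$ in place of $H$, not with $H$ itself, since $H$ need not be abelian. We also need that $supp(p)$ really is the singleton $\{S\}$, which comes from conjugacy inside an abelian group being equality. Everything else is order counting.
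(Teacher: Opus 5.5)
Your proof is correct and follows the paper's argument. You take a preimage of $x$ in $|\gg\{\bb S\}|$, observe that its support is $\{S\}$, and apply \cref{lem:temperedHalfAbelianSupportStaysSupport} with the abelian group $S$ in place of $H$. The only difference is cosmetic: the paper concludes that every support of $y$ is conjugate to $S$ and then uses that supports are closed under conjugation, whereas you verify minimality of $S$ directly by comparing orders.
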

\begin{proof}
  Let \(z\) be a preimage of \(x\) in \(|\gg\{\bb S\}|\).
  Applying \cref{lem:temperedHalfAbelianSupportStaysSupport} to \(z\) and \(S\subset G\) we find that any support \(S'\subset G\) of \(y\) must be conjugate in \(G\) to a subgroup \(T\subset S\) such that \(T\) is in the support of \(z\).
  But the support of \(z\) must be \(S\) itself, so \(T=S\).
  Since supports are closed under conjugation, we see that \(S\) is also in the support of \(y\) and we are done.
\end{proof}

\begin{definition}
  Let \(G\) be a finite group and let \(\mathcal{F}\) be a family of subgroups of \(G\).
  We define the {\it vanishing locus of \(\mathcal{F}\)} to be
  \[Z_\gg(\mathcal{F})=\{x\in|\gg\{\bb G\}|\mid supp(x)\subset\mathcal{F}\}.\]
\end{definition}

\begin{lemma}\label[lemma]{lem:ClosedSubsetOfFamily}
  Let \(G\) be a finite group and \(\mathcal{F}\) be a family of subgroups of \(G\).
  \begin{enumerate}
    \item For every subgroup \(H\subset G\) the preimage of \(Z_\gg(\mathcal{F})\) under the map
      \[|\gg\{\bb H\}|\to|\gg\{\bb G\}|\]
      induced by the inclusion coincides with \(Z_\gg(\mathcal{F}\vert_H)\).
    \item The subset \(Z_\gg(\mathcal{F})\subset |\gg\{\bb G\}|\) is closed.
  \end{enumerate}
\end{lemma}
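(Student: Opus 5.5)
For (1), the plan is to compare supports directly using the results of the previous subsection. Let \(x\in|\gg\{\bb H\}|\) with image \(y\in|\gg\{\bb G\}|\). By \cref{prop:temperedSupportsAreConjugate}, both \(supp(x)\) and \(supp(y)\) are single conjugacy classes, in \(H\) and in \(G\) respectively; both are nonempty, since every point comes from some abelian subgroup and hence from a minimal one. Pick \(S\in supp(x)\). By \cref{lem:temperedSupportStaysSupport}, \(S\in supp(y)\), so \(supp(y)\) is the \(G\)-conjugacy class of \(S\). Since \(\mathcal{F}\) is closed under conjugation in \(G\), we get \(supp(y)\subset\mathcal{F}\) if and only if \(S\in\mathcal{F}\). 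Because \(S\subset H\), this holds if and only if \(S\in\mathcal{F}\vert_H\). As \(supp(x)\) is the \(H\)-conjugacy class of \(S\) and \(\mathcal{F}\vert_H\) is closed under \(H\)-conjugation, this is in turn equivalent to \(supp(x)\subset\mathcal{F}\vert_H\). This proves the equality of preimage and \(Z_\gg(\mathcal{F}\vert_H)\).

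For (2), I would use the colimit presentation \(\gg\{\bb G\}\simeq\colim_{G/A\in\Orb_\ab(G)}\Spec R_\gg^{\bb A}\). Since \(|-|\) commutes with colimits, \(|\gg\{\bb G\}|\) carries the quotient (colimit) topology. It therefore suffices to show that the preimage of \(Z_\gg(\mathcal{F})\) in each \(|\gg\{\bb A\}|\), for \(A\) abelian, is closed. There might be a subtlety here: the topology on \(|-|\) of a colimit of stacks in \cite{BalderramaDaviesLinskens_2025_AffinenessReconstructionComplexperiodic} should be checked to be the colimit topology. I expect this to follow from \(|-|\) preserving colimits, taken as given in the appendix.

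By (1), that preimage is \(Z_\gg(\mathcal{F}\vert_A)\subset|\gg[\hat A]|\). So we reduce to the case where \(G=A\) is abelian. By \cref{lem:supportInAbelianGroupsIsInjStack}, a point \(x\) has support \(\{S\}\) exactly when \(x\) lies in the image of \(|Inj(\hat S,\gg)|\). These images partition \(|\gg[\hat A]|\), and the image of \(|Inj(\hat S,\gg)|\) is \(|\gg[\hat S]|\) minus the images of \(|\gg[\hat{S'}]|\) for \(S'\subsetneq S\). Hence \(x\) comes from \(S\) iff \(x\in|\gg[\hat S]|\), and \(supp(x)\subset\mathcal{F}\vert_A\) iff \(x\) comes from some \(S\in\mathcal{F}\vert_A\), using that the family is closed under subgroups. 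Therefore
\[Z_\gg(\mathcal{F}\vert_A)=\bigcup_{S\in\mathcal{F}\vert_A}\im\big(|\gg[\hat S]|\to|\gg[\hat A]|\big).\]
Each map \(\gg[\hat S]\to\gg[\hat A]\) is a closed immersion, as noted in the proof of \cref{lem:supportInAbelianGroupsIsInjStack}, so each image is closed. The union is finite, hence closed.

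The main obstacle is the topological point in (2), namely confirming that closedness can be checked on the abelian charts. If the colimit-topology claim is unavailable, the fallback is that the map \(\coprod_A|\gg\{\bb A\}|\to|\gg\{\bb G\}|\) is a surjective quotient map because it is induced by an effective epimorphism of stacks. The saturatedness of the preimage is automatic from (1).
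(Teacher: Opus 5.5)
Your proposal is correct and follows essentially the same route as the paper. Part (1) uses \cref{lem:temperedSupportStaysSupport}, the fact that supports form a single conjugacy class, and conjugation-invariance of the families; part (2) uses the quotient topology on \(|\gg\{\bb G\}|\) together with (1) to reduce to the abelian case, where \(Z_\gg(\mathcal{F})\) is a finite union of images of closed immersions between affines. Your write-up of (1) is cleaner than the paper's, and the topological point you flag is exactly what the paper relies on, since \(|-|\) preserves colimits by construction.
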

\begin{proof}
  For (1), denote the induced map of topological spaces by \(\iota\) and the preimage by \(Z'\).
  We have that
  \[Z'=\{x\in|\gg\{\bb H\}|\mid supp(\iota(x))\subset \mathcal{F}\}.\]
  Since \(\iota(x)\) comes from \(H\), there is a support \(S\subset H\subset G\) of \(x\).
  But then \(S\) must also be a support of \(x\), so \(supp(x)=(S)_H\subset (S)_G=supp(\iota(x))\).
  Thus, we have the inclusion \(Z'\subset Z_\gg(\mathcal{F}\vert_H)\).
  In the other direction, if \(supp(x)=(S)_H\subset \mathcal{F}\vert_H\), then by \cref{lem:temperedSupportStaysSupport} we have \(supp(\iota(x))=(S)_G\) and \(S\in\mathcal{F}\).
  Since families are closed under conjugation, we then also have that \(supp(\iota(x))\subset\mathcal{F}\), so \(Z_\gg(\mathcal{F}\vert_H)\subset Z'\).

  For (2), let us first consider the case when \(G\) is abelian.
  Then, for every subgroup \(H\subset G\), the map \(\gg\{\bb H\}\to\gg\{\bb G\}\) is a closed immersion between affines and we have
  \[Z_\gg(\mathcal{F})=\bigcup_{H\in\mathcal{F}}\im (|\gg\{\bb H\}|\to\gg\{\bb G\}|).\]
  This is a finite union of closed subsets and thus closed.

  In the general case, we know that the topological space \(|\gg\{\bb G\}|\) is a quotient of the topological space
  \[\coprod_{G/H\in \Orb_{\ab}(G)}|\gg\{\bb H\}|,\]
  and by (1) the preimage of \(Z_\gg(\mathcal{F})\) in this is given by
  \[\coprod_{G/H\in \Orb_{\ab}(G)}Z_\gg(\mathcal{F}\vert_H).\]
  Since each \(H\) in the above is abelian, we already know that \(Z_\gg(\mathcal{F}\vert_H)\subset |\gg\{\bb H\}|\) is closed, so the above is closed, so \(Z_\gg(\mathcal{F})\) is closed.
\end{proof}

\begin{definition}
  Let \(G\) be a finite group and let \(\mathcal{F}\) be a family of subgroups of \(G\).
  We denote the open substack of \(\gg\{\bb G\}\) associated to the open subset \(|\gg\{\bb G\}|\setminus Z_\gg(\mathcal{F})\), whose existence is guaranteed by \cref{rmk:open_substacks_exist}, by \(U_\gg(\mathcal{F})\).
\end{definition}

\begin{proposition}\label[proposition]{prop:affinizationVIFandZF}
  Let \(G\) be a finite group and \(\mathcal{F}\) be a family of subgroups of \(G\).
  Define the ideal \(I_\gg(\mathcal{F})\) as the intersection of all the restriction ideals for subgroups contained in \(\mathcal{F}\), that is
  \[I_\gg(\mathcal{F})=\bigcap_{H\in \mathcal{F}}\ker(\pi_0 R_\gg^{\bb G}\to \pi_0R_\gg^{\bb H}).\]
  Then \(Z_\gg(\mathcal{F})\) is the preimage of \(V(I_\gg(\mathcal{F}))\) under the affinization map
  \[|a|:|\gg\{\bb G\}|\to |\spec\,R_\gg^{\bb G}|.\]
\end{proposition}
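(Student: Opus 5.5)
We prove the two inclusions separately, in each case reducing to abelian subgroups via the presentation \(|\gg\{\bb G\}|=\colim_{G/H\in\Orb_\ab(G)}|\gg\{\bb H\}|\). Every point \(x\in|\gg\{\bb G\}|\) is the image of some \(x_A\in|\gg\{\bb A\}|\) with \(A\) abelian. By \cref{lem:ClosedSubsetOfFamily}(1), \(x\in Z_\gg(\mathcal{F})\) if and only if \(x_A\in Z_\gg(\mathcal{F}\vert_A)\). By naturality of affinization, \(|a|(x)\) is the image of \(x_A\) under \(|\spec\,R_\gg^{\bb A}|\to|\spec\,R_\gg^{\bb G}|\); write \(\rho:\pi_0R_\gg^{\bb G}\to\pi_0R_\gg^{\bb A}\) for the restriction. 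The prime \(|a|(x)\) is \(\rho^{-1}(\mathfrak{q})\), where \(\mathfrak{q}\) is the prime of \(x_A\). The statement therefore becomes: \(supp(x_A)\subset\mathcal{F}\) if and only if \(\rho(I_\gg(\mathcal{F}))\subset\mathfrak{q}\).

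\textbf{Inclusion \(Z_\gg(\mathcal{F})\subset|a|^{-1}V(I_\gg(\mathcal{F}))\).} Suppose \(supp(x_A)=\{S\}\) with \(S\in\mathcal{F}\); this is a singleton by \cref{prop:temperedSupportsAreConjugate}. Then \(x_A\) comes from some \(x_S\in|\gg\{\bb S\}|\), so \(\mathfrak{q}\) is the image of the prime of \(x_S\) under the restriction \(\pi_0R_\gg^{\bb A}\to\pi_0R_\gg^{\bb S}\). Every element of \(I_\gg(\mathcal{F})\) restricts to zero in \(\pi_0R_\gg^{\bb S}\), because \(S\in\mathcal{F}\). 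Hence \(\rho(I_\gg(\mathcal{F}))\) lies in the preimage of any prime of \(\pi_0R_\gg^{\bb S}\), and in particular in \(\mathfrak{q}\). This direction is formal.

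\textbf{Inclusion \(|a|^{-1}V(I_\gg(\mathcal{F}))\subset Z_\gg(\mathcal{F})\).} This is the main obstacle: if \(supp(x_A)=\{S\}\) with \(S\notin\mathcal{F}\), we must produce an element of \(I_\gg(\mathcal{F})\) whose restriction to \(A\) lies outside \(\mathfrak{q}\). Since \(\pi_0R_\gg^{\bb G}\) is only a limit over abelian subgroups, such elements are hard to write down directly. We would instead use a closedness argument.

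First we would construct, for each abelian \(B\), an element of \(I_\gg(\mathcal{F}\vert_B)\) not in the prime of \(x_B\). Here \(Z_\gg(\mathcal{F}\vert_B)\) is a finite union of closed subschemes \(\gg[\widehat{T}]\), \(T\in\mathcal{F}\vert_B\), of the affine \(\gg[\widehat{B}]\). Their ideals are exactly \(\ker(\pi_0R^{\bb B}_\gg\to\pi_0R^{\bb T}_\gg)\), because these are closed immersions, i.e. surjections on \(\pi_0\). So \(V(I_\gg(\mathcal{F}\vert_B))=Z_\gg(\mathcal{F}\vert_B)\). This settles the abelian case, where \(a\) is an equivalence, and gives an element \(f_B\in I_\gg(\mathcal{F}\vert_B)\) not vanishing at \(x_B\).

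To get to \(G\), we would use that the ideals \(I_\gg(\mathcal{F}\vert_B)\), for \(B\) abelian, form a compatible system: restriction along subconjugations sends \(I_\gg(\mathcal{F}\vert_B)\) into \(I_\gg(\mathcal{F}\vert_{B'})\). We then pass to the limit using transfers, or a norm. Concretely, we would take a power of a product of suitable Galois or Weyl conjugates of \(f_A\) to obtain a compatible family over \(\Orb_\ab(G)\). Alternatively, for a finite group, one can use that \(\pi_0R^{\bb G}_\gg\to\prod_{B}\pi_0R^{\bb B}_\gg\) is integral: the limit has finite index in the product after inverting \(|G|\), as in Lurie's character theory. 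The integrality version is the one we would try first.

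Integrality implies that \(|\gg\{\bb G\}|\to|\spec\,R_\gg^{\bb G}|\) is closed and surjective, with fibres identified by the zig-zags. Then \(|a|(Z_\gg(\mathcal{F}))\) is closed. By the integrality, any point of \(\spec\,R_\gg^{\bb G}\) outside \(|a|(Z_\gg(\mathcal{F}))\) admits a function in \(I_\gg(\mathcal{F})\) not vanishing there, via going-up applied to each \(x_B\). The fibres of \(|a|\) consist of points that are related through restriction. Combining this with \cref{lem:temperedSupportStaysSupport}, a fibre cannot meet both \(Z_\gg(\mathcal{F})\) and its complement, which gives the reverse inclusion.
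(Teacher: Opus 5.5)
Your reduction to a point \(x_A\) over an abelian \(A\), the easy inclusion \(Z_\gg(\mathcal{F})\subset|a|^{-1}(V(I_\gg(\mathcal{F})))\), and the abelian case are fine and match the paper. The gap is in the reverse inclusion. There the argument rests on two assertions, both attributed to integrality of \(\pi_0R_\gg^{\bb G}\to\prod_B\pi_0R_\gg^{\bb B}\).

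\emph{First assertion:} the fibres of \(|a|\) are exactly the zig-zag classes, i.e.\ \(|a|\) is injective. Integrality gives closedness, lying over and going up, but it says nothing about which points of a fibre are related by restrictions. The map \(\pi_0R\to\prod_B\pi_0R_\gg^{\bb B}\) is integral too, and there the analogous conclusion fails. For \(\gg=\mu_{\mathbb{P}^\infty}\) over \(\KU\), \(G=C_2\) and \(p\) odd, the points \((p,t-1)\) and \((p,t+1)\) of \(\spec\,\zz[t]/(t^2-1)\) have supports \(\{e\}\) and \(\{C_2\}\), yet the same image in \(\spec\,\zz\).

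\emph{Second assertion:} \(|a_H|\) is surjective also for non-abelian \(H\in\mathcal{F}\). You need this because, by finiteness, \(V(I_\gg(\mathcal{F}))\) is the union of the images of \(\spec\,R_\gg^{\bb H}\) over all \(H\in\mathcal{F}\). Surjectivity requires the kernel of \(\pi_0R_\gg^{\bb H}\to\prod_B\pi_0R_\gg^{\bb B}\) to consist of nilpotents, which integrality does not give.

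So the crux is never addressed: producing an element of \(I_\gg(\mathcal{F})\) outside \(|a|(x)\), which you correctly flag as the main obstacle. The paper supplies it with genuinely new input. It reduces to \(R\) complete local with \(x\) over the closed point, and splits the connected--étale sequence over a faithfully flat \(S\). Lurie's character theorem (Elliptic Cohomology III, thm.~4.3.2) then writes the affinization map as a disjoint union indexed by conjugacy classes of \(\gamma:\widehat\Lambda\to G\), compatibly with restriction. The point \(x'\) lies in the piece of some \(\alpha\) with \((\im\alpha)_G=supp(x')\not\subset\mathcal{F}\). The images of \(\spec\,S_{\gg_S}^{\bb H}\) for all \(H\in\mathcal{F}\), abelian or not, lie in pieces with \(\im\gamma\in\mathcal{F}\).

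Your idea of a ``compatible family over \(\Orb_\ab(G)\)'' can be made to work, but with idempotents rather than norms or integrality. Note that \(\pi_0R_\gg^{\bb G}\) is \(\pi_0\) of a homotopy limit, not the limit of the \(\pi_0\)'s, so general compatible families in \(\pi_0\) do not give elements of it.

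Start from the paper's reduction to \(R\) complete local and \(x\) over \(\mm\). Each \(\pi_0R_\gg^{\bb B}\), \(B\) abelian, is finite over \(\pi_0R\), hence a finite product of local rings whose factors correspond to the points over \(\mm\). Summing the primitive idempotents at the points of the zig-zag class of \(x\) gives a compatible family \((e_{x,B})_B\). Since \(\Map_{\CAlg}(\ss\times\ss,-)\) preserves limits and computes idempotents of \(\pi_0\), this family defines an idempotent \(e_x\in\pi_0R_\gg^{\bb G}\).

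Now suppose \(x\notin Z_\gg(\mathcal{F})\). By \cref{prop:temperedSupportsAreConjugate}, no point of this class lies over an abelian \(B\in\mathcal{F}\). Idempotents of \(\pi_0R_\gg^{\bb H}\) are detected on abelian subgroups, so \(e_x\) restricts to \(0\) on every \(H\in\mathcal{F}\). Thus \(e_x\in I_\gg(\mathcal{F})\), while \(\rho(e_x)=e_{x,A}\notin\mathfrak{q}\).

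That would be a more elementary route than the paper's appeal to character theory. As written, however, your proposal contains no valid argument for this step.
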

\begin{proof}
  Since each map \(\pi_0R_\gg^{\bb G}\to \pi_0R_\gg^{\bb H}\) is finite (\cite[cor.~4.7.13(b)]{Lurie_2019_EllipticCohomologyIII}) and thus integral, we have that \(V(I_\gg(\mathcal{F}))\) is the union of the images of \(|\spec\,R_\gg^{\bb H}|\to |\spec\,R_\gg^{\bb G}|\) over all \(H\in \mathcal{F}\).

  A point \(x\in Z_\gg(\mathcal{F})\) is, by definition, in the image of \(|\gg\{\bb H\}|\to |\gg\{\bb G\}|\) with \(H\in\mathcal{F}\) abelian.
  For such \(H\) \(\gg\{\bb H\}\) is affine, so we see that \(|a|(x)\in V(I_\gg(\mathcal{F}))\).
  
  Now assume that \(x\notin Z_\gg(\mathcal{F})\).
  We need to show that then \(|a|(x)\notin V(I_\gg(\mathcal{F}))\).

  Let \(\mm\in |\spec\,R|\) be the image of \(x\).
  Since \(R\to R_{\mm}\) is a localization and \(R_\mm\to \widehat{R_\mm}\) is faithfully flat and local (since \(R\) is noetherian), \(x\) has a unique preimage \(x'\) in
  \[|\spec\,\widehat{R_\mm}\times_{\spec\,R}\gg\{\bb G\}|=|\gg_{\widehat{R_\mm}}\{\bb G\}|,\]
  \(x\in Z_\gg(\mathcal{F})\) if and only if \(x'\in Z_{\gg_{\widehat{R_\mm}}}(\mathcal{F})\), and \(|a|(x)\in V(I_\gg(\mathcal{F}))\) if an only if \(|a|(x')\in V(I_{\gg_{\widehat{R_\mm}}}(\mathcal{F}))\).
  Thus, we may assume that \(R\) is complete local and \(x\) lies over the closed point \(\mm\).

  Now \(\gg\) admits a connected-étale sequence
  \[0\to \gg^\circ \xrightarrow{f} \gg\xrightarrow{q} \gg^{\et}\to 0.\]
  If the characteristic of \(k(\mm)\) is zero we can simply set \(\gg^\circ=0\), else we use \cite[cor.~2.5.22]{Lurie_Ell2}.
  Now by \cite[prop.~2.7.15]{Lurie_2019_EllipticCohomologyIII} there exists a faithfully flat map \(R\to S\) and a colattice \(\Lambda\) such that \(\gg^{\et}_S\simeq \underline{\Lambda}_S\) and such that basechange of the connected-étale sequence
  \[0\to \gg^\circ_S\to\gg_S\to\gg^{\et}_S\to 0\]
  splits.
  By faithful flatness there exists a lift \(x'\) of \(x\) to \(|\gg_S\{\bb G\}|\), and denote the image of \(x'\) in \(|\spec\, S|\) by \(\nn\).

  Our assumption on \(x\) gives us that \(x'\notin Z_{\gg_S}(\mathcal{F})\), and it suffices to show that \(|a|(x')\notin V(I_{\gg_S}(\mathcal{F}))\).
  Now \cite[thm.~4.3.2]{Lurie_2019_EllipticCohomologyIII} identifies the affinization map with
  \[\coprod_{(\gamma:\widehat{\Lambda}\to G)_G}|a|_{[\gamma]}:\coprod_{(\gamma:\widehat{\Lambda}\to G)_G}|\gg^\circ_S\{\bb Z_G(\im\gamma)\}|\to\coprod_{(\gamma:\widehat{\Lambda}\to G)_G}|\spec\,S_{\gg^\circ_S}^{\bb Z_G(\im\gamma)}|.\]
  Let \(\alpha\) be a representative for the class of the component \(x'\) is contained in.
  As in the proof of \cref{prop:temperedSupportsAreConjugate} we see that the support of \(x'\) must then be given by \((\im\alpha)_G\).
  In particular, since \(x'\notin Z_{\gg_S}(\mathcal{F})\), we must have \(\im\alpha\notin\mathcal{F}\).

  For every \(H\in\mathcal{F}\) the image of
  \[|\spec\,S_{\gg_S}^{\bb H}|\to |\spec\,S_{\gg_S}^{\bb G}|\]
  is contained parts of the above disjoint union where some representative \(\gamma:\widehat{\Lambda}\to G\) factors over \(H\), so in parts where \(\im\gamma\in\mathcal{F}\).
  In particular, we see that \(|a|(x')\) can't be contained in \(V(I_{\gg_S}(\mathcal{F}))\).
\end{proof}

\begin{proposition}
  Let \(H\subset G\) be finite groups and let \(\mathcal{F}\) be a family of subgroups of \(G\).
  \begin{enumerate}
    \item The basechange of \(U_\gg(\mathcal{F})\) to \(\gg\{\bb H\}\) is \(U_\gg(\mathcal{F}\vert_H)\).
    \item The canonical map \(U_\gg(\mathcal{F})\to \gg\{\bb G\}\) is a locally quasi-compact open immersion.
  \end{enumerate}
\end{proposition}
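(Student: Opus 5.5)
For (1), I would use that an open immersion is a \(|-|\)-Cartesian map, so its basechange along any map of stacks is again an open immersion. Its underlying open subset is then the preimage of the original open subset; this is the content of the open-immersion results in appendix~\ref{sec:stacks}. Concretely, \(\gg\{\bb H\}\times_{\gg\{\bb G\}}U_\gg(\mathcal{F})\to\gg\{\bb H\}\) is an open immersion with underlying open subset
\[\iota^{-1}\bigl(|\gg\{\bb G\}|\setminus Z_\gg(\mathcal{F})\bigr)=|\gg\{\bb H\}|\setminus \iota^{-1}Z_\gg(\mathcal{F}).\]
By \cref{lem:ClosedSubsetOfFamily}(1), \(\iota^{-1}Z_\gg(\mathcal{F})=Z_\gg(\mathcal{F}\vert_H)\). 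Open substacks are determined by their underlying open subsets (\cref{rmk:open_substacks_exist}), so the basechange is \(U_\gg(\mathcal{F}\vert_H)\). The only thing to check here is that the uniqueness statement for open substacks in the appendix applies to \(|-|\)-Cartesian maps with a prescribed image.

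For (2), I would use the effective epimorphism
\[Y=\coprod_{G/H\in\Orb_\ab(G)}\gg\{\bb H\}\simeq\coprod_{H}\Spec R_\gg^{\bb H}\to \gg\{\bb G\}.\]
It is effective because \(\gg\{\bb G\}\) is the colimit of these affines over \(\Orb_\ab(G)\), and the map from the coproduct of the terms of a colimit diagram to the colimit is an effective epimorphism. By (1), the basechange of \(U_\gg(\mathcal{F})\) to \(Y\) is \(\coprod_H U_\gg(\mathcal{F}\vert_H)\to\coprod_H\Spec R_\gg^{\bb H}\), so by \cref{def:locally_quasicompact_open_immersion} it suffices to show that this is a quasi-compact open immersion.

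Since \(H\) is abelian, \(\gg\{\bb H\}\) is affine and \(Z_\gg(\mathcal{F}\vert_H)\) is the finite union of the images of the closed immersions \(\gg\{\bb K\}\to\gg\{\bb H\}\) for \(K\in\mathcal{F}\vert_H\), as in the proof of \cref{lem:ClosedSubsetOfFamily}(2). Each such image is \(V(J_K)\) for the kernel \(J_K\) of the surjection \(\pi_0R_\gg^{\bb H}\to\pi_0R_\gg^{\bb K}\). Because \(R_\gg^{\bb H}\) is finite over the noetherian ring \(R\), \(\pi_0R_\gg^{\bb H}\) is noetherian and \(J_K\) is finitely generated. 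Hence the complement of \(Z_\gg(\mathcal{F}\vert_H)=V(\prod_K J_K)\) is quasi-compact, and each \(U_\gg(\mathcal{F}\vert_H)\to\Spec R_\gg^{\bb H}\) is a quasi-compact open immersion.

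The main obstacle is the final step: showing that a coproduct of quasi-compact open immersions over the summands of a coproduct of affines is again a quasi-compact open immersion. The point is that \(|-|\) preserves coproducts, and that quasi-compactness of an open embedding is checked after pulling back to affines (quasi-compact opens), which here reduces to the individual summands. If the appendix's definition of quasi-compact demands more, for example quasi-compactness globally, I would fall back on the per-summand basechanges as in the proof of \cref{lem:lqc_open_immersion_recollement}, which only uses the summands.
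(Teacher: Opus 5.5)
Your proposal is correct and follows the paper's proof. Part (1) is exactly the paper's combination of \cref{cor:open_immersion_basechange} with \cref{lem:ClosedSubsetOfFamily}(1), and part (2) likewise base changes along the colimit presentation by the noetherian affines \(\gg\{\bb H\}\), \(H\) abelian. The step you flag as the main obstacle is harmless: \(\Orb_{\ab}(G)\) has only finitely many objects, so \(Y\) is itself the affine \(\Spec\prod_H R_\gg^{\bb H}\) with noetherian \(\pi_0\), hence every open of \(|Y|\) is quasi-compact and any open immersion into \(Y\) is quasi-compact.
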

\begin{proof}
  For (1), by \cref{cor:open_immersion_basechange} the basechange is the open immersion associated to the preimage of \(|\gg\{\bb G\}|\setminus Z_\gg(\mathcal{F})\), which, by \cref{lem:ClosedSubsetOfFamily}, is exactly \(|\gg\{\bb H\}|\setminus Z_\gg(\mathcal{F\vert_H})\).

  For (2), write
  \[\gg\{\bb G\}\simeq \colim_{G/H\in \Orb_{\ab}(G)}\gg\{\bb H\}.\]
  Since each \(\gg\{\bb H\}\) is affine, \cref{lem:open_immersion_basechange_to_affines} and (1) tell us that the basechange of \(U_\gg(\mathcal{F})\) agrees with an open spectral subscheme of \(\gg\{\bb H\}\simeq \Spec R(\gg)^H\).
  Since \(R\) is noetherian and \(\gg\{\bb H\}\) is finite over \(\spec\,R\), also \(\gg\{\bb H\}\) is noetherian, so every open is quasi-compact.
\end{proof}

\subsection{Family completion over affines}\label{ssec:family_completion_affine}
\begin{lemma}\label[lemma]{lem:temperedIsMNNAbNilpotent}
  Let \(G\) be a finite group.
  Then \(R(\gg)_G\) is nilpotent with respect to the family of abelian subgroups of \(G\), in the sense of \cite[def.~6.36]{MathewNaumannNoel_2017_NilpotenceDescentEquivariant}.
\end{lemma}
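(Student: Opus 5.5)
We want to show that \(R(\gg)_G\) lies in the thick \(\otimes\)-ideal of \(\Sp_G\) generated by the orbits \(G/A_+\) with \(A\) abelian. We will use the criterion of Mathew--Naumann--Noel: a commutative algebra \(B\in\CAlg(\Sp_G)\) is \(\mathcal{F}\)-nilpotent in their sense if and only if its geometric fixed points \(\Phi^H B\) vanish for all \(H\notin\mathcal{F}\). In the formulation of \cite{MathewNaumannNoel_2017_NilpotenceDescentEquivariant}, this is the statement that \(\widetilde{E\mathcal{F}}\otimes B\simeq 0\), equivalently that the unit \(1\in\pi_0(\widetilde{E\mathcal{F}}\otimes B)^G\) vanishes. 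This is the case because an \(\einfty\)-ring (in particular the idempotent-algebra tensor \(\widetilde{E\mathcal{F}}\otimes B\)) vanishes exactly when all its geometric fixed points vanish, and \(\Phi^H(\widetilde{E\mathcal{F}}\otimes B)\simeq\Phi^HB\) for \(H\notin\mathcal{F}\) while it is zero for \(H\in\mathcal{F}\). So the plan is to apply this criterion with \(\mathcal{F}\) the family of abelian subgroups and \(B=R(\gg)_G\).

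It then remains to show that \(\Phi^H R(\gg)_G\simeq 0\) for every nonabelian \(H\subset G\). Geometric fixed points commute with restriction, so \(\Phi^H R(\gg)_G\simeq\Phi^H R(\gg)_H\), and we may assume \(G=H\) is nonabelian and show \(\Phi^G R(\gg)_G=0\). Here we use that \(R(\gg)\) as a \(\fin\)-global spectrum is by definition the coinduction (right Kan extension) of the \(\fab\)-global algebra of \cite[thm.~E]{GepnerLinskensPol_2024_Global2ringsGenuine}. The geometric fixed points of such a coinduced global spectrum vanish at groups outside the family; this is the standard fact that right-induced global spectra from \(\fab\) are \(\fab\)-complete, hence \(\Phi^G\) vanishes for \(G\notin\fab\) (compare \cite{GepnerLinskensPol_2024_Global2ringsGenuine}). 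I would verify this by writing \(R(\gg)_G\) as a limit over \(\Orb_\ab(G)\)-indexed restrictions, that is as \(\underline{\map}(E\mathcal{A}_+,R(\gg)_G)\) with \(\mathcal{A}\) the family of abelian subgroups. The geometric fixed points of an \(\mathcal{A}\)-complete ring spectrum at \(G\notin\mathcal{A}\) then vanish, since the spectrum is a module over \(\underline{\map}(E\mathcal{A}_+,\ss)\) and \(\Phi^G\) kills the latter. The cleanest way to see this last point is that \(\widetilde{E\mathcal{A}}\otimes\underline{\map}(E\mathcal{A}_+,\ss)\) is a ring that is both \(\mathcal{A}\)-local and \(\mathcal{A}\)-complete, hence zero by the recollement orthogonality of \cref{prop:basic_recollement_omnibus}.

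The main obstacle is this identification of \(R(\gg)_G\) as \(\mathcal{A}\)-complete, i.e. pinning down precisely what coinduction from \(\fab\) to \(\fin\) does on genuine \(G\)-spectra. If the global-spectra formalism makes this awkward, the fallback is a ring-level argument. By the previous step it suffices that the unit vanishes in \(\pi_0\Phi^GR(\gg)_G\), and \(\pi_0\Phi^G\) is a localization of \(\pi_0R(\gg)^G\) at Euler classes of representations without fixed vectors. One would then produce, using the character-stack description \(\pi_0R(\gg)^G=\pi_0\Gamma\gg\{\bb G\}\) together with \cref{prop:affinizationVIFandZF} for the family of proper subgroups, an element of \(I_\gg(\mathcal{P})\) that is nilpotent: for nonabelian \(G\), every point has abelian support, hence proper support, so \(Z_\gg(\mathcal{P})\) is everything and \(V(I_\gg(\mathcal{P}))\) contains the image of affinization. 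Inverting the Euler class of the reduced regular representation then kills \(\Phi^G\). I would try the completeness route first.
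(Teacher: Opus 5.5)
\textbf{The first step is right; the vanishing argument has a gap.} Your first step is exactly the paper's. By the Mathew--Naumann--Noel criterion it suffices to show \(\Phi^H R(\gg)_G\simeq 0\) for every nonabelian \(H\), and the paper then simply cites Balderrama--Davies--Linskens (prop.~5.4.4) for this vanishing. The gap is in your attempt to prove that vanishing yourself.

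\textbf{The completeness route fails.} It rests on the principle that an \(\mathcal{A}\)-complete \(G\)-spectrum has vanishing \(\Phi^G\) for \(G\notin\mathcal{A}\), and this is false. For \(G=C_p\) and the trivial family, \(\underline{\map}((EC_p)_+,\mathrm{H}\F_p)\) is complete, yet its geometric fixed points are the Tate construction \(\mathrm{H}\F_p^{tC_p}\neq 0\). The same happens for \(\ss\) and \(\KU\). The specific error is your claim that \(\widetilde{E\mathcal{A}}\otimes\underline{\map}(E\mathcal{A}_+,\ss)\) is \(\mathcal{A}\)-complete. It is \(\mathcal{A}\)-local, but completeness is not preserved by tensoring with the non-dualizable \(\widetilde{E\mathcal{A}}\). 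This object is precisely the generalized Tate construction, which is typically nonzero. So \(\mathcal{A}\)-completeness of \(R(\gg)_G\) cannot by itself force \(\Phi^HR(\gg)_G=0\), even though that completeness does hold. The vanishing is a genuinely tempered phenomenon and needs input specific to \(R(\gg)\), which is what the cited result supplies.

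\textbf{The fallback is also incomplete.} From \(Z_\gg(\mathcal{P})=|\gg\{\bb G\}|\) and \cref{prop:affinizationVIFandZF} you only learn that \(V(I_\gg(\mathcal{P}))\) contains the image of \(|a|\). To conclude that elements of \(I_\gg(\mathcal{P})\) are nilpotent you need \(|a|\) to be surjective, which is not available at this point. The paper explicitly warns that the limit over abelian subgroups might create new prime ideals. Since \(\pi_0R(\gg)^G\to\prod_A\pi_0R(\gg)^A\) is finite, surjectivity on prime spectra is equivalent to its kernel being nil. That is a Quillen-stratification type statement of the sort one normally deduces from the very nilpotence you are trying to prove.

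There is a second problem with the fallback. \(\pi_0\Phi^G\) is a localization of the \(RO(G)\)-graded coefficients at Euler classes in twisted degrees, not a localization of \(\pi_0R(\gg)^G\). A nilpotence statement about \(I_\gg(\mathcal{P})\subset\pi_0R(\gg)^G\) would therefore still need equivariant Thom classes to reach the Euler class of the reduced regular representation. Degree-zero elements of \(I_\gg(\mathcal{P})\) coming from the Burnside ring only show that \(|G|\) is nilpotent in \(\Phi^GR(\gg)_G\).
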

\begin{proof}
  By \cite[thm.~6.41]{MathewNaumannNoel_2017_NilpotenceDescentEquivariant} we need to show that, for every nonabelian subgroup \(H\subset G\), the geometric fixed point \((R(\gg)_G)^{\Phi H}\) vanish.
  But this is exactly the content of \cite[prop.~5.4.4]{BalderramaDaviesLinskens_2026_AmbidextrousGlobalSpectra}.
\end{proof}

\begin{lemma}\label{lem:factors_over_character_stack}
  There is a commutative diagram
\[\begin{tikzcd}
	{\QCoh(\gg\{\bb G\})} & {\Mod_{R(\gg)_G}(\Sp_G)} \\
	{\QCoh(\Spec \Gamma\gg\{\bb G\})} & {\Mod_{R(\gg)^G}(\Sp)}
	\arrow[from=1-1, to=1-2]
	\arrow["{a^*}", from=2-1, to=1-1]
	\arrow["\simeq"', from=2-1, to=2-2]
	\arrow[from=2-2, to=1-2]
\end{tikzcd}\]
  in \(\CAlg(\PrLst)\), natural in \(\bb G\in \Glo_\fin^\op\).
\end{lemma}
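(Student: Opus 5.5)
The plan is to construct the top horizontal functor \(\QCoh(\gg\{\bb G\})\to \Mod_{R(\gg)_G}(\Sp_G)\) by Kan extension from the abelian case. We then check that the square commutes by comparing along the colimit presentation of \(\gg\{\bb G\}\).

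First, the abelian case. For an abelian group \(A\) we have \(\gg\{\bb A\}\simeq \Spec R(\gg)^A\). We define the functor
\[\QCoh(\gg\{\bb A\})\simeq \Mod_{R(\gg)^A}(\Sp)\to \Mod_{R(\gg)_A}(\Sp_A)\]
as the unique symmetric monoidal colimit-preserving functor furnished by \cref{prop:linear_over_endomorphisms}. This is the functor \(R(\gg)_A\otimes_{R(\gg)^A}(-)\); its right adjoint is \(\map(R(\gg)_A,-)\), and it is the composite \(\Mod_{R(\gg)^A}(\Sp)\to \Mod_{R(\gg)_A}(\Sp_A)\) of the algebra structure. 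This choice makes the square commute tautologically for abelian \(A\), since \(a\) is an equivalence there.

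Second, naturality. Over \(\Glo_\fin^\op\) we consider the functor \(\bb G\mapsto \Mod_{R(\gg)_G}(\Sp_G)\) into \(\CAlg(\PrLst)\). Its functoriality uses restriction along \(\bb H\to\bb G\), together with the maps \(\mathrm{Res}\) and conjugation of the genuine algebra \(R(\gg)\in\CAlg(\Sp^\gl_\fin)\). We also consider \(\bb G\mapsto \Mod_{R(\gg)^G}(\Sp)\), and the transformation between them coming from the universal property of \(\Mod_{\mathrm{end}(\mathbb{1})}\) in \cref{prop:linear_over_endomorphisms}. That universal property gives an essentially unique map out of \(\Mod_{\mathrm{end}(\mathbb 1)}(\Sp)\) in \(\CAlg(\PrLst)\), which is what makes the transformation natural. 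Concretely, one realizes both via the functor \(\CAlg(\PrLst)\to \CAlg(\Sp)\), \(C\mapsto \mathrm{end}(\mathbb 1_C)\), and its left adjoint \(\Mod_{(-)}(\Sp)\). The transformation is then the counit \(\Mod_{\mathrm{end}(\mathbb 1_C)}(\Sp)\to C\), which is natural in \(C\).

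Third, the general case. Since \(\QCoh\) sends colimits of stacks to limits in \(\CAlg(\PrLst)\), and \(\gg\{\bb G\}\simeq\colim_{G/H\in\Orb_\ab(G)}\Spec R(\gg)^H\), we get
\[\QCoh(\gg\{\bb G\})\simeq \lim_{G/H\in\Orb_\ab(G)^\op}\Mod_{R(\gg)^H}(\Sp).\]
The restriction functors \(\Mod_{R(\gg)_G}(\Sp_G)\to\Mod_{R(\gg)_H}(\Sp_H)\) assemble into a map
\[\Mod_{R(\gg)_G}(\Sp_G)\to \lim_{\Orb_\ab(G)}\Mod_{R(\gg)_H}(\Sp_H).\]
However, we need a map in the other direction. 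The cleaner route is to define the top functor as the composite
\[\QCoh(\gg\{\bb G\})\simeq \QCoh\Big(\colim_{\Orb_\ab(G)}\Spec R(\gg)^H\Big)\to \Mod_{R(\gg)_G}(\Sp_G).\]
We obtain this composite from the universal property of \(\QCoh\) on colimit presentations only after establishing a \emph{descent} statement. That statement is that \(\Mod_{R(\gg)_G}(\Sp_G)\) is equivalent to \(\lim_{\Orb_\ab(G)}\Mod_{R(\gg)_H}(\Sp_H)\), i.e. that \(R(\gg)_G\)-modules satisfy abelian descent. This is precisely the content of \cref{lem:temperedIsMNNAbNilpotent} combined with the descent theorems of Mathew–Naumann–Noel for \(\mathcal{F}\)-nilpotent algebras. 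Given this equivalence, the top functor is the limit of the abelian functors from the first step, which are natural by the second step. Since limits in \(\CAlg(\PrLst)\) are computed underlying, the functor lies in \(\CAlg(\PrLst)\).

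Commutativity of the square with \(a^*\) follows because \(a^*\) is the canonical map
\[\Mod_{\Gamma}(\Sp)\to\lim\Mod_{R(\gg)^H}(\Sp),\]
and the bottom path is compatible with restriction to each \(H\) by naturality from the second step. Naturality in \(\bb G\) follows from functoriality of the orbit-category limits.

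The main obstacle is the descent step, namely identifying \(\Mod_{R(\gg)_G}(\Sp_G)\) with the limit over \(\Orb_\ab(G)\). I would deduce it from MNN nilpotence (\cref{lem:temperedIsMNNAbNilpotent}), which gives \(\Mod_{A}(\Sp_G)\simeq\lim_{G/H\in\Orb_{\mathcal F}(G)}\Mod_{\mathrm{Res}_H A}(\Sp_H)\). Care is needed to make this equivalence symmetric monoidal and natural in \(\bb G\).
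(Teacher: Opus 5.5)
Your proposal is correct and is essentially the paper's argument. The top functor is the right Kan extension along \(\Glo_\fab^\op\subset\Glo_\fin^\op\) of the counit \(\Mod_{\mathrm{end}(\mathbb{1}_C)}(\Sp)\to C\) at \(C=\Mod_{R(\gg)_A}(\Sp_A)\); this is legitimate because \(\bb G\mapsto\QCoh(\gg\{\bb G\})\) is right Kan extended by construction and \(\bb G\mapsto\Mod_{R(\gg)_G}(\Sp_G)\) is so by \cref{lem:temperedIsMNNAbNilpotent} and Mathew--Naumann--Noel, and the square is the naturality square of that counit. The care you flag is handled by working directly with right Kan extensions in \(\CAlg(\PrLst)\), where limits are computed underlying, so symmetric monoidality and naturality in \(\bb G\) come for free; these agree with your pointwise limits over \(\Orb_\ab(G)\) by finality, and the bottom equivalence \(\Gamma\gg\{\bb G\}\simeq R(\gg)^G\) follows because \(C\mapsto\mathrm{end}_C(\mathbb{1}_C)\) preserves limits and its left adjoint \(\Mod_{(-)}(\Sp)\) is fully faithful.
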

\begin{proof}
  First of all, note that the functor
  \[\Glo_\fin^\op\to\CAlg(\PrLst),\,\bb G\mapsto \Mod_{R(\gg)_G}(\Sp_G)\]
  is right Kan extended from its restriction to \(\Glo_\fab^\op\): this is a consequence of \cref{lem:temperedIsMNNAbNilpotent} and \cite[thm.~6.42]{MathewNaumannNoel_2017_NilpotenceDescentEquivariant}.
  Since \(\QCoh:\Stk^\op\to \CAlg(\PrLst)\) preserves limits, also
  \[\Glo_\fin^\op\to\CAlg(\PrLst),\,\bb G\mapsto \QCoh(\gg\{\bb G\})\]
  is right Kan extended from its restriction to \(\Glo_\fab^\op\).

  Denote by \(\mathbb{\Gamma}:\CAlg(\PrLst)\to \CAlg(\Sp)\) the functor that maps \(C\) to \(\mathrm{end}_C(\mathbb{1}_C)\), which is a right adjoint by \cite[thm.~4.8.5.11,\,thm.~4.8.5.16(4)]{Lurie_HA}.
  Moreover, its left adjoint \(\Theta\) is fully faithful by \cite[cor.~4.8.5.22]{Lurie_HA}.

  Let us first construct the natural transformation
  \[\phi_{\bb G}:\QCoh(\gg\{\bb G\})\to \Mod_{R(\gg)_G}(\Sp_G).\]
  Both are right Kan extended from \(\Glo_\fab^\op\), and on there the source is, by construction, equivalent to \(\Theta\mathbb{\Gamma}\) of the target.
  Thus, we may take \(\phi\) to be the right Kan extension of the adjunction counit.
  The square we want to construct then arises as the adjunction counit \(\Theta\mathbb{\Gamma}\phi_{\bb G}\to \phi_{\bb G}\).

  Since \(\Theta\) is fully faithful and \(\mathbb{\Gamma}\) commutes with limits, appyling  \(\mathbb{\Gamma}\) to \(\phi_{\bb G}\) yields an equivalence.

  Now note that we have equivalences \(\Theta\simeq \QCoh\circ\Spec\) and \(\Gamma\simeq \mathbb{\Gamma}\circ \QCoh\), so we can naturally identify the adjunction counit
  \[\Theta\mathbb{\Gamma}\QCoh(X)\to \QCoh(X)\]
  with \(\QCoh\) applied to the affinization map \(a_X:X\to \Spec \Gamma X\), for any stack \(X\).
  This yields the desired factorization. 
\end{proof}

\begin{lemma}\label[lemma]{lem:temperedAbelianProperCompletion}
  Let \(A\) be a finite abelian group, let \(\mathcal{P}\) be the family of proper subgroups of \(A\), and denote by \(I\mathcal{P}\) the intersection of the ideals
  \[I^A_B=\ker \pi_0R(\gg)^A\to \pi_0R(\gg)^B\]
  over all \(B\in \mathcal{P}\).
  Then we have an equivalence of recollements
  \[\mathcal{R}(\widetilde{E\mathcal{P}})\otimes_{\Sp_A}\Mod_{R(\gg)_A}(\Sp_A)\simeq \mathcal{R}(I\mathcal{P})\otimes_{\Mod_{R(\gg)^A}(\Sp)}\Mod_{R(\gg)_A}(\Sp_A).\]
\end{lemma}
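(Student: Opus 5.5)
The plan is to compare the two recollements through their nilpotent objects, using the abelian structure of \(A\). Both sides are \(\Mod_{R(\gg)_A}(\Sp_A)\)-linear recollements of the same category: the left by \cref{lem:CLinearRecollementTensor} applied to \(\mathcal{R}(\widetilde{E\mathcal{P}})\), the right by \cref{lem:CLinearRecollementTensor} together with \cref{cons:ICompletionRecollement}. So it suffices to show that the two classes of nilpotent objects coincide.

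By \cref{lem:characterizeNLCOfIdempotentRecollement}, the \(\mathcal{P}\)-nilpotents are the modules \(M\) with \(\widetilde{E\mathcal{P}}\otimes M\simeq 0\). By \cref{lem:ICompletionIsTensoredUp}, the \(I\mathcal{P}\)-nilpotents are the modules \(M\) with \(M[x^{-1}]\simeq 0\) for all \(x\in I\mathcal{P}\). Here \(I\mathcal{P}\) is finitely generated because \(\pi_0R(\gg)^A\) is noetherian, being finite over \(\pi_0R\).

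\emph{\(\mathcal{P}\)-nilpotent implies \(I\mathcal{P}\)-nilpotent.} By \cref{lem:generatorsForFNilpotents}, tensored up, the \(\mathcal{P}\)-nilpotent modules form the localizing subcategory generated by the modules \(A/B_+\otimes N\) with \(B\) proper and \(N\) an \(R(\gg)_A\)-module. The \(I\mathcal{P}\)-nilpotent modules are closed under colimits and under tensors (\cref{cons:ICompletionRecollement}), so it is enough to treat these generators. For \(x\in I\mathcal{P}\) the projection formula gives
\[(A/B_+\otimes N)[x^{-1}]\simeq \mathrm{Ind}_B^A\bigl(\mathrm{Res}^A_BN[\mathrm{res}^A_B(x)^{-1}]\bigr)\simeq 0,\]
because \(\mathrm{res}^A_B(x)=0\). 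This is the same mechanism as in \cref{prop:FCompleteImpliesIFComplete}.

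\emph{\(I\mathcal{P}\)-nilpotent implies \(\mathcal{P}\)-nilpotent.} This is the substantive direction. Let \(E=\widetilde{E\mathcal{P}}\otimes R(\gg)_A\), which is a commutative \(R(\gg)_A\)-algebra. If \(E\) is \(I\mathcal{P}\)-local, then for any \(I\mathcal{P}\)-nilpotent \(M\) the module \(E\otimes_{R(\gg)_A}M\simeq\widetilde{E\mathcal{P}}\otimes M\) is both local and nilpotent, since the local objects are an ideal and the nilpotent objects are closed under tensors. It therefore vanishes. So the task reduces to showing that every \(x\in I\mathcal{P}\) acts invertibly on \(E\), equivalently that the image of \(x\) in \(\pi_0\Phi^AR(\gg)_A\) is a unit.

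Here I will use the work of Gepner, Linskens, and Pol. First I will identify \(I\mathcal{P}\), at least up to radical (which is all \cref{cons:ICompletionRecollement} needs), with the ideal of \cite[def.~15.10]{GepnerLinskensPol_2024_Global2ringsGenuine}. For \(A\) abelian, \(R(\gg)^B\) is the ring of functions on the closed substack \(\gg[\hat B]\subset\gg[\hat A]\). Hence \(V(I\mathcal{P})\) is the union of the \(|\gg[\hat B]|\) over proper \(B\), and its complement is \(|Inj(\hat A,\gg)|\) by \cref{lem:supportInAbelianGroupsIsInjStack}. This matches their description via Euler classes of the characters \(\hat A\to\hat B\).

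With the ideals identified, I can invoke \cite[prop.~15.14]{GepnerLinskensPol_2024_Global2ringsGenuine}: every \(I\mathcal{P}\)-nilpotent module lies in the localizing tensor ideal generated by modules induced from proper subgroups, hence is \(\mathcal{P}\)-nilpotent. If I argue directly instead, the key fact is that \(\Phi^AR(\gg)_A\) is obtained from \(R(\gg)^A\) by inverting the Euler classes of the nontrivial characters. Each of these Euler classes lies in a kernel \(I^A_B\), since it is built from a character vanishing on \(B=\ker\chi\). Conversely, a product of such Euler classes over enough characters lies in \(I\mathcal{P}\) and is radical-equivalent to a generating set, so every \(x\in I\mathcal{P}\) becomes a unit in \(\pi_0\Phi^AR(\gg)_A\).

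I expect this comparison of \(I\mathcal{P}\) with the Euler-class ideal, up to radical, to be the main obstacle. It is exactly where the orientation of \(\gg\) and the explicit description of \(R(\gg)^A\) for abelian \(A\) are needed.
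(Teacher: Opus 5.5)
Your main line is the paper's proof. You get the easy inclusion from induced generators, which is the mechanism of \cref{prop:FCompleteImpliesIFComplete}. For the converse you identify \(I\mathcal{P}\) with the ideal of \cite[def.~15.10]{GepnerLinskensPol_2024_Global2ringsGenuine}, apply \cite[prop.~15.14]{GepnerLinskensPol_2024_Global2ringsGenuine}, and conclude with \cref{lem:generatorsForFNilpotents}. That part is sound.

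The reduction you build around it, and the ``direct'' alternative at the end, rest on a false statement: it is not true that every \(x\in I\mathcal{P}\) becomes a unit in \(\pi_0\Phi^AR(\gg)_A\). Take \(\gg=\mu_{\mathbb{P}^\infty}\) over \(\KU\) and \(A=C_p\). Then \(I\mathcal{P}=(1-\chi)\subset R(C_p)\) and \(\pi_0\Phi^{C_p}\KU_{C_p}\cong\mathbb{Z}[\zeta_p,1/p]\). For a prime \(q\neq p\), the element \(q(1-\chi)\in I\mathcal{P}\) maps to \(q(1-\zeta_p)\), which is not a unit.

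Your justification breaks at the last step. The equality \(\sqrt{(e)}=\sqrt{I\mathcal{P}}\) only gives \(x^n\in(e)\), and that does not make \(x\) invertible once \(e\) is inverted.

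The first half of your reduction is correct: it suffices that \(E=\widetilde{E\mathcal{P}}\otimes R(\gg)_A\) be \(I\mathcal{P}\)-local. That condition is weaker than what you then asked for. Since \(E\) is concentrated at \(A\), it is \(I\mathcal{P}\)-local if and only if \(I\mathcal{P}\) generates the unit ideal of \(\pi_0\Phi^AR(\gg)_A\). For this it suffices that a single \(y\in I\mathcal{P}\) become a unit, because then \(\Map(N,E)\simeq\Map(N[y^{-1}],E)=0\) for every \(I\mathcal{P}\)-nilpotent \(N\).

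With the Euler-class description of \(\Phi^A\) that you quote, the product \(e\) of the Euler classes of all nontrivial characters does the job. Every proper \(B\subset A\) lies in \(\ker\chi\) for some \(\chi\neq 1\), so \(\mathrm{res}^A_B(e)=0\) and hence \(e\in I\mathcal{P}\). And \(e\) is inverted in \(\Phi^AR(\gg)_A\). Repaired this way, the direct route is a genuine alternative to the paper's: it replaces the generation statement \cite[prop.~15.14]{GepnerLinskensPol_2024_Global2ringsGenuine} by a computation of \(\Phi^AR(\gg)_A\).
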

\begin{proof}
  It will suffice to show that the classes of nilpotent objects agree.
  By \cref{prop:FCompleteImpliesIFComplete} we already know that \(\mathcal{P}\)-nilpotent \(R(\gg)_A\)-modules are \(I\mathcal{P}\)-nilpotent.

  If \(M\) is an \(I\mathcal{P}\)-nilpotent \(R(\gg)_A\)-module, then \(M^A\) is an \(I\mathcal{P}\)-complete \(R(\gg)^A\)-module.
  Note that \(I\mathcal{P}\) coincides with the ideal \(I\) defined in \cite[def.~15.10]{GepnerLinskensPol_2024_Global2ringsGenuine}.

  In particular, we have that \(\Phi^A(M)=0\), where \(\Phi^A\) is defined in \cite[def.~15.11]{GepnerLinskensPol_2024_Global2ringsGenuine}.

  Now \cite[prop.~15.14]{GepnerLinskensPol_2024_Global2ringsGenuine}
  tells us that \(M\) is contained in the localizing tensor ideal generated by
  \[\{R(\gg)_A\otimes Ind_{A_0}^A\mathbb{1}\simeq (A/A_0)_+\mid A_0\in\mathcal{P}\}.\]
  Thus, the underlying \(A\)-spectrum of \(M\) is \(\mathcal{P}\)-nilpotent by \cref{lem:generatorsForFNilpotents}, and so also \(M\) is \(\mathcal{P}\)-nilpotent.
\end{proof}

\begin{theorem}\label[theorem]{thm:betterTemperedFamilyCompletion}
Let \(G\) be a finite group, let \(\mathcal{F}\) be a family of subgroups of \(G\), and denote by \(I_\gg(\mathcal{F})\) the intersection of the ideals
\[I^G_H=\ker \pi_0R(\gg)^G\to \pi_0R(\gg)^H\]
over all \(H\in \mathcal{F}\).
Then we have an equivalence of recollements
\[\mathcal{R}(\widetilde{E\mathcal{F}})\otimes_{\Sp_G}\Mod_{R(\gg)_G}(\Sp_G)\simeq \mathcal{R}(I_\gg(\mathcal{F}))\otimes_{\Mod_{R(\gg)^G}(\Sp)}\Mod_{R(\gg)_G}(\Sp_G).\]
\end{theorem}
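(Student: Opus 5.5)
Both sides are \(\Mod_{R(\gg)^G}(\Sp)\)-linear recollements of \(\Mod_{R(\gg)_G}(\Sp_G)\), so it suffices to show that their classes of complete objects agree. By \cref{prop:FCompleteImpliesIFComplete}, every \(\mathcal{F}\)-complete module is \(I_\gg(\mathcal{F})\)-complete; here \(I_\gg(\mathcal{F})\) is finitely generated because \(\pi_0R(\gg)^G\) is noetherian, being finite over \(\pi_0R\). For the converse I argue by induction on \(|G|\), and for fixed \(G\) on the family. If \(G\in\mathcal{F}\), then \(\widetilde{E\mathcal{F}}=0\) and \(I_\gg(\mathcal{F})=0\), so both recollements are trivial and agree. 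So assume \(G\notin\mathcal{F}\), so that \(\mathcal{F}\subset\mathcal{P}\), the family of proper subgroups. Let \(M\) be \(I_\gg(\mathcal{F})\)-complete. We must show that \(\underline{\map}(\widetilde{E\mathcal{F}},M)=0\). Mapping the cofibre sequence \(E\mathcal{P}_+\otimes\widetilde{E\mathcal{F}}\to\widetilde{E\mathcal{F}}\to\widetilde{E\mathcal{P}}\otimes\widetilde{E\mathcal{F}}\simeq\widetilde{E\mathcal{P}}\) into \(M\) splits the problem into two vanishing statements.

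For the \(\widetilde{E\mathcal{P}}\)-part, note that \(I_\gg(\mathcal{F})\subset I_\gg(\mathcal{P})\), so \(M\) is also \(I_\gg(\mathcal{P})\)-complete. It therefore suffices to show that \(I_\gg(\mathcal{P})\)-complete modules are \(\mathcal{P}\)-complete. If \(G\) is abelian, this is \cref{lem:temperedAbelianProperCompletion}. If \(G\) is nonabelian, \cref{lem:temperedIsMNNAbNilpotent} says that \(R(\gg)_G\) is nilpotent for the family of abelian subgroups, all of which are proper. Hence \(R(\gg)_G\otimes\widetilde{E\mathcal{P}}=0\) (by MNN, \(R(\gg)_G\) lies in the thick tensor ideal generated by \(G/A_+\) with \(A\) abelian, each of which is killed by \(\widetilde{E\mathcal{P}}\)). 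Consequently \(\underline{\map}(\widetilde{E\mathcal{P}},M)=\underline{\map}_{R(\gg)_G}(R(\gg)_G\otimes\widetilde{E\mathcal{P}},M)=0\) for every module \(M\).

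For the \(E\mathcal{P}_+\)-part, \(E\mathcal{P}_+\) is built from cells \(G/H_+\) with \(H\) proper. By the projection formula, \(\underline{\map}(G/H_+\otimes\widetilde{E\mathcal{F}},M)\) is coinduced from \(\underline{\map}_H(\widetilde{E\mathcal{F}\vert_H},\mathrm{Res}^G_HM)\), using \(\mathrm{Res}^G_H\widetilde{E\mathcal{F}}=\widetilde{E\mathcal{F}\vert_H}\). So it suffices to show that \(\mathrm{Res}^G_HM\) is \(\mathcal{F}\vert_H\)-complete, and by induction on \(|G|\) it suffices to show that it is \(I_\gg(\mathcal{F}\vert_H)\)-complete. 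By linearity and \cref{lem:ICompletionAndRestrictingScalars}, \(\mathrm{Res}^G_HM\) is complete for the extended ideal \(J=\mathrm{res}^G_H(I_\gg(\mathcal{F}))\pi_0R(\gg)^H\).

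The main obstacle is comparing \(J\) with \(I_\gg(\mathcal{F}\vert_H)\); I do this on character stacks. By \cref{lem:factors_over_character_stack}, the \(\Mod_{R(\gg)^H}(\Sp)\)-linear structure on \(\Mod_{R(\gg)_H}(\Sp_H)\) factors through \(\QCoh(\gg\{\bb H\})\) via \(a^*\). Hence by \cref{lem:CLinearRecollementBasechange} and the remark identifying ideal recollements with open immersions, the recollement \(\mathcal{R}(I)\otimes\Mod_{R(\gg)_H}(\Sp_H)\) depends only on the open substack \(a^{-1}(\spec\setminus V(I))\) of \(\gg\{\bb H\}\). This uses \cref{cor:idempotent_recollement_basechange}, together with the observation that the idempotent \(L_I\) pulls back to \(i_*\O\) of the preimage, since \(a\) is a map from a colimit of affines and open immersions base change. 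So it remains to check that \(a^{-1}V(J)=a^{-1}V(I_\gg(\mathcal{F}\vert_H))\) in \(|\gg\{\bb H\}|\). First, \(V(J)\) is the preimage of \(V(I_\gg(\mathcal{F}))\) under \(\spec\,R_\gg^{\bb H}\to\spec\,R_\gg^{\bb G}\). By naturality of affinization and \cref{prop:affinizationVIFandZF}, \(a^{-1}V(J)\) is therefore the preimage of \(Z_\gg(\mathcal{F})\) along \(|\gg\{\bb H\}|\to|\gg\{\bb G\}|\). By \cref{lem:ClosedSubsetOfFamily}(1) this preimage is \(Z_\gg(\mathcal{F}\vert_H)\), which by \cref{prop:affinizationVIFandZF} again equals \(a^{-1}V(I_\gg(\mathcal{F}\vert_H))\). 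Thus \(\mathrm{Res}^G_HM\) is \(I_\gg(\mathcal{F}\vert_H)\)-complete, and the induction closes.
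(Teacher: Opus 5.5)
Your proposal is correct and follows the paper's proof essentially step for step: the easy direction via \cref{prop:FCompleteImpliesIFComplete}, induction with isotropy separation for the proper family \(\mathcal{P}\) (using \cref{lem:temperedAbelianProperCompletion} for abelian \(G\) and nilpotence for the abelian family otherwise), and the comparison of \(V(J)\) with \(V(I_\gg(\mathcal{F}\vert_H))\) after pulling back to \(\gg\{\bb H\}\) via \cref{prop:affinizationVIFandZF} and \cref{lem:ClosedSubsetOfFamily}. One small slip: since \(\mathcal{F}\subset\mathcal{P}\), the inclusion is \(I_\gg(\mathcal{P})\subset I_\gg(\mathcal{F})\), not the reverse, and it is this direction that makes every \(I_\gg(\mathcal{F})\)-complete module \(I_\gg(\mathcal{P})\)-complete.
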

\begin{proof}
  We need to show that the classes of complete objects agree.
  From \cref{prop:FCompleteImpliesIFComplete} we already now that \(\mathcal{F}\)-complete \(R(\gg)_G\)-modules are \(I_\gg(\mathcal{F})\)-complete.
  For the converse we will use an induction on the subgroups of \(G\).

  To start the induction, assume that \(G=e\).
  There is only one family of subgroups, namely \(\mathcal{F}=\{e\}\).
  Then \(\widetilde{E\mathcal{F}}=0\) so every module is \(\mathcal{F}\)-complete.

  Now assume we have proven the theorem for every proper subgroup of \(G\).
  Again, if \(G\in\mathcal{F}\), then every \(R(\gg)_G\)-module is \(\mathcal{F}\)-complete, so we may assume that \(G\notin \mathcal{F}\).

  Let \(M\) be an \(I_\gg(\mathcal{F})\)-complete \(R(\gg)_G\)-module.
  We need to show that
  \[\underline{\map}_{R(\gg)_G}(R(\gg)_G\otimes \widetilde{E\mathcal{F}},M)=0.\]
  By mapping the isotropy seperation sequence into this, we reduce to proving that both
  \[X=\underline{\map}_{R(\gg)_G}(R(\gg)_G\otimes \widetilde{E\mathcal{F}}\otimes \widetilde{E\mathcal{P}},M)\text{ and }Y=\underline{\map}_{R(\gg)_G}(R(\gg)_G\otimes \widetilde{E\mathcal{F}}\otimes E\mathcal{P}_+,M)\]
  vanish, where \(\mathcal{P}\) is the family of proper subgroups of \(G\).

  By assumption we have \(\mathcal{F}\subset \mathcal{P}\), so \(\widetilde{E\mathcal{F}}\otimes \widetilde{E\mathcal{P}}=\widetilde{E\mathcal{P}}\), and any \(I_\gg(\mathcal{F})\)-complete object is also \(I\mathcal{P}\)-complete.

  If \(G\) is abelian, we can apply \cref{lem:temperedAbelianProperCompletion} to conclude that \(X=0\).

  If \(G\) is not abelian, we see that the family of abelian subgroups of \(G\) \(\mathcal{AB}\) is contained in \(\mathcal{P}\).
  In particular, we have \(\widetilde{E\mathcal{P}}=\widetilde{E\mathcal{P}}\otimes \widetilde{E\mathcal{AB}}\).
  Also, since \(R(\gg)_G\) is \(\mathcal{AB}\)-nilpotent in the sense of \cite[def.~6.41]{MathewNaumannNoel_2017_NilpotenceDescentEquivariant}, every \(R(\gg)_G\)-module is \(\mathcal{AB}\)-complete
  Thus, also in this case we have \(X=0\).

  Now for \(Y\):
  Since \(E\mathcal{P}_+\) is \(\mathcal{P}\)-nilpotent, \cref{lem:generatorsForFNilpotents} tells us that it is contained in the localizing stable subcategory generated by \(G/H_+\) for \(H\in\mathcal{P}\).
  Thus it suffices to prove that, for every proper subgroup \(H\subset G\),
  \[Y(H)=\underline{\map}_{R(\gg)_G}(R(\gg)_G\otimes \widetilde{E\mathcal{F}}\otimes G/H_+,M)\]
  vanishes.
  The underlying \(G\)-spectrum of this is equivalent to the induction from \(H\) to \(G\) of
  \[Y'(H)=\underline{\map}_{R(\gg)_H}(R(\gg)_H\otimes\widetilde{E\mathcal{F}\vert_H},\mathrm{Res}^G_HM).\]
  We claim that \(\mathrm{Res}^G_HM\) is \(I_\gg(\mathcal{F}\vert_H)\)-complete.
  Then our inductive hypothesis shows that \(\mathrm{Res}^G_HM\) is also \(\mathcal{F}\vert_H\)-complete, so \(Y'(H)=0\) and we would be done.

  To prove the claim, note that Frobenius reciprocity tells us that \(\mathrm{Ind}_H^G\) is canonically \(\Mod_{R(\gg)_H}(\Sp_H)\)-linear, so in particular \(\Mod_{R(\gg)^H}(\Sp)\)-linear.
  Its right adjoint \(\mathrm{Res}^G_H\) thus commutes with all completions associated to recollements that are tensored up from \(\Mod_{R(\gg)^H}(\Sp)\).
  \Cref{lem:ICompletionAndRestrictingScalars} then tells us that \(\mathrm{Res}^G_H M\) is complete for the ideal \(J=\mathrm{res}^G_H(I_\gg(\mathcal{F}))\pi_0R(\gg)^H\).
  Consider the commutative diagram
\[\begin{tikzcd}
	{\gg\{\bb H\}} & {\gg\{\bb G\}} \\
	{\spec\,R(\gg)^H} & {\spec\,R(\gg)^G}
	\arrow[from=1-1, to=1-2]
	\arrow[from=1-1, to=2-1]
	\arrow[from=1-2, to=2-2]
	\arrow[from=2-1, to=2-2]
\end{tikzcd}\]
By definition \(V(J)\) is the preimage of \(V(I_\gg(\mathcal{F}))\) under the bottom horizontal map, and let \(W\) be the preimage of \(V(J)\) under the left vertical map.
By \cref{prop:affinizationVIFandZF} the preimage of \(V(I_\gg(\mathcal{F}))\) under the right vertical map is \(Z_\gg(\mathcal{F})\), and by \cref{lem:ClosedSubsetOfFamily} the preimage of \(Z_\gg(\mathcal{F})\) under the top horizontal map is \(Z_\gg(\mathcal{F}\vert_H)\).
Thus, \(W=Z(\mathcal{F}\vert_H)\), which by \cref{prop:affinizationVIFandZF} is also the preimage of \(V(I_\gg(\mathcal{F}\vert_H))\).

Finally, \cref{lem:factors_over_character_stack} tells us that the symmetric monoidal functor \(\Mod_{R(\gg)^H(\Sp)}\to \Mod_{R(\gg)_H}(\Sp_H)\) factors over \(\QCoh(\gg\{\bb H\})\).
By the previous discussion and \cref{cor:idempotent_recollement_basechange} we thus find that
\begin{align*}
  \mathcal{R}(J)\otimes_{\Mod_{R(\gg)^H}(\Sp)}\Mod_{R(\gg)_H}(\Sp_H)&\simeq\mathcal{R}(i_*\O_{U_\gg(\mathcal{F}\vert_H)})\otimes_{\QCoh(\gg\{\bb H\})}\Mod_{R(\gg)_H}(\Sp_HH)\\
  &\simeq \mathcal{R}(I_\gg(\mathcal{F}\vert_H))\otimes_{\Mod_{R(\gg)^H}(\Sp)}\Mod_{R(\gg)_H}(\Sp_H).
\end{align*}
So \(\mathrm{Res}^G_HM\) is \(I_\gg(\mathcal{F}\vert_H)\)-complete.
\end{proof}

As a consequence, we find that tempered cohomology with bounded height is 'detected' by abelian groups of bounded generation:

\begin{proposition}\label{prop:rightKanExtension}
  Let \(n\geq 0\) such that, for each point \(\mathfrak{p}\) in \(\spec\, R\) and each \(\ell\in\mathbb{P}\), the \(\ell\)-divisable group \((\gg_{\widehat{R}_\mathfrak{p}})^{et}_{(\ell)}\) has height less or equal to \(n\).
  Then the functor
  \[\Glo_{\fab}^{\op}\to \CAlg(\Sp),\,\bb V\mapsto R_\gg^{\bb V}\]
  is right Kan extended from the full subcategory on objects \(\bb H\) where \(H\) can be generated by at most \(n\) elements.
\end{proposition}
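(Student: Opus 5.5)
We want: for every finite abelian group \(V\), the canonical map
\[R_\gg^{\bb V}\to \lim_{(\bb H\to \bb V),\ H\ n\text{-generated}} R_\gg^{\bb H}\]
is an equivalence. The plan is to realize this limit as the \(\mathcal{F}\)-completion of \(R(\gg)_V\) for a suitable family, and then show that the relevant ideal is the unit ideal, so that completion does nothing.

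\textbf{Step 1: choose the family and reduce the indexing category.} Let \(\mathcal{F}_n\) be the family of subgroups of \(V\) generated by at most \(n\) elements. It is closed under subgroups, since subgroups of an \(n\)-generated abelian group are \(n\)-generated, and it is trivially closed under conjugation. By a finality argument analogous to \cite[lem.~4.2]{GepnerMeier_EquivariantTMF1}, the slice category \((\Glo_\fab^{\le n})_{/\bb V}\) receives a colimit-final functor from the orbit category \(\Orb_{\mathcal{F}_n}(V)\). So the right Kan extension evaluated at \(\bb V\) is
\[\lim_{V/H\in \Orb_{\mathcal{F}_n}(V)} R(\gg)^H \simeq \map_V(E\mathcal{F}_{n,+},R(\gg)_V)^V.\]
The map from \(R_\gg^{\bb V}\) to this is the \(V\)-fixed points of the \(\mathcal{F}_n\)-completion map of the \(R(\gg)_V\)-module \(R(\gg)_V\).

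\textbf{Step 2: apply \cref{thm:betterTemperedFamilyCompletion}.} By the theorem, \(\mathcal{F}_n\)-completion agrees with \(I_\gg(\mathcal{F}_n)\)-completion. It therefore suffices to show that \(I_\gg(\mathcal{F}_n)=\pi_0R(\gg)^V\), that is, \(V(I_\gg(\mathcal{F}_n))=\emptyset\). In that case every module is \(I\)-complete, because the \(I\)-local objects are \(0\). Hence \(R(\gg)_V\) is \(\mathcal{F}_n\)-complete and the map is an equivalence.

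\textbf{Step 3: the support argument.} Since \(V\) is abelian, \(\gg\{\bb V\}\to\Spec R_\gg^{\bb V}\) is an equivalence. By \cref{prop:affinizationVIFandZF}, \(V(I_\gg(\mathcal{F}_n))\) corresponds to \(Z_\gg(\mathcal{F}_n)\). So it suffices to show that every point \(x\in|\gg\{\bb V\}|\) has support in \(\mathcal{F}_n\). This is exactly \cref{rmk:boundedHeightBoundedGeneration}. Let \(x\) lie over \(\mathfrak{p}\), and let \(S\) be in its support. Then \(S\) admits a continuous surjection from \(\prod_\ell (\zz_\ell^\wedge)^{h(\mathfrak{p})_\ell}\) with all \(h(\mathfrak{p})_\ell\le n\). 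A finite abelian quotient of this is \(n\)-generated: its \(\ell\)-parts are \(n\)-generated for each \(\ell\), and by the Chinese remainder theorem the whole group is then \(n\)-generated. Thus \(\mathrm{supp}(x)\subset\mathcal{F}_n\), so \(Z_\gg(\mathcal{F}_n)\) is the whole space.

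This gives the opposite of what Step 2 needs, and Step 2 should be corrected accordingly. We need the \emph{local} part to vanish, i.e.\ \(V(I)\) should be everything, not empty. Indeed, \(I\)-completeness is automatic exactly when the \(I\)-nilpotent objects \(A[x^{-1}]\otimes m\) vanish for all \(x\in I\), and this happens when \(V(I)\) is all of \(\Spec\), i.e.\ when \(I\) is nilpotent. So the correct form of Step 2 is: \(V(I_\gg(\mathcal{F}_n))=|\Spec R_\gg^{\bb V}|\) forces every \(x\in I\) to be nilpotent. Then \(L_IA=0\), every module is \(I\)-complete, and the argument concludes.

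The main obstacle is the finality step in Step 1: identifying the right Kan extension along \(\Glo_\fab^{\le n}\subset\Glo_\fab\) with the limit over \(\Orb_{\mathcal{F}_n}(V)\), and this limit with the fixed points of \(\map(E\mathcal{F}_{n,+},-)\). For the first, I would imitate the proof of \cite[lem.~4.2]{GepnerMeier_EquivariantTMF1}. For the second, I would use that \(E\mathcal{F}_{n,+}\) is the colimit of \(V/H_+\) over \(\Orb_{\mathcal{F}_n}(V)\), together with \(\map_V(V/H_+,R(\gg)_V)^V\simeq R(\gg)^H\).
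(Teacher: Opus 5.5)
Your argument, once corrected, is the paper's proof. It uses the Gepner--Meier finality to identify the comparison map with the \(V\)-fixed points of \(R(\gg)_V\to\underline{\map}((E\mathcal{F}_n\vert_V)_+,R(\gg)_V)\), applies \cref{thm:betterTemperedFamilyCompletion}, and then combines \cref{rmk:boundedHeightBoundedGeneration} with \cref{prop:affinizationVIFandZF} to see that \(I_\gg(\mathcal{F}_n\vert_V)\) lies in the nilradical, so that completion is the identity. Do delete the original Step 2: for the unit ideal the local objects are everything and the complete objects are zero, so the condition you need is \(V(I_\gg(\mathcal{F}_n\vert_V))=|\Spec R_\gg^{\bb V}|\), exactly as you conclude at the end of Step 3.
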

\begin{proof}
  Let \(\mathcal{F}_n\) be tha global family of finite abelian groups that can be generated by \(n\) or less elements.
  By the pointwise formula for right Kan extensions we need to show that
  \[R_\gg^{\bb V}\xrightarrow{\sim}\lim_{(\Glo_{\mathcal{F}_n})_{/\bb V}}R_\gg^\bullet\]
  is an equivalence.
  By \cite[lem.~4.2,prop.~2.16]{GepnerMeier_EquivariantTMF1} the right hand side agrees with the limit of its restriction along
  \[\Orb^{\mathcal{F}_n}(V)\to (\Glo_{\mathcal{F}_n})_{/\bb V},\, V/S\mapsto (V/S)//V\to *//V,\]
  where the source is the full subcategory of \(\Orb(V)\) on objects \(V/S\) with \(S\in\mathcal{F}_n\).
  
  The colimit of
  \[\Orb^{\mathcal{F}_n}(V)\to \Spc_V\]
  is the \(V\)-space \(E\mathcal{F}_n\vert_V\).
  Thus the map we are interested in can be identified with
  \[R_\gg^{*//V}\to R_\gg^{E\mathcal{F}_n\vert_V//V},\]
  which is equivalent to
  \[R(\gg)^V\to \underline{\map}((E\mathcal{F}_n\vert_V)_+,R(\gg)_V)^V.\]
  By \cref{thm:betterTemperedFamilyCompletion} we have that this map agrees with the \(V\)-fixed points of the \(I_\gg(\mathcal{F}_n\vert_V)\)-completion of \(R(\gg)_V\).
  Finally, by \cref{rmk:boundedHeightBoundedGeneration} and our assumption we have that \(\mathcal{F}_n\vert_V\) contains a support for every prime ideal in \(\pi_0R(\gg)^V\), so that \(I_\gg(\mathcal{F}_n\vert_V)\) is contained in the nilradical of \(\pi_0R(\gg)^V\).
  It clearly also contains \((0)\), so \(I_\gg(\mathcal{F}_n\vert_V)\)-completion is equivalent to \((0)\)-completion, which is the identity, and we are done.
\end{proof}

\subsection{Family completion over locally noetherian geometric stacks}\label{ssec:LNG}
Having done the affine case, we now want to give a version for more general base stacks \(M\).
Let us first define the class of stacks where our arguments will work:
\begin{definition}\label[definition]{def:LNG_stack}
  Let \(M\) be a stack.
  We say that \(M\) is a {\it locally noetherian geometric} stack, or {\it LNG}-stack, if it admits a small colimit presentation
  \[M\simeq \colim_{i\in I}\Spec R_i\]
  with each \(R_i\) noethrian and each transition map \(R_i\to R_j\) flat.
  In this case, we will call such a presentation an {\it LNG-presentation} of \(M\).
\end{definition}
\begin{remark}
  Note that LNG-stacks are, in particular, geometric stacks in the sense of \cite[def.~3.4.1.1]{BalderramaDaviesLinskens_2025_AffinenessReconstructionComplexperiodic}.
  Examples of interest include \(\mathrm{M}_{\mathrm{Ell}}^{\mathrm{or}}\) and \(\mathrm{M}_{\mathrm{Tori}}^{\mathrm{or}}\simeq(\Spec \KU)//C_2\).
\end{remark}

To even give a statement, we will first need to introduce/recall some notation.

\begin{construction}\label[construction]{cons:OMGG}
  From \cite[prop.~5.2.6]{BalderramaDaviesLinskens_2026_AmbidextrousGlobalSpectra} we get an object
\[\oo_M(\gg)\in \CAlg_\pi^{gl}(\Mod_{\oo_M}(\mathrm{Shv}(M,\Sp))).\]
  For any finite group \(G\) \cite[cons.~2.3.9]{BalderramaDaviesLinskens_2026_AmbidextrousGlobalSpectra}
  yields a functor
  \[(-)_G:\CAlg_\pi^{gl}(\Mod_{\oo_M}(\mathrm{Shv}(M,\Sp)))\to \CAlg(\Fun^\times(\mathrm{Span}(\Fin_G),\Mod_{\oo_M}(\mathrm{Shv}(M,\Sp)))).\]
  The value of \(\oo_M(\gg)_G\) on \(G/H\) lies, by \cite[prop.~5.3.3]{BalderramaDaviesLinskens_2026_AmbidextrousGlobalSpectra}, in the full subcategory
  \[\QCoh(M)\subset \Mod_{\oo_M}(\mathrm{Shv}(M,\Sp)).\]
  This inclusion is symmetric monoidal and commutes with colimits, so by stability it commutes with finite products.
  In particular, we find that \(\oo_M(\gg)_G\) canonically lifts to
  \[\CAlg(\Fun^\times(\mathrm{Span}(\Fin_G),\QCoh(M)))\simeq \CAlg(\Sp_G\otimes \QCoh(M)).\]

  The functoriality in \(\gg\) is as follows: denote by
  \[\underline{\CAlg_G}\to \Stk\]
  the Cartesian unstraightening of
  \[\Stk^{op}\to \widehat{\Cat_\infty},\,M\mapsto \CAlg(\Sp_G\otimes\QCoh(M)).\]
  Then \(\gg\mapsto \oo_M(\gg)_G\) assembles into a Cartesian section
\[\begin{tikzcd}
	& {\underline{\CAlg_G}} \\
	{\mathrm{PDiv}^{or}_{cart}} & \Stk
	\arrow[from=1-2, to=2-2]
	\arrow[from=2-1, to=1-2]
	\arrow[from=2-1, to=2-2]
\end{tikzcd}\]

Finally, recall from \cite[def.~5.3.4]{BalderramaDaviesLinskens_2026_AmbidextrousGlobalSpectra} the notation \(\gg(\bb G)\) for the relative affine stack over \(M\) associated to \(\O_M(\gg)^G\in \QCoh(M)\).
\end{construction}

\begin{lemma}\label[lemma]{lem:VIF_flat_basechange}
  Let \(G\) be a finite group, \(\mathcal{F}\) a family of subgroups of \(G\), \(f:\Spec S\to \Spec R\) a flat map between noetherian affine stacks, and \(\gg\) an orient \(\mathbb{P}\)-divisible group over \(\Spec R\).
  Then we have
  \[|f_G|^{-1}(V(I_\gg(\mathcal{F})))=V(I_{f^*\gg}(\mathcal{F})).\]
\end{lemma}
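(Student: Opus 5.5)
The plan is to describe \(V(I_\gg(\mathcal{F}))\) geometrically as a union of images, and then use that images commute with flat base change on the level of underlying sets. Here \(f_G:\Spec S^{\bb G}_{f^*\gg}\to \Spec R^{\bb G}_\gg\) denotes the induced map on \(G\)-fixed point rings.

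\textbf{Step 1: the base change formula.} First I establish that, for every finite group \(K\), the canonical map
\[S\otimes_R R_\gg^{\bb K}\to S_{f^*\gg}^{\bb K}\]
is an equivalence. For \(K\) abelian this is the flatness of the construction: \(R_\gg^{\bb K}\) is the ring of functions on \(\gg[\hat K]\), and the formation of \(\gg[\hat K]\) commutes with base change. For general \(K\), write \(R_\gg^{\bb K}\) as the finite limit
\[R_\gg^{\bb K}\simeq\lim_{\Orb_\ab(K)^{\op}}R_\gg^{\bb H}.\]
This follows from the colimit formula for \(\gg\{\bb K\}\) together with \(\Gamma\) preserving limits. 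Tensoring with the flat \(R\)-module \(S\) commutes with finite limits of modules, so the equivalence holds for all \(K\). This is also the content of \cite[cor.~4.7.3]{Lurie_2019_EllipticCohomologyIII}, which could be cited instead. Taking \(\pi_0\), and using flatness again, gives
\[\pi_0 S_{f^*\gg}^{\bb K}\simeq \pi_0S\otimes_{\pi_0R}\pi_0R_\gg^{\bb K}.\]
Hence each square
\[\begin{tikzcd}
	{\Spec S^{\bb H}_{f^*\gg}} & {\Spec S^{\bb G}_{f^*\gg}} \\
	{\Spec R^{\bb H}_{\gg}} & {\Spec R^{\bb G}_{\gg}}
	\arrow[from=1-1, to=1-2]
	\arrow[from=1-1, to=2-1]
	\arrow[from=1-2, to=2-2]
	\arrow[from=2-1, to=2-2]
\end{tikzcd}\]
is a pullback of classical affine schemes, for each \(H\subset G\).

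\textbf{Step 2: \(V(I)\) as a union of images.} As in the first line of the proof of \cref{prop:affinizationVIFandZF}, each restriction map \(\pi_0R_\gg^{\bb G}\to\pi_0R_\gg^{\bb H}\) is finite, hence integral, by \cite[cor.~4.7.13(b)]{Lurie_2019_EllipticCohomologyIII}. Its image on spectra is therefore the closed set \(V(I^G_H)\). Since \(\mathcal{F}\) is finite,
\[V(I_\gg(\mathcal{F}))=\bigcup_{H\in\mathcal{F}}\im\nb{|\Spec R_\gg^{\bb H}|\to|\Spec R_\gg^{\bb G}|}.\]
The same description holds for \(f^*\gg\) over \(S\). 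The noetherian hypothesis on \(S\) is used here, to apply the finiteness result to \(f^*\gg\).

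\textbf{Step 3: images commute with base change.} For any pullback of schemes, the preimage of the image of the top-left corner under the right vertical map is the image of the pullback. Indeed, given \(y\) upstairs and \(x\) downstairs with the same image, a point of the fibre product lying over both exists by \cite[\href{https://stacks.math.columbia.edu/tag/01JT}{Tag 01JT}]{stacks-project}. Applying this to each square of Step 1 and taking the finite union over \(H\in\mathcal{F}\) gives
\[|f_G|^{-1}(V(I_\gg(\mathcal{F})))=V(I_{f^*\gg}(\mathcal{F})).\]

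The main obstacle is Step 1 for nonabelian \(K\), specifically checking that the limit over \(\Orb_\ab(K)\) is a finite limit in \(\Mod_R\), so that flat base change applies. I expect to handle this by citing Lurie's base change result directly, or by noting that \(\Orb_\ab(K)\) is a finite category. Once Step 1 is in place, the rest is point-set topology.
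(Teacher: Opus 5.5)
Your argument is the same as the paper's. In both, \(V(I_\gg(\mathcal{F}))\) is the finite union of the images of the integral maps \(|\Spec R_\gg^{\bb H}|\to|\Spec R_\gg^{\bb G}|\). The comparison squares for \(H\subset G\) before and after base change are pullbacks, and they stay pullbacks on \(\pi_0\) by flatness. Then \cite[Tag 01JT]{stacks-project} identifies \(|f_G|^{-1}\) of each image with the corresponding image over \(S\). (In Step 3 you mean the image of the bottom-left corner \(|\Spec R_\gg^{\bb H}|\), not the top-left one.)

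Your fallback justification of Step 1 is wrong, however. \(\Orb_\ab(K)\) is not a finite category in the sense that matters, because its objects have nontrivial automorphism groups: \(K/e\) has automorphism group \(K\), and \(K/H\) has \(N_KH/H\). So the limit over \(\Orb_\ab(K)^\op\) involves homotopy fixed points, and flat base change does not commute with those in general. For instance, \(\KU\to\KU_\qq\) is flat. Yet \(\qq\otimes\pi_0\KU^{BC_2}\cong\qq\otimes(\zz\oplus\zz_2^\wedge)\) by Atiyah--Segal, whereas \(\pi_0\KU_\qq^{BC_2}\cong\qq\).

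So for nonabelian \(K\) you need Lurie's base change theorem, which is what the paper cites for this square (\cite[thm.~4.7.1]{Lurie_2019_EllipticCohomologyIII}). Alternatively, you could use the \(\mathcal{AB}\)-nilpotence of \cref{lem:temperedIsMNNAbNilpotent}. It makes the Tot tower computing \(R_\gg^{\bb K}\) from the abelian subgroups pro-constant, which exhibits \(R_\gg^{\bb K}\) as a retract of a genuinely finite limit of the \(R_\gg^{\bb H}\) with \(H\) abelian; your flatness argument then goes through.
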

\begin{proof}
  As noted in the proof of \cref{prop:affinizationVIFandZF}, \(V(I_\gg(\mathcal{F}))\) equals the union of the images of
  \[|\Spec R(\gg)^H|\to |\Spec R(\gg)^G|\]
  for \(H\in \mathcal{F}\).
  Consider the commutative square
\[\begin{tikzcd}
	{\Spec S(f^*\gg)^H} & {\Spec R(\gg)^H} \\
	{\Spec S(f^*\gg)^G} & {\Spec R(\gg)^G}
	\arrow[from=1-1, to=1-2]
	\arrow[from=1-1, to=2-1]
	\arrow[from=1-2, to=2-2]
	\arrow[from=2-1, to=2-2]
\end{tikzcd}\]
  By \cite[thm.~4.7.1]{Lurie_2019_EllipticCohomologyIII} it is a pullback square, and the horizontal maps are flat by assumption.
  Thus, it remains a pullback when passing to \(\pi_0\), so \cite[\href{https://stacks.math.columbia.edu/tag/01JT}{Tag 01JT}]{stacks-project} tells us that the diagonal map in the diagram 
\[\begin{tikzcd}
	{|\Spec S(f^*\gg)^H|} && \\
	& {|\Spec S(f^*\gg)^G|\times_{|\Spec R(\gg)^G|}|\Spec R(\gg)^H|} & {|\Spec R(\gg)^H|} \\
	& {|\Spec S(f^*\gg)^G|} & {|\Spec R(\gg)^G|}
	\arrow[from=1-1, to=2-2]
	\arrow[curve={height=-12pt}, from=1-1, to=2-3]
	\arrow[curve={height=12pt}, from=1-1, to=3-2]
	\arrow[from=2-2, to=2-3]
	\arrow[from=2-2, to=3-2]
	\arrow[from=2-3, to=3-3]
	\arrow[from=3-2, to=3-3]
\end{tikzcd}\]
  is surjective.
  This shows that \(|f_G|^{-1}(V(I^G_H))=V(I^G_H)\) and we are done.
\end{proof}

\begin{proposition}\label[proposition]{prop:VIF_closed_over_geometric_stacks}
  Let \(M\) be an LNG-stack, let \(\gg\) be an oriented \(\mathbb{P}\)-divisible group over \(M\), let \(G\) be a finite group, and let \(\mathcal{F}\) be a family of subgroups of \(G\).
  Then \(V_\gg(\mathcal{F})\), the union of the images of
  \[|\gg(\bb H)|\to |\gg(\bb G)|\]
  for \(H\in\mathcal{F}\), is a closed subset.
  Moreover, the open immersion \(U'_\gg(\mathcal{F})\to \gg(\bb G)\) associated to its complement is locally quasi-compact.
\end{proposition}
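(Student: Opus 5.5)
We fix an LNG-presentation \(M\simeq\colim_{i\in I}\Spec R_i\) with noetherian \(R_i\) and flat transition maps, and reduce both claims to the affine charts. Write \(\gg_i\) for the pullback of \(\gg\) to \(\Spec R_i\). Relative affine stacks are stable under base change, so \(\gg(\bb G)\times_M\Spec R_i\simeq\Spec R_i(\gg_i)^G\), and similarly for \(H\). Since \(|-|\) commutes with colimits and \(\gg(\bb G)\simeq\colim_i\Spec R_i(\gg_i)^G\), the space \(|\gg(\bb G)|\) is a quotient of \(\coprod_i|\Spec R_i(\gg_i)^G|\). A subset is therefore closed as soon as its preimage in each chart is closed.

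First, I will compute the preimage of \(V_\gg(\mathcal{F})\) in each chart. The claim is that it equals \(V(I_{\gg_i}(\mathcal{F}))\), which is closed; by the argument at the start of the proof of \cref{prop:affinizationVIFandZF}, using finiteness of restriction maps, this is the union of the images of \(|\Spec R_i(\gg_i)^H|\) for \(H\in\mathcal{F}\). The inclusion \(\supseteq\) is clear, since the square with \(\gg(\bb H)\to\gg(\bb G)\) commutes.

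For \(\subseteq\), take a point \(y\) of the chart \(i\) that maps to the image of some \(z\in|\gg(\bb H)|\). Points of \(|\gg(\bb H)|\) come from charts, so \(z\) comes from a point of some chart \(j\). Hence the images of chart \(i\) and chart \(j\) in \(|\gg(\bb G)|\) meet at a point represented over both. The equivalence relation on \(\coprod_i|\Spec R_i(\gg_i)^G|\) is generated by the flat transition maps \(R_i\to R_k\), so \(y\) and a point in \(V(I_{\gg_j}(\mathcal{F}))\) are joined by a zig-zag along these maps. \Cref{lem:VIF_flat_basechange} says that preimages of \(V(I(\mathcal{F}))\) under flat base change are again \(V(I(\mathcal{F}))\). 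Applying it in both directions along the zig-zag, membership propagates (images land inside \(V\), and preimages of \(V\) are \(V\)), so \(y\in V(I_{\gg_i}(\mathcal{F}))\).

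The main obstacle is this step: making precise that the equivalence relation on points of a colimit of stacks is generated by the transition maps. I would use the description of \(|-|\) from \cref{sec:stacks}, namely that it preserves colimits, so it is a quotient of the coproduct by the relation generated by the arrows of the diagram. This also explains the restriction to flat diagrams: \cref{lem:VIF_flat_basechange} needs flatness so that the property can be pushed both forwards and backwards along each arrow.

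Finally, for local quasi-compactness, I will use the effective epimorphism \(Y=\coprod_i\Spec R_i(\gg_i)^G\to\gg(\bb G)\). By \cref{cor:open_immersion_basechange}, the base change of \(U'_\gg(\mathcal{F})\) to the chart \(i\) is the open immersion attached to the preimage of the complement, which is \(|\Spec R_i(\gg_i)^G|\setminus V(I_{\gg_i}(\mathcal{F}))\). Since \(R_i(\gg_i)^G\) is finite over the noetherian \(R_i\), it is noetherian, so this open is quasi-compact. Hence \(U'_\gg(\mathcal{F})_Y\to Y\) is a quasi-compact open immersion, as required by \cref{def:locally_quasicompact_open_immersion}.
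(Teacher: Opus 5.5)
Your proposal is correct and follows the paper's proof essentially step by step: you present \(|\gg(\bb G)|\) as a quotient of \(\coprod_i|\Spec R_i(\gg_i)^G|\), use \cref{lem:VIF_flat_basechange} to show that the union of the chartwise loci \(V(I_{\gg_i}(\mathcal{F}))\) is saturated for the quotient relation, and conclude that \(V_\gg(\mathcal{F})\) is closed; your zig-zag argument just spells out what the paper asserts in one line. Local quasi-compactness then follows, as in the paper, by base changing along the effective epimorphism from the coproduct of charts and using that each \(|\Spec R_i(\gg_i)^G|\) is noetherian, so that every subset, and in particular the preimage of any quasi-compact open, is quasi-compact.
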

\begin{proof}
  Choose an LNG-presentation \(M\simeq \colim_{i\in I}\Spec R_i\) of \(M\), and denote the inclusion maps by \(\iota_i:\Spec R_i\to M\).

  By universality of colimits we get a commutative square
\[\begin{tikzcd}
	{\coprod_{i\in I}|\Spec R_i(\iota_i^*\gg)^H|} & {\coprod_{i\in I}|\Spec R_i(\iota_i^*\gg)^G|} \\
	{|\gg(\bb H)|} & {|\gg(\bb G)|}
	\arrow[from=1-1, to=1-2]
	\arrow["p"', from=1-1, to=2-1]
	\arrow["q"', from=1-2, to=2-2]
	\arrow[from=2-1, to=2-2]
\end{tikzcd}\]
  where the vertical maps are quotient maps.
  By \cref{lem:VIF_flat_basechange} the image of the top map is closed under the equivalence relation associated to \(q\), so it agrees with the preimage of the image of the bottom map.
  Thus, the image of the bottom map is closed, and so is \(V_\gg(\mathcal{F})\).
  Additionally, the preimage of the of \(V_\gg(\mathcal{F})\) under each \(|\iota_i|\) agrees with \(V(I_{\iota_i^*\gg}(\mathcal{F}))\).

  Denoting the open immersion associated to the complement of \(V_\gg(\mathcal{F})\) by \(U_\gg(\mathcal{F})\to \gg(\bb G)\), the above shows that its base change along \(\iota_i\) agrees with the open immersion into \(\Spec R_i(\iota_i^*\gg)^G\) associated to \(|\Spec R_i(\iota_i^*\gg)^G|\setminus V(I_{\iota_i^*\gg}(\mathcal{F}))\).
  Since, by assumption, \(R_i\) is noetherian,  \(R_i(\iota_i^*\gg)^G\) is noetherian, so \(I_{\iota_i^*\gg}(\mathcal{F})\) is finitely generated, so this basechanged open immersion is quasi-compact.
  Thus, \(U_\gg(\mathcal{F})\to \gg(\bb G)\) is locally quasi-compact.
\end{proof}
\begin{remark}\label[remark]{rmk:LNG_to_local_affines_family_recollement_basechange}
  The proof also shows that, for such a small colimit presentation, the basechange of \(U_\gg(\mathcal{F})\to \gg(\bb G)\) along each \(\iota_i:\Spec R_i\to M\) agrees with the quasi-compact open immersion associated to the complement of \(V(I_{\iota_i^*\gg}(\mathcal{F}))\) in \(|\Spec R_i(\iota_i^*\gg)^G|\).
\end{remark}

\begin{definition}
  To match with the affine case, we denote the recollement of \(\QCoh(\gg(\bb G))\) associated to \(U'_\gg(\mathcal{F})\to \gg(\bb G)\) by \(\mathcal{R}(I_\gg(\mathcal{F}))\).
\end{definition}
\begin{theorem}\label[theorem]{thm:family_completion_LNG_version}
  Let \(M\) be an LNG-stack, and let \(\gg\) be an oriented \(\mathbb{P}\)-divisible group over \(M\).
  For any finite group \(G\) and family \(\mathcal{F}\) of subgroups of \(G\) the basechange of \(\mathcal{R}(I_\gg(\mathcal{F}))\) along
  \[\QCoh(\gg(\bb G))\to \Mod_{\oo_M(\gg)_G}(\Sp_G\otimes_{\Sp}\QCoh(M))\]
  and the basechange of \(\mathcal{R}(\widetilde{E\mathcal{F}})\) along
  \[\Sp_G\to \Mod_{\oo_M(\gg)_G}(\Sp_G\otimes_{\Sp}\QCoh(M))\]
  are equivalent.
\end{theorem}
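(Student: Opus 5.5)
We reduce to the affine \cref{thm:betterTemperedFamilyCompletion} by descent along an LNG-presentation \(M\simeq\colim_{i\in I}\Spec R_i\) with inclusions \(\iota_i\). Write \(C_M=\Mod_{\oo_M(\gg)_G}(\Sp_G\otimes\QCoh(M))\) and \(C_i=\Mod_{R_i(\iota_i^*\gg)_G}(\Sp_G)\). Both recollements in question are linear recollements of \(C_M\): one is tensored up from \(\Sp_G\) via \cref{cor:idempotent_recollement_basechange}, and the other from \(\QCoh(\gg(\bb G))\) via \cref{cor:qc_open_immersion_idempotent}, using \cref{prop:VIF_closed_over_geometric_stacks}. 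Each is therefore of the form \(\mathcal{R}(e)\) for an idempotent algebra \(e\), namely the image of \(\widetilde{E\mathcal{F}}\), respectively of \(i_*\O_{U'_\gg(\mathcal{F})}\). By \cref{lem:characterizeNLCOfIdempotentRecollement} it suffices to show that the two classes of nilpotent (equivalently local) objects agree. The cleanest way is to show that the two idempotent algebras \(e_1,e_2\) in \(C_M\) under the unit are equivalent.

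The first step is to use the Cartesian section of \cref{cons:OMGG}. It gives an identification of the basechange of \(\oo_M(\gg)_G\) along \(\iota_i\) with \(R_i(\iota_i^*\gg)_G\), and hence symmetric monoidal pullback functors \(\iota_i^*:C_M\to C_i\). These are compatible with the maps from \(\Sp_G\) and from \(\QCoh(\gg(\bb G))\to\QCoh(\Spec R_i(\iota_i^*\gg)^G)\).

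The second step is descent: since \(\QCoh\) sends colimits of stacks to limits, \(\Sp_G\otimes\QCoh(M)\simeq\lim_i\Sp_G\otimes\QCoh(\Spec R_i)\), using that \(\Sp_G\) is dualizable so tensoring commutes with limits. Passing to module categories over the compatible family of algebras gives \(C_M\simeq\lim_i C_i\). In particular the \(\iota_i^*\) are jointly conservative, and an object is nilpotent for \(e_k\) if and only if each \(\iota_i^*\) of it is nilpotent for \(\iota_i^*e_k\).

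The third step identifies the local idempotents, again by \cref{cor:idempotent_recollement_basechange}. The image of \(e_1\) is \(R_i(\iota_i^*\gg)_G\otimes\widetilde{E\mathcal{F}}\). The image of \(e_2\) is the pushforward of the structure sheaf of the basechange of \(U'_\gg(\mathcal{F})\) to \(\Spec R_i(\iota_i^*\gg)^G\), by the adjointability of the relevant squares as in \cref{lem:lqc_open_immersion_recollement}. By \cref{rmk:LNG_to_local_affines_family_recollement_basechange} and the Remark after \cref{cor:qc_open_immersion_idempotent}, this is the recollement \(\mathcal{R}(I_{\iota_i^*\gg}(\mathcal{F}))\) tensored up to \(C_i\). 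Now \cref{thm:betterTemperedFamilyCompletion}, applied to the noetherian \(R_i\), gives equality of the nilpotent classes in each \(C_i\), and hence in \(C_M\).

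The main obstacle is the first step: checking that \(\iota_i^*\) really intertwines the two structure maps into \(C_M\) with those into \(C_i\). For the map from \(\QCoh(\gg(\bb G))\) this needs that the relative affine stack \(\gg(\bb G)\) base changes to \(\Spec R_i(\iota_i^*\gg)^G\). That holds because \(\oo_M(\gg)^G\) is quasi-coherent with pullback \(R_i(\iota_i^*\gg)^G\), which I would extract from the Cartesianness in \cref{cons:OMGG} and \cite[prop.~5.3.3]{BalderramaDaviesLinskens_2026_AmbidextrousGlobalSpectra}. The same point is needed to see that the functor \(\QCoh(\gg(\bb G))\to C_M\) comes from \(\oo_M(\gg)^G\simeq\) the endomorphisms of the unit, relative to \(\QCoh(M)\). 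The remaining steps are formal.
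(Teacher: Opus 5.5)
Your proposal is correct and follows the same route as the paper: choose an LNG-presentation, use \cref{rmk:LNG_to_local_affines_family_recollement_basechange} together with \cref{thm:betterTemperedFamilyCompletion} to compare the two recollements after base change to each \(\Mod_{R_i(\iota_i^*\gg)_G}(\Sp_G)\), and conclude by joint conservativity of these base change functors. You also make explicit several points the paper leaves implicit: the compatibility of the pullbacks \(\iota_i^*\) with the two structure maps, coming from the Cartesian section of \cref{cons:OMGG}; the adjointability needed to pull back \(i_*\O_{U'_\gg(\mathcal{F})}\); and the fact that nilpotence for a symmetric monoidally pulled-back idempotent can be checked chartwise.
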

\begin{proof}
  Choose an LNG-presentation \(M\simeq\colim_{i\in I}\Spec R_i\) of \(M\).

  By \cref{rmk:LNG_to_local_affines_family_recollement_basechange} and \cref{thm:betterTemperedFamilyCompletion} the further basechanges of these recollement along
  \[\Mod_{\oo_M(\gg)_G}(\Sp_G\otimes_{\Sp}\QCoh(M))\to \Mod_{R_i(\iota_i^*\gg)_G}(\Sp_G)\]
  are equivalent.
  But these functors are jointly conservative, so we conclude.
\end{proof}

\begin{example}
  Let us illustrate a possible application of \cref{thm:family_completion_LNG_version}.
  The classifying stack \(\mathrm{M}_{\mathrm{Ell}}^{\mathrm{or}}\) of oriented ellipitc curves is LNG, so we may apply \cref{thm:family_completion_LNG_version} to the oriented \(\mathbb{P}\)-divisible group \(\mathrm{E}^\mathrm{or}\) of torsion in the universal oriented elliptic curve.

  Since, by a famous result of Mathew and Meier (\cite[thm.~7.2]{MathewMeier_2015_AffinenessChromaticHomotopy}), \(\mathrm{M}_{\mathrm{Ell}}^{\mathrm{or}}\) is \(0\)-affine, we find that the target category in question is equivalent to
  \[\Mod_{\mathrm{TMF}_G}(\Sp_G),\]
  that is to modules over genuine \(G\)-equivariant topological modular forms.

  Since \(\mathrm{M}_{\mathrm{Ell}}^{\mathrm{or}}\) admits an LNG-presentation such that, over each affine, the heights of \(\mathrm{E}^\mathrm{or}\) are bounded above by \(2\), \cref{rmk:boundedHeightBoundedGeneration} tells us that, for every finite abelian group \(G\),
  \[U'_{\mathrm{E}^\mathrm{or}}(\mathcal{F}_2\vert_G)\to \mathrm{E}^\mathrm{or}(\bb G)\]
  is the open immersion associated to the empty open subset.
  Here \(\mathcal{F}_2\) denotes the global family of finite abelian groups that can be generated by \(2\) or less elements.
  As a consequence, we find that (similarly to \cref{prop:rightKanExtension})
  \[\Glo_\fin^\op\to \CAlg(\Sp),\,\bb G\to \mathrm{TMF}^G\]
  is right Kan extended from its restriction to \(\Glo_{\mathcal{F}_2}^\op\).
  A generalization of this result to compact Lie groups was previously obtained by Gepner and Meier in (so-far) unpublished work.
\end{example}
\begin{remark}
  Assume that \(M\) is \(0\)-affine.
  Then also \(\gg(\bb G)\) is \(0\)-affine, so that
  \[\QCoh(\gg(\bb G))\simeq \Mod_{\Gamma\O_M(\gg)^G}(\Sp).\]
  It is unclear whether or not \(\mathcal{R}(I_\gg(\mathcal{F}))\) is then equivalent to a recollement associated to a finitely generated ideal in \(\pi_0\Gamma\O_M(\gg)^G\).
  Thus, while the affine case has concrete computational ramifications, it seems that the \(0\)-affine case is mostly useful to prove qualitative statements.
\end{remark}

\appendix

\section{Some spectral algebraic geometry}\label{sec:stacks}
We dedicate this appendix to develope locality statements for various classes of maps of stacks in the sense of \cite[§~3]{BalderramaDaviesLinskens_2025_AffinenessReconstructionComplexperiodic}.
Of particular interest to us are the open immersions.
Defining them and proving their basic properties could be done much terser, but we ended up essentially proving (special cases of) most of the subsequent results anyway.
Since these could be of their own interest, we take a more small-stepped approach.

\subsection{Underlying locales and spaces of stacks}
\begin{definition}
    Given a spectral Deligne-Mumford stack \(\X=(\mathcal{X},\O_\X)\), we denote by \(\mathrm{Sub}(\X)\) the (essentially small) subcategory of \((-1)\)-truncated objects of \(\mathcal{X}\), which is a locale.

    A morphism of spectral Deligne-Mumford stacks \(f:\X\to \Y\) has an underlying geometric morphism \(f^*\dashv f_*:\mathcal{X}\to\mathcal{Y}\), and we denote by \(\mathrm{Sub}(f)=f^*:\mathrm{Sub}(\Y)\to \mathrm{Sub}(\X)\) the corresponding morphism of frames.
    This defines a functor
    \[\mathrm{Sub}:\SpDM^\nc\to \mathrm{Loc},\]
    and we call \(\mathrm{Sub}(\X)\) the {\it underlying locale} of \(\X\).
\end{definition}
\begin{lemma}
    The underlying locale functor is a sheaf for the étale topology on \(\SpDM^\nc\).
\end{lemma}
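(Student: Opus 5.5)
The plan is to unwind what it means for \(\mathrm{Sub}\) to be an étale sheaf and then reduce to descent for \((-1)\)-truncated objects in \(\infty\)-topoi. Concretely, let \(\{f_\alpha:\X_\alpha\to\X\}\) be an étale covering in \(\SpDM^\nc\), with Čech nerve \(\X_\bullet\). We must show that
\[\mathrm{Sub}(\X)\to\lim_{[n]\in\Delta}\mathrm{Sub}(\X_n)\]
is an equivalence of locales. Equivalently, since limits of locales are computed as colimits of frames, we must show that the underlying frame of \(\mathrm{Sub}(\X)\) is the limit in \(\Cat\) of the frames \(\mathrm{Sub}(\X_n)\) along the pullback maps \(f^*\). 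Limits of frames are computed underlying, because the forgetful functor from frames to posets creates limits. So it suffices to prove this statement for posets of \((-1)\)-truncated objects.

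The first key step is the standard identification of an étale map \(\X_\alpha\to\X\) with the étale geometric morphism \(\mathcal{X}_{/U_\alpha}\to\mathcal{X}\) for some object \(U_\alpha\in\mathcal{X}\) \cite[§~1.4]{Lurie_SAG}. Under this identification:
\begin{itemize}
\item the Čech nerve \(\X_\bullet\) has underlying topoi \(\mathcal{X}_{/U_n}\), where \(U_\bullet\) is the Čech nerve of \(U=\coprod_\alpha U_\alpha\to 1\);
\item since the family is a covering, \(U\to 1\) is an effective epimorphism.
\end{itemize}
The problem therefore reduces to showing that \(\mathcal{X}\simeq\lim_n\mathcal{X}_{/U_n}\), and that this equivalence restricts to \((-1)\)-truncated objects. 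The equivalence of topoi is descent in the \(\infty\)-topos \(\mathcal{X}\) \cite[thm.~6.1.3.9]{Lurie_HTT}: the functor \(\mathcal{X}_{/(-)}\) sends colimits to limits, and \(1\simeq|U_\bullet|\) by effectivity. The restriction to truncated objects holds because truncatedness is detected étale locally, i.e.\ pullback to an effective-epimorphic cover reflects \((-1)\)-truncatedness \cite[lem.~6.2.3.17 / prop.~5.5.6.16]{Lurie_HTT}. Hence \(\tau_{\le -1}\mathcal{X}\simeq\lim_n\tau_{\le-1}\mathcal{X}_{/U_n}\), compatibly with the maps \(f^*\).

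The main obstacle is the bookkeeping between the two notions involved: étale covers of nonconnective spectral DM stacks on one side, and the Čech nerves of the corresponding objects in the underlying topos on the other. One has to check that fiber products of étale maps in \(\SpDM^\nc\) have underlying topos \(\mathcal{X}_{/U_\alpha\times U_\beta}\). I would cite Lurie's description of étale morphisms as equivalences \((\mathcal{X}_{/U},\O|_U)\) to get this, and then the remainder is formal.

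Finally, since sheaves for the étale topology are determined by Čech descent plus the sending of finite disjoint unions to products, I would also note the disjoint-union condition: \(\mathrm{Sub}(\X\sqcup\Y)=\mathrm{Sub}(\X)\times\mathrm{Sub}(\Y)\), which holds because \(\mathcal{X}\times\mathcal{Y}\) is the underlying topos of the disjoint union.
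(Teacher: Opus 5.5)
Your argument is correct, but it takes a different route from the paper. The paper never computes a Čech limit of topoi. It observes that the underlying set of the frame is representable on \(\mathrm{RTop}\), via \(\mathrm{Sub}(\mathcal{X})\simeq\mathrm{Loc}(\mathrm{Sub}(\mathcal{X}),\{0<u<1\})\simeq\mathrm{RTop}(\mathcal{X},\Shv(\{0<u<1\}))\) (maps into the Sierpiński topos). It then invokes Carchedi's result that the étale topology on \(\mathrm{RTop}\) is subcanonical, so this set-valued functor is an étale sheaf. Finally it upgrades from sets to frames, using that \(\mathrm{Frm}\to\mathrm{Set}\) preserves and reflects products and equalizers.

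You instead prove the descent statement by hand. You translate the cover into an effective epimorphism \(\coprod U_\alpha\to 1\) in \(\mathcal{X}\) and use descent \(\mathcal{X}\simeq\lim_n\mathcal{X}_{/U_n}\). You then restrict to \((-1)\)-truncated objects, using that truncatedness of a morphism can be checked after pullback along an effective epimorphism.

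Both arguments ultimately rest on descent in \(\infty\)-topoi: subcanonicity is the colimit-in-\(\mathrm{RTop}\) form of the limit-in-\(\Cat\) statement you use. The paper's version is shorter and gives a black box that applies to any functor represented on \(\mathrm{RTop}\). Yours needs only HTT. It also spells out the bookkeeping the paper compresses into the phrase ``an étale cover of spectral Deligne--Mumford stacks induces an étale cover of \(\infty\)-topoi'', namely that Čech nerves of étale covers in \(\SpDM^\nc\) have underlying topoi \(\mathcal{X}_{/U_n}\).

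One small slip: \(\mathrm{Sub}\) is covariant into \(\mathrm{Loc}\). So the sheaf condition says that \(\mathrm{Sub}(\X)\) is the \emph{colimit} of the locales \(\mathrm{Sub}(\X_n)\), equivalently the limit of the frames along the maps \(f^*\). Your first displayed sentence states this backwards, but the frame-level condition you go on to prove is the correct one.
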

\begin{proof}
    By construction, \(\mathrm{Sub}\) factors over the forgetful functor \(\SpDM^\nc\to \mathrm{RTop}\) to \(\infty\)-topoi and geometric morphisms, and an étale cover of spectral Deligne-Mumford stacks induces an étale cover of \(\infty\)-topoi.

    The functor that associates to an \(\infty\)-topos the underlying set of its frame of \((-1)\)-truncated objects is representable on \(\mathrm{RTop}\):
    we have that
    \[\mathrm{Sub}(\mathcal{X})\simeq \mathrm{Loc}(\mathrm{Sub}(\mathcal{X}),\{0<u<1\})\simeq \mathrm{RTop}(\mathcal{X},\Shv(\{0<u<1\})).\]
    Now, since the étale topology on \(\mathrm{RTop}\) is subcanonical \cite[prop.~3.6]{Carchedi_2020_HigherOrbifoldsDeligneMumford}, it follows that the composite
    \[\mathrm{RTop}^{op}\xrightarrow{\mathrm{Sub}}\mathrm{Loc}^{op}\simeq \mathrm{Frm}\to \mathrm{Set}\]
    is an étale sheaf.
    But the functor \(\mathrm{Frm}\to \mathrm{Set}\) preserves and reflects products and equalizers by \cite[§~IV.3.1,\,IV.3.2]{PicadoPultr_2012_FramesLocalesTopology}, so we conclude.
\end{proof}
\begin{lemma}\label{lem:zariski_space_fpqc_sheav}
    The functor
    \[\CAlg\to \Top^{op},\,A\mapsto |\Spec A|\]
    that maps an \(\mathbb{E}_\infty\)-ring \(A\) to the Zariski space of \(\pi_0 A\) is a sheaf for the fpqc topology.
\end{lemma}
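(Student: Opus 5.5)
Since \(|\Spec A|\) depends only on \(\pi_0A\), and a faithfully flat map \(A\to B\) of \(\mathbb{E}_\infty\)-rings induces a faithfully flat map \(\pi_0A\to\pi_0B\), we first reduce to a classical statement. Concretely, we must show two things: the functor sends finite products to coproducts of spaces, and for every fpqc cover \(A\to B\) the diagram
\[|\Spec \pi_0(B\otimes_A B)|\rightrightarrows|\Spec\pi_0B|\to|\Spec\pi_0A|\]
is a coequalizer in \(\Top\). The only subtlety is that \(\pi_0(B\otimes_AB)\) need not equal \(\pi_0B\otimes_{\pi_0A}\pi_0B\). However, flatness of \(B\) over \(A\) gives \(\pi_0(B\otimes_AB)\simeq\pi_0B\otimes_{\pi_0A}\pi_0B\) by the Tor spectral sequence. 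So we may work entirely with ordinary commutative rings and faithfully flat maps.

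\textbf{Finite products.} The product case is the classical fact that \(\Spec(\prod_k A_k)=\coprod_k\Spec A_k\) for finite products, using the idempotents. This is a homeomorphism onto a disjoint union.

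\textbf{Coequalizer: set level.} Here we use \cite[\href{https://stacks.math.columbia.edu/tag/01JT}{Tag 01JT}]{stacks-project}, as elsewhere in the paper. Surjectivity of \(\Spec\pi_0B\to\Spec\pi_0A\) follows from faithful flatness. For two points \(\mathfrak{q}_1,\mathfrak{q}_2\) of \(\Spec\pi_0B\) over the same \(\mathfrak{p}\), the fibre product \(\Spec\pi_0B\times_{\Spec\pi_0A}\Spec\pi_0B\) surjects onto the set-theoretic fibre product. This holds because \(k(\mathfrak{q}_1)\otimes_{k(\mathfrak{p})}k(\mathfrak{q}_2)\neq0\). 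Hence the equivalence relation generated by the two projections is exactly "same image in \(\Spec\pi_0A\)", and the underlying set of the coequalizer is \(|\Spec\pi_0A|\).

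\textbf{Coequalizer: topology.} This is the main step: we must show that \(\Spec\pi_0B\to\Spec\pi_0A\) is a topological quotient map. This is the classical theorem that faithfully flat morphisms of affine schemes are submersive, which is Stacks Project Tag 0255 (fpqc coverings are submersive). The proof we have in mind runs as follows. Let \(T\subset\Spec\pi_0A\) have closed preimage. Then \(T\) is stable under specialization, since flat maps satisfy going down and hence generalizations lift. One then shows \(T\) is closed using that the image of the closed preimage is pro-constructible. Combined with stability under specialization, this gives closedness for a pro-constructible set. We would cite this rather than reprove it. Note that the quotient topology on the coequalizer is computed from the quotient topology on \(|\Spec\pi_0B|\), since the coequalizer in \(\Top\) is the set-theoretic coequalizer with the quotient topology.

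Together, the product and coequalizer properties give the sheaf condition for covers by single faithfully flat maps. General fpqc covers by finite families reduce to this case by taking the product of the members, using the product compatibility.
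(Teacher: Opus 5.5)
Your proposal is correct. Up to the topological step it matches the paper's proof: both reduce to \(\pi_0\) (using flatness to identify \(\pi_0(B\otimes_AB)\)), handle finite products classically, and get the bijection on points from \cite[Tag 01JT]{stacks-project} plus surjectivity of faithfully flat maps.

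You differ in how you show that the quotient topology on the coequalizer agrees with the Zariski topology on \(|\Spec \pi_0A|\). You invoke the classical theorem that faithfully flat maps of affine schemes are submersive, proved via going-down and pro-constructibility of images. The paper does not prove submersiveness directly. It uses \cite[prop.~1.6.2.2]{Lurie_SAG}, which says that \(A\mapsto \mathrm{Open}(|\Spec A|)\) is an fpqc sheaf of frames. It combines this with the fact that \(\mathrm{Open}\colon\Top\to\mathrm{Loc}\) is a left adjoint, so it sends the coequalizer \(X\) to an equalizer of frames. Comparing the two equalizers shows that \(f^{-1}\colon\mathrm{Open}(|\Spec A|)\to\mathrm{Open}(X)\) is a bijection. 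Together with bijectivity on points, this makes \(f\) a homeomorphism.

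Once the point-level facts are known, descent for the frame of opens is equivalent to submersiveness, so the two arguments differ mainly in which black box they cite. Yours is self-contained in classical commutative algebra and makes the role of flatness (going-down) explicit. The paper's is shorter given SAG and fits the locale-theoretic framework of the surrounding appendix, where the same frame-level descent is used for the underlying locale.
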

\begin{proof}
    According to \cite[prop.~A.3.3.1]{Lurie_SAG} we need to verify two things:
    \begin{enumerate}
        \item For a finite family \(\{A_i\}_{i=1,\ldots,n}\) we have that the natural map
        \[\coprod_{i=1}^n|\Spec A_i|\to |\Spec\prod_{i=1}^nA_i|\]
        is a homeomorphism, and
        \item for a faithfully flat map \(A\to B\) the natural diagram
        \[|\Spec B\otimes_A B|\rightrightarrows|\Spec B|\to |\Spec A|\]
        is a coequalizer in \(\Top\).
    \end{enumerate}
    
    The first point follows from the classical statement, since \(\pi_0:\CAlg\to \CAlg^\heartsuit\) commutes with products.

    Regarding the second point, note that, since \(A\to B\) is faithfully flat, we have that \(\pi_0A\to \pi_0B\) is faithfully flat, and that \(\pi_0(B\otimes_A B)\simeq (\pi_0B)\otimes_{(\pi_0 A)}(\pi_0 B)\).
    Thus, we may assume that \(A\) and \(B\) are discrete rings.

    By \cite[\href{https://stacks.math.columbia.edu/tag/01JT}{Tag 01JT}]{stacks-project} we have that the natural map
    \[|\Spec B\otimes_A B|\to |\Spec B|\times_{|\Spec A|}|\Spec B|\]
    is surjective, and by \cite[\href{https://stacks.math.columbia.edu/tag/00HP}{Tag 00HP}]{stacks-project} that \(|\Spec B|\to |\Spec A|\) is surjective.
    It follows that the natural map \(f:X\to |\Spec A|\), where \(X\) is the coequalizer in \(\Top\), is bijective on points.

    Let us now show that \(f\) is an open map, thus a homeomorphism.
    By \cite[prop.~1.6.2.2]{Lurie_SAG} the composite
    \[\CAlg\to \Top^\op\xrightarrow{\mathrm{Open}}\mathrm{Loc}^\op\simeq \mathrm{Frm}\to\mathrm{Set},\]
    which maps \(A\) to the set of open subsets of \(|\Spec A|\), is an fpqc-sheaf.
    But the functor \(\mathrm{Frm}\to \mathrm{Set}\) preserves and reflects products and equalizers by \cite[§~IV.3.1,\,IV.3.2]{PicadoPultr_2012_FramesLocalesTopology}, so already the functor that sends \(A\) to the frame of open subsets of \(|\Spec A|\) is an fpqc-sheaf.
    Also, as a left adjoint, \(\mathrm{Open}:\Top\to \mathrm{Loc}\) commutes with colimits, so we get a diagram of frames
\[\begin{tikzcd}
	& {\mathrm{Open}(|\Spec A|)} && \\
	{\mathcal{P}(|\Spec A|)} & {\mathrm{Open}(X)} & {\mathrm{Open}(|\Spec B|)} & {\mathrm{Open}(|\Spec B\otimes_A B|)} \\
	{\mathcal{P}(X)} & {\mathcal{P}(|\Spec B|)}
	\arrow[hook, from=1-2, to=2-1]
	\arrow["{f^{-1}}"', hook, from=1-2, to=2-2]
	\arrow["{q^{-1}}", hook, from=1-2, to=2-3]
	\arrow["{f^{-1}}"', hook, two heads, from=2-1, to=3-1]
	\arrow["{q^{-1}}", hook, from=2-1, to=3-2]
	\arrow["{p^{-1}}", hook, from=2-2, to=2-3]
	\arrow[hook, from=2-2, to=3-1]
	\arrow[shift right, from=2-3, to=2-4]
	\arrow[shift left, from=2-3, to=2-4]
	\arrow[hook, from=2-3, to=3-2]
	\arrow["{p^{-1}}"', hook, from=3-1, to=3-2]
\end{tikzcd}\]
    where \(\mathcal{P}\) denotes the frame of all subsets.
    By the discussion above both \(p^{-1}\) and \(q^{-1}\) are equalizers for the two parallel arrows, so they have the same image, so \(f^{-1}:\mathrm{Open}(|\Spec A|)\to \mathrm{Open}(X)\) is a bijection, so \(f\) is a homeomorphism.
\end{proof}

\begin{lemma}
    Restricted to affine spectral Deligne-Mumford stacks, the underlying locale is a sheaf for the fpqc-topology.
\end{lemma}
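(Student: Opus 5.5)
The statement says: for affine spectral Deligne--Mumford stacks, the functor \(A\mapsto\mathrm{Sub}(\Spec A)\) satisfies fpqc descent. The plan is to reduce it to \cref{lem:zariski_space_fpqc_sheav}, applied to frames of open subsets. The starting observation is that for an affine spectral Deligne--Mumford stack \(\Spec A\), with its étale \(\infty\)-topos, the \((-1)\)-truncated objects are the open subobjects of the terminal object. These correspond to quasi-compact-generated opens, i.e.\ to open subsets of \(|\Spec A|=|\Spec\pi_0A|\). So the first step is to identify, naturally in \(A\), the locale \(\mathrm{Sub}(\Spec A)\) with the locale \(\mathrm{Open}(|\Spec A|)\) of the Zariski space. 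Concretely, I would cite \cite[prop.~1.6.2.2]{Lurie_SAG} or the analogous statement for the étale topos: every \((-1)\)-truncated object of \(\Shv_{\et}(A)\) is determined by the étale maps \(A\to B\) factoring through it, and it is the union of the images of their Zariski opens. Naturality in \(A\) comes from pulling back along \(\Spec B\to\Spec A\), which on both sides is the preimage map on opens.

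With this identification, sheafiness of \(\mathrm{Sub}\) on affines reduces to sheafiness of \(A\mapsto\mathrm{Open}(|\Spec A|)\) as a frame-valued functor. I would then reuse the argument from the proof of \cref{lem:zariski_space_fpqc_sheav}. By \cite[prop.~1.6.2.2]{Lurie_SAG} the set-valued functor of open subsets is an fpqc sheaf. Since the forgetful functor \(\mathrm{Frm}\to\mathrm{Set}\) preserves and reflects products and equalizers \cite[§~IV.3.1,\,IV.3.2]{PicadoPultr_2012_FramesLocalesTopology}, the frame-valued functor is an fpqc sheaf as well. Dually, \(\mathrm{Sub}\) sends finite products of rings to coproducts of locales and Čech nerves of faithfully flat maps to coequalizers, which is the sheaf condition in \(\mathrm{Loc}^{\op}\). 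For the full sheaf condition in the sense of \cite[prop.~A.3.3.1]{Lurie_SAG}, it suffices to check finite products and faithfully flat Čech nerves. The finite product case follows from the equality \(|\Spec\prod A_i|=\coprod|\Spec A_i|\). Since \(\mathrm{Frm}\) is a \(1\)-category, only the first stages of the Čech nerve matter.

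The main obstacle is the first step: naturally identifying \((-1)\)-truncated objects of the étale topos of \(\Spec A\) with Zariski opens of \(\pi_0A\). The étale topos has more opens a priori only as subobjects, but every subterminal is covered by étale \(B\)'s whose images in \(|\Spec A|\) are open, since étale maps are open. Two subterminals with the same union of images agree, because a subterminal is determined by which étale \(B\) map into it. I would prove this by checking it on the site of étale \(A\)-algebras, using that \(\Spec B\to\Spec A\) factors through a subterminal \(U\) exactly when its image lies in the associated open. I expect this local check, done via the étale-sheaf property of \(\mathrm{Sub}\) established in the preceding lemma, to be the only nonformal input.
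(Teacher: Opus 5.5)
Your proposal is correct and follows essentially the paper's argument. The paper cites \cite[prop.~1.6.2.2]{Lurie_SAG} for the fpqc-sheaf property of the set-valued functor \(A\mapsto\mathrm{Sub}(\Spet A)\), and then passes to frames using that \(\mathrm{Frm}\to\mathrm{Set}\) preserves and reflects products and equalizers, exactly as you do. The only difference is that you isolate and sketch the natural identification \(\mathrm{Sub}(\Spet A)\simeq\mathrm{Open}(|\Spec A|)\), which the paper leaves implicit in that citation; your sketch is sound: subterminals are determined by the étale algebras mapping into them, étale maps have open image, and membership is étale-local.
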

\begin{proof}
    By \cite[prop.~1.6.2.2]{Lurie_SAG} The composite functor
    \[\CAlg\xrightarrow{\Spet}(\SpDM^\nc)^\op\xrightarrow{\mathrm{Sub}}\mathrm{Loc}^\op\simeq \mathrm{Frm}\to \mathrm{Set}\]
    is a sheaf for the fpqc topology.
    But the functor \(\mathrm{Frm}\to \mathrm{Set}\) preserves and reflects products and reflects equalizers by \cite[§~IV.3.1,\,IV.3.2]{PicadoPultr_2012_FramesLocalesTopology}, so we conclude.
\end{proof}
\begin{corollary}\label{cor:h_underlying_locale}
    The underlying locale functor \(\mathrm{Sub}:\SpDM^\nc\to\mathrm{Loc}\) factors uniquely through \(h:\SpDM^\nc\to \Stk\), and the resulting functor \(\Stk\to\mathrm{Loc}\) commutes with colimits.
\end{corollary}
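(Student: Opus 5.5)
The plan is to construct the functor \(\Stk\to\mathrm{Loc}\) by left Kan extension and then use the descent results just proved to show that it restricts correctly to spectral Deligne-Mumford stacks. Recall from \cite{BalderramaDaviesLinskens_2025_AffinenessReconstructionComplexperiodic} that \(\Stk\) is the category of fpqc-sheaves of spaces on \(\CAlg^\op\) (with their accessibility conventions), and that \(h\) sends a nonconnective spectral Deligne-Mumford stack to its functor of points. I will first define \(\Stk\to\mathrm{Loc}\) as the left Kan extension of \(\mathrm{Sub}\circ\Spet:\CAlg^\op\to\mathrm{Loc}\) along the Yoneda embedding into presheaves, and then restrict to \(\Stk\). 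Since \(\mathrm{Loc}\) is cocomplete, this extension exists and preserves colimits of presheaves. By the preceding lemma, \(\mathrm{Sub}\circ\Spet\) is an fpqc-sheaf in the sense that it sends \v{C}ech nerves of fpqc covers to colimit diagrams in \(\mathrm{Loc}\), and it takes finite products of rings to coproducts. Consequently the colimit-preserving extension inverts the sheafification maps. It therefore factors uniquely through the localization \(\mathcal{P}(\CAlg^\op)\to\Stk\) as a colimit-preserving functor. Uniqueness holds because colimit-preserving functors out of \(\Stk\) are determined by their values on affines. I expect the size issues in this step to be routine, handled by the accessibility conventions of the cited paper.

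Second, I will compare this with \(\mathrm{Sub}\) on \(\SpDM^\nc\). Given \(\X\), I choose an étale atlas by affines \(\coprod\Spet A_i\to\X\) and form its \v{C}ech nerve, each term of which is a disjoint union of spectral Deligne-Mumford stacks that may be refined by affines. The functor \(h\) sends étale covers to effective epimorphisms, so \(h\X\) is the colimit of the \(h\) of the \v{C}ech nerve. On the other side, the étale-sheaf lemma for \(\mathrm{Sub}\) shows that \(\mathrm{Sub}(\X)\) is the colimit in \(\mathrm{Loc}\) of \(\mathrm{Sub}\) of the nerve. Both functors therefore agree on \(\X\), provided they agree on the terms of the nerve. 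To close this recursion I will first treat quasi-compact, quasi-separated stacks and then pass to general ones. Alternatively, I can invoke the fact from the cited paper that \(h\) is fully faithful on suitable \(\SpDM^\nc\) and that the full subcategory of objects where the two functors agree is closed under étale-local colimits. The naturality of this identification in \(\X\) follows from the naturality of both colimit descriptions.

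I expect the main obstacle to be this comparison step. It requires making precise that \(h\) carries étale hypercovers or \v{C}ech nerves of spectral Deligne-Mumford stacks to colimits in \(\Stk\), and that the resulting identification is functorial rather than only objectwise. I would handle this by writing the comparison as a natural transformation obtained from the universal property of the Kan extension, a map \(\mathrm{Lan}(\mathrm{Sub}\circ\Spet)\circ h\to\mathrm{Sub}\). I would then check that it is an equivalence étale-locally using the two descent lemmas. Colimit preservation of the resulting functor \(\Stk\to\mathrm{Loc}\) is then automatic from the construction.
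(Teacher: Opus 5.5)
Your proposal is correct and takes the route the paper intends; the paper states this corollary without proof, as a consequence of the preceding lemmas. The fpqc-descent lemma for \(\mathrm{Sub}\circ\Spet\) gives the unique colimit-preserving extension to \(\Stk\), and the étale-descent lemma for \(\mathrm{Sub}\) on \(\SpDM^\nc\) identifies the composite with \(h\) with \(\mathrm{Sub}\). In the comparison step, use your second option rather than the \v{C}ech recursion, which need not terminate for non-truncated stacks (quasi-compactness and quasi-separatedness do not force truncatedness): write \(\X\) as the colimit of its small étale site of affines, which both \(h\) and \(\mathrm{Sub}\) send to colimits, and this shows directly that your map \(\mathrm{Lan}(\mathrm{Sub}\circ\Spet)\circ h\to\mathrm{Sub}\) is an equivalence.
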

\begin{definition}\label{def:underlying_locale_stack}
    We denote the resulting functor \(\Stk\to \mathrm{Loc}\) of \cref{cor:h_underlying_locale} by \(\mathrm{Sub}\), and call \(\mathrm{Sub}(X)\) the {\it underlying locale} of \(X\).

    By \cite[prop.~2.1.2.2]{BalderramaDaviesLinskens_2025_AffinenessReconstructionComplexperiodic} and \cref{lem:zariski_space_fpqc_sheav} we also obtain a colimit preserving functor \(|-|:\Stk\to \Top\), and we call \(|X|\) the {\it underlying topological space} of \(X\).
\end{definition}

\begin{remark}\label{rmk:spaces_vs_locales}
    As the underlying locale of any affine \(\Spec A\) is spatial, and spatial locales are closed under colimits, it follows that \(\mathrm{Sub}(X)\) is spatial for any \(X\in\Stk\).
    In particular, we can recover \(\mathrm{Sub}(X)\) as \(\mathrm{Open}(|X|)\).
    On the other hand, the best we can say about the natural map \(|X|\to \mathrm{Pt}(\mathrm{Sub}(X))\) is  that it is sobrification.
    It follows that for \(\X\in \SpDM^\nc\) the natural map \(|h_\X|\to |\X|\) is sobrification.
\end{remark}
\begin{example}
    While, by construction, \(X\mapsto \mathrm{Sub}(X)\)  and \(X\mapsto |X|\) commute with colimits, this does not hold for \(X\mapsto \mathrm{Pt}(\mathrm{Sub}(X))\):
    Let \(I\) be the category with objects \(\{x\}\coprod\mathbb{C}\coprod \{y\}\) and morphisms identities and unique morphisms \(u_c:c\to x,\,v_c:c\to y\) for each \(c\in \mathbb{C}\).
    Let \(F:I^\op\to \CAlg^\heartsuit\subset \CAlg\) be the diagram defined as
    \[F(x)=\mathbb{C}[s],\,F(y)= \mathbb{C}[t],\,F(c\in\mathbb{C})=\mathbb{C},\,F(u_c)=(\mathbb{C}[s]\xrightarrow{s\mapsto c}\mathbb{C}),\,F(v_c)=(\mathbb{C}[t]\xrightarrow{t\mapsto c}\mathbb{C}).\]
    Then \(T=\colim_{i\in I}\mathrm{Pt}(\mathrm{Sub}(\Spec F(i)))\) has underlying set
    \[\{\eta_s=(0)\subset \mathbb{C}[s]\}\coprod \{[(s-c),(t-c)]\mid c\in \mathbb{C}\}\coprod\{\eta_t=(0)\subset \mathbb{C}[t]\}.\]
    If \(\eta_s\in U\subset T\) is an open neighborhood, then its preimage in \(|\Spec\mathbb{C}[s]|\) is open and contains the generic point, so \(U\) must contain all but finitely many of the points \([(s-c)]=[(t-c)]\).
    On the other hand, the closure of \(\{\eta_t\}\) must contain all the points \([(t-c)]=[(s-c)]\).
    Since \(\mathbb{C}\) is famously infinite, this tells us that \(U\cap \overline{\{\eta_t\}}\neq \emptyset\), so the closed complement of \(U\) can't contain \(\eta_t\), so also \(\eta_t\in U\).
    The same argument also shows that every open containing \(\eta_t\) also contains \(\eta_s\).
    In particular, \(T\) is not a sober space, so can't agree with \(\mathrm{Pt}(\mathrm{Sub}(\colim_{i\in I}\Spec F(i)))\).
    
    What does hold is that \(X\mapsto \mathrm{Pt}(\mathrm{Sub}(X))\) commutes with colimits up to sobrification.
\end{example}

\subsection{Local properties of morphisms of stacks}
\begin{definition}\label{def:stacks_effective_epi_etc}
    Let \(f:X\to Y\) be a map of stacks.
    Denote by \(\check{C}^\bullet(f):\Delta^\op\to \Stk_{/Y}\) the Čech nerve of \(f\).
    Define the {\it image of \(f\)}, \(\mathrm{Im}(f)\to Y\), to be the colimit of \(\check{C}^\bullet(f)\).

    We call \(f\) an {\it effective epimorphism} if \(\mathrm{Im}(f)\to Y\) is an equivalence.

    Let \(P\) be a class of maps of stacks.
    We say that \(P\) is {\it good} if it contains all equivalences, is closed under composition, and is closed under basechange.
    Call a good class of maps {\it local} if, for a pullback square
\[\begin{tikzcd}
	{X'} & X \\
	{Y'} & Y
	\arrow[from=1-1, to=1-2]
	\arrow["{f'}"', from=1-1, to=2-1]
	\arrow["\lrcorner"{anchor=center, pos=0.125}, draw=none, from=1-1, to=2-2]
	\arrow["f", from=1-2, to=2-2]
	\arrow["g"', from=2-1, to=2-2]
\end{tikzcd}\]
    of stacks with \(g\) an effective epimorphism, we have \(f'\in P\) implies \(f\in P\).

    Finally, call a good class of maps {\it finitary} if, for any (small) family \(\{f_i\in P\}_{i\in I}\), we have \(\coprod_{i\in I}f_i\in P\).
\end{definition}

\begin{remark}
    It follows from \cite[props.~2.1.2.1,\,2.1.2.9]{BalderramaDaviesLinskens_2025_AffinenessReconstructionComplexperiodic} that \(\Stk\) is a {\it regular} category as in \cite[def.~2.1.1]{Stefanich_2023_Derived$infty$categoriesExact}.
    Thus, by \cite[prop.~2.1.6]{Stefanich_2023_Derived$infty$categoriesExact}, the class of effective epimorphisms is local.
    Moreover, since coproducts are disjoint and universal in \(\Stk\), effective epimorphisms are finitary.
\end{remark}
\begin{remark}
  What we denote by \(\mathrm{Im}(f)\) is called the {\it descent stack} and deontet by \(\mathrm{D}_f\) in \cite[def.~2.3.1.1]{BalderramaDaviesLinskens_2025_AffinenessReconstructionComplexperiodic}.
\end{remark}

\begin{lemma}\label{lem:colim_coproduct_effective_epi}
    Let \(X\in \Stk\).
    Then, for any small colimit presentation \(X\simeq \colim_{i\in I}\Spec R_i\) by affines, the canonical map
    \[\coprod_{i\in I}\Spec R_i\to X\]
    is an effective epimorphism.
\end{lemma}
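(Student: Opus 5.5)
The plan is to compute the image \(\mathrm{Im}(f)\) of \(f:\coprod_{i\in I}\Spec R_i\to X\) directly, using that colimits in \(\Stk\) are universal and coproducts are disjoint and universal. Write \(Y=\coprod_{i\in I}\Spec R_i\). By universality of colimits, the Čech nerve of \(f\) decomposes levelwise:
\[\check{C}^n(f)=Y\times_X\cdots\times_X Y\simeq\coprod_{(i_0,\ldots,i_n)\in I^{n+1}}\Spec R_{i_0}\times_X\cdots\times_X\Spec R_{i_n}.\]
Since \(\Stk\) is an \(\infty\)-topos-like setting (sheaves on affines for the fpqc topology), this is exactly the situation of \cite[cor.~6.2.3.5]{Lurie_HTT}-type arguments. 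I would rather reduce to the general fact, valid in any \(\infty\)-category where colimits are universal, that a morphism \(Y\to X\) is an effective epimorphism whenever \(X\) is a colimit of objects each of which admits a factorization through \(Y\) over \(X\).

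Concretely, the key steps are as follows. First, for each \(i\), the inclusion \(\Spec R_i\to X\) factors through \(Y\) via the coproduct inclusion, so it factors through \(\mathrm{Im}(f)\to X\). Second, I would check that \(\mathrm{Im}(f)\to X\) is a monomorphism. This follows because the formation of Čech nerves and their colimits commutes with basechange (universality of colimits), and the basechange of \(\mathrm{Im}(f)\to X\) along itself is the image of \(f\times_X\mathrm{Im}(f)\), which is split by the diagonal-type section. Equivalently, \(\mathrm{Im}(f)\to X\) is \((-1)\)-truncated, using that \(\Stk\) is regular by the preceding remark (\cite[props.~2.1.2.1,\,2.1.2.9]{BalderramaDaviesLinskens_2025_AffinenessReconstructionComplexperiodic}), so that images are the \((-1)\)-truncations. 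Third, a monomorphism \(\mathrm{Im}(f)\to X\) through which every \(\Spec R_i\to X\) factors induces, for the diagram \(i\mapsto \Spec R_i\), a cocone over \(\mathrm{Im}(f)\). Since maps into a subobject are unique when they exist, this cocone is coherent for free. Hence it induces a section \(X\simeq\colim_i\Spec R_i\to\mathrm{Im}(f)\) of the monomorphism, which forces \(\mathrm{Im}(f)\to X\) to be an equivalence.

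The main obstacle is the second step: justifying that \(\mathrm{Im}(f)\to X\) is a monomorphism, i.e.\ that the descent stack really is the \((-1)\)-truncated image in \(\Stk\). I would try first to cite regularity of \(\Stk\) from the preceding remark together with \cite[def.~2.1.1]{Stefanich_2023_Derived$infty$categoriesExact}, where effective epi–mono factorizations are given by the colimit of the Čech nerve. If that citation does not directly give the identification, I would fall back on the fact that \(\Stk\) sits inside a presheaf/sheaf \(\infty\)-category closed under the relevant colimits. There the statement is standard: evaluated on any affine \(\Spec A\), the geometric realization of a Čech nerve is the subspace of components hit, which is \((-1)\)-truncated over \(X(A)\) (\cite[prop.~6.2.3.4]{Lurie_HTT}).
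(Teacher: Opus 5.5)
Your argument is correct, but it takes a different route from the paper's.

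\textbf{The paper's route.} The paper never argues intrinsically in \(\Stk\). It chooses \(\kappa\) so that all \(R_i\) are \(\kappa\)-compact and lifts the diagram to \((\CAlg^\kappa)^\op\). It then applies \cite[lem.~6.2.3.13]{Lurie_HTT} in the honest \(\infty\)-topos \(\Shv^\fpqc_\kappa\). Finally, it pushes the resulting effective epimorphism forward along \(\Shv^\fpqc_\kappa\to\Stk\). That functor preserves small colimits and finite limits, because \((\CAlg^\kappa)^\op\subset\CAlg^\op\) is an exact morphism of sites. It therefore preserves Čech nerves and their realizations, hence effective epimorphisms.

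\textbf{Your route.} You reprove directly in \(\Stk\) what is essentially the proof of that HTT lemma. You factor through \(\mathrm{Im}(f)\) and show that \(\mathrm{Im}(f)\to X\) is a monomorphism. You then use full faithfulness of \(\Stk_{/\mathrm{Im}(f)}\to\Stk_{/X}\) to produce a section.

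\textbf{Comparison.} Your argument needs only pullbacks and universal geometric realizations in \(\Stk\), or the regularity recorded shortly before the lemma. So it is more general and avoids the cardinal bookkeeping. The paper's argument is shorter given its citations, and it sets up the \(\kappa\)-transport technique that it reuses in the next lemma.

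\textbf{Step 2 as written.} The splitting does not occur for the base change of \(f\) along \(\mathrm{Im}(f)\to X\), since \(Y\times_X\mathrm{Im}(f)\to\mathrm{Im}(f)\) has no evident section. It occurs for the base change along \(f\) itself. There, \(Y\times_X\mathrm{Im}(f)\simeq|\check{C}(Y\times_XY\to Y)|\simeq Y\), because the diagonal splits that Čech nerve. Universality then gives \(\mathrm{Im}(f)\times_X\mathrm{Im}(f)\simeq\colim_n\bigl(\check{C}^n(f)\times_X\mathrm{Im}(f)\bigr)\). Each projection \(\check{C}^n(f)\times_X\mathrm{Im}(f)\to\check{C}^n(f)\) is an equivalence, being a base change of \(Y\times_X\mathrm{Im}(f)\to Y\). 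So the first projection \(\mathrm{Im}(f)\times_X\mathrm{Im}(f)\to\mathrm{Im}(f)\) is a colimit of equivalences, and the diagonal is an equivalence.

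\textbf{Your fallback.} The fallback via evaluation on affines is not accurate as stated, since colimits in \(\Stk\) are not computed objectwise. Making it work would amount to passing through \(\Shv^\fpqc_\kappa\), which is the paper's argument. You do not need it.
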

\begin{proof}
    Let \(\kappa\) be a regular cardinal such that each \(R_i\) is \(\kappa\)-compact in \(\CAlg\).
    Then, since \(\Spec:\CAlg^\op\to \Stk\) is fully faithful, we may lift the whole diagram to a functor \(I\to (\CAlg^\kappa)^\op\).
    Composing this with \(\Spec_\kappa:(\CAlg^\kappa)^\op\to \Shv^\fpqc_\kappa\) and denoting the resulting colimit by \(X_\kappa\), \cite[lem.~6.2.3.13]{Lurie_HTT} tells us that
    \[\coprod_{i\in I}\Spec_\kappa R_i\to X_\kappa\]
    is an effective epimorphism.

    Since the inclusion \((\CAlg^\kappa)^\op\to \CAlg^\op\) is an exact morphism of sites (\cite[def.~2.1.1.2]{BalderramaDaviesLinskens_2025_AffinenessReconstructionComplexperiodic}) for the fpqc topologies, the induced functor
    \[\Shv^\fpqc_\kappa\simeq \Stk((\CAlg^\kappa)^\op,\tau_\fpqc)\to \Stk\]
    commutes with small colimits and finite limits.
    In particular, it sends effective epimorphisms to effective epimorphisms.
    This shows that already \(\coprod_{i\in I}\Spec R_i\to X\) is an effective epimorphism.
\end{proof}

\begin{lemma}\label{lem:effective_epi_to_affine_local_section}
    Let \(f:X\to \Spec A\) be a map of stacks with affine target.
    Then \(f\) is an effective epimorphism if and only if there exists a faithfully flat map \(A\to B\) such that \(\Spec B\to \Spec A\) factors though \(f\).
\end{lemma}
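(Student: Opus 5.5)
The plan is to prove both directions by reducing to the fact that the fpqc topology on affines is generated by faithfully flat maps.

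For the ``if'' direction, suppose \(A\to B\) is faithfully flat and \(\Spec B\to \Spec A\) factors as \(\Spec B\xrightarrow{s} X\xrightarrow{f}\Spec A\). First I will show that \(\Spec B\to\Spec A\) is an effective epimorphism in \(\Stk\). Its Čech nerve is \(\Spec(B^{\otimes_A \bullet+1})\), and since \(\Spec A\) is an fpqc sheaf, faithfully flat descent identifies the geometric realization of the nerve with \(\Spec A\). Concretely, the Čech nerve of a covering sieve generating morphism becomes an effective epimorphism after sheafification; this is just the definition of a covering in the sheaf topos. It then remains to use the standard fact that in an \(\infty\)-topos, or more generally a regular category, if \(g\circ s\) is an effective epimorphism then so is \(g\). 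I would argue this via images: \(\mathrm{Im}(f)\to\Spec A\) is a monomorphism through which the effective epimorphism \(\Spec B\to \Spec A\) factors. A monomorphism that receives an effective epimorphism is an equivalence, using the regularity of \(\Stk\) noted in the remark above.

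For the ``only if'' direction, suppose \(f\) is an effective epimorphism. I will use the identification \(\Stk\) as fpqc sheaves on \(\CAlg^{\op}\) (modulo the cardinal bookkeeping in \cref{lem:colim_coproduct_effective_epi}, passing to a \(\kappa\)-small site if needed). In a sheaf topos, a map \(X\to \Spec A\) to a representable is an effective epimorphism if and only if the identity section of \(\Spec A\) lifts locally. That is, there is an fpqc covering family \(\{A\to B_j\}_{j\in J}\) such that each \(\Spec B_j\to\Spec A\) factors through \(X\). This holds because effective epimorphisms of sheaves are exactly the maps that are locally surjective on sections. The fpqc coverings in \cite{BalderramaDaviesLinskens_2025_AffinenessReconstructionComplexperiodic} are finite families \(\{A\to B_j\}_{j=1}^n\) with \(A\to\prod_j B_j\) faithfully flat. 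Setting \(B=\prod_{j=1}^n B_j\), we get \(\Spec B\simeq\coprod_j\Spec B_j\), because \(\Spec\) sends finite products to coproducts in \(\Stk\) (the first point of \cref{lem:zariski_space_fpqc_sheav}'s sheaf condition). The factorizations therefore assemble into a single factorization of \(\Spec B\to\Spec A\) through \(X\).

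The main obstacle is the ``local surjectivity'' characterization of effective epimorphisms in the large category \(\Stk\). That category is not literally a sheaf topos on a small site, so I would reduce to \(\Shv^\fpqc_\kappa\) for \(\kappa\) large enough that \(A\) is \(\kappa\)-compact and \(X\) is in the image, as in \cref{lem:colim_coproduct_effective_epi}. I would then check that the inclusion preserves and reflects effective epimorphisms over a \(\kappa\)-compact affine target, since it is left exact and colimit preserving. After that, \cite[cor.~7.2.1.15]{Lurie_HTT}-type arguments give the local lift, using that an effective epimorphism onto a representable sheaf admits sections over some covering sieve.
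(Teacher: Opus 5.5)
Your strategy is the paper's. For the ``if'' direction you show that \(\Spec B\to\Spec A\) is an effective epimorphism and then cancel using regularity of \(\Stk\). For the ``only if'' direction you pass to a small fpqc sheaf topos and extract a local section. There the paper simply cites \cite[cor.~B.7]{HesselholtPstragowski_2024_DiracGeometryII}, whereas you derive the local section from local surjectivity of effective epimorphisms of sheaves and the description of fpqc covering sieves; that derivation is fine.

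Two small misattributions:
\begin{itemize}
\item The geometric realization of the \v{C}ech nerve \(\Spec(B^{\otimes_A\bullet+1})\) is \(\Spec A\) by Yoneda, because every stack satisfies descent along \(A\to B\). It is not because \(\Spec A\) is a sheaf.
\item \(\Spec(\prod_j B_j)\simeq\coprod_j\Spec B_j\) is the finite-product part of the sheaf condition for stacks. It is not a statement about \(|\Spec(-)|\), so \cref{lem:zariski_space_fpqc_sheav} is the wrong citation.
\end{itemize}

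The one step that fails as written is the reduction to \(\Shv^\fpqc_\kappa\). You assert that \(\Shv^\fpqc_\kappa\to\Stk\) reflects effective epimorphisms ``since it is left exact and colimit preserving''. Those properties only give that the functor preserves \v{C}ech nerves and their colimits, hence preserves \(\mathrm{Im}\) and effective epimorphisms. Reflection needs conservativity: you must deduce that \(\mathrm{Im}(f_\kappa)\to\Spec_\kappa A\) is an equivalence knowing only that its image in \(\Stk\) is one. That would require full faithfulness, or at least conservativity, of \(\Shv^\fpqc_\kappa\to\Stk\). This does not follow from the properties you list, and you do not address it.

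The paper avoids the issue as follows. Since \(\Stk\simeq\colim_\lambda\Shv^\fpqc_\lambda\) is a filtered colimit and the transition functors preserve \(\mathrm{Im}\), an inverse of \(\mathrm{Im}(f)\to\Spec A\), together with its coherence homotopies, is already defined at some stage \(\lambda\geq\kappa\). Hence \(f_\lambda\) is an effective epimorphism in \(\Shv^\fpqc_\lambda\). The paper does this with the composite \(\coprod_i\Spec R_i\to\Spec A\) in place of \(f\), but that makes no difference. Choose \(\lambda\) this large and run your local-section argument there; with that change your proof goes through.
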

\begin{proof}
    For the 'if' part, let \(\kappa\) be a regular cardinal such that both \(A\) and \(B\) are \(\kappa\)-compact.
    Then \(\Spec_\kappa B\to \Spec_\kappa A\) is an effective epimorphism in \(\Shv^\fpqc_\kappa\).
    Since the inclusion \((\CAlg^\kappa)^\op\to \CAlg^\op\) is an exact morphism of sites (\cite[def.~2.1.1.2]{BalderramaDaviesLinskens_2025_AffinenessReconstructionComplexperiodic}) for the fpqc topologies, the induced functor
    \[\Shv^\fpqc_\kappa\simeq \Stk((\CAlg^\kappa)^\op,\tau_\fpqc)\to \Stk\]
    commutes with small colimits and finite limits.
    In particular, it sends effective epimorphisms to effective epimorphisms, so \(\Spec B\to \Spec A\) is an effective epimorphism.
    Now it follows from \cite[prop.~2.1.6(6)]{Stefanich_2023_Derived$infty$categoriesExact} that also \(f\) is an effective epimorphism.

    For the 'only if' part, choose a small colimit presentation \(X\simeq \colim_{i\in I}\Spec R_i\), and let \(\kappa\) be a regular cardinal such that \(A\) and all of the \(R_i\) are \(\kappa\)-compact.
    We then may lift the effective epimorphism
    \[q:\coprod_{i\in I}\Spec R_i\to \Spec A\]
    to a map
    \[q_\kappa:\coprod_{i\in I}\Spec_\kappa R_i\to \Spec_\kappa A\]
    in \(\Shv^\fpqc_\kappa\).
    We claim that, for \(\kappa\) large enough, \(q_\kappa\) is an effective epimorphism.
    In fact, since the colimit
    \[\Stk\simeq \colim_\lambda\Shv_\lambda^\fpqc\]
    is filtered, an inverse of \(\mathrm{Im}(q)\to \Spec A\) must exist at some finite stage.
    
    For such a \(\kappa\), there exists a faitfully flat map \(A\to B\), with \(B\) also \(\kappa\) compact, such that \(\Spec_\kappa B\to \Spec_\kappa A\) factors over \(q_\kappa\) by \cite[cor.~B.7]{HesselholtPstragowski_2024_DiracGeometryII}.
    Mapping this factorization into \(\Stk\) gives the desired result.
\end{proof}

\begin{definition}
    Let \(P\) be a good class of maps of stacks.
    We say that \(P\) is {\it detected on affines} if a map \(f:X\to Y\) is in \(P\) if and only if, for all maps \(\Spec A\to Y\) with affine source, the basechange \(X_A\to \Spec A\) is in \(P\).
\end{definition}

\begin{proposition}\label{prop:local_finitary_vs_detected_on_affines}
    Let \(P\) be a good class of maps of stacks.
    Then \(P\) is detected on affines if \(P\) is local and finitary.
    The converse holds if \(P\) is additionally closed under finite coproducts of maps with affine target and if, for any map \(f:X\to \Spec A\) and any faithfully flat map \(A\to B\), the basechange \(X_B\to \Spec B\) being in \(P\) implies that \(f\) is in \(P\).
\end{proposition}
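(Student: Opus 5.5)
We prove the two implications separately.

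\textbf{Local and finitary implies detected on affines.} One direction of the equivalence is free, since \(P\) is closed under basechange. Conversely, suppose every basechange \(X_A\to\Spec A\) along a map \(\Spec A\to Y\) lies in \(P\). Choose a small colimit presentation \(Y\simeq\colim_{i\in I}\Spec R_i\). By \cref{lem:colim_coproduct_effective_epi} the map
\[g:\coprod_{i\in I}\Spec R_i\to Y\]
is an effective epimorphism. Because coproducts in \(\Stk\) are universal, the basechange of \(f\) along \(g\) is \(\coprod_i (X_{R_i}\to\Spec R_i)\). Each summand is in \(P\) by hypothesis, so the coproduct is in \(P\) because \(P\) is finitary. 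Locality of \(P\) then gives \(f\in P\).

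\textbf{The converse: detected on affines, with the extra hypotheses, implies local and finitary.}

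For \emph{finitary}, take a family \(f_i:X_i\to Y_i\) in \(P\) and a map \(\Spec A\to\coprod_i Y_i\). We must show that the basechange of \(\coprod_i f_i\) lies in \(P\).
\begin{itemize}
\item Pulling back the effective epimorphism \(\coprod_i Y_i\to\coprod_i Y_i\) (the identity), universality of coproducts decomposes \(\Spec A\) as \(\coprod_i\Spec A\times_{\coprod Y}Y_i\).
\item Each piece \(\Spec A\times_{\coprod Y}Y_i\) is a clopen subfunctor of an affine. I would argue it is affine, \(\Spec A_i\) with \(A\simeq\prod A_i\), and that only finitely many pieces are nonempty. This is the step I expect to be the main obstacle. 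The plan is to use \cref{lem:effective_epi_to_affine_local_section} to obtain a faithfully flat \(A\to B\) whose \(\Spec B\to\Spec A\) factors through some \(\coprod_{j}\Spec R_j\). Quasi-compactness of \(|\Spec B|\), together with descent of idempotents along faithfully flat maps, should then show that the decomposition is finite and comes from idempotents of \(\pi_0A\).
\item Granting this, the basechange of \(\coprod_i f_i\) is a finite coproduct of maps \((X_i)_{A_i}\to\Spec A_i\), each in \(P\). The closure of \(P\) under finite coproducts with affine target finishes this case.
\end{itemize}

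For \emph{local}, take a pullback square with \(g:Y'\to Y\) an effective epimorphism and \(f'\in P\). Let \(\Spec A\to Y\) be given.
\begin{itemize}
\item Since effective epimorphisms are stable under pullback, \(Y'_A\to\Spec A\) is an effective epimorphism.
\item By \cref{lem:effective_epi_to_affine_local_section} there is a faithfully flat \(A\to B\) such that \(\Spec B\to\Spec A\) factors through \(Y'_A\).
\item Then \(X_B\to\Spec B\) is a basechange of \(f'\), so it lies in \(P\).
\item The faithfully flat descent hypothesis gives \(X_A\to\Spec A\in P\). Since this holds for every \(\Spec A\to Y\), detection on affines yields \(f\in P\).
\end{itemize}
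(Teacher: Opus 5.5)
Your proposal is correct. For the first implication and for locality it is the paper's argument essentially verbatim. The one real divergence is the finiteness step in showing that \(P\) is finitary.

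The paper lifts \(p\colon\Spec B\to\coprod_{i,j}\Spec R_{i,j}\) to a \(\lambda\)-small stage \(\Shv^\fpqc_\lambda\). It then uses that this topos is coherent and that \(\Spec_\lambda B\) is a coherent object, so \(p\) factors through a finite subcoproduct. From this it deduces that \(g\) factors through \(\coprod_{i\in F}Y_i\), and it simply asserts that disjoint summands of an affine are affine.

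You instead use quasi-compactness of the Zariski space. To make that rigorous you need exactly two facts, both easy in the paper's framework:
\begin{itemize}
\item \(|-|\colon\Stk\to\Top\) preserves coproducts. It preserves all small colimits by construction.
\item A stack \(W\) with \(|W|=\emptyset\) is initial. Write \(W\simeq\colim_k\Spec C_k\); each \(|\Spec C_k|\) maps to \(\emptyset\), so \(\pi_0C_k=0\), hence \(C_k\simeq 0\) and \(\Spec C_k\) is initial.
\end{itemize}

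With these, your route is shorter than you planned, since neither \(B\) nor descent of idempotents is needed. Universality of coproducts gives \(\Spec A\simeq\coprod_i Z_i\) with \(Z_i=\Spec A\times_{\coprod Y}Y_i\). Hence \(|\Spec A|\cong\coprod_i|Z_i|\) as a topological disjoint union. Since this is the quasi-compact Zariski space of \(\pi_0A\), all but finitely many \(Z_i\) are empty. The remaining finite decomposition is classified by a map \(\Spec A\to\coprod_{i\in F}\Spec\mathbb{S}\simeq\Spec\mathbb{S}^{F}\). So each \(Z_i\) is a pullback of affines, hence affine; concretely \(Z_i\simeq\Spec A[e_i^{-1}]\) for orthogonal idempotents \(e_i\in\pi_0A\).

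So your approach replaces the paper's size bookkeeping and coherent-topos input with the topological fact that \(|-|\) preserves colimits. It also supplies the justification for affineness of the summands, which the paper leaves implicit.
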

\begin{proof}
    Let us first assume that \(P\) is local and finitary, and let \(f:X\to Y\) be a map of stacks.
    If \(f\) is in \(P\), then every basechange of \(f\) to an affine is in \(P\) by definition.

    Conversely, assume that every basechange of \(f\) to an affine is in \(P\).
    Fix a small colimit presentation \(Y\simeq \colim_{i\in I}\Spec R_i\).
    Then each basechange \(X_i\to \Spec R_i\) is in \(P\), so
    \[\coprod_{i\in I}X_i\to \coprod_{i\in I}\Spec R_i\]
    is in \(P\) since \(P\) is finitary.
    But this is the basechange of \(f\) along the effective epimorphism \(\coprod_{i\in I}\Spec R_i\to Y\), so \(f\) is in \(P\) since \(P\) is local.

    Now let us assume that \(P\) is detected on affines.
    Let \(\{f_i:X_i\to Y_i\}_{i\in I}\) be a family of maps in \(P\), and let \(g:\Spec A\to \coprod_{i\in I}Y_i\).
    Fix small colimit presentations \(Y_i\simeq \colim_{j\in J_i}\Spec R_{i,j}\).
    The resulting map
    \[q:\coprod_{i\in I}\coprod_{j\in J_i}\Spec R_{i,j}\to \coprod_{i\in I} Y_i\]
    is an effective epimorphism, so there is a faithfully flat map \(h:A\to B\) such that
    \[\Spec B\xrightarrow{h} \Spec A\xrightarrow{g} \coprod_{i\in I} Y_i\]
    factors over \(q\), say \(g\circ h=q\circ p\).

    Let \(\kappa\) be such that all of \(B\) and the \(R_{i,j}\) are \(\kappa\)-compact.
    Since the defining colimit
    \[\Stk\simeq \colim_\lambda\Shv_\lambda^\fpqc\]
    is filtered, there is some \(\lambda\geq \kappa\) such that \(p\) lifts to a map
    \[p_\lambda:\Spec_\lambda B\to \coprod_{i\in I}\coprod_{j\in J_i}\Spec_\lambda R_{i,j}\]
    in \(\Shv_\lambda^\fpqc\).
    Now \cite[prop.~A.3.1.3]{Lurie_SAG} tells us that \(\Shv_\lambda^\fpqc\) is a coherent topos and that \(\Spec_\lambda B\) is a coherent object in it.
    In particular, there must be a finite subset \(S\subset \prod_{i\in I}J_i\) such that \(p_\lambda\) factors over \(\coprod_{(i,j)\in S}\Spec_\lambda R_{i,j}\).
    It follows that the same is true for the map \(p\).

    From this we conclude that there must be a finite subset \(F\subset I\) such that \(g\) factors over \(\coprod_{i\in F}Y_i\).
    Thus, \(\Spec A\simeq \coprod_{i\in F}Z_i\), and on each \(Z_i\) \(g\) factors over the inclusion of \(Y_i\).
    But as disjoint summands of an affine, each \(Z_i\) is affine it self.
    Thus the basechange along \(g\) is the finite coproduct of the basechanges to the affines \(Z_i\), and these are all in \(P\).
    If \(P\) is closed under finite coproducts of maps with affine target, then also the basechange along \(g\) must be in \(P\).
    Thus, \(P\) is finitary.

    Finally let \(f:X\to Y\) and \(g:Y'\to Y\) be maps of stacks, with \(g\) an effective epimorphism, and assume thate base change of \(f\) along \(g\) is in \(P\).
    Let \(h:\Spec A \to Y\) be any map from an affine.
    Then we get a faithfully flat map \(i:A\to B\) such that \(h\circ i\) factors over \(g\).
    In particular, the base change of \(f\) to \(\Spec B\) is in \(P\), so our assumptions show that also the basechange of \(f\) to \(\Spec A\) is in \(P\).
    Since \(h\) was arbitrary and \(P\) is detected on affines, this shows that \(f\) is in \(P\).
    Thus, \(P\) is local.
\end{proof}

\begin{proposition}
    The class of affine maps of stacks is local and finitary.
\end{proposition}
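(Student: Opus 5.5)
The natural route is \cref{prop:local_finitary_vs_detected_on_affines}. Write \(P\) for the class of affine maps: those \(f:X\to Y\) such that for every \(\Spec A\to Y\) the basechange \(X_A\) is affine. Note that \(P\) is good: equivalences are affine, basechange is clear, and composition follows because if \(X\to Y\to Z\) are affine then \(Y_A=\Spec B\) and \(X_A=X_{B}\) is affine. By definition \(P\) is detected on affines. So it suffices to verify the two extra hypotheses of the converse direction of \cref{prop:local_finitary_vs_detected_on_affines}.

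The first hypothesis is closure under finite coproducts of maps with affine target. Let \(X_k\to\Spec A_k\), \(k=1,\dots,n\), be affine, so each \(X_k\) is affine, say \(X_k=\Spec B_k\). Then \(\coprod_k X_k\simeq\Spec\prod_k B_k\) lies over \(\Spec\prod_k A_k\simeq\coprod_k\Spec A_k\), since finite products of rings give finite coproducts of affine stacks. Any further basechange to some \(\Spec C\) splits as \(C\simeq\prod C_k\), and the pulled-back map is the product of the affine pieces \(X_k\times_{\Spec A_k}\Spec C_k\), which is again affine.

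The second hypothesis is fpqc descent. Let \(f:X\to\Spec A\) and a faithfully flat \(A\to B\) be given, and suppose \(X_B\simeq\Spec C\). The plan is to use that \(\QCoh\) satisfies fpqc descent together with the Čech nerve of \(\Spec B\to\Spec A\). Each term \(X_{B^{\otimes n}}\simeq\Spec(C\otimes_B B^{\otimes n})\) is affine, so \(X\), as a colimit of the Čech nerve of \(X_B\to X\), is the colimit of the cosimplicial diagram of affines \(\Spec(C\otimes_B B^{\otimes\bullet})\). This diagram is the pullback of \(\Spec B^{\otimes\bullet}\) along the cartesian maps. I would then show that \(\CAlg(\Mod_A)\simeq\lim\CAlg(\Mod_{B^{\otimes\bullet}})\) by faithfully flat descent for modules (\cite[Lurie SAG, D.6.3]{Lurie_SAG}) applied to commutative algebras. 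The cartesian descent datum \(C\otimes_B B^{\otimes\bullet}\) then descends to an \(A\)-algebra \(C_0\) with \(C_0\otimes_A B\simeq C\). Finally \(\Spec C_0\to\Spec A\) and \(X\) agree after pulling back along the effective epimorphism \(\Spec B\to\Spec A\), compatibly with the descent data. Since \(\Stk\) is an \(\infty\)-topos-like regular category where effective epimorphisms are of effective descent (descent for slices), this gives \(X\simeq\Spec C_0\).

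The main obstacle is this last step: identifying \(X\) with \(\Spec C_0\) requires that \(\Spec:\CAlg^\op\to\Stk\) sends the descent limit to a colimit, i.e.\ that \(\Spec C_0\simeq\colim\Spec(C\otimes_A B^{\otimes\bullet})\). The plan here is to observe that \(C_0\to C\) is faithfully flat, being a basechange of \(A\to B\). Then \(\Spec C\to\Spec C_0\) is an effective epimorphism by \cref{lem:effective_epi_to_affine_local_section}, whose Čech nerve is exactly \(\Spec(C\otimes_A B^{\otimes\bullet})\). Hence both \(X\) and \(\Spec C_0\) are the geometric realization of the same simplicial object, giving \(X\simeq\Spec C_0\) over \(\Spec A\). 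Applying \cref{prop:local_finitary_vs_detected_on_affines} then shows that \(P\) is local and finitary.
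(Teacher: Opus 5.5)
Your proof is correct. Its outline matches the paper: reduce to the two extra hypotheses of \cref{prop:local_finitary_vs_detected_on_affines}, and handle finite coproducts by noting that everything in sight is affine. The descent step, however, takes a genuinely different route.

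The paper never constructs a descended ring. It takes the canonical candidate \(\Spec\Gamma(X,\O_X)\) and invokes the pushout \(\QCoh(X_B)\simeq\QCoh(X)\otimes_{\Mod_A}\Mod_B\), valid for the affine map \(\Spec B\to\Spec A\). From this it argues that the affinization map \(a_X\) base changes to \(a_{X_B}\), which is an equivalence, over \(\Spec B\) and anything over it. It then concludes by descent for \(\Stk_{/\Spec A}\).

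You instead descend the cartesian cosimplicial algebra \(\Gamma(X_{B^{\otimes\bullet+1}},\O)\) to an \(A\)-algebra \(C_0\) via faithfully flat descent for \(\CAlg\). You then identify \(X\simeq\Spec C_0\) by matching the \v{C}ech nerves of the effective epimorphisms \(X_B\to X\) and \(\Spec C\to\Spec C_0\). The one point needing care is that this matching must be an equivalence of simplicial objects over \(\Spec B^{\otimes\bullet+1}\), not just levelwise. The equivalence in \(\lim_\Delta\CAlg(\Mod_{B^{\otimes\bullet+1}})\) provides exactly that.

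\emph{What each buys.} The paper's argument is shorter and produces the canonical presentation directly. Yours is more hands-on and rests only on Lurie's faithfully flat descent. It also supplies an input that the paper's step uses without justification. Saying that the base change of \(a_X\) is \(a_{X_B}\) amounts to \(\Gamma(X,\O_X)\otimes_A B\simeq\Gamma(X_B,\O_{X_B})\). This does not follow from the pushout of \(\QCoh\) categories alone: for \(X=\coprod_{\mathbb{N}}\Spec\mathbb{Z}\) and \(\mathbb{Z}\to\mathbb{Z}\times\mathbb{Q}\) the pushout holds, yet \((\prod\mathbb{Z})\otimes\mathbb{Q}\neq\prod\mathbb{Q}\). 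It does hold in the present situation, because \(\Gamma(X,\O_X)\simeq\lim_\Delta\Gamma(X_{B^{\otimes\bullet+1}},\O)\simeq C_0\), which is exactly your descent argument.
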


\begin{proof}
    It follows directly from the definition that the class of affine maps is good and detected on affines.
    Thus, it suffices to prove that it is closed under finite coproducts and local for faithfully flat maps between affines.

    Clearly the empty coproduct of maps is affine.
    Let \(f:X\to \Spec A\) and \(f':X'\to \Spec A'\) be affine maps.
    In particular, both \(X\) and \(X'\) are affine stacks, say \(\Spec B\) and \(\Spec B'\).
    Since finite coproducts of affine stacks are affine,
    we see that \(f\coprod f'\) has affine source and target, thus is an affine map.

    Now let \(f:X\to \Spec A\) and \(g:\Spec B\to \Spec A\) be faithfully flat such that the basechange of \(f\) along \(g\) is affine, say \(h:X_B=\Spec C\to \Spec B\).

    We want to show that \(X\) is affine, or equivalently that the canonical map
    \[a_X:X\to \Spec \mathrm{End}_{\QCoh(X)}(\O_X)\]
    is an equivalence.
    Note that this is canonically a map in \(\Stk_{/\Spec A}\).

    Since \(\Spec B\to \Spec A\) is affine, \cite[props.~2.2.2.5,~2.2.1.11]{BalderramaDaviesLinskens_2025_AffinenessReconstructionComplexperiodic} tell us that the natural square
\[\begin{tikzcd}
	{\QCoh(X)} & {\QCoh(X_B)} \\
	{\QCoh(\Spec A)} & {\QCoh(\Spec B)}
	\arrow[from=1-1, to=1-2]
	\arrow[from=2-1, to=1-1]
	\arrow[from=2-1, to=2-2]
	\arrow[from=2-2, to=1-2]
\end{tikzcd}\]
    is a pushout in \(\CAlg(\PrLst)\).
    Consequently, the basechange of \(a_X\) to \(\Stk_{/\Spec B}\) is the affinization map
    \[a_{X_B}:X_B\to \Spec \mathrm{End}_{\QCoh(X_B)}(\O_{X_B}),\]
    which is an equivalence.

    In fact, the above works for any map \(\Spec R\to \Spec A\) that factors over \(\Spec B\to \Spec A\).
    Since \(A\to B\) is faithfully flat, we have that
    \[\Spec A\simeq \colim_{\bullet\in\Delta^{op}}\Spec(B^{\otimes_A\bullet +1}),\]
    so by descent we get
    \[\Stk_{/\Spec A}\simeq \lim_{\bullet\in\Delta^{op}} \Stk_{/ \Spec(B^{\otimes_A\bullet +1})}.\]
    By the previous discussion the basechange of \(a_X\) to each \(\Spec (B^{\otimes_A\bullet +1})\) is an equivalence, so this shows that already \(a_X\) is an equivalence.
\end{proof}

\begin{lemma}\label{lem:monomorphism_local_finitary}
    The class of monomorphisms of stacks is local and finitary.
\end{lemma}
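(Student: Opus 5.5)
The plan is to apply \cref{prop:local_finitary_vs_detected_on_affines}. That means checking four things: monomorphisms form a good class, they are detected on affines, they are closed under finite coproducts of maps with affine target, and they satisfy fpqc descent along faithfully flat maps of affines.

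Goodness is formal. Equivalences are monomorphisms, compositions of monomorphisms are monomorphisms, and monomorphisms are stable under pullback in any $\infty$-category.

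For the remaining properties I will use the characterization that $f\colon X\to Y$ is a monomorphism if and only if the diagonal $X\to X\times_Y X$ is an equivalence.

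\textbf{Detected on affines.} Suppose every basechange of $f$ along a map $\Spec A\to Y$ is a monomorphism. The diagonal of $f$ pulled back to $\Spec A$ is then the diagonal of $X_A\to\Spec A$, which is an equivalence. Since every stack is a colimit of affines and colimits in $\Stk$ are universal, pulling back along all $\Spec A\to Y$ detects equivalences of stacks over $Y$. Hence the diagonal of $f$ is an equivalence.

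\textbf{Finite coproducts.} Coproducts in $\Stk$ are disjoint and universal. Therefore
\[(X\sqcup X')\times_{Y\sqcup Y'}(X\sqcup X')\simeq (X\times_Y X)\sqcup(X'\times_{Y'}X'),\]
and the diagonal of $f\sqcup f'$ is the coproduct of the two diagonals, which is an equivalence.

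\textbf{Faithfully flat descent.} Let $A\to B$ be faithfully flat and let $f\colon X\to\Spec A$ be such that $X_B\to\Spec B$ is a monomorphism. Then $f$ is also a monomorphism over every $\Spec(B^{\otimes_A (n+1)})$, since these all map to $\Spec B$. Using the descent equivalence
\[\Stk_{/\Spec A}\simeq\lim_{\Delta}\Stk_{/\Spec(B^{\otimes_A\bullet+1})},\]
as in the proof that affine maps are local, the diagonal of $f$ becomes an equivalence after every such basechange. Since the functors in this limit are jointly conservative, the diagonal of $f$ is itself an equivalence.

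The main obstacle is making the conservativity in the descent step rigorous, in the same form in which it was used for affine maps. The argument needs that pullback along the effective epimorphism $\Spec B\to\Spec A$ is conservative on $\Stk_{/\Spec A}$. I would deduce this from \cref{lem:effective_epi_to_affine_local_section} together with the universality of colimits: by that lemma $\Spec B\to\Spec A$ is an effective epimorphism, so $\Spec A$ is the colimit of its Čech nerve. Since colimits are universal, a map over $\Spec A$ is then the colimit of its pullbacks to the Čech nerve, so if those pullbacks are equivalences, so is the map.

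Alternatively, one can bypass \cref{prop:local_finitary_vs_detected_on_affines} entirely and prove locality directly, using the same conservativity. Finitariness then follows directly from disjointness of coproducts, which holds for arbitrary families and not only finite ones.
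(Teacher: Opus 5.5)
Your argument is correct, but your main route is not the paper's. The paper never invokes \cref{prop:local_finitary_vs_detected_on_affines}.

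It proves locality directly for an arbitrary effective epimorphism $g\colon Y'\to Y$. The relative diagonal $X\to X\times_Y X$ base-changes, over each term of $\check{C}^\bullet(g)$, to the relative diagonal of the pulled-back map, which is an equivalence; descent then finishes the argument. This is exactly the \v{C}ech-nerve-plus-universality argument from your ``main obstacle'' paragraph, but without first specializing to $\Spec B\to\Spec A$. Finitariness is checked directly for arbitrary small families: by disjointness of coproducts, the diagonal of $\coprod_i f_i$ restricts over each $Y_j$ to the diagonal of $f_j$. Your closing ``alternative'' is therefore precisely the paper's proof.

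Your detour is valid and not circular, since the proposition does not depend on this lemma. It buys nothing here, though. The conservativity you need in the descent step already holds along every effective epimorphism. Reducing to faithfully flat maps of affines and then re-globalizing through the proposition only adds extra verifications (detection on affines, finite coproducts). It also makes the lemma depend on the heavier machinery behind the proposition: coherence of the fpqc sheaf topoi, and the ``only if'' factorization in \cref{lem:effective_epi_to_affine_local_section}. The direct route needs only that an effective epimorphism exhibits its target as the colimit of its \v{C}ech nerve and that colimits in $\Stk$ are universal.
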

\begin{proof}
    By definition, a map \(f:X\to Y\) of stacks is a monomorphism if the relative diagonal \(X\to X\times_Y X\) is an equivalence.
    Equivalently, for any stack \(T\), the induced map
    \[\Stk(T,X)\to\Stk(T,Y)\]
    is \((-1)\)-truncated.
    It follows the class of monomorphisms is good.

    Let \(g:Y'\to Y\) be an effective epimorphism, and assume that the basechange \(f':X'\to Y'\) of \(f\) along \(g\) is a monomorphism.
    In particular, the further basechange to each \(\check{C}^\bullet(g)\) is a monomorphism.

    Note that the basechange of a relative diagonal is the relative diagonal of the basechange.
    Thus, the basechange of the map \(X\to X\times_Y X\) in \(\Stk_{/Y}\) to each \(\Stk_{/\check{C}^\bullet(g)}\) is an equivalence, so by descent already \(f\) must be a monomorphism.

    Now let \(\{f_i:X_i\to Y_i\}_{i\in I}\) be a (small) family of monomorphisms.
    Since coproducts are disjoint in \(\Stk\), the base change of
    \[\coprod_{i\in I}X_i\to (\coprod_{i\in I}X_i)\times_{(\coprod_{i\in I}Y_i)}(\coprod_{i\in I}X_i)\]
    to each \(Y_j\) is the relative diagonal of \(f_j\), which is an equivalence by assumption.
    Thus, by descent, already \(\coprod_{i\in I}f_i\) is a monomorphism.
\end{proof}

Recall the definition of a closed immersion of stacks from \cite[def.~5.5.3]{BalderramaDaviesLinskens_2026_AmbidextrousGlobalSpectra}.
\begin{lemma}\label{lem:closed_immersion_local_finitary}
    The class of closed immersions of stacks is local and finitary.
\end{lemma}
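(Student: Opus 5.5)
The plan is to apply the converse direction of \cref{prop:local_finitary_vs_detected_on_affines}, just as for affine maps. I will use the definition of \cite[def.~5.5.3]{BalderramaDaviesLinskens_2026_AmbidextrousGlobalSpectra} in the following form: a map \(f:X\to Y\) is a closed immersion if it is affine and, for every \(\Spec A\to Y\), the basechange \(\Spec B=X_A\to\Spec A\) induces a surjection \(\pi_0A\to\pi_0B\). This form is assumed, not checked against the source.

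\emph{Step 1: the class is good and detected on affines.} With the definition in this form, detection on affines holds by construction. It contains equivalences. It is closed under basechange: affine maps are, and for \(A\to C\) arbitrary we have \(\pi_0(B\otimes_AC)\simeq\pi_0B\otimes_{\pi_0A}\pi_0C\) (the lowest homotopy group of a relative tensor product of connective parts), and surjections are stable under tensoring. It is closed under composition: affine maps compose, and over an affine base the composite on \(\pi_0\) is a composite of surjections.

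\emph{Step 2: finite coproducts with affine target.} Let \(\Spec B_k\to\Spec A_k\), \(k=1,\dots,n\), be closed immersions. Their coproduct is \(\Spec\prod_kB_k\to\Spec\prod_kA_k\), which is a map of affines. Since \(\pi_0\) commutes with finite products, the induced map on \(\pi_0\) is a product of surjections, hence surjective. Because finite coproducts of affines are affine and every map to \(\Spec\prod_k A_k\) from an affine splits accordingly, this map of affines is itself a closed immersion.

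\emph{Step 3: fpqc descent over an affine base.} This is the key step. Let \(f:X\to\Spec A\) be given, let \(A\to B\) be faithfully flat, and suppose \(X_B\to\Spec B\) is a closed immersion. By the preceding proposition, affine maps are local, so \(X=\Spec C\) is affine and \(X_B=\Spec(C\otimes_AB)\). Let \(K\) be the cokernel of \(\pi_0A\to\pi_0C\). Flatness gives \(\pi_0(C\otimes_AB)\simeq\pi_0C\otimes_{\pi_0A}\pi_0B\). Since \(\pi_0B\) is flat over \(\pi_0A\), we get \(K\otimes_{\pi_0A}\pi_0B\simeq\operatorname{coker}(\pi_0B\to\pi_0(C\otimes_AB))=0\). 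Faithful flatness then forces \(K=0\). The same argument applies to any further basechange of \(f\) along \(\Spec A'\to\Spec A\), after basechanging \(A\to B\) to the faithfully flat \(A'\to A'\otimes_AB\). Together with Steps 1 and 2, \cref{prop:local_finitary_vs_detected_on_affines} then gives that closed immersions are local and finitary.

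The main obstacle I expect is matching the precise definition of a closed immersion in \cite{BalderramaDaviesLinskens_2026_AmbidextrousGlobalSpectra}. If it is phrased differently, for instance by asking that \(X\to Y\) be affine with \(\O_Y\to f_*\O_X\) surjective on \(\pi_0\) in \(\QCoh(Y)\), I would first show it is equivalent to the affine-local formulation above. For this I would use that \(\pi_0\)-surjectivity of a map in \(\QCoh(Y)\) can be checked after pullback to affines, together with the adjointability of basechange for affine maps \cite[prop.~2.2.2.5]{BalderramaDaviesLinskens_2025_AffinenessReconstructionComplexperiodic}.
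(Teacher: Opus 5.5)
Your proof is correct and follows the paper's own route. You reduce to the converse of \cref{prop:local_finitary_vs_detected_on_affines}. You handle finite coproducts using that \(\pi_0\) commutes with finite products; your remark that maps from affines into \(\Spec\prod_k A_k\) split is a bit more careful than the paper here. You then descend \(\pi_0\)-surjectivity along a faithfully flat \(A\to B\), and your cokernel-and-faithful-flatness argument is exactly the paper's ``quick diagram chase''. Finally, you base change the cover to \(A'\to A'\otimes_A B\) to get the universal statement, as the paper does.

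One justification in Step 1 is wrong and should be dropped. The formula \(\pi_0(B\otimes_A C)\simeq\pi_0B\otimes_{\pi_0A}\pi_0C\) fails for general nonconnective \(\mathbb{E}_\infty\)-rings. The paper's own remark after this lemma is a counterexample: \(\pi_0\ko\to\pi_0\ku\) is surjective, but after base change along \(\ko\to\KU\) the map \(\pi_0\KU\to\pi_0(\ku\otimes_{\ko}\KU)\cong\mathbb{Z}\oplus\mathbb{Z}\) is not.

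You do not need this formula. With the universally quantified definition you adopted, stability under base change is immediate: a base change to an affine of a base change is itself a base change to an affine. In Step 3 you only use the formula when \(B\) is flat over \(A\), and there it does hold.
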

\begin{proof}
    From the definition it is clear that the class of closed immersions is good and detected on affines.
    Thus, it will suffice to verify the extra conditions of \cref{prop:local_finitary_vs_detected_on_affines}.

    Let \(\{f_i:X_i\to \Spec R_i\}_{i\in I}\) be a finite family of closed immersions.
    Since closed immersions are affine maps, we have \(X_i\simeq \Spec S_i\), and the induced maps \(\pi_0 R_i\to \pi_0 S_i\) are surjective.

    Since \(\Spec\) commutes with finite coproducts, we must show that the induced map
    \[\pi_0\prod_{i\in I}R_i\to \pi_0\prod_{i\in I}S_i\]
    is surjective.
    But \(\pi_0\) commutes with finite products.

    Now let \(A\to C\) be any map, let \(A\to B\) be faithfully flat, and assume that \(\pi_0 B\to \pi_0B\otimes_A C\) is surjective.
    That \(A\to B\) is faithfully flat means that \(\pi_0A\to \pi_0B\) is injective, \(\pi_0B\) and \(\pi_0B/A\simeq \pi_0B/\pi_0A\) are flat \(\pi_0A\)-modules, and
    \[\pi_*B\simeq \pi_0B\otimes_{\pi_0 A}\pi_*A.\]
    A quick diagram chase now shows that also \(\pi_0A\to \pi_0 C\) is surjective.

    Finally, let \(A\to C\) be any map, let \(A\to B\) be faithfully flat, and assume that \(\Spec B\otimes_A C\to \Spec B\) is a closed immersion.
    We need to show that, for any map \(A\to T\), the induced map \(\pi_0T\to \pi_0 T\otimes_A C\) is surjective.

    The map \(T\to B\otimes_A T\) is still faithfully flat, and by assumption the map \(B\otimes_A T\to (B\otimes_A C)\otimes_A T\simeq (B\otimes_A C)\otimes_B (B\otimes_A T)\) is surjective on \(\pi_0\).
    By the previous argument we conclude that also \(\pi_0T\to \pi_0 T\otimes_A C\) is surjective.
\end{proof}
\begin{remark}
    Note that for a map \(\Spec B\to \Spec A\) to be a closed immersion of stacks, it is not enough for \(\pi_0A\to \pi_0B\) to be surjective, not even when \(A\) and \(B\) are connective.
    
    A counter example is given by the map \(i:\ko\to \ku\):
    This is surjective on \(\pi_0\), but its basechange along \(\ko\to \ku\to \KU\) is, by Wood's theorem, a map \(\KU\to \KU\oplus \KU\), which can't be surjective on \(\pi_0\).

    Another counter example is the map \(\mathbb{Z}\to \mathbb{F}_p\), whose basechange along \(\mathbb{Z}\to \mathbb{F}_p\to \map(\Sigma^\infty_+ S^1,\mathbb{F}_p)\)
    can't be surjective on \(\pi_0\).

    Though, if the map \(A\to B\) admits a section as \(A\)-modules, then \(\Spec B\to \Spec A\) is a closed immersion.
\end{remark}
\begin{example}
    Despite the name, a closed immersion of stacks does not necessarily induce a closed embedding on underlying topological spaces:

    Let \(*\to BU(1)\) be any point and consider the induced map
    \[\F_2^{BU(1)}\to \F_2^*\simeq \F_2.\]
    This admits a section as \(\mathbb{E}_\infty\) rings, so
    \[i:\Spec \F_2\to \Spec \F_2^{BU(1)}\]
    is a closed immersion.
    Consider the base change of this along the map \(\F_2^{BU(1)}\to (\F_2P)^{BU(1)}\), where \(\F_2P\) is the even periodic version of \(\F_2\).

    We have \(\pi_* \F_2^{BU(1)}\simeq \F_2[x]\) with \(|x|=-2\), and \(\pi_*\F_2P\simeq \F_2[u^\pm]\) with \(|u|= 2\).
    There is a cofiber sequence
    \[\Sigma^{-2}\F_2^{BU(1)}\xrightarrow{\cdot x}\F_2^{BU(1)}\to \F_2\]
    in \(\F_2^{BU(1)}\)-modules, so the basechange of \(\F_2^{BU(1)}\to \F_2\) along \(\F_2^{BU(1)}\to (\F_2P)^{BU(1)}\) is given by \(\F_2P\).
    If we further basechange along the localization
    \[(\F_2P)^{BU(1)}\to (\F_2P)^{BU(1)}[1/ux]\]
    we get the zero ring.

    Now let \(Z\to X\) be the row-wise colimit of the diagram
\[\begin{tikzcd}
	{\Spec \F_2} & {\Spec 0} & {\Spec \F_2} \\
	{\Spec \F_2^{BU(1)}} & {\Spec (\F_2P)^{BU(1)}[1/ux]} & {\Spec \F_2^{BU(1)}}
	\arrow[from=1-1, to=2-1]
	\arrow[from=1-2, to=1-1]
	\arrow[from=1-2, to=1-3]
	\arrow["\lrcorner"{anchor=center, pos=0.125, rotate=-90}, draw=none, from=1-2, to=2-1]
	\arrow[from=1-2, to=2-2]
	\arrow["\lrcorner"{anchor=center, pos=0.125}, draw=none, from=1-2, to=2-3]
	\arrow[from=1-3, to=2-3]
	\arrow[from=2-2, to=2-1]
	\arrow[from=2-2, to=2-3]
\end{tikzcd}\]
    where both squares are pullbacks and all vertical maps are closed immersions.
    By universality of colimits, and since closed immersions are local, we find that \(Z\to X\) is a closed immersion.
    But the map \(|Z|\to |X|\) is the row-wise colimit of the diagram
\[\begin{tikzcd}
	{*} & \emptyset & {*} \\
	{*} & {*} & {*}
	\arrow[from=1-1, to=2-1]
	\arrow[from=1-2, to=1-1]
	\arrow[from=1-2, to=1-3]
	\arrow[from=1-2, to=2-2]
	\arrow[from=1-3, to=2-3]
	\arrow[from=2-2, to=2-1]
	\arrow[from=2-2, to=2-3]
\end{tikzcd}\]
    which is \(*\coprod*\to *\), which is not even injective.
    It follows that the map \(\mathrm{Sub}(Z)\to \mathrm{Sub}(X)\) is not a closed embedding of locales.
\end{example}

\begin{lemma}\label[lemma]{lem:colim_of_adjointables_through_adjointables}
  Let \(F:I\to \Fun(\Delta^1\times \Delta^1,\Stk)\) be a small diagram such that, for every object \(i\in I\) and every map \(e:i\to j\) in \(I\), the squares
\[\begin{tikzcd}
	{F_{00}(i)} & {F_{10}(i)} & {F_{00}(i)} & {F_{10}(i)} & {F_{01}(i)} & {F_{11}(i)} \\
	{F_{01}(i)} & {F_{11}(i)} & {F_{00}(j)} & {F_{10}(j)} & {F_{01}(j)} & {F_{11}(j)}
	\arrow[from=1-1, to=1-2]
	\arrow[from=1-1, to=2-1]
	\arrow[from=1-2, to=2-2]
	\arrow[from=1-3, to=1-4]
	\arrow[from=1-3, to=2-3]
	\arrow[from=1-4, to=2-4]
	\arrow[from=1-5, to=1-6]
	\arrow[from=1-5, to=2-5]
	\arrow[from=1-6, to=2-6]
	\arrow[from=2-1, to=2-2]
	\arrow[from=2-3, to=2-4]
	\arrow[from=2-5, to=2-6]
\end{tikzcd}\]
  are adjointable.
  Then the colimit
\[\begin{tikzcd}
	{\colim F_{00}} & {\colim F_{10}} \\
	{\colim F_{01}} & {\colim F_{11}}
	\arrow[from=1-1, to=1-2]
	\arrow[from=1-1, to=2-1]
	\arrow[from=1-2, to=2-2]
	\arrow[from=2-1, to=2-2]
\end{tikzcd}\]
  is adjointable.
\end{lemma}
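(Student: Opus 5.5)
The approach is to pass to quasi-coherent sheaves, where the statement becomes one about limits of squares of presentable categories, and to reduce the Beck--Chevalley map of the colimit square to the Beck--Chevalley maps of the individual squares \(F(i)\).

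First, I apply \(\QCoh:\Stk^\op\to\CAlg(\PrLst)\). It sends small colimits of stacks to limits, so for each vertex \(ab\in\Delta^1\times\Delta^1\) we get
\[\QCoh(\colim_I F_{ab})\simeq\lim_{i\in I^\op}\QCoh(F_{ab}(i)).\]
Under this identification the pullback functors of the colimit square are the limits of the pullback functors of the squares \(F(i)\). Thus the square of pullback functors for \(\colim F\) is the limit, in \(\Fun(\Delta^1\times\Delta^1,\PrLst)\), of the diagram \(i\mapsto\QCoh(F(i))\).

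Second, I identify the pushforwards along the colimit maps with the limits of the pushforwards. Adjointability of the transition squares, for instance the square \(F_{00}(i)\to F_{10}(i)\) over \(F_{00}(j)\to F_{10}(j)\), says that the pointwise right adjoints \(f(i)_*\) commute with the transition pullbacks. This is exactly the hypothesis of the standard result on limits of left-adjointable diagrams of categories (\cite[cor.~4.7.4.18]{Lurie_HA}). Its conclusion is twofold. The right adjoint of \(\lim_i f(i)^*\) exists and is computed as \(\lim_i f(i)_*\). Moreover, for each \(i\), the square
\[\QCoh(F_{10})\to\QCoh(F_{10}(i)),\qquad \QCoh(F_{00})\to\QCoh(F_{00}(i))\]
formed by the limit projections \(\iota_i^*\) is itself adjointable. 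In other words, \(\iota_i^*f_*\simeq f(i)_*\iota_i^*\) via the Beck--Chevalley map. The same holds for the other horizontal and vertical maps, using the third family of transition squares, between \(F_{01}\) and \(F_{11}\).

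Third, I check the Beck--Chevalley transformation \(\beta:g^*f_*\Rightarrow f'_*g'^*\) of the colimit square. The projections \(\iota_i^*\) out of \(\QCoh(F_{01})\) are jointly conservative, since they are the projections of a limit. By the previous step, \(\iota_i^*\beta\) is identified with \(\beta_i\iota_i^*\), where \(\beta_i\) is the Beck--Chevalley map of \(F(i)\); this uses the compatibility of Beck--Chevalley maps with pasting of squares. Each \(\beta_i\) is an equivalence by hypothesis, so \(\beta\) is an equivalence.

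I expect the main obstacle to be the second step. It requires carefully matching the pasting of Beck--Chevalley transformations with the limit identification, in particular verifying that \(\iota_i^*\beta\simeq\beta_i\iota_i^*\) holds coherently rather than merely objectwise. I would handle this by working with the whole diagram in \(\Fun(\Delta^1\times\Delta^1,\widehat{\Cat})\) and invoking the naturality in \cite[§~4.7.4]{Lurie_HA}, rather than checking things one object at a time.
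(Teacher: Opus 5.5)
Your proposal is correct and is essentially the paper's argument: check the Beck--Chevalley map after pulling back to the components, use that the top and bottom faces of the cube relating \(F(i)\) to the colimit are adjointable, and paste with the adjointability of \(F(i)\). For the face adjointability the paper cites \cite[prop.~2.2.1.9]{BalderramaDaviesLinskens_2025_AffinenessReconstructionComplexperiodic}, which is the same limit-of-adjointable-diagrams input you take from \cite[cor.~4.7.4.18]{Lurie_HA}; for conservativity it uses the effective epimorphism \(\coprod_i F_{10}(i)\to\colim F_{10}\) instead of the limit projections.

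One piece of bookkeeping in your third step needs fixing. Since the hypotheses concern pushforward along the horizontal maps, the Beck--Chevalley map is a transformation of functors \(\QCoh(\colim F_{01})\to\QCoh(\colim F_{10})\). The jointly conservative family you need is therefore the pullbacks out of \(\QCoh(\colim F_{10})\), not out of \(\QCoh(\colim F_{01})\). Likewise, no pushforward compatibility along the vertical maps is available or needed; for those maps only the automatic compatibility of pullbacks with the projections is used.
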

\begin{proof}
  We need to show that the Beck-Chevalley transformation is an equivalence for each object \(M\in \QCoh(\colim F_{01})\).
  Since, by construction, the map
  \[\coprod_{i\in I}F_{10}(i)\to \colim F_{10}\]
  is an effective epimorphism, the functor
  \[\QCoh(\colim F_{10})\to \QCoh(\coprod_{i\in I}F_{10}(i))\simeq \prod_{i\in I}\QCoh(F_{10}(i))\]
  is conservative, so it suffices to show that to it is an equivalence after pulling back to each \(F_{10}(i)\).
  Consider the diagram
\[\begin{tikzcd}
	& {F_{00}(i)} && {F_{10}(i)} \\
	{\colim F_{00}} && {\colim F_{10}} \\
	& {F_{01}(i)} && {F_{11}(i)} \\
	{\colim F_{01}} && {\colim F_{11}}
	\arrow[from=1-2, to=1-4]
	\arrow[from=1-2, to=2-1]
	\arrow[from=1-2, to=3-2]
	\arrow[from=1-4, to=2-3]
	\arrow[from=1-4, to=3-4]
	\arrow[from=2-1, to=2-3]
	\arrow[from=2-1, to=4-1]
	\arrow[from=2-3, to=4-3]
	\arrow[from=3-2, to=3-4]
	\arrow[from=3-2, to=4-1]
	\arrow[from=3-4, to=4-3]
	\arrow[from=4-1, to=4-3]
\end{tikzcd}\]
  where the back square is adjointable by assumption, and the top and bottom squares are adjointable by \cite[prop.~2.2.1.9]{BalderramaDaviesLinskens_2025_AffinenessReconstructionComplexperiodic} (note that the \(0\)-(semi)affineness assumption is not needed for the adjointability result).
  Chasing through the diagram gives the desired result.
\end{proof}

\begin{lemma}\label[lemma]{lem:universal_zero_affine_local_finitary}
  The class of universally \(0\)-affine maps of stacks is local and finitary.
\end{lemma}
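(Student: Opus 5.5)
The strategy is to use \cref{prop:local_finitary_vs_detected_on_affines}. First I will check that universally \(0\)-affine maps form a good class that is detected on affines, together with its two extra hypotheses. Recall that \(f:X\to Y\) is universally \(0\)-affine if every basechange \(X_{Y'}\to Y'\) is \(0\)-affine, meaning the pushforward induces \(\QCoh(X_{Y'})\simeq\Mod_{f_*\O}(\QCoh(Y'))\).

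\emph{Goodness.} Equivalences are clearly universally \(0\)-affine, and the class is stable under basechange by definition. For composition I would use that a composite of \(0\)-affine maps whose pushforwards satisfy the projection formula and basechange is again \(0\)-affine. This is essentially \cite[§~3]{BalderramaDaviesLinskens_2025_AffinenessReconstructionComplexperiodic}, so I take it as known.

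\emph{Detection on affines.} One direction is immediate. For the other, suppose every basechange to an affine \(\Spec A\to Y\) is \(0\)-affine; I must show \(X_{Y'}\to Y'\) is \(0\)-affine for an arbitrary \(Y'\to Y\). I would present \(Y'\simeq\colim_i\Spec R_i\). Then \(\QCoh(Y')\simeq\lim_i\Mod_{R_i}\), and by universality of colimits \(\QCoh(X_{Y'})\simeq\lim_i\QCoh(X_{R_i})\). By hypothesis each \(\QCoh(X_{R_i})\simeq\Mod_{f_{i*}\O}(\Mod_{R_i})\).

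Comparing \(\lim_i\Mod_{f_{i*}\O}(\Mod_{R_i})\) with \(\Mod_{f_*\O}(\QCoh(Y'))\) needs the pushforwards \(f_{i*}\O\) to form a compatible, Cartesian family. This amounts to the adjointability of the squares \(X_{R_i}\to X_{R_j}\) over \(\Spec R_i\to\Spec R_j\). That adjointability holds because these are basechanges of \(0\)-affine maps between affine-based stacks, via \cite[prop.~2.2.2.5]{BalderramaDaviesLinskens_2025_AffinenessReconstructionComplexperiodic}. Adjointability of the colimit square then follows from \cref{lem:colim_of_adjointables_through_adjointables}. I expect this step to be the main obstacle: the subtle point is that \(f_*\) commutes with colimits and satisfies basechange globally, and \cref{lem:colim_of_adjointables_through_adjointables} is exactly the tool for assembling that.

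\emph{The extra hypotheses.} For finite coproducts over affine targets, \(\QCoh\) sends finite coproducts to products, and modules over a product are the product of module categories, so \(0\)-affineness is preserved. For faithfully flat descent, let \(A\to B\) be faithfully flat with \(X_B\to\Spec B\) universally \(0\)-affine. Any basechange \(\Spec T\to\Spec A\) becomes \(0\)-affine after the further faithfully flat basechange to \(T\otimes_A B\), and also on all terms of the Čech nerve \(B^{\otimes_A\bullet+1}\otimes_A T\).

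I then descend along the Čech nerve, writing \(\QCoh(X_T)\simeq\lim_\bullet\QCoh(X_{T\otimes B^{\otimes\bullet+1}})\) and likewise for the module categories. The identification is compatible across the cosimplicial diagram by the same adjointability as before, so it descends. This gives \(0\)-affineness over every affine, hence universal \(0\)-affineness by the detection step. \cref{prop:local_finitary_vs_detected_on_affines} then yields that the class is local and finitary.
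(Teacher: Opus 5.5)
Your proposal is correct and follows the paper's route. Both reduce to \cref{prop:local_finitary_vs_detected_on_affines}, handle finite coproducts over affine targets by decomposing the affine base, and handle faithfully flat descent by base-changing along the \v{C}ech nerve, with \cref{lem:colim_of_adjointables_through_adjointables} assembling the needed adjointability. The difference is bookkeeping: the paper's definition of universal \(0\)-affineness is phrased via affine basechanges and adjointable squares, so goodness and detection on affines are immediate and descended \(0\)-affineness comes from \cite[prop.~2.2.1.9]{BalderramaDaviesLinskens_2025_AffinenessReconstructionComplexperiodic}, whereas your global relative definition makes you unpack these steps by hand through descent of module categories, which works but costs extra effort.
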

\begin{proof}
  It is clear from the definition that it is a good class, and detected on affines.

  Let \(\{f_i:X_i\to \Spec R_i\}_{i=1,\ldots,n}\) be a finite collection of universally \(0\)-affine morphisms with affine targets.
  For a diagram
\[\begin{tikzcd}
	Z & {\Spec B} \\
	Y & {\Spec A} \\
	{\coprod_{i=1}^nX_i} & {\coprod_{i=1}^n\Spec R_i}
	\arrow["h"{description}, from=1-1, to=1-2]
	\arrow[from=1-1, to=2-1]
	\arrow["\lrcorner"{anchor=center, pos=0.125}, draw=none, from=1-1, to=2-2]
	\arrow[from=1-2, to=2-2]
	\arrow["g"{description}, from=2-1, to=2-2]
	\arrow[from=2-1, to=3-1]
	\arrow["\lrcorner"{anchor=center, pos=0.125}, draw=none, from=2-1, to=3-2]
	\arrow[from=2-2, to=3-2]
	\arrow[from=3-1, to=3-2]
\end{tikzcd}\]
  with both squares pullbacks, we need to show that both \(g\) and \(h\) are \(0\)-affine, and that the top square is adjointable.

  Note that we have decompositions
  \[\Spec A\simeq \coprod_{i=1}^n\Spec A_i\text{ and }\Spec B\simeq \coprod_{i=1}^n \Spec B_i,\]
  and the right vertical maps are coproducts of maps \(\Spec B_i\to \Spec A_i\to \Spec R_i\).
  Since each \(f_i\) is universally \(0\)-affine, we conclude that the top square is a coproduct of adjointable squares, and thus adjointable itself by \cref{lem:colim_of_adjointables_through_adjointables}.

  This also gives us that both \(h\) and \(g\) are coproducts of \(0\)-affine morphisms, so they are \(0\)-affine them selves.

  Now, let \(q:\Spec B\to \Spec A\) be a faithfully flat map, and let \(f:X\to \Spec A\) be a map such that its basechange \(X_B\to \Spec B\) is universally \(0\)-affine.
  Consider a diagram
\[\begin{tikzcd}
	{X_S} & {\Spec S} \\
	{X_R} & {\Spec R} \\
	X & {\Spec A}
	\arrow[from=1-1, to=1-2]
	\arrow[from=1-1, to=2-1]
	\arrow["\lrcorner"{anchor=center, pos=0.125}, draw=none, from=1-1, to=2-2]
	\arrow[from=1-2, to=2-2]
	\arrow[from=2-1, to=2-2]
	\arrow[from=2-1, to=3-1]
	\arrow["\lrcorner"{anchor=center, pos=0.125}, draw=none, from=2-1, to=3-2]
	\arrow[from=2-2, to=3-2]
	\arrow[from=3-1, to=3-2]
\end{tikzcd}\]
  where both squares are pullbacks.
  Since \(q\) is affine and \(X_B\to \Spec B\) is universally \(0\)-affine, taking the basechange of the top square along all the maps \(\check{C}^\bullet(q)\to \Spec A\) yields a functor \(F:\Delta^{op}\to \Fun(\Delta^1\times\Delta^1,\Stk)\) that fulfills the assumptions of \cref{lem:colim_of_adjointables_through_adjointables}.
  By universality of colimits, the colimit of \(F\) is the top square in the diagram above, so it is adjointable.

  Finally, restricting \(F\) to \(F_{00}\to F_{10}\) and \(F_{01}\to F_{11}\) and applying \cite[propo.~2.2.1.9]{BalderramaDaviesLinskens_2025_AffinenessReconstructionComplexperiodic} yields that both \(X_R\to \Spec R\) and \(X_S\to \Spec S\) are \(0\)-affine.
\end{proof}

\subsection{Open immersions of stacks}

\begin{definition}\label{def:open_immersion_stk}
    We call a map of stacks \(i:U\to X\) an {\it open immersion} if
    \begin{enumerate}
        \item the induced map \(|i|:|U|\to |X|\) on underlying topological spaces is an open embedding, and
        \item the natural square
\[\begin{tikzcd}
	{\Stk(T,U)} & {\Stk(T,X)} \\
	{\mathrm{Top}(|T|,|U|)} & {\mathrm{Top}(|T|,|X|)}
	\arrow[from=1-1, to=1-2]
	\arrow[from=1-1, to=2-1]
	\arrow["\lrcorner"{anchor=center, pos=0.125}, draw=none, from=1-1, to=2-2]
	\arrow[from=1-2, to=2-2]
	\arrow[from=2-1, to=2-2]
\end{tikzcd}\]
        is cartesian for every \(T\in\Stk\).
    \end{enumerate}
    Said differently, \(i\) is a \(|-|\)-cartesian edge over an open embedding.
    
    It is a {\it quasi-compact} open immersion if, for every quasi-compact open \(C\subset |X|\), its preimage \(|i|^{-1}(C)\) is quasi-compact in \(|U|\).
    That is, if \(|i|:|U|\to |X|\) is a quasi-compact map.
\end{definition}
\begin{remark}\label{rmk:open_immersion_on_affines_are_sub_schemes}
    The second point shows that an open immersion is a monomorphism of stacks, and open immersions with target \(X\) are uniquely determined by their associated open subset of \(|X|\).
    Also note that, for \(X=\Spec A\) an affine stack, this definition agrees with \cite[def.~3.3.1.1]{BalderramaDaviesLinskens_2025_AffinenessReconstructionComplexperiodic}, that is, a map into \(\Spec A\) is an open immersion if and only if it is equivalent to applying \(h:\SpSch^\nc\to \Stk\) to
    \[(U,\O_A\vert_U)\to (|\Spec \pi_0 A|,\O_A)\]
    for some open subset \(U\subset |\Spec \pi_0 A|\).
\end{remark}
\begin{lemma}\label{lem:open_immersion_basechange_to_affines}
    Let \(i:U\to X\) be a (quasi-compact) open immersion, and let \(f:\Spec A\to X\) be a map from an affine.
    Then the basechange \(i_A:U_A\to \Spec A\) of \(i\) along \(f\) is a (quasi-compact) open immersion, and
    \[|i_A|(|U_A|)=|f|^{-1}(|i|(|U|)).\]
\end{lemma}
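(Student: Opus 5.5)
The proof is a direct unwinding of the cartesian definition (\cref{def:open_immersion_stk}) combined with \cref{rmk:open_immersion_on_affines_are_sub_schemes}. Set \(V=|f|^{-1}(|i|(|U|))\). This is an open subset of \(|\Spec A|\), since \(|i|(|U|)\) is open in \(|X|\) and \(|f|\) is continuous. Let \(j:W\to \Spec A\) be the open immersion of affines attached to \(V\) by \cref{rmk:open_immersion_on_affines_are_sub_schemes}, namely \(h\) applied to \((V,\O_A\vert_V)\to(|\Spec\pi_0A|,\O_A)\). I will show that \(W\to\Spec A\) is a basechange of \(i\) along \(f\). This gives the first claim and the formula for the image at the same time.

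Fix a test stack \(T\). A map \(T\to W\) is a map \(g:T\to\Spec A\) such that \(|g|\) factors through \(V\). To see this, apply the cartesian square for \(j\): it holds because \(j\) is an open immersion in the sense of \cref{def:open_immersion_stk}, by the second half of \cref{rmk:open_immersion_on_affines_are_sub_schemes}. Next, \(|g|\) lands in \(V\) exactly when \(|f\circ g|=|f|\circ|g|\) lands in \(|i|(|U|)\). Since \(|i|\) is an open embedding, this happens exactly when \(|f\circ g|\) factors through \(|U|\), and such a factorization is unique. Now apply the cartesian square for \(i\). It says that the space of lifts of \(f\circ g\) along \(i\) is equivalent to the space of factorizations of \(|f\circ g|\) through \(|U|\), and the latter is empty or a point. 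So \(\Stk(T,W)\simeq\Stk(T,X)\times_{\Stk(T,X)}\dots\); more precisely, both \(\Stk(T,W)\) and \(\Stk(T,\Spec A)\times_{\Stk(T,X)}\Stk(T,U)\) are the full subspace of \(\Stk(T,\Spec A)\) on those \(g\) with \(|g|(|T|)\subset V\). All these identifications are natural in \(T\), so the Yoneda lemma gives \(W\simeq U\times_X\Spec A=U_A\) over \(\Spec A\). It follows that \(i_A\) is an open immersion with image \(V\), which is the displayed equality.

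Checking that the identifications are natural in \(T\), and compatible with the maps to \(\Stk(T,\Spec A)\), is routine. The only input needed is that \(|-|\) is a functor and that both \(i\) and \(j\) are monomorphisms.

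For quasi-compactness, first recall that \(|\Spec A|=|\Spec\pi_0A|\) is quasi-compact. If \(i\) is quasi-compact, then \(|i|^{-1}\) of any quasi-compact open of \(|X|\) is quasi-compact. I then need \(V\), and the preimage in \(V\) of every quasi-compact open of \(|\Spec A|\), to be quasi-compact. This is the step I expect to be the main obstacle. The reason is that \(|f|\) need not send quasi-compact opens into quasi-compact opens of \(|X|\) in any useful way, and pulling back along \(|f|\) does not obviously preserve quasi-compactness of \(|i|\). My first attempt would be to use that \(|\Spec A|\) is a spectral space whose quasi-compact opens form a basis closed under finite intersections. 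Then I would try to show that \(V\) is covered by finitely many basic opens, by writing \(|i|(|U|)\) as a union of quasi-compact opens of \(|X|\) and using quasi-compactness of the image \(|f|(|\Spec A|)\). If this fails in general, I would fall back on treating the quasi-compact clause through the interpretation actually needed later, namely that \(V\) is quasi-compact in \(|\Spec A|\), and argue it directly in the cases used.
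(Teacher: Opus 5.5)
\paragraph{Main assertion.} Your proof that \(i_A\) is an open immersion with image \(V=|f|^{-1}(|i|(|U|))\) is the paper's proof. Both arguments read off from the cartesian square for \(i\) that \(U_A\) is the \((-1)\)-truncated object of \(\Stk_{/\Spec A}\) that receives a unique map from \(g\colon T\to\Spec A\) exactly when \(|f\circ g|\) lands in \(|i|(|U|)\). Both then use Yoneda to identify \(U_A\) with the open subscheme on \(V\). The paper obtains the functor-of-points description of that subscheme from full faithfulness of \(h\) on spectral schemes and generation of \(\Stk\) by affines; this is exactly what you take from \cref{rmk:open_immersion_on_affines_are_sub_schemes}. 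Drop the garbled formula \(\Stk(T,W)\simeq\Stk(T,X)\times_{\Stk(T,X)}\dots\); the sentence after it is the correct statement.

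\paragraph{Quasi-compactness clause.} You stop here, and your suspicion is justified. The paper's entire argument is that quasi-compact maps of topological spaces are stable under base change. For the notion in \cref{def:open_immersion_stk} (preimages of quasi-compact opens are quasi-compact) this is false. For instance, \(\mathbb{R}\setminus\{0\}\subset\mathbb{R}\) is vacuously quasi-compact, because \(\mathbb{R}\) has no nonempty quasi-compact opens, yet its pullback to \([0,1]\) is not.

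This failure is realized by stacks, so the clause itself is false:
\begin{itemize}
\item Let \(B\) be the Boolean ring of locally constant \(\F_2\)-valued functions on the Cantor set \(C\), so \(|\Spec B|\cong C\).
\item Glue \(\zz\)-many copies of \(\Spec B\) along the fibres of the Cantor function \(C\to[0,1]\), and along the endpoints of consecutive copies. The fibre relation is a closed equivalence relation, hence again a Stone space, i.e.\ the spectrum of a Boolean ring, and both projections are induced by ring maps.
\item Since \(|-|\) preserves colimits, the resulting stack \(X\) has \(|X|\cong\mathbb{R}\). The \(0\)-th chart \(f\colon\Spec B\to X\) induces the Cantor function onto \([0,1]\).
\item The open substack \(U\) on \(\mathbb{R}\setminus\{0\}\) given by \cref{rmk:open_substacks_exist} is a quasi-compact open immersion.
\item By the part you did prove, \(|U_B|\cong C\setminus\{0\}\). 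This is not quasi-compact, because \(0\) is not isolated in \(C\).
\end{itemize}

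So the clause cannot be proved as stated, and your covering argument cannot be completed. \(|X|\) has no nonempty quasi-compact opens with which to cover \(|i|(|U|)\), and \(|f|(|\Spec B|)\) is quasi-compact but not open.

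What does hold is the case \(X\) affine, since then \(|f|\) is a spectral map. The natural repair, close to your fallback, is to call an open immersion quasi-compact when each of its base changes to an affine has quasi-compact image. On affine targets this agrees with the original notion, and the clause becomes immediate, since maps of affines pull back quasi-compact opens to quasi-compact opens. This form suffices for the later uses, where quasi-compactness is only ever verified on noetherian affine charts.
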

\begin{proof}
    By construction \(U_A\) is a \((-1)\)-truncated object of \(\Stk_{/\Spec A}\), and by inspection we find that
    \[\Map_{\Stk_{/\Spec A}}(g:T\to \Spec A,i_A:U_A\to\Spec A)\simeq \begin{cases}*& \text{if } |f\circ g|(|T|)\subset |i|(|U|),\\ \emptyset & \text{else.}\end{cases}\]

    Let \(V\to \Spet A\) be the nonconnective spectral open subscheme associated to the open subset \(|f|^{-1}(|i|(|U|))\subset |\Spec A|\simeq |\Spet A|\).
    Since \(h_{(-)}:\SpDM^\nc\to\Stk\) is fully faithful on spectral schemes, and \(\Stk\) is generated under colimits by affines, we find that
    \(h_V\) and \(U_A\) are equivalent \((-1)\)-truncated objects of \(\Stk_{/\Spec A}\), so we conclude.

    The claim about quasi-compactness follows since quasi-compact maps of topological spaces are stable under basechange.
\end{proof}
\begin{lemma}\label{lem:colim_open_embeddings_to_affines}
    Let \(F=(U_i\xrightarrow{\eta_i}\Spec R_i):I\to \Fun(\Delta^1,\Stk)\) be a small diagram such that, for any edge \(e:i\to i'\), the square
\[\begin{tikzcd}
	{U_i} & {U_{i'}} \\
	{\Spec R_i} & {\Spec R_{i'}}
	\arrow[from=1-1, to=1-2]
	\arrow["{\eta_i}"', from=1-1, to=2-1]
	\arrow["\lrcorner"{anchor=center, pos=0.125}, draw=none, from=1-1, to=2-2]
	\arrow["{\eta_{i'}}", from=1-2, to=2-2]
	\arrow[from=2-1, to=2-2]
\end{tikzcd}\]
    is cartesian, and the maps \(\eta_i\) are open immersions.
    Denote the colimit of \(F\) by \(\eta:U\to X\) and the canonical morphisms \(\Spec R_i\to X\) by \(\iota_i\).

    Then \(|\eta|:|U|\to |X|\) is an open embedding with image \(\bigcup_{i\in I}|\iota_i\circ\eta_i|(|U_i|)\),
    and for every \(i\in I\) we have \(|\eta_i|(|U_i|)=|\iota_i|^{-1}(|\eta|(|U|))\).
\end{lemma}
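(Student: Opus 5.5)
The proof has three parts: describe the points of the colimit, transport the open subsets of the charts to \(|U|\) and \(|X|\), and check that \(|\eta|\) is an open embedding with the claimed image. Since \(|-|:\Stk\to\Top\) preserves colimits (\cref{def:underlying_locale_stack}), we have
\[|U|\simeq\colim_{i\in I}|U_i|\quad\text{and}\quad|X|\simeq\colim_{i\in I}|\Spec R_i|\]
in \(\Top\), and \(|\eta|\) is the induced map. Write \(V_i=|\eta_i|(|U_i|)\), which is open in \(|\Spec R_i|\). By \cref{lem:open_immersion_basechange_to_affines}, the cartesian squares in the hypothesis give, for every edge \(e:i\to i'\), the identity \(|e|^{-1}(V_{i'})=V_i\). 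So the family \((V_i)\) is compatible.

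First we show that \(V=\bigcup_i|\iota_i|(V_i)\) is open and satisfies \(|\iota_i|^{-1}(V)=V_i\). The colimit topology is the quotient topology from \(\coprod_i|\Spec R_i|\), so it suffices to show that the preimage of \(V\) in the coproduct is exactly \(\coprod_iV_i\); that preimage is then open.

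1. Suppose a point \(p\in|\Spec R_j|\) is identified in the colimit with a point \(q\in V_i\). Then \(p\) and \(q\) are connected by a finite zig-zag of edges of \(I\).
2. Along each edge \(e\), the compatibility \(|e|^{-1}(V_{i'})=V_i\) says that a point lies in \(V_i\) if and only if its image lies in \(V_{i'}\).
3. Hence membership in the \(V\)'s is transported in both directions along the zig-zag, and \(p\in V_j\).

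This gives both openness of \(V\) and the identity \(|\iota_i|^{-1}(V)=V_i\) for every \(i\).

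It remains to show that \(|\eta|\) is a homeomorphism onto \(V\). Each \(|U_i|\to V_i\) is a homeomorphism, and the diagram \(i\mapsto V_i\) is the restriction of the diagram \(i\mapsto|\Spec R_i|\) to the compatible open subsets. So it is enough to show that colimits in \(\Top\) commute with restriction to such a compatible family of opens: \(\colim_iV_i\to\colim_i|\Spec R_i|\) should be a homeomorphism onto the saturated open \(V\).

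On points this is step 2 again, since identifications between points of the \(V\)'s can be made inside the \(V\)'s. Any zig-zag between two points of \(\coprod_iV_i\) stays inside \(\coprod_iV_i\), because every intermediate point also lies in some \(V_k\). So the equivalence relations agree, and the induced map is injective with image \(V\).

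On topologies, a subset of \(V\) is open in the quotient topology of \(\coprod_iV_i\) if and only if its preimage in each \(V_i\) is open. Since each \(V_i\) is open in \(|\Spec R_i|\), such a preimage is then also open in \(|\Spec R_i|\). So every such set is open in \(\colim_i|\Spec R_i|\), and conversely. This standard fact, that restricting a quotient map to a saturated open subset is again a quotient map, is the main point to check carefully. The rest is bookkeeping with \cref{lem:open_immersion_basechange_to_affines}.
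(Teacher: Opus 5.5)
Your proof is correct and follows essentially the same route as the paper: use that \(|-|\) preserves colimits, get \(|e|^{-1}(V_{i'})=V_i\) from \cref{lem:open_immersion_basechange_to_affines}, and then check that \(\coprod_i V_i\) is saturated for the quotient map \(\coprod_i|\Spec R_i|\to|X|\) and that the identifications on the \(|U_i|\)-side are exactly the restricted ones. You simply spell out the zig-zag and quotient-topology bookkeeping (restriction of a quotient map to a saturated open) that the paper leaves implicit.
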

\begin{proof}
    First of all, observe that the functor \(|-|:\Stk\to \Top\) preserves colimits by construction and pullback squares of the form \(F(e:i\to i')\) by \cref{lem:open_immersion_basechange_to_affines}.
    We then get a commutative diagram
\[\begin{tikzcd}
	{\coprod_{i\in I}|U_i|} & {\coprod_{i\in I}|\Spec R_i|} \\
	{|U|} & {|X|}
	\arrow["{|\eta_\bullet|}", hook, from=1-1, to=1-2]
	\arrow["p"', two heads, from=1-1, to=2-1]
	\arrow["q", two heads, from=1-2, to=2-2]
	\arrow["{|\eta|}"', from=2-1, to=2-2]
\end{tikzcd}\]
    where \(p\) and \(q\) are quotient maps.
    Then, in the equivalence relation associated to \(q\), a point is equivalent to one in the image of \(|\eta_\bullet|\) if and only if it is itself in that image.
    Also, two points are equivalent in the relation associated to \(p\) if and only if their images under \(|\eta_\bullet|\) are equivalent with respect to \(q\).
    Putting these together proves that \(|\eta|\) is an open embedding with image as claimed, and the above diagram is a pullback in \(\Top\).
\end{proof}

\begin{lemma}\label{lem:underlying_space_basechange_open_immersion}
    The functor \(|-|:\Stk\to\Top\) preserves pullbacks where one of the legs is an open immersion.
\end{lemma}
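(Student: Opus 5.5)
The plan is to reduce to the case where the open immersion has affine target, and then to handle that case by combining \cref{lem:open_immersion_basechange_to_affines} with the fact that \(|-|\) preserves colimits.

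Fix a pullback square of stacks with \(i:U\to X\) an open immersion, \(g:Y\to X\) arbitrary, and \(U_Y=U\times_XY\). We must show that
\[|U_Y|\to |U|\times_{|X|}|Y|\]
is a homeomorphism. Since \(|i|\) is an open embedding, the right-hand side is the open subspace \(|g|^{-1}(|i|(|U|))\subset|Y|\). So it suffices to show that \(|U_Y|\to|Y|\) is an open embedding with image exactly \(|g|^{-1}(|i|(|U|))\).

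For the reduction, choose a small colimit presentation \(Y\simeq\colim_{j\in J}\Spec A_j\). By universality of colimits in \(\Stk\), we get \(U_Y\simeq\colim_j U_{A_j}\), where \(U_{A_j}\) is the basechange of \(i\) along \(\Spec A_j\to Y\to X\). By \cref{lem:open_immersion_basechange_to_affines}, each \(U_{A_j}\to\Spec A_j\) is an open immersion with image \(|f_j|^{-1}(|i|(|U|))\), where \(f_j:\Spec A_j\to X\). For any edge \(j\to j'\) the square relating \(U_{A_j}\to\Spec A_j\) and \(U_{A_{j'}}\to\Spec A_{j'}\) is cartesian, by pasting of pullbacks. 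The diagram \(j\mapsto(U_{A_j}\to\Spec A_j)\) therefore satisfies the hypotheses of \cref{lem:colim_open_embeddings_to_affines}.

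That lemma shows that \(|U_Y|\to|Y|\) is an open embedding. Its image is
\[\bigcup_j |\iota_j|\bigl(|f_j|^{-1}(|i|(|U|))\bigr),\]
where \(\iota_j:\Spec A_j\to Y\) are the structure maps. Since \(f_j=g\circ\iota_j\), this set equals
\[\bigcup_j|\iota_j|\bigl(|\iota_j|^{-1}(|g|^{-1}(|i|(|U|)))\bigr).\]
Because every point of \(|Y|\) lies in the image of some \(|\iota_j|\) (\(|-|\) preserves colimits, so \(\coprod_j|\Spec A_j|\to|Y|\) is a quotient map, in particular surjective), this union is exactly \(|g|^{-1}(|i|(|U|))\).

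The main point needing care is that the comparison map \(|U_Y|\to|U|\times_{|X|}|Y|\) is compatible with these identifications. It is the canonical map induced by \(|U_Y|\to|U|\) and \(|U_Y|\to|Y|\). Since \(|U|\to|X|\) is injective, the projection from the fibre product to \(|Y|\) is itself an open embedding onto \(|g|^{-1}(|i|(|U|))\), and the composite of the comparison map with this projection is \(|U_Y|\to|Y|\). The comparison map is therefore a homeomorphism, since both \(|U_Y|\) and the fibre product embed openly in \(|Y|\) with the same image.
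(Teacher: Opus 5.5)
Your proposal is correct and follows the paper's proof essentially step for step. Both arguments take a colimit presentation of \(Y\) by affines, use universality of colimits, apply \cref{lem:open_immersion_basechange_to_affines} on each chart and \cref{lem:colim_open_embeddings_to_affines} to get an open embedding onto the union of the images, and then use joint surjectivity of the \(|\iota_j|\) to identify that union with \(|g|^{-1}(|i|(|U|))\). Your final paragraph, explaining why this identifies \(|U_Y|\) with the topological fibre product, spells out a step the paper leaves implicit; the paper's write-up is also notationally garbled at that point, with stray \(Y'\) and \(j(V')\).
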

\begin{proof}
    Consider a pullback square
\[\begin{tikzcd}
	V & U \\
	Y & X
	\arrow["{f'}"{description}, from=1-1, to=1-2]
	\arrow["{i'}"{description}, from=1-1, to=2-1]
	\arrow["\lrcorner"{anchor=center, pos=0.125}, draw=none, from=1-1, to=2-2]
	\arrow["i"{description}, from=1-2, to=2-2]
	\arrow["f"{description}, from=2-1, to=2-2]
\end{tikzcd}\]
    in \(\Stk\) with \(i\) an open immersion.
    We need to show that \(|V|\to |Y|\) is an open embedding with image \(|f|^{-1}(|i|(|U|))\).
    
    Choose a small colimit presentation
    \(Y\simeq \colim_{j\in J}\Spec R_j\) and denote the canonical maps by \(\iota_j:\Spec R_j\to Y\).
     Since colimits are universal in \(\Stk\), we find that \(i':V\to Y\) is the colimit of a diagram
     \[F:J\to \Fun(\Delta^1,\Stk),\,j\mapsto (\eta_j:V_j\to \Spec R_j),\]
     where \(V_j\) is the basechange of of \(U\) along \(f\circ \iota_j\).

     By \cref{lem:open_immersion_basechange_to_affines} the diagram \(F\) is of the form considered in \cref{lem:colim_open_embeddings_to_affines}, so \(|i'|\) is an open embedding.

    It will suffice to prove that the preimage of \(|i|(|U|)\) in \(Y'\) is \(j(V')\).
    This agree with the image in \(Y'\) of the preimage in \(\coprod_{j\in J}|\Spec R_j|\) of \(|i|(|U|)\).
    By \cref{lem:open_immersion_basechange_to_affines} this agrees with the image of \(\cup_{j\in J} |\eta_j|(|V_j|)\) in \(Y'\), which is exactly \(j(V')\) by \cref{lem:colim_open_embeddings_to_affines}.
\end{proof}
\begin{corollary}\label{cor:open_immersion_basechange}
    Let \(i:U\to X\) be an open immersion of stacks, and let \(f:Y\to X\) be any map.
    Then the basechange \(i':V\to Y\) of \(i\) along \(f\) is an open immersion, and
    \[|i'|(|V|)=|f|^{-1}(|i|(|U|)).\]
    If \(i\) is quasi-compact, then \(i'\) is quasi-compact.
\end{corollary}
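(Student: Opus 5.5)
The statement to prove is \cref{cor:open_immersion_basechange}. Nearly everything is already in \cref{lem:underlying_space_basechange_open_immersion}. That lemma shows that \(|i'|:|V|\to|Y|\) is an open embedding with image \(|f|^{-1}(|i|(|U|))\), which handles condition (1) of \cref{def:open_immersion_stk} together with the formula for the image. What remains is condition (2): the square comparing \(\Stk(T,V)\to\Stk(T,Y)\) with \(\mathrm{Top}(|T|,|V|)\to\mathrm{Top}(|T|,|Y|)\) must be cartesian for every \(T\in\Stk\). After that comes quasi-compactness.

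For condition (2), I will use that \(i\) is a monomorphism (\cref{rmk:open_immersion_on_affines_are_sub_schemes}), so its basechange \(i'\) is a monomorphism too. Both horizontal maps in the square are therefore \((-1)\)-truncated: on the topological side because \(|i'|\) is an embedding. It then suffices to check one thing: a map \(g:T\to Y\) factors through \(V\) if and only if \(|g|\) factors through \(|V|\), i.e.\ \(|g|(|T|)\subset|f|^{-1}(|i|(|U|))\).

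\begin{itemize}
\item A factorization of \(g\) through \(V\) is the same as a factorization of \(f\circ g\) through \(U\), by the pullback property.
\item Since \(i\) is an open immersion, condition (2) for \(i\) says \(f\circ g\) factors through \(U\) exactly when \(|f\circ g|(|T|)\subset|i|(|U|)\).
\item That inclusion is precisely \(|g|(|T|)\subset|f|^{-1}(|i|(|U|))=|i'|(|V|)\), using the image formula above.
\end{itemize}

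This gives the cartesian square.

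For quasi-compactness, the proof of \cref{lem:underlying_space_basechange_open_immersion} shows that the square of underlying spaces is a pullback in \(\Top\), since \(|-|\) preserves it. Hence \(|i'|\) is the basechange of the quasi-compact map \(|i|\), and quasi-compact maps of topological spaces are stable under basechange. If one prefers to avoid relying on the proof rather than the statement, the same conclusion follows from the identification of \(|V|\) with the open subset \(|f|^{-1}(|i|(|U|))\): the preimage in \(|V|\) of a quasi-compact \(C\subset|Y|\) is \(C\cap|f|^{-1}(|i|(|U|))\).

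The main obstacle is this last point. To conclude that \(C\cap|f|^{-1}(|i|(|U|))\) is quasi-compact, one needs that \(|f|(C)\) lies in a quasi-compact open of \(|X|\), and for general stacks \(|X|\) this need not hold. The clean route is therefore the pullback-in-\(\Top\) statement, which I would record explicitly as part of the preceding lemma. Given that, quasi-compactness follows formally from stability of quasi-compact maps under pullback of topological spaces, exactly as in \cref{lem:open_immersion_basechange_to_affines}.
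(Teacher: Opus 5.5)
\textbf{The open-immersion part is correct.} Your argument that \(i'\) is an open immersion with image \(|f|^{-1}(|i|(|U|))\) works. The paper leaves the corollary as an immediate consequence of \cref{lem:underlying_space_basechange_open_immersion}. Since \(|-|\) carries the square to a pullback in \(\Top\), one pastes the cartesian square of mapping spaces for \(i\) with the squares \(\Stk(T,V)\simeq\Stk(T,Y)\times_{\Stk(T,X)}\Stk(T,U)\) and \(\Top(|T|,|V|)\cong\Top(|T|,|Y|)\times_{\Top(|T|,|X|)}\Top(|T|,|U|)\). This shows that \(i'\) is \(|-|\)-cartesian. You instead use that \(i'\) is a monomorphism and check factorizations pointwise. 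That needs only the image formula and is equally valid.

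Note that ``the square is a pullback in \(\Top\)'' is literally the statement of that lemma. It is also equivalent to the image formula, because the pullback of an open embedding in \(\Top\) is the inclusion of the preimage. So your ``clean route'' is the same computation as your direct one and adds no information.

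\textbf{The quasi-compactness step is where it breaks.} Quasi-compact maps of topological spaces, in the sense used here (preimages of quasi-compact opens are quasi-compact), are not stable under pullback in \(\Top\). Take the quasi-compact open embedding \(\{1\}\hookrightarrow S\) into the Sierpi\'nski space. Pulling it back along the map \(Y\to S\) classifying a non-quasi-compact open \(W\) of a quasi-compact space \(Y\) gives \(W\hookrightarrow Y\). Your proposed criterion, that \(|f|(C)\) lie in a quasi-compact open of \(|X|\), is also not sufficient, since \(|f|\) itself need not pull quasi-compact opens back to quasi-compact ones.

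\textbf{The final sentence of the statement is false for stacks.} Let \(R=\prod_{n\in\mathbb{N}}k\) for a field \(k\). Then \(|\Spec R|=\beta\mathbb{N}\), with \(\mathbb{N}\) an open, dense set of isolated points. Let \(X\) be the pushout of \(\Spec R\leftarrow\coprod_{n}\Spec k\to\Spec k\), where the left map is given by the projections.
\begin{itemize}
\item \(|X|=\beta\mathbb{N}/\mathbb{N}\). This is quasi-compact, so your criterion holds, and the collapsed point \(*\) is open.
\item The open substack \(U\) with \(|U|=\{*\}\) is trivially a quasi-compact open immersion.
\item Its basechange along \(\Spec R\to X\) is the open subscheme of \(\Spec R\) on \(\mathbb{N}\subset\beta\mathbb{N}\), which is not quasi-compact.
\end{itemize}

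The paper's implicit justification is the same appeal to stability in \(\Top\) made in \cref{lem:open_immersion_basechange_to_affines}, so it has the same defect. This example, which has affine source, refutes the quasi-compactness claim there as well.

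What does hold is the case where \(X\) and \(Y\) are affine, since then \(|f|\) is a spectral map of spectral spaces. That covers the noetherian situations in which the paper actually uses quasi-compactness. In general, though, the last assertion needs extra hypotheses, and this step of your proof cannot be repaired as stated.
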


\begin{proposition}\label{prop:open_immersion_detected_on_affines}
    Open immersions are detected on affines.
\end{proposition}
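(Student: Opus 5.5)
The plan is to show that a map \(i:U\to X\) is an open immersion as soon as every basechange \(i_A:U_A\to\Spec A\) along a map \(f:\Spec A\to X\) is one. The converse is \cref{lem:open_immersion_basechange_to_affines}. I will check the two conditions of \cref{def:open_immersion_stk} directly, using a colimit presentation of \(X\) and the lemmas on colimits of open immersions proved just above.

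First fix a small colimit presentation \(X\simeq\colim_{j\in J}\Spec R_j\) with structure maps \(\iota_j\). By universality of colimits, \(U\simeq\colim_j U_j\), where \(U_j\to\Spec R_j\) is the basechange of \(i\), hence an open immersion by hypothesis. For each edge \(j\to j'\) the square relating \(U_j\) and \(U_{j'}\) is cartesian, since both are basechanges of \(i\). So the diagram satisfies the assumptions of \cref{lem:colim_open_embeddings_to_affines}. That lemma gives that \(|i|:|U|\to|X|\) is an open embedding with image \(W=\bigcup_j|\iota_j|(|U_j|)\), and that \(|\iota_j|^{-1}(W)=|U_j|\). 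This settles condition (1).

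For condition (2), let \(T\in\Stk\). Since \(i\) is a monomorphism, I will show the required square is cartesian by checking that a map \(g:T\to X\) factors through \(U\), uniquely up to contractible choice, exactly when \(|g|(|T|)\subset W\). First I show that \(i\) is a monomorphism: each basechange \(U_j\to\Spec R_j\) is one by \cref{rmk:open_immersion_on_affines_are_sub_schemes}, the class is local and finitary by \cref{lem:monomorphism_local_finitary}, and \(\coprod_j\Spec R_j\to X\) is an effective epimorphism by \cref{lem:colim_coproduct_effective_epi}. The map \(\Stk(T,U)\to\Stk(T,X)\) is then \((-1)\)-truncated, and so is \(\Top(|T|,|U|)\to\Top(|T|,|X|)\), since \(|i|\) is an embedding. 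It therefore suffices to compare the essential images. If \(g\) factors through \(U\), then clearly \(|g|\) lands in \(W\). Conversely, suppose \(|g|(|T|)\subset W\), and write \(T\simeq\colim_k\Spec A_k\). For each \(k\), the composite \(\Spec A_k\to X\) has image in \(W\). Its basechange of \(i\) is an open immersion into \(\Spec A_k\) whose image I need to be all of \(|\Spec A_k|\), which would make it an equivalence.

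The main obstacle is identifying that image, i.e.\ proving \(|i_{A}|(|U_{A}|)=|f|^{-1}(W)\) for an arbitrary affine \(f:\Spec A\to X\), since \(f\) need not factor through any single \(\Spec R_j\). To handle this, I will use \cref{lem:effective_epi_to_affine_local_section} to choose a faithfully flat \(A\to B\) such that \(\Spec B\to X\) factors through \(\coprod_j\Spec R_j\). Passing through the finite decomposition argument from the proof of \cref{prop:local_finitary_vs_detected_on_affines}, I then reduce to \(\Spec B_j\to\Spec R_j\), where the formula follows from \cref{lem:open_immersion_basechange_to_affines} and \(|\iota_j|^{-1}(W)=|U_j|\). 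To descend back to \(A\), I use that \(|\Spec B|\to|\Spec A|\) is surjective and that open immersions into affines are compatible with basechange. Once this holds, each \(U_{A_k}\to\Spec A_k\) is an equivalence, so \(U_T\to T\) is a colimit of equivalences and hence an equivalence, and \(g\) factors through \(U\). This proves condition (2).
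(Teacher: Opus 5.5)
Your proposal is correct and follows essentially the same route as the paper. Condition (1) comes from \cref{lem:colim_open_embeddings_to_affines}, monomorphy from \cref{lem:monomorphism_local_finitary}, you reduce to affine test objects, and you use a faithfully flat cover \(\Spec B\to\Spec A\) that factors through a finite coproduct of charts. The only difference is the final descent step: you descend the image formula along the surjection \(|\Spec B|\to|\Spec A|\), using that \(U_A\to\Spec A\) is already an open immersion by hypothesis, whereas the paper shows that the basechange to the cover is an equivalence and descends that along the effective epimorphism. Both versions work.
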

\begin{proof}
    As open embeddings of topological spaces contain all equivalences and are closed under compositions, it follows that the class of open immersions of stacks is good.
    
    The 'only if' part follows from \cref{lem:open_immersion_basechange_to_affines}.

    In the other direction, choose a small colimit presentation \(X\simeq \colim_{i\in I}\Spec R_i\) and denote the canonical maps by \(\iota_j:\Spec R_j\to X\).

    Since colimits are universal in \(\Stk\), we find that \(\eta:U\to X\) is the colimit of a diagram
    \[F:I\to \Fun(\Delta^1,\Stk),\,i\mapsto (\eta_i:U_i\to \Spec R_i),\]
    where \(\eta_i\) is the basechange of \(\eta\) along \(\iota_i\).
    Our assumption tells us that this diagram is of the form considered in \cref{lem:colim_open_embeddings_to_affines}, so \(|\eta|:|U|\to |X|\) is an open embedding, \(|\eta|(|U|)=\bigcup_{i\in I}|\iota_i\circ \eta_i|(|U_i|)\), and \(|\iota_i|^{-1}(|\eta|(|U|))=|\eta_i|(|U_i|)\).
    This takes care of the first point of \cref{def:open_immersion_stk}.

    For the second point of \cref{def:open_immersion_stk}, let us first note that \(\eta:U\to X\) is a monomorphism by \cref{lem:monomorphism_local_finitary}.

    Also note that it suffices to check the second point of \cref{def:open_immersion_stk} on affines \(T\simeq \Spec B\), since they generate \(\Stk\) under colimits and \(|-|\) preserves colimits.

    We then get a diagram
\[\begin{tikzcd}
	{\Stk(\Spec B, U)} && \\
	& PB & {\Stk(\Spec B, X)} \\
	& {\Top(|\Spec B|,|U|)} & {\Top(|\Spec B|, |X|)}
	\arrow["\epsilon"{description}, from=1-1, to=2-2]
	\arrow["{\eta_*}"{description}, curve={height=-12pt}, from=1-1, to=2-3]
	\arrow[curve={height=18pt}, from=1-1, to=3-2]
	\arrow["j"{description}, from=2-2, to=2-3]
	\arrow[from=2-2, to=3-2]
	\arrow[from=2-3, to=3-3]
	\arrow["{|\eta|_*}"', from=3-2, to=3-3]
\end{tikzcd}\]
    where the square is cartesian and both \(\eta_*\) and \(j\) are \((-1)\)-truncated maps of spaces.
    Thus, for \(\epsilon\) to be an equivalence (which we need to show), it suffices for \(\pi_0\epsilon\) to be surjective.

    By construction we have
    \[\pi_0PB\simeq \{[f]\in \pi_0\Stk(\Spec B, X)\mid |f|(|\Spec B|)\subset |\eta|(|U|)\}.\]
    Given such an \(f\) we need to show that it factors through \(\eta:U\to X\).
    Consider the pullback square
\[\begin{tikzcd}
	{U_B} & U \\
	{\Spec B} & X
	\arrow[from=1-1, to=1-2]
	\arrow["{\eta_B}"{description}, from=1-1, to=2-1]
	\arrow["\lrcorner"{anchor=center, pos=0.125}, draw=none, from=1-1, to=2-2]
	\arrow["\eta"{description}, from=1-2, to=2-2]
	\arrow["f"{description}, from=2-1, to=2-2]
\end{tikzcd}\]
    If \(\eta_B\) was an equivalence this would give us the desired factorization.

    By \cref{lem:colim_coproduct_effective_epi} and \cref{lem:effective_epi_to_affine_local_section} there is a faithfully flat map \(B\to C\) and a map \(g:\Spec C\to \coprod_{i\in I}\Spec R_i\) such that
    \[(\Spec C\to \Spec B\xrightarrow{f} X)\simeq(\Spec C\xrightarrow{g}\coprod_{i\in I}\Spec R_i\to X).\]
    By construction there exists a lift \(g\) to a map
    \[g_\kappa:\Spec_\kappa C\to \coprod_{i\in I}\Spec_\kappa R_i\]
    in \(\Shv_\kappa^\fpqc\).
    Now \cite[prop.~A.3.1.3]{Lurie_SAG} tells us that \(\Shv_\kappa^\fpqc\) is a coherent topos and that \(\Spec_\kappa C\) is a coherent object in it.
    Therefore, there is a finite subset \(S\subset I\) such that \(g_\kappa\) factors over
    \[\Spec_\kappa \prod_{i\in S}R_i\simeq \coprod_{i\in S}\Spec_\kappa R_i\to\coprod_{i\in I}\Spec_\kappa R_i.\]
    Mapping back to \(\Stk\) we find that \(g\) factors over
    \[\Spec \prod_{i\in S}R_i\simeq \coprod_{i\in S}\Spec R_i\to\coprod_{i\in I}\Spec R_i.\]
    Call the resulting map \(g':\Spec C\to \coprod_{i\in S}\Spec R_i\).

    By assumption we have \(|f|(|\Spec B|)\subset |\eta|(|U|)\), so \(|\coprod_{i\in S}\iota_i|(|g'|(|\Spec C|))\subset |\eta|(|U|)\), so
    \[|g'|(|\Spec C|)\subset \coprod_{i\in S}|\eta_i|(|U_i|).\]
    Since coproducts are disjoint, and since \(\coprod_{i\in S}\Spec R_i\) is affine, \cref{lem:open_immersion_basechange_to_affines} tells us that \(\coprod_{i\in S}U_i\to \coprod_{i\in S}\Spec R_i\) is an open immersion with image \(\coprod_{i\in S}|\eta_i|(|U_i|)\).

    Thus, again by \cref{lem:open_immersion_basechange_to_affines}, the basechange of \(U\to X\) to \(\Spec C\) is an open immersion with image \(|\Spec C|\), so an equivalence.
    Consequently, the basechange of \(\eta_B\) along the effective epimorphism \(\Spec C\to \Spec B\) is an equivalence, so by descent already \(\eta_B\) is an equivalence.
\end{proof}
\begin{lemma}\label{lem:open_immersions_local_finitary}
    The class of open immersions is local and finitary.
\end{lemma}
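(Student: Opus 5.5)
The plan is to apply \cref{prop:local_finitary_vs_detected_on_affines} in its converse direction. \Cref{prop:open_immersion_detected_on_affines} already shows that open immersions form a good class detected on affines. It therefore remains to check two extra hypotheses:
\begin{enumerate}
  \item open immersions with affine target are closed under finite coproducts;
  \item if \(f:X\to\Spec A\) is a map whose basechange along a faithfully flat \(A\to B\) is an open immersion, then \(f\) itself is an open immersion.
\end{enumerate}

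For finite coproducts, take open immersions \(U_k\to\Spec R_k\), \(k=1,\dots,n\). By \cref{rmk:open_immersion_on_affines_are_sub_schemes} these are the open subschemes attached to open subsets \(W_k\subset|\Spec R_k|\). Since \(\coprod_k\Spec R_k\simeq\Spec\prod_k R_k\) and \(|\Spec\prod_k R_k|\simeq\coprod_k|\Spec R_k|\) (\cref{lem:zariski_space_fpqc_sheav}), the subset \(\coprod_k W_k\) is open. Its associated open subscheme is \(\coprod_k U_k\), because the functor \(h\) preserves finite disjoint unions of spectral schemes. So the coproduct map is an open immersion.

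For faithfully flat descent, let \(q:\Spec B\to\Spec A\) be faithfully flat and suppose \(f_B:X_B\to\Spec B\) is an open immersion with open image \(W_B\). The two preimages of \(W_B\) in \(|\Spec B\otimes_AB|\) agree by \cref{lem:open_immersion_basechange_to_affines}, since both are the image of the basechange of \(X\). By \cref{lem:zariski_space_fpqc_sheav}, \(|\Spec A|\) is the coequalizer of \(|\Spec B\otimes_AB|\rightrightarrows|\Spec B|\), so \(W_B=|q|^{-1}(W)\) for a unique open \(W\subset|\Spec A|\). Let \(V\to\Spec A\) be the corresponding open immersion. The basechanges of \(V\) and of \(X\) to every stage of the Čech nerve of \(q\) are then open immersions with the same open subset, hence equivalent over that stage. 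These equivalences are compatible, since both objects are \((-1)\)-truncated, so the compatibility data is unique. By descent, \(\Stk_{/\Spec A}\simeq\lim\Stk_{/\Spec(B^{\otimes_A\bullet+1})}\), so \(X\simeq V\) over \(\Spec A\), and \(f\) is an open immersion.

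Finally, \cref{prop:local_finitary_vs_detected_on_affines} gives that open immersions are local and finitary. I expect the main obstacle to be descending the open subset, that is, producing \(W\). This uses surjectivity of \(|q|\) together with the quotient property from \cref{lem:zariski_space_fpqc_sheav} to see that the descended set is open. An alternative is to show directly that \(|X|\to|\Spec A|\) is an open embedding, but the descent argument above avoids that.
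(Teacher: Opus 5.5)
Your proposal is correct and follows the paper's proof. You reduce via \cref{prop:local_finitary_vs_detected_on_affines} and \cref{prop:open_immersion_detected_on_affines}, handle finite coproducts through open subschemes of \(\Spet\prod_k R_k\), and settle faithfully flat descent by comparing with an open substack \(V\) of \(\Spec A\) at every stage of the Čech nerve. The only variation is in that last step: you get \(W\) directly from the coequalizer property in \cref{lem:zariski_space_fpqc_sheav} and obtain the map \(X\to V\) from uniqueness of maps into \((-1)\)-truncated objects, whereas the paper first uses \cref{lem:colim_open_embeddings_to_affines} and \cref{lem:monomorphism_local_finitary} to see that \(|\eta|\) is an open embedding and \(\eta\) a monomorphism, and then factors \(\eta\) through \(h_V\).
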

\begin{proof}
    In view of \cref{prop:open_immersion_detected_on_affines} we only need to verify the extra conditions of \cref{prop:local_finitary_vs_detected_on_affines}.

    Let \(\{\eta_i:U_i\to \Spec R_i\}_{i\in I}\) be a finite family of open immersions.
    By \cref{rmk:open_immersion_on_affines_are_sub_schemes} each \(U_i\) is in the image of the fully faithful functor \(\SpSch^\nc\to\Stk\), which also commutes with finite coproducts, so the coproduct of the \(\eta_i\) is equivalent to applying this functor to
    \[\coprod_{i\in I}(\Spet R_i)\vert_{|\eta_i|(|U_i|)}\to \coprod_{i\in I}\Spet R_i.\]
    The target of this map is equivalent to \(\Spet \prod_{i\in I}R_i\), and this identifies the map with an open immersion in \(\SpSch^\nc\).
    Thus, by \cref{rmk:open_immersion_on_affines_are_sub_schemes}, \(\coprod_{i\in I}\eta_i\) is an open immersion.

    Now let \(\eta: U\to \Spec A\) be a map of stacks, and let \(f:\Spec B\to \Spec A\) be a faithfully flat map such that the basechange \(\eta_B:U_B\to \Spec B\) of \(\eta\) along \(f\) is an open immersion.
    Then \cref{cor:open_immersion_basechange} tells us that the further basechange to each \(\Spec (B^{\otimes_A n+1})\) is an open immersion, so, by universality, \cref{lem:monomorphism_local_finitary}, \cref{lem:open_immersion_basechange_to_affines}, and \cref{lem:colim_open_embeddings_to_affines}, \(|\eta|\) is an open embedding and \(\eta\) is a monomorphism.

    Let \(V\to \Spet A\) be the open substack associated to \(|\eta|(|U|)\subset |\Spec A|=|\Spet A|\).
    Then by \cref{rmk:open_immersion_on_affines_are_sub_schemes} \(h_V\to \Spec A\) is an open immersion, and the monomorphism \(\eta\) factors through it.
    By \cref{cor:open_immersion_basechange} and \cref{lem:colim_open_embeddings_to_affines} the basechange of the resulting map \(U\to h_V\) to each \(\Spec (B^{\otimes_A n+1})\) is an equivalence, so by descent \(U\to h_V\) is an equivalence, so \(\eta\) is an open immersion.
\end{proof}
\begin{corollary}\label{cor:colim_open_immersions_to_affines}
    Let \(F=(U_i\xrightarrow{\eta_i}\Spec R_i):I\to \Fun(\Delta^1,\Stk)\) be as in \cref{lem:colim_open_embeddings_to_affines}.
    Then \(\eta:U\to X\), the colimit of \(F\), is an open immersion.
\end{corollary}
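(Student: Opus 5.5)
The plan is to deduce the corollary directly from \cref{lem:colim_open_embeddings_to_affines} together with \cref{prop:open_immersion_detected_on_affines}. The detection statement says it suffices to show that for every map \(f:\Spec A\to X\) the basechange \(U_A\to\Spec A\) is an open immersion.

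The first step is to present \(U_A\to \Spec A\) as a colimit of open immersions into affines. Colimits in \(\Stk\) are universal, so \(\Spec A\simeq \colim_{i}\,(\Spec R_i\times_X\Spec A)\), and likewise \(U_A\simeq\colim_i (U_i\times_X\Spec A)\). Because the squares in \(F\) are cartesian, one expects \(U\times_X\Spec R_i\simeq U_i\); this is the usual descent property of cartesian transformations in an \(\infty\)-topos-like setting, and I would verify it via \cref{lem:colim_open_embeddings_to_affines}, which identifies \(|\iota_i|^{-1}(|\eta|(|U|))\). Granting this, \(U_i\times_X\Spec A\simeq U_i\times_{\Spec R_i}(\Spec R_i\times_X\Spec A)\) is a basechange of an open immersion, hence an open immersion by \cref{cor:open_immersion_basechange}.

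The sticking point is that the fiber products \(\Spec R_i\times_X\Spec A\) need not be affine. To handle this I would refine once more: choose colimit presentations of each of them by affines. I would then argue instead via the locality of open immersions (\cref{lem:open_immersions_local_finitary}). The map \(\coprod_i\Spec R_i\to X\) is an effective epimorphism by \cref{lem:colim_coproduct_effective_epi}. The basechange of \(\eta\) along it is \(\coprod_i (U\times_X\Spec R_i)\to\coprod_i\Spec R_i\), which by the identification above is \(\coprod_i U_i\to \coprod_i\Spec R_i\). This is a coproduct of open immersions, hence an open immersion because the class is finitary. Locality of the class then gives that \(\eta\) is an open immersion.

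The main obstacle is therefore the identification \(U\times_X\Spec R_i\simeq U_i\) from cartesianness of the diagram. I would try first to prove it by descent: cartesian natural transformations over a colimit are pulled back from their colimit. This holds in \(\Stk\) as a sheaf topos-style category by universality and effectivity of colimits for cartesian diagrams. Failing that, I would fall back on the description of \(|\eta|\) in \cref{lem:colim_open_embeddings_to_affines} combined with \cref{lem:open_immersion_basechange_to_affines}, which would require knowing a priori that \(\eta\) is a monomorphism.
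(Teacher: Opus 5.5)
Your argument is correct and matches the route the paper intends: the corollary is placed right after \cref{lem:open_immersions_local_finitary}, and it follows by pulling \(\eta\) back along the effective epimorphism \(\coprod_i\Spec R_i\to X\) of \cref{lem:colim_coproduct_effective_epi}, identifying that basechange with \(\coprod_i\eta_i\), and then applying finitarity and locality of open immersions. Justify the identification \(U\times_X\Spec R_i\simeq U_i\) by descent for cartesian transformations in \(\Stk\), as you propose first and as the paper does elsewhere; your fallback via the point-set description in \cref{lem:colim_open_embeddings_to_affines} would be circular, since it presupposes what you are proving.
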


\begin{remark}\label{rmk:open_substacks_exist}
    This also shows that for any stack \(X\) and open \(|U|\subset|X|\) there is an open immersion with image \(|U|\).
    Said differently, \(|-|:\Stk\to\Top\) admits cartesian lifts of open embeddings.
\end{remark}


\section{Lifting operadic structures across monomorphisms}
\label{sec:lifting_monos}

Throughout, fix a fibration of \(\infty\)-operads \(q\colon \mathcal{C}^\otimes \to \mathcal{B}^\otimes\), that is, a map of \(\infty\)-operads which is a categorical fibration \cite[def.~2.1.2.10]{Lurie_HA}.
The reader may keep in mind the case \(\mathcal{B}^\otimes = \mathrm{Comm}^\otimes = \Fin_*\) with \(\mathcal{C}^\otimes\) a symmetric monoidal \(\infty\)-category, which recovers the setting of interest; in this case, \(q\) is even a coCartesian fibration of \(\infty\)-operads.
For \(b \in \mathcal{B}^\otimes\) we write \(\mathcal{C}_b = \mathcal{C}^\otimes_b\) for the fibre, and we abbreviate \(\mathcal{C} = \mathcal{C}_{\langle 1\rangle}\).

We will use repeatedly the following consequence of \cite[prop.~2.1.2.22]{Lurie_HA}: for every object \(X \in \mathcal{C}^\otimes\) and every inert morphism \(\beta\colon q(X) \to Y\) of \(\mathcal{B}^\otimes\), there is an inert morphism \(\bar\beta\colon X \to Y'\) of \(\mathcal{C}^\otimes\) with \(q(\bar\beta) = \beta\), and the inert morphisms of \(\mathcal{C}^\otimes\) are exactly the \(q\)-coCartesian morphisms whose image in \(\mathcal{B}^\otimes\) is inert.
In particular \(q\) admits \(q\)-coCartesian lifts of inert morphisms of \(\mathcal{B}^\otimes\), even though it need not be a coCartesian fibration.

The application we have in mind is the following.
Let \(\mathcal{A}\) be a monoidal \(\infty\)-category and \(\mathcal{M}\) an \(\infty\)-category left-tensored over \(\mathcal{A}\), encoded by a coCartesian fibration of \(\infty\)-operads \(\mathcal{C}^\otimes \to \mathcal{LM}^\otimes\) \cite[def.~4.2.1.19]{Lurie_HA}.
For \(R \in \operatorname{Alg}(\mathcal{A})\) and \(M \in \operatorname{LMod}_R(\mathcal{M})\), we write \(p\colon \operatorname{LMod}_R(\mathcal{M}) \to \mathcal{M}\) for the forgetful functor.

\begin{lemma}\label{lem:lifting_monos_into_modules}
In the situation above, let \(i\colon U \to p(M)\) be a monomorphism in \(\mathcal{M}\).
The space of lifts of \(i\) to a morphism \(U' \to M\) in \(\operatorname{LMod}_R(\mathcal{M})\) is empty or contractible.
It is non-empty if and only if
\[
  R \otimes U \longrightarrow R \otimes p(M) \longrightarrow p(M)
\]
factors through \(i\), where the second map is the action of \(R\) on \(p(M)\).
\end{lemma}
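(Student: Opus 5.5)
The plan is to encode module structures as maps of $\infty$-operads. Write $\operatorname{LMod}_R(\mathcal{M})$ as the fibre over $R$ of $\operatorname{Alg}_{/\mathcal{LM}}(\mathcal{C})\to\operatorname{Alg}(\mathcal{A})$. A morphism $U'\to M$ in it lifting $i$ is then the same as a section of $\mathcal{C}^\otimes\to\mathcal{LM}^\otimes$ over $\mathcal{LM}^\otimes\times\Delta^1$ with three properties: it preserves inerts, it restricts to $M$ over $\{1\}$, it is constant at $R$ on the algebra colour, and its underlying arrow in $\mathcal{M}$ is $i$. I would first prove the general statement announced at the start of the appendix, for the fibration $q$. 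Fix an $\infty$-operad map $A\colon\mathcal{O}^\otimes\to\mathcal{C}^\otimes$ over $\mathcal{B}^\otimes$ and a monomorphism $i\colon U\to A(o)$ in a fibre. The space of operad maps $A'$ equipped with a transformation $A'\to A$ that is $i$ at $o$ and the identity on the other colours is then empty or contractible. It is nonempty exactly when every operation landing in colour $o$ factors through $i$ after precomposition with $i$ on its $o$-inputs. Specialising to $\mathcal{LM}^\otimes$, the only operations with output the module colour are the actions $R^{\otimes n}\otimes M\to M$. By associativity these reduce to the single condition $R\otimes U\to R\otimes p(M)\to p(M)$ factoring through $i$.

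For the contractibility, I would describe such a lift as a functor $\mathcal{O}^\otimes\times\Delta^1\to\mathcal{C}^\otimes$ and build it by a relative right Kan extension argument. The $\{1\}$-end is fixed as $A$. At each active operation $\phi\colon(x_1,\dots,x_n)\to o$, the required value is a morphism into $U$ lying over $\phi$. Its composite with $i$ is prescribed, namely $A(\phi)$ precomposed with $i$ on the $o$-inputs. Because $i$ is a monomorphism, and composition with $i$ on mapping spaces in the fibre is $(-1)$-truncated, the space of such factorisations is empty or contractible. To transport this from the fibre to mapping spaces over $\phi$ in $\mathcal{C}^\otimes$, I would use the $q$-coCartesian lifts of inerts recalled above, which decompose multimorphism spaces into products over the target colours. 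Then all the spaces of choices are $(-1)$-truncated, and coherence data are again $(-1)$-truncated, so they are automatic. Formally, I would show that the restriction map from lifts to the space of underlying arrows is a monomorphism of spaces, by expressing it as a limit of the monomorphisms above.

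For existence, given the factorisation condition, I would define $A'$ on objects by replacing $o$ with $U$. On operations I would use the unique factorisations, and functoriality holds because uniqueness forces compatibility with composition. Inert morphisms are preserved because over inerts the required factorisation is the identity, or a projection followed by $i$, which tautologically factors. Conversely, if a lift exists, applying it to the action operation gives the factorisation.

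The hard step will be the middle paragraph. It must make precise that "operad maps plus natural transformation restricting to $i$" is a space whose fibres are computed by the mapping-space monomorphisms. I would do this via Lurie's description of $\operatorname{Alg}$ as a full subcategory of sections. A mono in the fibre $\mathcal{C}$ gives a mono in $\operatorname{Fun}_{\mathcal{B}^\otimes}(\mathcal{O}^\otimes,\mathcal{C}^\otimes)$ on the relevant subcategory, and full subcategories inherit this. The truncatedness needed is that postcomposition with a monomorphism in a fibre is $(-1)$-truncated on mapping spaces over arbitrary morphisms. This follows from the product decomposition over inerts, and I expect checking it there to be the main technical point.
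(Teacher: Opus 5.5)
Your overall strategy matches the paper's in spirit. You view the lift as a pointwise monic transformation $A'\to A$ of operad maps, use that composing with a monomorphism makes the space of choices $(-1)$-truncated, and reduce the operations into $\mathfrak{m}$ to the single action $R\otimes U\to p(M)$ by associativity. Two steps do not hold up as written.

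\textbf{The truncatedness does not come from the inert decomposition.} The product decomposition over inerts only reduces to multimorphism spaces with unary target. Whether $\operatorname{Mul}(x_1,\dots,x_n;U)\to\operatorname{Mul}(x_1,\dots,x_n;A(o))$ is $(-1)$-truncated for $n\geq 2$ is a separate question. For an arbitrary fibration of operads the answer is no, because multimorphism spaces are not corepresented in the fibre. For a counterexample, take the discrete operad with colours $u,a$ defined as follows:
\begin{itemize}
\item exactly one operation into $a$ from every input profile;
\item the associative operad as the operations among $u$'s;
\item no operations into $u$ from profiles involving $a$.
\end{itemize}
Then $i\colon u\to a$ is monic in the underlying category. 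Yet both the associative structure on $u$ and its opposite lift $i$ to algebra maps into the terminal algebra on $a$, so the space of lifts has at least two components. Your announced general statement, with $i$ only a monomorphism in a fibre, is therefore false; this is why \cref{lem:lifting_monos_into_O_algebras} asks for monomorphisms in $\mathcal{C}^\otimes$. For the present lemma you are rescued because $\mathcal{C}^\otimes\to\mathcal{LM}^\otimes$ is coCartesian. Hence $\operatorname{Mul}(R,\dots,R,U;-)\simeq\Map_{\mathcal{M}}(R^{\otimes n}\otimes U,-)$, and monicity of $i$ in $\mathcal{M}$ suffices (\cref{rmk:mono_notions}). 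That argument, not the inert decomposition, is the one you must give.

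\textbf{The coherence step has no mechanism.} The step you call hard is where the content lies, and the proposal does not supply a way to do it. Defining $A'$ by the unique factorizations and noting that uniqueness forces compatibility with composition only gives data up to homotopy. The claim that ``coherence data are $(-1)$-truncated, hence automatic'' needs a theorem behind it. A right Kan extension from $\mathcal{O}^\otimes\times\{1\}$ just returns $A$ at the $0$-end, so it cannot produce $U$.

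The paper's mechanism works as follows. It packages pointwise monic transformations as maps into the sub-operad $(\mathcal{C}^{\mathrm{mono}}_{/\mathcal{B}})^\otimes$ of the relative arrow operad. It proves that $t\circ\iota$ is faithful (\cref{lem:mono_operad_target_faithful}). It then applies the lifting lemma for an essentially surjective functor against a faithful one \cite[lem.~A.6]{Ramzi_2023_ElementaryProofNaturality}. This gives both the empty-or-contractible statement and existence from the morphism-wise factorization condition. Finally it checks that the filler sends inerts to something equivalent to a canonical inert. Your remark that factorizations over inerts exist does not establish this.

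You could also do it by hand. Take the full subcategory of $\mathcal{O}^\otimes\times_{\mathcal{C}^\otimes}\Ar(\mathcal{C}^\otimes)$ on the prescribed monomorphisms. By the first point, its projection to $\mathcal{O}^\otimes$ is essentially surjective and $(-1)$-truncated on mapping spaces, with image the liftable operations. Under your factorization hypothesis it is therefore an equivalence, and an inverse gives $A'$ with contractible choice.
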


We will deduce this from a general lifting criterion, \cref{lem:lifting_monos_into_O_algebras}, valid for an arbitrary fibration of \(\infty\)-operads.
First, we need to establish some technology.

\begin{lemma}\label{lem:fiber_decomposition}
Let \(b \in \mathcal{B}^\otimes_{\langle n\rangle}\), and choose inert morphisms \(b \to b_i\) with \(b_i \in \mathcal{B}^\otimes_{\langle 1\rangle}\) for \(1 \leq i \leq n\).
Then the inert morphisms of \(\mathcal{C}^\otimes\) lying over \(b \to b_i\) induce an equivalence
\[
  \mathcal{C}_b \xrightarrow{\ \sim\ } \prod_{i=1}^{n} \mathcal{C}_{b_i}.
\]
\end{lemma}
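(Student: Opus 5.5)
We reduce the statement to the corresponding fact for the base operad \(\mathcal{B}^\otimes\), using that inert morphisms of \(\mathcal{C}^\otimes\) are exactly the \(q\)-coCartesian lifts of inert morphisms of \(\mathcal{B}^\otimes\). Since \(q\) is a map of \(\infty\)-operads, it is in particular an inner fibration over \(\Fin_*\). Both \(\mathcal{C}^\otimes\) and \(\mathcal{B}^\otimes\) are \(\infty\)-operads, so for \(X\) over \(\langle n\rangle\) the inert lifts \(X\to X_i\) of \(\rho^i\) exhibit \(X\) as a \(p\)-limit. Consequently, for every \(Y\in\mathcal{C}^\otimes\) with image \(\langle m\rangle\), we have an equivalence
\[\Map^{f}_{\mathcal{C}^\otimes}(Y,X)\simeq \prod_i\Map^{\rho^i\circ f}_{\mathcal{C}^\otimes}(Y,X_i)\]
for every \(f:\langle m\rangle\to\langle n\rangle\), and similarly in \(\mathcal{B}^\otimes\).

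The first step is to pass to fibres over \(\mathcal{B}^\otimes\). For \(X\in\mathcal{C}_b\) and \(Y\in\mathcal{C}_{b'}\) and a morphism \(g:b'\to b\), write \(\Map^g(Y,X)\) for the fibre of \(\Map_{\mathcal{C}^\otimes}(Y,X)\to\Map_{\mathcal{B}^\otimes}(b',b)\) over \(g\). Using the two displayed limit decompositions, together with the fact that the chosen inerts \(b\to b_i\) in \(\mathcal{B}^\otimes\) are the images of the inerts \(X\to X_i\), we take fibres of the \(\mathcal{C}\)-equivalence over the \(\mathcal{B}\)-equivalence. This gives
\[\Map^g(Y,X)\simeq\prod_i\Map^{g_i}(Y,X_i),\]
where \(g_i\) is the composite \(b'\to b\to b_i\). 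Specialising to \(b'=b\) and \(g=\id\) shows that the functor \(\mathcal{C}_b\to\prod_i\mathcal{C}_{b_i}\) is fully faithful. This requires first making the functor precise: we choose the \(q\)-coCartesian lifts functorially, which is possible because coCartesian lifts of a fixed edge form a contractible space. Transport along the inert edge \(b\to b_i\) then gives \(\mathcal{C}_b\to\mathcal{C}_{b_i}\), and \(\Map_{\mathcal{C}_{b_i}}(X_i,X'_i)\simeq\Map^{b\to b_i}(X,X'_i)\) by coCartesianness.

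The second step is essential surjectivity, which I expect to be the main obstacle. Given objects \(Y_i\in\mathcal{C}_{b_i}\), we need some \(X\in\mathcal{C}_b\) mapping to them. Since \(\mathcal{C}^\otimes\) is an \(\infty\)-operad, its fibre over \(\langle n\rangle\) is equivalent to \(\mathcal{C}^n\), so there is \(X'\) over \(\langle n\rangle\) whose inert components are the \(Y_i\). The difficulty is that \(q(X')\) need not equal \(b\). We do know that \(q(X')\) and \(b\) have the same images \(b_i\) under the inerts, because the inerts \(X'\to Y_i\) lie over inerts \(q(X')\to b_i\) with targets \(q(Y_i)=b_i\). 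Since \(\mathcal{B}^\otimes\) is an operad, \(q(X')\simeq b\) via an equivalence over \(\id_{\langle n\rangle}\) compatible with these inerts. We then use that \(q\) is a categorical fibration to lift this equivalence to an equivalence \(X'\simeq X\) with \(q(X)=b\) on the nose. Composing the inert components with this equivalence identifies the components of \(X\) with the \(Y_i\), up to equivalence in \(\mathcal{C}_{b_i}\).

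This finishes the proof: the functor \(\mathcal{C}_b\to\prod_i\mathcal{C}_{b_i}\) is fully faithful and essentially surjective, hence an equivalence.
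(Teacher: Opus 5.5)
Your argument is correct. It uses the same ingredients as the paper but packages them differently.

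The paper forms the square of Segal equivalences \(\mathcal{C}^\otimes_{\langle n\rangle}\simeq\mathcal{C}^n\) and \(\mathcal{B}^\otimes_{\langle n\rangle}\simeq\mathcal{B}^n\), joined by \(q_{\langle n\rangle}\) and \((q_{\langle 1\rangle})^n\); the square commutes because \(q\) preserves inerts. It then passes to fibres over \(b\) and \((b_1,\dots,b_n)\). Since the horizontal maps are categorical fibrations, these strict fibres are homotopy fibres, so full faithfulness and essential surjectivity come out at once.

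You instead check the two halves by hand:
\begin{enumerate}
\item full faithfulness from the mapping-space Segal condition in \(\mathcal{C}^\otimes\) and \(\mathcal{B}^\otimes\), fibred over \(\operatorname{Map}_{\mathcal{B}^\otimes}(b',b)\);
\item essential surjectivity from the object-level Segal condition in \(\mathcal{C}^\otimes\), the Segal equivalence in \(\mathcal{B}^\otimes\), and equivalence lifting along the categorical fibration \(q\).
\end{enumerate}
Your version is longer but more explicit. It actually constructs the comparison functor: the precise form of ``choosing lifts functorially'' is that the base change of \(q\) along the edge \(b\to b_i\colon\Delta^1\to\mathcal{B}^\otimes\) is a coCartesian fibration over \(\Delta^1\). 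It also shows exactly where the categorical-fibration hypothesis is used, which the paper only indicates.

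Two minor points of precision:
\begin{enumerate}
\item \(q\) is an inner fibration because it is a categorical fibration, not because it is a map of operads.
\item The composite \(X\to X'\to Y_i\) lies over a morphism only homotopic to \(b\to b_i\). You should either correct it by an inner horn lift, or compare it with a chosen \(q\)-coCartesian lift \(X\to X_i\) to obtain the equivalence \(X_i\simeq Y_i\) in \(\mathcal{C}_{b_i}\). Both fixes are routine.
\end{enumerate}
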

\begin{proof}
This is the equivalence established in the course of proving \cite[prop.~2.1.2.12]{Lurie_HA}, in the implication (a)\(\Rightarrow\)(b).
That argument forms the square of Segal equivalences
\[
\begin{tikzcd}
\mathcal{C}^\otimes_{\langle n\rangle} \arrow[r, "q_{\langle n\rangle}"] \arrow[d, "\simeq"'] &
\mathcal{B}^\otimes_{\langle n\rangle} \arrow[d, "\simeq"] \\
\mathcal{C}^n \arrow[r, "(q_{\langle 1\rangle})^n"'] & \mathcal{B}^n,
\end{tikzcd}
\]
which commutes because \(q\) preserves inert morphisms, and passes to the fibres of the horizontal maps over \(b\) and its image \((b_1,\dots,b_n)\) to obtain \(\mathcal{C}_b \simeq \prod_i \mathcal{C}_{b_i}\).
Of the coCartesian hypothesis on \(q\) it uses only that \(q\) admits coCartesian lifts of inert morphisms in \(\mathcal{B}^\otimes\), and that the fibres of the horizontal maps compute their homotopy fibres; the latter holds because \(q_{\langle n\rangle}\) and \((q_{\langle 1\rangle})^n\) are base changes of the categorical fibration \(q\), so that these strict pullbacks are homotopy pullbacks.
\end{proof}

\begin{definition}\label{def:relative_arrow_operad}
Let \(\Ar_{\mathcal{B}}(\mathcal{C})^\otimes\) denote the pullback
\[
  \Ar_{\mathcal{B}}(\mathcal{C})^\otimes
  := \Ar(\mathcal{C}^\otimes) \times_{\Ar(\mathcal{B}^\otimes)} \mathcal{B}^\otimes,
\]
formed along the diagonal \(\delta\colon \mathcal{B}^\otimes \to \Ar(\mathcal{B}^\otimes)\), with structure map \(r\) to \(\operatorname{Fin}_*\) given by the composite through \(\mathcal{B}^\otimes\).
Its objects lying over \(b\) are the morphisms of the fibre \(\mathcal{C}_b\).
We write \(s,\, t\colon \Ar_{\mathcal{B}}(\mathcal{C})^\otimes \to \mathcal{C}^\otimes\) for evaluation at the source and at the target; both are functors over \(\mathcal{B}^\otimes\).
Given \(\phi \in \Ar_{\mathcal{B}}(\mathcal{C})^\otimes\) over \(b \in \mathcal{B}^\otimes_{\langle n\rangle}\), its \emph{components} \(\phi_i\) (\(1 \leq i \leq n\)) are the images of \(\phi\) under the equivalence of \cref{lem:fiber_decomposition} for the arrow categories, \(\Ar(\mathcal{C}_b) \simeq \prod_i \Ar(\mathcal{C}_{b_i})\).
We define \((\mathcal{C}^{\mathrm{mono}}_{/\mathcal{B}})^\otimes \subseteq \Ar_{\mathcal{B}}(\mathcal{C})^\otimes\) to be the full subcategory spanned by those \(\phi\) all of whose components \(\phi_i\) are monomorphisms in the \(\infty\)-category \(\mathcal{C}^\otimes\).
\end{definition}

\begin{lemma}\label{lem:relative_arrow_mapping_spaces}
The projection \(\Ar_{\mathcal{B}}(\mathcal{C})^\otimes \to \Ar(\mathcal{C}^\otimes)\) is fully faithful, with essential image the arrows of \(\mathcal{C}^\otimes\) lying over equivalences of \(\mathcal{B}^\otimes\).
In particular, for objects \(\Phi, \Psi\) there is a natural equivalence
\[
  \operatorname{Map}_{\Ar_{\mathcal{B}}(\mathcal{C})^\otimes}(\Phi, \Psi)
  \simeq
  \operatorname{Map}_{\mathcal{C}^\otimes}(s\Phi, s\Psi)
  \times_{\operatorname{Map}_{\mathcal{C}^\otimes}(s\Phi, t\Psi)}
  \operatorname{Map}_{\mathcal{C}^\otimes}(t\Phi, t\Psi),
\]
under which \(t\) is the projection to the last factor.
\end{lemma}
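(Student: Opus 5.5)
The plan is to compute mapping spaces in the pullback \(\Ar_{\mathcal{B}}(\mathcal{C})^\otimes = \Ar(\mathcal{C}^\otimes)\times_{\Ar(\mathcal{B}^\otimes)}\mathcal{B}^\otimes\) directly. Since \(q\) is a categorical fibration, the induced map \(\Ar(\mathcal{C}^\otimes)\to\Ar(\mathcal{B}^\otimes)\) is again a categorical fibration; this is exponentiation along \(\Delta^1\) in the Joyal model structure. Hence the strict pullback is a homotopy pullback, and mapping spaces in it are fibre products of mapping spaces. Concretely, for \(\Phi=(\phi, b)\) and \(\Psi=(\psi, b')\) we get
\[
\operatorname{Map}(\Phi,\Psi)\simeq \operatorname{Map}_{\Ar(\mathcal{C}^\otimes)}(\phi,\psi)\times_{\operatorname{Map}_{\Ar(\mathcal{B}^\otimes)}(\mathrm{id}_b,\mathrm{id}_{b'})}\operatorname{Map}_{\mathcal{B}^\otimes}(b,b').
\]

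The key observation is that the diagonal \(\delta\colon\mathcal{B}^\otimes\to\Ar(\mathcal{B}^\otimes)\) is fully faithful. Indeed, it is left (and right) adjoint to evaluation at the source/target. Equivalently, \(\operatorname{Map}_{\Ar}(\mathrm{id}_b,\mathrm{id}_{b'})\) is the space of commuting squares with both vertical maps identities, which is \(\operatorname{Map}(b,b')\) via the standard formula
\[
\operatorname{Map}_{\Ar(\mathcal{D})}(f,g)\simeq \operatorname{Map}(x,x')\times_{\operatorname{Map}(x,y')}\operatorname{Map}(y,y'),
\]
applied with \(f=\mathrm{id}_b\) and \(g=\mathrm{id}_{b'}\), which collapses to \(\operatorname{Map}(b,b')\). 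Since fully faithful functors are stable under homotopy pullback, the projection \(\Ar_{\mathcal{B}}(\mathcal{C})^\otimes\to\Ar(\mathcal{C}^\otimes)\) is fully faithful.

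Its essential image consists of the arrows \(\phi\) of \(\mathcal{C}^\otimes\) whose image \(q\phi\) lies in the essential image of \(\delta\), namely the equivalences of \(\mathcal{B}^\otimes\). Here we use again that the map to \(\Ar(\mathcal{B}^\otimes)\) is an isofibration, so an object equivalent to one in the image can be strictified to lie strictly over an identity.

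The displayed formula then follows by applying the standard mapping-space formula for arrow categories to \(\mathcal{D}=\mathcal{C}^\otimes\), with \(x=s\Phi\), \(y=t\Phi\), \(x'=s\Psi\), \(y'=t\Psi\). Naturality and the identification of \(t\) with the last projection are immediate from that formula.

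The only step needing care is the homotopy-invariance input, namely that the strict pullback computes the \(\infty\)-categorical one. I would handle it by citing that \(\Fun(\Delta^1,-)\) preserves categorical fibrations (HTT, cor.~2.2.5.4 / Joyal model structure is cartesian), so no further obstacle is expected.
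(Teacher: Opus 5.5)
Your proof is correct and follows essentially the same route as the paper: full faithfulness of the diagonal $\delta$ (the paper gets it from the adjunction $\mathrm{ev}_1\dashv\delta$ with invertible counit, you from the equivalent direct mapping-space computation), the pullback being a homotopy pullback because $\Ar(q)$ is a categorical fibration, stability of full faithfulness under pullback, and the standard mapping-space formula for arrow categories. You also spell out a step the paper leaves implicit: using the isofibration property to strictify an object lying over an equivalence so that it lies strictly over an identity.
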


\begin{proof}
Evaluation at \(1 \in \Delta^1\) is left adjoint to \(\delta\) with invertible counit, so \(\delta\) is fully faithful; since \(\Ar(q)\) is a categorical fibration, the defining pullback is a homotopy pullback, and the first assertion follows, the essential image consisting of those arrows whose image under \(\Ar(q)\) is equivalent to a degenerate one.
The displayed formula is the standard description of mapping spaces in an arrow category.
\end{proof}

\begin{remark}\label{rmk:mono_notions}
If \(q\) is a coCartesian fibration, i.e.\ \(\mathcal{C}^\otimes\) is a \(\mathcal{B}\)-monoidal \(\infty\)-category, then a morphism of a fibre \(\mathcal{C}_{b_0}\), \(b_0 \in \mathcal{B}^\otimes_{\langle 1\rangle}\), is a monomorphism in \(\mathcal{C}^\otimes\) as soon as it is one in the fibre, since every multimorphism space of \(\mathcal{C}^\otimes\) with target in \(\mathcal{C}_{b_0}\) is corepresented in \(\mathcal{C}_{b_0}\).
For \(\mathcal{B}^\otimes = \mathrm{Comm}^\otimes\) and \(\mathcal{C}\) symmetric monoidal this recovers the naive notion of a pointwise list of monomorphisms.
In general the fibrewise notion is genuinely too weak, since multimorphism spaces of an \(\infty\)-operad are not corepresented; correspondingly, \((\mathcal{C}^{\mathrm{mono}}_{/\mathcal{B}})^\otimes\) is only an operad and not a \(\mathcal{B}\)-monoidal subcategory.
\end{remark}

\begin{lemma}\label{lem:mono_operad_is_operad}
The functor \(\mathrm{pr}_2\) exhibits \(\Ar_{\mathcal{B}}(\mathcal{C})^\otimes\) and \((\mathcal{C}^{\mathrm{mono}}_{/\mathcal{B}})^\otimes\) as \(\infty\)-operads, fibered over \(\mathcal{B}^\otimes\).
A morphism is inert if and only if its images under \(s\) and \(t\) are inert in \(\mathcal{C}^\otimes\).
Moreover, \(s\), \(t\), and the inclusion \(\iota:(\mathcal{C}^{\mathrm{mono}}_{/\mathcal{B}})^\otimes\subset \Ar_{\mathcal{B}}(\mathcal{C})^\otimes\) are maps of \(\infty\)-operads over \(\mathcal{B}^\otimes\).
\end{lemma}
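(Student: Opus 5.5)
The plan is to verify the defining conditions of an \(\infty\)-operad from \cite[def.~2.1.1.10]{Lurie_HA} directly for \(r\colon \Ar_{\mathcal{B}}(\mathcal{C})^\otimes\to\Fin_*\), using \cref{lem:relative_arrow_mapping_spaces} to reduce every statement about \(\Ar_{\mathcal{B}}(\mathcal{C})^\otimes\) to statements about sources and targets in \(\mathcal{C}^\otimes\). The mono subcategory will then follow by checking that it is closed under the relevant lifts.

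\textbf{Inert lifts.} First I would construct inert lifts. Given \(\phi\colon X\to Y\) in \(\mathcal{C}_b\) and an inert \(\beta\colon b\to b'\) in \(\mathcal{B}^\otimes\), take \(q\)-coCartesian lifts \(\bar\beta_X\colon X\to X'\) and \(\bar\beta_Y\colon Y\to Y'\) of \(\beta\). The coCartesian property of \(\bar\beta_X\), applied to the composite \(\bar\beta_Y\circ\phi\), which lies over \(\beta\), yields an essentially unique \(\phi'\colon X'\to Y'\) over \(\id_{b'}\) together with a commuting square. This square is a morphism \(\phi\to\phi'\) in \(\Ar_{\mathcal{B}}(\mathcal{C})^\otimes\) lying over \(\beta\).

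I would then check that this square is \(r\)-coCartesian. By \cref{lem:relative_arrow_mapping_spaces}, mapping spaces out of \(\phi'\) are fibre products of mapping spaces out of \(X'\) and \(Y'\) in \(\mathcal{C}^\otimes\). The coCartesian conditions for \(\bar\beta_X\) and \(\bar\beta_Y\) then assemble, via fibre products of pullback squares, into the coCartesian condition for the square. This argument also proves the characterization: a morphism is inert iff \(s\) and \(t\) of it are inert.

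For the converse direction of the characterization, let \(\alpha\) be a morphism whose source and target components are inert. It agrees with the lift just constructed up to equivalence, because lifts of \(\beta\) with prescribed source are unique. Conversely, if \(\alpha\) is \(r\)-coCartesian, then comparing with the constructed lift shows that \(s\alpha\) and \(t\alpha\) are inert.

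\textbf{Segal conditions.} For the fibre equivalence \(\Ar_{\mathcal{B}}(\mathcal{C})^\otimes_{\langle n\rangle}\simeq(\Ar_{\mathcal{B}}(\mathcal{C})^\otimes_{\langle1\rangle})^n\), I would work over each \(b\). Since the fibre over \(b\) is \(\Ar(\mathcal{C}_b)\), \cref{lem:fiber_decomposition} together with \(\mathcal{B}^\otimes_{\langle n\rangle}\simeq(\mathcal{B}^\otimes_{\langle1\rangle})^n\) gives the equivalence. The multimorphism condition, that \(\Map\) over an active map is the product of the components after composing with inerts, again reduces through the fibre-product formula to the corresponding condition for \(\mathcal{C}^\otimes\), applied separately to sources and targets.

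From this construction \(s\) and \(t\) preserve inerts, so they are operad maps over \(\mathcal{B}^\otimes\). The inclusion \(\iota\) preserves inerts as soon as the mono subcategory is a suboperad, which is the next step.

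\textbf{Mono subcategory.} A full subcategory of an \(\infty\)-operad, closed under the Segal decomposition, in which the inert lifts of its objects stay in it, is again an \(\infty\)-operad whose inerts are the ambient ones. The definition via components is exactly closure under the decomposition. The remaining point is that the components \(\phi'\) of an inert lift of \(\phi\) are among the components of \(\phi\), which holds because inert lifts compose and the components are defined by inert lifts to \(\langle1\rangle\).

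\textbf{Main obstacle.} I expect the hardest step to be the \(r\)-coCartesian verification. It requires care that the relevant squares of mapping spaces are genuinely homotopy pullbacks. Here I would use that \(\Ar(q)\) is a categorical fibration, as in \cref{lem:relative_arrow_mapping_spaces}, and work with relative mapping spaces over \(\mathcal{B}^\otimes\) rather than absolute ones.
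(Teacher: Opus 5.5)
Your proposal is correct and is essentially the paper's proof. You verify Lurie's axioms directly, build inert lifts from $q$-coCartesian lifts of source and target, check coCartesianness and the Segal condition through the fibre-product formula of \cref{lem:relative_arrow_mapping_spaces}, and obtain the mono suboperad from the fact that the components of an inert pushforward are among the original components. You are slightly more explicit than the paper on the converse half of the inert characterization. One caveat: your fibre-by-fibre comparison over each $b$ only gives essential surjectivity of $\Ar_{\mathcal{B}}(\mathcal{C})^\otimes_{\langle n\rangle}\to(\Ar_{\mathcal{B}}(\mathcal{C})^\otimes_{\langle 1\rangle})^n$, which is Lurie's condition (3); full faithfulness instead comes from your multimorphism condition at $f=\id$, so nothing is lost.
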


\begin{proof}
We verify the conditions of \cite[def.~2.1.1.10]{Lurie_HA} for \(\Ar_{\mathcal{B}}(\mathcal{C})^\otimes\).
For (1), let \(\Phi\) lie over \(b\) and let \(\alpha\) be an inert morphism of \(\operatorname{Fin}_*\) under \(r(\Phi)\).
Choosing an inert morphism \(\beta\colon b \to b'\) of \(\mathcal{B}^\otimes\) over \(\alpha\), the inert-lift property recalled above provides inert morphisms \(s\Phi \to c\) and \(t\Phi \to c'\) of \(\mathcal{C}^\otimes\) over \(\beta\), which are moreover \(q\)-coCartesian.
Factoring \(s\Phi\to t\Phi \to c'\) through the \(q\)-coCartesian morphism \(s\Phi \to c\) produces an arrow \(\Phi'\colon c \to c'\) of the fibre \(\mathcal{C}_{b'}\), and the pair \((s\Phi \to c,\, t\Phi \to c')\) defines an edge \(\phi:\Phi \to \Phi'\) of \(\Ar_{\mathcal{B}}(\mathcal{C})^\otimes\) over \(\beta\) whose images under \(s\) and \(t\) are inert.
The mapping space formula of \cref{lem:relative_arrow_mapping_spaces} then shows that \(\phi\) is a \(\mathrm{pr}_2:\Ar_\mathcal{B}(\mathcal{C})^\otimes\to \mathcal{B}^\otimes\)-coCartesian lift of \(b\), so an \(r\)-coCartesian lift of \(\alpha\).

For (2), the mapping space formula of \cref{lem:relative_arrow_mapping_spaces} and the fibre decomposition reduce the Segal condition to that of \(\mathcal{C}^\otimes\), computed with the inert morphisms just constructed.

For (3), the existence of tuples follows from condition (3) for \(\mathcal{B}^\otimes\) together with \cref{lem:fiber_decomposition}.

Since the components of an inert pushforward of \(\Phi\) are, up to equivalence, among the components of \(\Phi\), the full subcategory \((\mathcal{C}^{\mathrm{mono}}_{/\mathcal{B}})^\otimes\) contains the targets of the canonical inert morphisms and inherits the operad structure.
The assertions about \(\iota, s, t\) are immediate from the pointwise description of inert morphisms.
\end{proof}

\begin{lemma}\label{lem:mono_operad_target_faithful}
The composite \((\mathcal{C}^{\mathrm{mono}}_{/\mathcal{B}})^\otimes \xrightarrow{\ \iota\ } \Ar_{\mathcal{B}}(\mathcal{C})^\otimes \xrightarrow{\ t\ } \mathcal{C}^\otimes\) is a faithful functor, i.e.\ induces inclusions of connected components on mapping spaces.
\end{lemma}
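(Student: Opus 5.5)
The proof works directly with the mapping space formula of \cref{lem:relative_arrow_mapping_spaces}. Since \(\iota\) is the inclusion of a full subcategory, mapping spaces in \((\mathcal{C}^{\mathrm{mono}}_{/\mathcal{B}})^\otimes\) agree with those in \(\Ar_{\mathcal{B}}(\mathcal{C})^\otimes\). For objects \(\Phi,\Psi\) of \((\mathcal{C}^{\mathrm{mono}}_{/\mathcal{B}})^\otimes\), the map induced by \(t\circ\iota\) is therefore the projection
\[
  \operatorname{Map}_{\mathcal{C}^\otimes}(s\Phi, s\Psi)
  \times_{\operatorname{Map}_{\mathcal{C}^\otimes}(s\Phi, t\Psi)}
  \operatorname{Map}_{\mathcal{C}^\otimes}(t\Phi, t\Psi)
  \longrightarrow \operatorname{Map}_{\mathcal{C}^\otimes}(t\Phi, t\Psi).
\]
This projection is a base change of the map \(\Psi_*\colon \operatorname{Map}_{\mathcal{C}^\otimes}(s\Phi, s\Psi)\to \operatorname{Map}_{\mathcal{C}^\otimes}(s\Phi, t\Psi)\) given by postcomposition with \(\Psi\). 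A \((-1)\)-truncated map of spaces is exactly an inclusion of connected components, and this property is stable under base change. So it suffices to show that \(\Psi_*\) is \((-1)\)-truncated for every \(\Psi\) in the mono operad and every object \(X=s\Phi\) of \(\mathcal{C}^\otimes\).

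To do this, suppose \(\Psi\) lies over \(b\in\mathcal{B}^\otimes_{\langle n\rangle}\), and choose inert morphisms \(b\to b_i\) to \(\mathcal{B}^\otimes_{\langle1\rangle}\). Lift them to \(q\)-coCartesian inert morphisms \(s\Psi\to (s\Psi)_i\) and \(t\Psi\to(t\Psi)_i\); the components \(\Psi_i\colon (s\Psi)_i\to(t\Psi)_i\) are monomorphisms in \(\mathcal{C}^\otimes\) by assumption. By the Segal condition for \(\mathcal{C}^\otimes\) (\cite[def.~2.1.1.10]{Lurie_HA}, with \(\mathcal{C}^\otimes\) an \(\infty\)-operad), \(s\Psi\) and \(t\Psi\) are \(p\)-limits of their inert components over \(\operatorname{Fin}_*\). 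Concretely,
\[
  \operatorname{Map}_{\mathcal{C}^\otimes}(X,s\Psi)\simeq \operatorname{Map}_{\operatorname{Fin}_*}(p X,\langle n\rangle)\times_{\prod_i\operatorname{Map}_{\operatorname{Fin}_*}(pX,\langle1\rangle)}\prod_i \operatorname{Map}_{\mathcal{C}^\otimes}(X,(s\Psi)_i),
\]
and similarly for \(t\Psi\). Here \(p\colon\mathcal{C}^\otimes\to\operatorname{Fin}_*\) is the structure map.

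Under these identifications, \(\Psi_*\) is induced by the identity on the \(\operatorname{Fin}_*\)-factors and by \(\prod_i(\Psi_i)_*\) on the components. For this we need the composites \(s\Psi\to t\Psi\to (t\Psi)_i\) and \(s\Psi\to(s\Psi)_i\xrightarrow{\Psi_i}(t\Psi)_i\) to agree. That compatibility is precisely how the components are defined in \cref{def:relative_arrow_operad}, via the inert pushforward constructed in \cref{lem:mono_operad_is_operad}. Each \((\Psi_i)_*\) is \((-1)\)-truncated because \(\Psi_i\) is a monomorphism in \(\mathcal{C}^\otimes\). Products of \((-1)\)-truncated maps are \((-1)\)-truncated, and so is the induced map on the fibre products over the common \(\operatorname{Fin}_*\)-data, since a map of cospans that is an equivalence on two corners and \((-1)\)-truncated on the third induces a \((-1)\)-truncated map on pullbacks. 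Hence \(\Psi_*\) is \((-1)\)-truncated, and the claim follows.

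The main obstacle is the bookkeeping in this last step. One has to check that the Segal-type decomposition of \(\operatorname{Map}(X,-)\) is natural in the target along \(\Psi\), that is, that \(\Psi\) really is the \(p\)-limit of its components \(\Psi_i\) as a morphism, and not merely that its source and target decompose separately. This requires care because \(q\) is only a fibration, not a coCartesian fibration. I would handle it using the \(q\)-coCartesian property of the chosen inert lifts, which produces the components uniquely, as in the proof of \cref{lem:mono_operad_is_operad}.
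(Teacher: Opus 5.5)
Your proof is correct. It rests on the same three ingredients as the paper's proof: the mapping-space formula of \cref{lem:relative_arrow_mapping_spaces}, the Segal decomposition along the inert lifts \(s\Psi\to(s\Psi)_i\) and \(t\Psi\to(t\Psi)_i\), and the fact that each component \(\Psi_i\) is a monomorphism in \(\mathcal{C}^\otimes\). The only difference is the order: the paper first splits the arrow mapping space over a fixed \(\bar f\) into a product over components and then base-changes factorwise, whereas you base-change first and then split \(\Psi_*\) using only the Segal condition of \(\mathcal{C}^\otimes\). This has the small bonus of showing that every object \(\Psi\) of \((\mathcal{C}^{\mathrm{mono}}_{/\mathcal{B}})^\otimes\) is itself a monomorphism in \(\mathcal{C}^\otimes\).
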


\begin{proof}
Let \(\Phi, \Psi \in (\mathcal{C}^{\mathrm{mono}}_{/\mathcal{B}})^\otimes\) with \(\Psi\) over \(b' \in \mathcal{B}^\otimes_{\langle n\rangle}\), and fix a morphism \(\bar f\colon r(\Phi) \to b'\) of \(\mathcal{B}^\otimes\).
By \cref{lem:fiber_decomposition} and the mapping space formula of \cref{lem:relative_arrow_mapping_spaces}, the component of \(\operatorname{Map}_{(\mathcal{C}^{\mathrm{mono}}_{/\mathcal{B}})^\otimes}(\Phi, \Psi)\) over \(\bar f\) decomposes as a product, over \(i \in \langle n\rangle^\circ\), of arrow mapping spaces into the unary component \(\Psi_i\); and on each factor \(t\) is the base change of postcomposition with \(\Psi_i\),
\[
  \operatorname{Map}_{\mathcal{C}^\otimes}(s\Phi, s\Psi_i)
  \longrightarrow
  \operatorname{Map}_{\mathcal{C}^\otimes}(s\Phi, t\Psi_i),
\]
which is \((-1)\)-truncated since \(\Psi_i\) is a monomorphism in \(\mathcal{C}^\otimes\).
Hence \(t \circ \iota\) is faithful.
\end{proof}

\begin{remark}\label{rmk:only_targets}
As in the symmetric monoidal case, the proof uses only that the \emph{targets} \(\Psi_i\) are monomorphisms; at the unary level it is the computation of \cite[cor.~A.9]{Ramzi_2023_ElementaryProofNaturality} for the arrow category \(\Ar(\mathcal{C})\) with evaluation at the target.
\end{remark}

\begin{lemma}\label{lem:lifting_monos_into_O_algebras}
Let \(f\colon \mathcal{O}^\otimes \to \mathcal{C}^\otimes\) be a map of \(\infty\)-operads, let \(g\colon K \to \mathcal{O}^\otimes_{\langle 1\rangle}\) be an essentially surjective functor, let \(h\colon K \to (\mathcal{C}^{\mathrm{mono}}_{/\mathcal{B}})^\otimes_{\langle 1\rangle}\) be any functor, and fix an equivalence between \(f \circ g\) and \(t \circ \iota \circ h\).
Regard \(\mathcal{O}^\otimes\) as an operad over \(\mathcal{B}^\otimes\) via \(q \circ f\).
Then the space of diagonal fillers
\[
\begin{tikzcd}
K \arrow[r, "h"] \arrow[d, "g"'] &
(\mathcal{C}^{\mathrm{mono}}_{/\mathcal{B}})^\otimes \arrow[d, "t\circ\iota"] \\
\mathcal{O}^\otimes \arrow[r, "f"'] \arrow[ur, dashed] & \mathcal{C}^\otimes
\end{tikzcd}
\]
where the dotted arrow is a map of \(\infty\)-operads over \(\mathcal{B}^\otimes\) is either empty or contractible.
It is non-empty if and only if the following condition is fulfilled:

For every \(m \geq 0\), every tuple \((k_1,\dots,k_m)\) of objects of \(K\), every object \(k'\) of \(K\), and every active morphism \(o\colon (g(k_1),\dots,g(k_m)) \to g(k')\) in \(\mathcal{O}^\otimes\), the multimorphism
  \[
    (sh(k_1),\ldots,sh(k_n)) \xrightarrow{ (h(k_\bullet))} (th(k_1),\ldots,th(k_n))\xrightarrow{\ f(o)\ } th(k')
  \]
  of \(\mathcal{C}^\otimes\) factors through the monomorphism \(h(k')\colon sh(k') \to th(k')\).

\end{lemma}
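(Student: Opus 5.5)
The plan is to build the filler $F\colon \mathcal{O}^\otimes \to (\mathcal{C}^{\mathrm{mono}}_{/\mathcal{B}})^\otimes$ in two stages. First I reduce the problem to a problem about mapping spaces, using the faithfulness of $t\circ\iota$ (\cref{lem:mono_operad_target_faithful}). Then I handle existence by an explicit construction, checked against that faithfulness.

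\textbf{Uniqueness.} A faithful functor is a monomorphism in $\mathrm{Cat}_\infty$ relative to its image on $1$-morphisms. The square fixes $t\iota F=f$ and $F\circ g=h$. Since $g$ is essentially surjective, the objects of $F$ are determined up to contractible choice. On mapping spaces $F$ must factor $f$ through the $(-1)$-truncated maps $(t\iota)_*$, which again gives a contractible or empty choice. I will formalize this as follows. Let $\mathcal{D}\subseteq \mathcal{C}^\otimes\times_{\mathcal{C}^\otimes}(\mathcal{C}^{\mathrm{mono}}_{/\mathcal{B}})^\otimes$ be the relevant pullback along $f$. The space of fillers is then the space of sections of the pullback $\mathcal{O}^\otimes\times_{\mathcal{C}^\otimes}(\mathcal{C}^{\mathrm{mono}}_{/\mathcal{B}})^\otimes\to\mathcal{O}^\otimes$ extending $(g,h)$. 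This pullback functor is faithful, and by the choice of $h$ it is essentially surjective on the image of $K$, hence on all objects. A faithful functor with a prescribed lift on an essentially surjective $K$ has a space of sections that is $(-1)$-truncated. Such a section exists precisely when every morphism of $\mathcal{O}^\otimes$ lies in the image subcategory.

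\textbf{Existence.} I reduce the condition to active multimorphisms using the factorization system (inert, active) on $\mathcal{O}^\otimes$. The Segal decomposition \cref{lem:fiber_decomposition} reduces arbitrary objects to tuples $(g(k_1),\dots,g(k_m))$. By \cref{lem:mono_operad_is_operad}, inert morphisms lift automatically, since inert pushforwards of objects of $(\mathcal{C}^{\mathrm{mono}}_{/\mathcal{B}})^\otimes$ remain there. A morphism into a multi-object target decomposes componentwise into unary-target multimorphisms. For an active $o$ with unary target, the mapping space formula of \cref{lem:relative_arrow_mapping_spaces} says the following. A lift of $f(o)$ to a morphism $h(k_\bullet)\to h(k')$ in $\Ar_\mathcal{B}(\mathcal{C})^\otimes$ is exactly a factorization of $f(o)\circ(h(k_\bullet))$ through $sh(k')\to th(k')$, which is the stated condition. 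Since every morphism in the image of $(t\iota)_*$ is closed under composition, the subcategory of $\mathcal{O}^\otimes$ spanned by liftable morphisms is a subcategory, and it contains inerts and actives. Hence it is everything. Because the filler is built out of inerts that go to inerts, it is a map of operads over $\mathcal{B}^\otimes$. Necessity of the condition is immediate by applying $s$ to the filler.

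\textbf{Main obstacle.} The hardest step is making rigorous the claim that a faithful functor over an essentially surjective source has $(-1)$-truncated section spaces, with existence detected on morphisms, in the $\infty$-operadic (fibred over $\mathcal{B}^\otimes$) setting. I would first try to identify $(\mathcal{C}^{\mathrm{mono}}_{/\mathcal{B}})^\otimes$, through $t\iota$, with a non-full subcategory of $\mathcal{C}^\otimes$ up to replacing objects. I would then invoke the standard fact that maps into a subcategory inclusion form a $(-1)$-truncated space, with existence checked on objects and morphisms.
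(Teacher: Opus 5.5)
You have the paper's overall architecture: faithfulness of $t\circ\iota$ against an essentially surjective leg gives an empty-or-contractible filler space, existence reduces to liftability of every morphism, and the Segal condition plus the inert--active factorization reduce liftability to active morphisms with unary target, where it is exactly the stated factorization condition.

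The gap is in the uniqueness step. The lifting fact you invoke (faithful right leg, essentially surjective left leg) needs the \emph{prescribed} part of the section to be essentially surjective onto all of $\mathcal{O}^\otimes$. But $g$ only hits $\mathcal{O}^\otimes_{\langle 1\rangle}$. That the projection $\mathcal{O}^\otimes\times_{\mathcal{C}^\otimes}(\mathcal{C}^{\mathrm{mono}}_{/\mathcal{B}})^\otimes\to\mathcal{O}^\otimes$ is essentially surjective only tells you its fibres are nonempty, which bears on existence, not uniqueness. Over $\langle m\rangle$ with $m\geq 2$ nothing in your square pins down the section. Since $t\circ\iota$ is faithful but far from injective on objects (many monomorphisms share a target), sections extending $(g,h)$ are genuinely non-unique. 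For instance, take $\mathcal{O}^\otimes$ the trivial operad on one colour and $K$ a point $k$. Send every object over $\langle n\rangle$, $n\geq 2$, to a tuple of copies of $h(k)\circ j$, where $j$ is a non-invertible monomorphism into $sh(k)$. This is still a section: the inert projections lift via $j$, just not to inert morphisms. For the same reason, the plan in your last paragraph of treating $t\circ\iota$ as a subcategory inclusion cannot work.

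What pins down the values on non-unary objects is that the filler must preserve inerts, and this has to be built into the lifting problem itself. The paper does this by transposing along the free-operad adjunction. Then $g$ and $h$ correspond to operad maps out of $\operatorname{Fr}(K)^\otimes$, the trivial operad on $K$, whose objects are tuples of objects of $K$. Hence $\tilde g\colon\operatorname{Fr}(K)^\otimes\to\mathcal{O}^\otimes$ \emph{is} essentially surjective, and the $\mathrm{Cat}_\infty$-level lemma applies. The operad-map fillers over $\mathcal{B}^\otimes$ then form a union of components of the $\mathrm{Cat}_\infty$-fillers, since promoting a filler to the slice over $\mathcal{B}^\otimes$ is a contractible choice.

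With that in place, your existence argument is essentially the paper's Steps 3 and 4. One small addition: lifting an inert whose target tuple is presented with different objects of $K$ also needs liftability of equivalences $g(k)\simeq g(k')$. These are active with $m=1$, so the hypothesis covers them.
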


\begin{proof}
Write \(\mathcal{S} = \mathcal{B}^\otimes\), regarded as the base of the slices below.
Note that \(t \circ \iota\) is a map of operads over \(\mathcal{S}\) by \cref{lem:mono_operad_is_operad}.

\emph{Step 1: transposition.}
By \cite[rmk.~2.1.4.10]{Lurie_HA} the forgetful functor \(\operatorname{Op}_\infty \to \operatorname{Cat}_\infty\), \(\mathcal{P}^\otimes \mapsto \mathcal{P}^\otimes_{\langle 1\rangle}\), admits a left adjoint \(\operatorname{Fr}(-)^\otimes\).
By \cite[prop.~2.1.4.11]{Lurie_HA} \(\operatorname{Fr}(K)^\otimes\) is the trivial operad on \(K\), so by \cite[rmk.~2.1.1.14]{Lurie_HA} its objects are the finite tuples of objects of \(K\).
Transposing \(g\), \(h\) and the fixed equivalence yields a square in \(\operatorname{Op}_\infty\)
\[
\begin{tikzcd}
\operatorname{Fr}(K)^\otimes \arrow[r, "\tilde h"] \arrow[d, "\tilde g"'] &
(\mathcal{C}^{\mathrm{mono}}_{/\mathcal{B}})^\otimes \arrow[d, "t \circ \iota"] \\
\mathcal{O}^\otimes \arrow[r, "f"'] & \mathcal{C}^\otimes
\end{tikzcd}
\]
whose space of fillers diagonal fillers in \((\operatorname{Op}_\infty)_{/\mathcal{S}}\) is equivalent to the space in question.
All four maps are over \(\mathcal{S}\), and \(\tilde g\) is essentially surjective by by construction.
For a tuple \(k\) and an inert \(\alpha\) we write \(\operatorname{can}_\alpha\colon k \to k|_\alpha\) for the canonical inert morphism of \(\operatorname{Fr}(K)^\otimes\) covering \(\alpha\).

\emph{Step 2: from \((\operatorname{Op}_\infty)_{/\mathcal{S}}\) to \(\operatorname{Cat}_\infty\).}
A functor over \(\mathcal{S}\) between operads is a map of operads if and only if it preserves inert morphisms, a property stable under equivalence; hence the space of fillers in \((\operatorname{Op}_\infty)_{/\mathcal{S}}\) is the union of those components of the space of fillers in \((\operatorname{Cat}_\infty)_{/\mathcal{S}}\) whose underlying functor preserves inert morphisms.
Moreover, the forgetful map from fillers in \((\operatorname{Cat}_\infty)_{/\mathcal{S}}\) to fillers in \(\operatorname{Cat}_\infty\) is an equivalence: promoting a filler and its coherences to the slice over \(\mathcal{S}\) amounts to filling inner horns \(\Lambda^3_2\) in \(\operatorname{Cat}_\infty\), whose space of solution is contractible.
Thus the operad-map fillers of the original square form a union of components of the space of diagonal fillers in \(\operatorname{Cat}_\infty\) of the square above.

\emph{Step 3: the lifting problem in \(\operatorname{Cat}_\infty\).}
Since \(\tilde g\) is essentially surjective and \(t \circ \iota\) is faithful (\cref{lem:mono_operad_target_faithful}), \cite[lem.~A.6]{Ramzi_2023_ElementaryProofNaturality} shows that the space of fillers in \(\operatorname{Cat}_\infty\) is empty or contractible, and is non-empty if and only if for all objects \(k, k'\) of \(\operatorname{Fr}(K)^\otimes\) and all \(o\colon \tilde g(k) \to \tilde g(k')\), the morphism \(f(o)\) lies in the image of \(\operatorname{Map}_{(\mathcal{C}^{\mathrm{mono}}_{/\mathcal{B}})^\otimes}(\tilde h(k), \tilde h(k')) \to \operatorname{Map}_{\mathcal{C}^\otimes}(f\tilde g(k), f\tilde g(k'))\); call such \(o\) \emph{liftable}.
In particular the operad-map filler space is empty or contractible.
Liftability is closed under composition, and every \(\tilde g(w)\) is liftable by commutativity of the square.

For \(o\) with unary target, over an active \(\beta\), the mapping space formula of \cref{lem:relative_arrow_mapping_spaces} identifies a preimage of \(f(o)\) with a factorization as in condition (1); so (1) holds if and only if every active morphism with unary target is liftable.
For general \(o\) over \(\varphi\colon \langle m\rangle \to \langle n\rangle\): if \(n = 0\), both mapping spaces are contractible by the Segal condition \cite[rmk.~2.1.2.6]{Lurie_HA} and \(o\) is liftable; if \(n \geq 1\), computing the Segal decompositions of both sides with the canonical inert lifts \(\tilde h(\operatorname{can}_{\rho^i})\) and their images shows that \(o\) is liftable if and only if each \(\tilde g(\operatorname{can}_{\rho^i}) \circ o\) is, and these have unary target.
Each such factors, by \cite[prop.~2.1.2.4]{Lurie_HA}, as an inert followed by an active morphism with unary target. 
The inert piece is liftable since it is equivalent to \(\tilde{g}\) applied to an inert, the active piece by condition (1), so \(o\) is liftable.

\emph{Step 4: morphism of operads.}
Let \(F\) be the essentially unique filler, promoted over \(\mathcal{S}\).
By \cite[rmk.~2.1.2.9]{Lurie_HA}, \(F\) preserves all inert morphisms if and only if it preserves those over the maps \(\rho^i\colon \langle n\rangle \to \langle 1\rangle\).
Since \(\tilde g\) is essentially surjective and inert morphisms and \(F\)-images are stable under composition with equivalences, it suffices to treat inert \(o\colon \tilde g(k) \to g(k')\) over \(\rho^i\) with both endpoints \(g\)-images.
Such an \(o\) factors as
\[
  \tilde g(k) \xrightarrow{\ \tilde g(\operatorname{can}_{\rho^i})\ } g(k_i) \xrightarrow{\ u\ } g(k'),
\]
where \(u\) is an equivalence of \(\mathcal{O}^\otimes_{\langle 1\rangle}\) comparing the two inert lifts of \(\rho^i\).
Thus, we find that \(F(o)\) is equivalent to the composite
\[\tilde{h}(k)\xrightarrow{\tilde{h}({\mathrm{can}_{\rho^i}})}h(k_i)\simeq F(g(k_i))\xrightarrow{F(u)}F(g(k')),\]
so it is inert itself.
\end{proof}

We record two consequences.
The first is \cref{lem:lifting_monos_into_modules}; the second is the analogous statement for subalgebras.

\begin{proof}[Proof of \cref{lem:lifting_monos_into_modules}]
Take \(\mathcal{B}^\otimes = \mathcal{O}^\otimes = \mathcal{LM}^\otimes\), let \(q\colon \mathcal{C}^\otimes \to \mathcal{LM}^\otimes\) be the coCartesian fibration encoding \(\mathcal{M}\) over \(\mathcal{A}\), and let \(f\) be the section classifying \((R, M)\).
Let \(g\) identify the two colors \(\{\mathfrak{a}, \mathfrak{m}\}\) with \(\mathcal{LM}^\otimes_{\langle 1\rangle}\), and set \(h(\mathfrak{a}) = (\operatorname{id}_R\colon R \to R)\) and \(h(\mathfrak{m}) = (i\colon U \to p(M))\); by \cref{rmk:mono_notions}, \(i\) is a monomorphism in \(\mathcal{C}^\otimes\) since it is one in the fibre \(\mathcal{M}\).
The space of lifts of \(i\) to \(\operatorname{LMod}_R(\mathcal{M})\) is equivalent to the space of operad-map fillers of the resulting diagram: a filler additionally chooses an algebra structure at \(\mathfrak{a}\) refining \(R\), but this datum is contractible, being an algebra map \(A \to R\) over \(\operatorname{id}_{p(R)}\), automatically an equivalence by conservativity of \(\operatorname{Alg}(\mathcal{A}) \to \mathcal{A}\).
Thus \cref{lem:lifting_monos_into_O_algebras} applies and the space of lifts is empty or contractible.
Condition (2) is automatic: \(\{\mathfrak{a}, \mathfrak{m}\}\) is discrete and \(g\) is an equivalence, so any equivalence \(u\colon g(k) \to g(k')\) forces \(k = k'\) and \(u \simeq \operatorname{id}\), whose factorization is the identity.
For condition (1), the active morphisms with target \(\mathfrak{a}\) have source a tuple of copies of \(\mathfrak{a}\), and factor since \(h(\mathfrak{a}) = \operatorname{id}_R\) is an equivalence (including the nullary case, the unit \(\mathbf{1} \to R\)); those with target \(\mathfrak{m}\) have source \((\mathfrak{a},\dots,\mathfrak{a},\mathfrak{m})\) and require that
\[
  R^{\otimes n} \otimes U \longrightarrow R^{\otimes n} \otimes p(M) \longrightarrow p(M)
\]
factor through \(i\).
By associativity of the action this holds for all \(n \geq 1\) as soon as it holds for \(n = 1\), while \(n = 0\) is the identity on \(U\).
Hence a filler exists if and only if \(R \otimes U \to R \otimes p(M) \to p(M)\) factors through \(i\).
Conversely, evaluating a filler on the binary action \(\varphi \in \operatorname{Mul}_{\mathcal{LM}}(\{\mathfrak{a}, \mathfrak{m}\}, \mathfrak{m})\) produces exactly this factorization, so the condition is also necessary.
\end{proof}

\begin{corollary}\label{cor:subalgebras}
Let \(\mathcal{A}\) be a monoidal \(\infty\)-category, \(A \in \operatorname{Alg}(\mathcal{A})\), and \(i\colon U \to A\) a monomorphism in \(\mathcal{A}\).
The space of lifts of \(i\) to a morphism \(U' \to A\) in \(\operatorname{Alg}(\mathcal{A})\) is empty or contractible, and non-empty if and only if the unit \(\mathbf{1} \to A\) and the composite \(U \otimes U \to A \otimes A \to A\) both factor through \(i\).
\end{corollary}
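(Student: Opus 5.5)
The plan is to deduce \cref{cor:subalgebras} from \cref{lem:lifting_monos_into_O_algebras}, in the same way \cref{lem:lifting_monos_into_modules} was deduced. I take \(\mathcal{B}^\otimes=\mathcal{O}^\otimes=\mathrm{Assoc}^\otimes\) and let \(q\colon\mathcal{C}^\otimes=\mathcal{A}^\otimes\to\mathrm{Assoc}^\otimes\) be the coCartesian fibration encoding the monoidal structure. I let \(f\colon\mathrm{Assoc}^\otimes\to\mathcal{A}^\otimes\) be the section classifying \(A\), and take \(K=*\) with \(g\) picking out the unique color, which is essentially surjective. I set \(h(*)=(i\colon U\to A)\), an object of \((\mathcal{C}^{\mathrm{mono}}_{/\mathcal{B}})^\otimes_{\langle 1\rangle}\); here \cref{rmk:mono_notions} applies since \(q\) is coCartesian, so \(i\) is a monomorphism in \(\mathcal{A}^\otimes\) because it is one in \(\mathcal{A}\). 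The equivalence \(f\circ g\simeq t\circ\iota\circ h\) is the tautological identification of both sides with \(A\).

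Next I identify the space of diagonal fillers with the space of lifts of \(i\) to \(\operatorname{Alg}(\mathcal{A})\). A filler is a map of operads \(\mathrm{Assoc}^\otimes\to(\mathcal{C}^{\mathrm{mono}}_{/\mathcal{B}})^\otimes\) over \(\mathrm{Assoc}^\otimes\) whose target is \(A\) and whose value on the color is \(i\). Since \(\Ar_{\mathcal{B}}(\mathcal{C})^\otimes\) is the arrow operad, whose algebras are arrows of algebras, this datum is exactly an algebra \(U'\), a map of algebras \(U'\to A\), and an identification of the underlying map with \(i\). The one step I would check with some care is this identification of \(\operatorname{Alg}\) of the relative arrow operad with \(\Ar(\operatorname{Alg}(\mathcal{A}))\); I would take it from the pointwise description of inert morphisms in \cref{lem:mono_operad_is_operad}. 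Granting it, \cref{lem:lifting_monos_into_O_algebras} gives at once that the space of lifts is empty or contractible.

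For the existence criterion I unwind the condition in \cref{lem:lifting_monos_into_O_algebras}. Since \(K=*\), I need, for each \(m\geq0\) and each active morphism \(\langle m\rangle\to\langle1\rangle\) in \(\mathrm{Assoc}^\otimes\), that
\[
U^{\otimes m}\xrightarrow{i^{\otimes m}}A^{\otimes m}\xrightarrow{\mu}A
\]
factors through \(i\). The active morphisms are given by linear orderings, and all of them give the \(m\)-fold multiplication up to a permutation of the factors.
\begin{itemize}
\item For \(m=0\) the condition says the unit \(\mathbf 1\to A\) factors through \(i\).
\item For \(m=1\) it is trivial.
\item For \(m=2\) it is the stated condition on \(U\otimes U\to A\otimes A\to A\).
\end{itemize}
For \(m\geq3\) I argue by induction, writing \(\mu_m=\mu\circ(\mu_{m-1}\otimes\mathrm{id})\). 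Given a factorization \(\mu_{m-1}\circ i^{\otimes(m-1)}=i\circ u_{m-1}\), I get \(\mu_m\circ i^{\otimes m}=\mu\circ(i\otimes i)\circ(u_{m-1}\otimes\mathrm{id}_U)\), and this factors through \(i\) by the \(m=2\) case. Necessity is clear: evaluating a filler on the nullary and binary operations produces the two required factorizations.
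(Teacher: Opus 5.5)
Your proposal is correct and follows the paper's proof. Both apply \cref{lem:lifting_monos_into_O_algebras} with \(\mathcal{B}^\otimes=\mathcal{O}^\otimes=\mathrm{Assoc}^\otimes\), a one-object \(K\) and \(h=(i\colon U\to A)\), and reduce the criterion to the nullary and binary multiplications; your induction via \(\mu_m=\mu\circ(\mu_{m-1}\otimes\mathrm{id})\) is exactly the ``by associativity'' step the paper leaves implicit, and your use of \cref{rmk:mono_notions} and the identification of algebras in the relative arrow operad with arrows of algebras only spell out details the paper asserts without argument.
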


\begin{proof}
Apply \cref{lem:lifting_monos_into_O_algebras} with \(\mathcal{B}^\otimes = \mathcal{O}^\otimes = \operatorname{Assoc}^\otimes\), the coCartesian fibration \(\mathcal{C}^\otimes \to \operatorname{Assoc}^\otimes\) encoding \(\mathcal{A}\), and \(f\) the section classifying \(A\); here \(K\) has a single object with \(h = (i\colon U \to A)\), and fillers over \(\operatorname{Assoc}^\otimes\) are algebra maps.
Condition (2) is automatic as before.
In condition (1) the active morphisms are the multiplications \(\langle n\rangle \to \langle 1\rangle\); the cases \(n = 0\) and \(n = 2\) give the stated factorizations of the unit and of \(U \otimes U \to A\), and the remaining cases follow from these by associativity.
\end{proof}

\printbibliography

\end{document}